\documentclass[11pt]{article}
\usepackage[utf8]{inputenc}
\usepackage{amsmath}
\usepackage{amssymb}
\usepackage{amsfonts}
\usepackage{amsthm}
\usepackage{mathabx} 
\usepackage{bbold}
\usepackage{comment}
\usepackage[top=2cm, bottom=2cm, left=2.8cm, right=2.8cm]{geometry}
\usepackage[linktocpage]{hyperref}
\usepackage[british]{babel}
\usepackage[shortlabels]{enumitem}
\usepackage{stmaryrd}

\usepackage{sansmathfonts}
\usepackage[sfdefault]{atkinson}
\usepackage[T1]{fontenc}

\usepackage{graphicx}
\graphicspath{ {./images/} }
\usepackage{subfig}
\usepackage[export]{adjustbox}

\usepackage{svg}

\newtheoremstyle{indented}{3pt}{3pt}{\addtolength{\leftskip}{3em}}{}{\bfseries}{.}{.5em}{}

\theoremstyle{indented}

\theoremstyle{plain}
\newtheorem{Theorem}{Theorem}
\newtheorem{Corollary}[Theorem]{Corollary}
\newtheorem{Th}{Theorem}[section]
\newtheorem{Prop}[Th]{Proposition}

\newtheorem{Cr}[Th]{Corollary}
\newtheorem{Lemma}[Th]{Lemma}

\newtheorem*{Th*}{Theorem}
\newtheorem*{Lemma*}{Lemma}
\newtheorem{Claim}[Th]{Claim}

\theoremstyle{definition}
\newtheorem{Def}[Th]{Definition}
\newtheorem{Setting}[Th]{Setting}
\newtheorem{Ex}[Th]{Example}
\newtheorem{Rem}[Th]{Remark}

\newtheorem*{Def*}{Definition}

\theoremstyle{plain}
\newtheorem{manualtheoreminner}{Theorem}
\newenvironment{manualtheorem}[1]{%
  \renewcommand\themanualtheoreminner{#1}%
  \manualtheoreminner
}{\endmanualtheoreminner}

\newtheorem{manualcorinner}{Corollary}
\newenvironment{manualcor}[1]{%
  \renewcommand\themanualcorinner{#1}%
  \manualcorinner
}{\endmanualcorinner}

\usepackage{xcolor}
\usepackage{color}

\definecolor{darkpink}{HTML}{D01C8B}
\definecolor{lightpink}{HTML}{F1B6DA}
\definecolor{lightgreen}{HTML}{B8E186}
\definecolor{darkgreen}{HTML}{4DAC26}
\definecolor{darkred}{rgb}{0.8, 0.0, 0.0}
\definecolor{darkpurple}{HTML}{7B3294}

\hypersetup{
colorlinks,
linkcolor={darkpurple},
citecolor={darkpurple},
urlcolor={darkpurple}
}

\newcommand{\mrm}{\mathrm}

\newcommand{\ds}{\displaystyle}

\newcommand{\mcl}{\mathcal}
\newcommand{\mfk}{\mathfrak}
\newcommand{\gen}[1]{\langle {#1} \rangle}
\newcommand{\partent}[1]{\left\lfloor {#1} \right\rfloor}

\newcommand{\eqdef}{=\mathrel{\mathop:}}
\newcommand{\defeq}{\mathrel{\mathop:}=}

\newcommand{\N}{\mathbb{N}}
\newcommand{\Z}{\mathbb{Z}}

\renewcommand{\epsilon}{\varepsilon}

\newcommand{\g}{\gamma}
\newcommand{\G}{\Gamma}

\newcommand{\w}{\omega}
\newcommand{\s}{\sigma}
\renewcommand{\S}{\Sigma}
\renewcommand{\a}{\alpha}
\renewcommand{\b}{\beta}

\newcommand{\mclH}{\mathcal{H}}
\newcommand{\mclG}{\mcl{G}}
\newcommand{\mclA}{\mcl{A}}
\newcommand{\mclB}{\mcl{B}}
\newcommand{\mclS}{\mcl{S}}
\newcommand{\mclR}{\mcl{R}}
\newcommand{\mclK}{\mcl{K}}
\newcommand{\mclC}{\mcl{C}}

\newcommand{\ModS}{\mathrm{Mod}(\Sigma)}
\newcommand{\ModSprime}{\mathrm{Mod}(\Sigma')}
\newcommand{\Mod}[1]{\mathrm{Mod}(#1)}
\newcommand{\Modext}{\mathrm{Mod}^{\pm}(\Sigma)}
\newcommand{\Modz}{\mathrm{Mod}^0(\Sigma)}
\newcommand{\Modzprime}{\mathrm{Mod}^0(\Sigma')}
\newcommand{\Modact}[1]{\mathrm{Mod}_{ #1 }(\Sigma)}
\newcommand{\Modactprime}[1]{\mathrm{Mod}_{ #1 }(\Sigma')}
\newcommand{\Modfix}[1]{\mathrm{Mod}_{ {#1 }^*}(\Sigma)}

\newcommand{\cxcourbes}[1]{\mcl{C}(#1)}

\newcommand{\Stab}[1]{\mrm{Stab}(#1)}
\newcommand{\Prob}{\mrm{Prob}}
\newcommand{\Supp}{\mrm{Supp}~}
\newcommand{\varphimax}{\varphi_{\text{max}}}

\newcommand{\dsig}{\delta_{\Sigma}}
\newcommand{\nsig}{{n_{\Sigma}}}

\title{Self measurable embeddings of mapping class groups}
\author{Alexandra Gurieva}
\date{\today}

\begin{document}
\maketitle

\begin{abstract}
Given a finite-type orientable surface without boundary, outside of a few low-complexity surfaces, we show that any measurable embedding of the associated mapping class group into itself is a measure equivalence. This is a measure-theoretic analog to the result of Ivanov and McCarthy that mapping class groups are co-Hopfian.
\end{abstract}

\tableofcontents

\vfill

\noindent {\href{https://alexandragurieva.pages.math.cnrs.fr/}{Alexandra Gurieva}}\\
		\textcolor{black!70}{ENS de Lyon,  Unité de Mathématiques Pures et Appliquées, 69007, Lyon, France \\
		\textit{e-mail:~}\texttt{firstname.lastname@ens-lyon.fr}}

\newpage
 \section*{Introduction}

 Rigidity of finitely-generated groups comes in a variety of flavours. Algebraic, geometric and measure-theoretic rigidities are well-studied fundamental examples. A group is algebraically rigid when is has a small automorphism group. Geometric rigidity of a group is quantified by the small amount of quasi-isometries it admits to various groups. In measured group theory, a group is rigid when its measure equivalence class is small. For instance, mapping class groups are rigid in these these three ways: \cite{ivanov_automorphism_1997} for automorphisms, \cite{behrstock_geometry_2012} and \cite{hamenstaedt_geometry_2007} for quasi-isometries, \cite{kida_measure_2010} for measure equivalence. There also exist such notions of rigidity for embeddings.
Algebraically, one can study subgroup embeddings of a group into itself or to other groups. One strong rigidity phenomenon for self-embeddings is the \textit{co-Hopfian} property: any injective homomorphism of the group into itself is necessarily an automorphism. Mapping class groups not only have this co-Hopfian property \cite{ivanov_injective_1999}, but they also verify a geometric analog of it: any self quasi-isometric embedding is a quasi-isometry \cite{bowditch_large-scale_2018}. Our aim is to show a measure-theoretic version of the co-Hopfian property for mapping class groups: any self measurable embedding is a measure equivalence.

A countable group $H$ \textit{measurably embeds} into a countable group $G$ when there exists a measure-preserving action of $G\times H$ on a standard measured space $\Omega$ such that 
\begin{itemize}[noitemsep, nolistsep]
    \item the action $G\curvearrowright \Omega$ is free and admits a measurable fundamental domain of finite positive measure;
    \item the action $H\curvearrowright \Omega$ is free and admits a measurable fundamental domain.
\end{itemize}
The space $\Omega$ is called a \textit{measurable embedding coupling}. A fundamental domain for a group action is a subset of the space that intersects each orbit exactly once. The groups $G$ and $H$ are \textit{measure equivalent} exactly when the fundamental domain of the $H$-action is also required to have finite measure. In this case, $\Omega$ is a \textit{measure equivalence coupling}. Note that if $H$ is a subgroup of $G$, then it measurably embeds into $G$ by taking $\Omega=G$ with $H$ acting by translation on the right and $G$ on the left. Note also that measurable embeddings are transitive (see Proposition~\ref{transitivité du plongement ME}).

The notion of measurable embedding has been defined for instance in \cite{sako_class_2009,demir_measurable_2022} and under the name \textit{measure subgroup} in \cite{delabie_quantitative_2022}. To our knowledge, the first result about measurable embeddings is the Gaboriau--Lyons theorem \cite{gaboriau_measurable-group-theoretic_2009}, stating that any countable nonamenable group measurably contains a nonabelian free group $F_2$. This exhibits flexibility for measurable embeddings compared to the algebraic setting. Indeed, there exist nonamenable groups that do not contain $F_2$ as a subgroup \cite{olchanskiy_question_1980}. Another important flexibility phenomenon, coming from measure equivalence flexibility \cite{ornstein_ergodic_1980}, is that all countable infinite amenable groups measurably embed into each other. Amenability is also preserved by measurable embeddings: when a group $H$ measurably embeds into an amenable group $G$, then $H$ is amenable as well (see Remark~\ref{rem amen is preserved by meas emb}). There have also been other results on monotone invariance for measurable embeddings, namely Ozawa's Class $\mcl S$ in \cite{sako_class_2009} and the Haagerup property in \cite{berendschot_measure_2025}. More recently, \cite{abgrall_algebraic_2026} gave a characterisation of measurable embeddings between square-free right-angled Artin groups.

In this paper, we exhibit a case of rigidity for measurable embeddings. For this, we study mapping class groups, as they have many other rigidity properties. Given a connected orientable finite-type surface $\S$ without boundary, the mapping class group $\ModS$ of $\S$ is the group of orientation-preserving diffeomorphisms of $\S$ up to isotopy. In our study, we exclude some surfaces that are too small for their mapping class group to exhibit rigidity, the size of a surface being quantified by its topological complexity. For a surface $\S$ of genus $g$ with $p$ marked points, the \textit{topological complexity} of $\S$ is $\kappa(\S)=3g-3+p$. It is the maximal cardinality of a collection of pairwise disjoint curves on $\S$. Mapping class groups of surfaces of complexity $0$ are amenable, and those of complexity $1$ are measure equivalent to all nonamenable free groups. Therefore, the surfaces we consider have topological complexity at least $2$. 

On the measure-theoretic side, Kida beautifully showed in \cite{kida_measure_2010} that mapping class groups are superrigid for measure equivalence: the only countable groups that are measure equivalent to $\ModS$ are necessarily virtually isomorphic to $\ModS$ up to finite kernels. Our work is a continuation of Kida's work, using the rich theory Kida developed for measure equivalence of mapping class groups in \cite{kida_introduction_2009} and \cite{kida_measure_2010}. We use the measure-theoretic tools provided by Kida, so the new tools we develop specifically for the theory of measurable embeddings of mapping class groups are more combinatorial in nature.

On the self-embeddings side, Ivanov and McCarthy showed in \cite{ivanov_injective_1999} that mapping class groups are co-Hopfian. Geometrically, Bowditch \cite{bowditch_large-scale_2018} showed that when $\kappa(\S)\geq 4$, any quasi-isometric embedding of $\ModS$ into itself is a quasi-isometry. Our main theorem is the analogous result in the measure-theoretic setting.
\begin{Theorem}\label{main thm}
Let $\S$ be a connected orientable surface of finite type without boundary and with genus $g\ge 0$ and $p\ge 0$ marked points such that $3g-3+p\geq 2$. 

Then, any measurable embedding coupling from $\ModS$ to $\ModS$ is a measure equivalence coupling.
\end{Theorem}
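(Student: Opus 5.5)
The plan is to follow Kida's strategy for measure equivalence rigidity and adapt it to the asymmetric setting of a measurable embedding coupling $\Omega$. Write $G=\ModS$ acting with a finite-measure fundamental domain $X$, and $H=\ModS$ acting with a (possibly infinite-measure) fundamental domain $Y$.

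The first step is to pass to groupoids. Restricting to $X$ and $Y$ gives a measured groupoid $\mathcal{G}=G\ltimes X$ on the probability space $X$, together with a subgroupoid $\mathcal{H}\le\mathcal{G}$ isomorphic to a restriction of $H\ltimes Y$. Since the $H$-action is free, $\mathcal{H}$ comes with a cocycle $\rho\colon\mathcal{H}\to H$ with trivial kernel, and $\mathcal{G}$ has the natural cocycle $\mathcal{G}\to G$. Showing that $\Omega$ is a measure equivalence amounts to showing that $\mathcal{H}$ has finite index in $\mathcal{G}$, meaning that almost every $\mathcal{G}$-class splits into finitely many $\mathcal{H}$-classes.

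The second step is Kida's characterisation of reducible subgroupoids. Using Kida's amenability and reducibility machinery, a subgroupoid of $\mathcal{H}$ of the form $\rho^{-1}(\Stab{C})$, for a curve or multicurve $C$, is characterised intrinsically. It is, up to finite index, a maximal subgroupoid containing an amenable normal subgroupoid of infinite type. Because the characterisation is purely groupoid-theoretic, the same subgroupoid viewed inside $\mathcal{G}$ via the $G$-cocycle must also be reducible, so it fixes a measurable family of multicurves for $G$. This yields, on a conull set and after countable partitioning, a map $\Phi$ from vertices of the curve complex $\cxcourbes{\S}$ seen by $H$ to multicurves seen by $G$. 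The map is equivariant with respect to the two cocycles and preserves disjointness, because commuting Dehn-twist subgroupoids remain commuting and amenable.

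The third step, which is the new combinatorial heart of the argument, is to show that $\Phi$ is induced by an injective simplicial map $\cxcourbes{\S}\to\cxcourbes{\S}$, sending curves to curves rather than to larger multicurves. Two tools would do this.
\begin{itemize}[noitemsep]
\item Maximality, via a rank count: a pants decomposition has $\kappa(\S)$ curves, and since $H$ contains $\Z^{\kappa(\S)}$ with amenable image, the images of disjoint curves must be distinct and pairwise disjoint. This forces single curves to go to single curves.
\item Non-disjointness preserved, via non-amenability: if $a,b$ intersect, the subgroupoid generated by their twists contains a free subgroupoid, so $\Phi(a)$ and $\Phi(b)$ cannot be disjoint.
\end{itemize}
An injective simplicial self-map of the curve complex is then induced by a mapping class, by Shackleton's result or Ivanov-type rigidity for $\kappa(\S)\ge2$ (with care for the low-complexity exceptional cases). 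Consequently there is a measurable map $\theta\colon X\to G$ relating the two cocycles: up to the measurable choice $\theta$, the $H$-cocycle agrees with the $G$-cocycle on $\mathcal{H}$.

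The final step turns this cocycle relation into finite index. From $\theta$ we build a $(G\times H)$-equivariant measurable map $\Omega\to G$, twisted by the identification, which makes $\Omega$ look like $G$ with $H$ acting through a map into $G$. Freeness of the $H$-action and injectivity then give that $Y$ maps finite-to-one over $X$. More precisely, the pushforward of the measure on $Y$ to $G$ is bounded by the measure of $X$ times the counting measure, so $Y$ has finite measure.

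The main obstacle is the third step. In Kida's measure-equivalence setting the symmetry between the two sides makes the map on curves automatically an automorphism. Here only an embedding is available, so the key difficulty is proving that curves go to curves and that non-disjointness is preserved. The rank and non-amenability arguments sketched above are the approach I would try first.
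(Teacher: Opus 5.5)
Your outer frame is the paper's: transport Dehn twists to single Dehn twists, get a superinjective self-map of $\cxcourbes{\S}$, apply Irmak--Shackleton, build a tautening map $\Omega\to\operatorname{Aut}\,\cxcourbes{\S}$ and pull back a finite fundamental domain. Two repairs to that frame are needed.
\begin{itemize}
\item Your $G\ltimes X$ does not make sense as written, since $G$ does not act on its own fundamental domain. Moreover, a general measurable embedding is not known to come from an inclusion of orbits. Instead, use the single groupoid on $D_G\cap D_H$, with an \emph{action-type} cocycle $\w$ to the embedded group $H$ and a merely trivial-kernel cocycle $\rho$ to $G$.
\item The map $\theta$ must take values in $\operatorname{Aut}\,\cxcourbes{\S}$, not in $G$.
\end{itemize}

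The genuine gap is your third step, and the rank count cannot work. The rank of free abelian groups is invisible to measurable embeddings: all infinite amenable groups measurably embed into one another (Ornstein--Weiss). So amenability considerations alone cannot stop the twists of a whole pants decomposition from being sent into a single $\gen{T_\gamma}$. Suppose you grant that the images of $a_1,\dots,a_{\kappa(\S)}$ are distinct nonempty multicurves whose union is a multicurve. Singletons still do not follow, since distinct sets may overlap, as $\{\gamma\}$ and $\{\gamma,\delta\}$ do. You would need the images to share no curve, and that is exactly what has to be proved.

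Step~2 does not rescue this. The ``maximal subgroupoid'' characterisation does not transfer to the target, because $\rho^{-1}(\gen{T_\gamma})$ need not be of infinite type. What does transfer is reducibility, via Kida's normaliser results. It only says that $\rho(\w^{-1}(\gen{T_a}))$ is trivial on the components where $\w^{-1}(\Stab{a})$ is irreducible nonamenable. For $\kappa(\S)\geq 3$ it may a priori be nontrivial, even irreducible, on other components, so it is not yet inside any multitwist. Your non-disjointness argument needs the images to lie in twist groups, so it is not yet available either.

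The missing idea is to count nonamenable pieces instead of curves.
\begin{itemize}
\item The paper fixes a $k$-maximal decomposition of $\S$ into $\nsig-k$ subsurfaces $S_i$ of complexity $1$ or $2$.
\item The subgroupoids $\w^{-1}(\Modact{S_i})$ normalise one another and are everywhere nonamenable. Hence they must be irreducible nonamenable on pairwise distinct components of the target.
\item The counting identity (\ref{equation star}) then forces the target picture to be again a $k$-maximal decomposition with isomorphic dual tree. The decomposition curves are sent into its multitwist (Lemma~\ref{lemma max decompo are sent to max decompo}).
\item Cutting this multitwist down to one twist still takes real combinatorics. One first shows that the image of a leaf curve contains at most one leaf curve. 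A maximality argument over the family $D$ of $0$-maximal decompositions with more leaf than non-leaf curves then excludes non-leaf curves (Proposition~\ref{prop leaf type curves}).
\item Other curves are pinned down by disjoint leaf curves. The case $\kappa(\S)=2$ follows directly from the normaliser argument.
\end{itemize}
Only after this do your intersection-preservation argument (Lemma~\ref{adjacency curves in groupoidal terms}) and the rest of your plan go through.
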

This means that if one takes two free commuting measure-preserving actions of $\ModS$ admitting Borel fundamental domains on a common standard measured space $\Omega$, if one action has a finite measure fundamental domain, then so does the other.
Hence, in algebraic, geometric and measurable points of view, the mapping class group of a surface does not embed properly into itself. A consequence of our theorem together with Kida's theorem \cite{kida_measure_2010} is that we now know all the groups that are measurably bi-embeddable with $\ModS$.
\begin{Corollary}\label{cr biembedability}
    Let $\S$ be a connected orientable surface of finite type without boundary and with genus $g\ge 0$ and $p\ge 0$ marked points such that $3g-3+p\geq 2$.

    If a countable group $H$ measurably embeds into $\ModS$ and $\ModS$ measurably embeds into $H$, then $H$ is virtually isomorphic to $\ModS$ up to finite kernels.
\end{Corollary}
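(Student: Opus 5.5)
The plan is to compose the two embeddings and apply Theorem~\ref{main thm}, then conclude with Kida's measure equivalence rigidity \cite{kida_measure_2010}. Let $\Omega_1$ be a measurable embedding coupling from $H$ to $\ModS$: $\ModS$ acts with a finite measure fundamental domain and $H$ acts with some fundamental domain. Let $\Omega_2$ be a measurable embedding coupling from $\ModS$ to $H$: $H$ acts with a finite measure fundamental domain and $\ModS$ acts with some fundamental domain. By transitivity of measurable embeddings (Proposition~\ref{transitivité du plongement ME}), the composed coupling
\[
\Omega \defeq \Omega_2\times_H \Omega_1 = (\Omega_2\times\Omega_1)/H
\]
is a measurable embedding coupling from $\ModS$ to $\ModS$. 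Here $H$ acts diagonally, and the two copies of $\ModS$ act on the remaining factors. Theorem~\ref{main thm} then shows that $\Omega$ is a measure equivalence coupling, so the fundamental domain of the "embedded" copy of $\ModS$ on $\Omega$ has finite measure.

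The key step is to transfer this finiteness back to $\Omega_1$ (or $\Omega_2$), proving that one of the original couplings is already a measure equivalence coupling. Choose fundamental domains $Y_1\subset\Omega_1$ for $\ModS$ (finite measure) and $X_1\subset\Omega_1$ for $H$. Choose $Y_2\subset\Omega_2$ for $H$ (finite measure) and $X_2\subset\Omega_2$ for $\ModS$. One can then describe fundamental domains in $\Omega$ concretely.
\begin{itemize}
\item The image of $Y_2\times Y_1$ is a fundamental domain for the $\ModS\times\ModS$-action, since both $\ModS$-factors act cocompactly in the measure sense.
\item A fundamental domain for the action of the second copy of $\ModS$ (the one acting on $\Omega_1$) is the image of $\Omega_2\times Y_1$. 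Its measure is computed, after quotienting by $H$, as $\mu(Y_1)$ times the measure of $H\backslash\Omega_2$, i.e.\ $\mu(Y_1)\mu(Y_2)<\infty$.
\item A fundamental domain for the first copy (acting on $\Omega_2$) is the image of $X_2\times\Omega_1$. Its measure is $\mu(X_2\cap\,\cdot)$ integrated against the $H$-quotient, and works out to $\mu(X_2)\,\mu(X_1)$ in the appropriate sense, more precisely as the measure of $X_2\times X_1$ adjusted by the $H$-identification.
\end{itemize}
Using the standard Fubini-type computation for composed couplings, finiteness of this measure forces both $\mu(X_1)<\infty$ and $\mu(X_2)<\infty$, because all quantities are positive. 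Hence $\Omega_1$ is a measure equivalence coupling between $H$ and $\ModS$.

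Finally, since $3g-3+p\ge 2$, Kida's theorem \cite{kida_measure_2010} applies. Any countable group measure equivalent to $\ModS$ is virtually isomorphic to $\ModS$ up to finite kernels, which gives the corollary.

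The main obstacle is the measure bookkeeping in the composed coupling: one has to check carefully that finiteness of the fundamental domain for $\ModS$ in $\Omega$ implies finiteness in $\Omega_1$. The subtle points are that the $H$-quotient need not be ergodic and that the fundamental domains are infinite a priori. I would handle this by decomposing $X_2$ and $X_1$ into countably many pieces of finite measure and applying monotone convergence, using positivity of $\mu(Y_1),\mu(Y_2)$.
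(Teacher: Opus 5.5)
Your proposal is correct and matches the paper's argument. The paper composes the two couplings, applies Theorem~\ref{main thm} to the resulting self-embedding, deduces that the original couplings are measure equivalence couplings, and concludes with Kida; it gets the middle step by citing the second half of Proposition~\ref{transitivité du plongement ME}, whose proof is exactly your bookkeeping. The only cleanup is in the fundamental domains: your first bullet is false and unneeded, and the quotient images of $\Omega_2\times Y_1$ and $X_2\times\Omega_1$ are in general not fundamental domains, whereas $Y_2\times Y_1$ and $X_2\times X_1$ are, under the measure-space isomorphisms $\Omega\simeq Y_2\times\Omega_1$ and $\Omega\simeq\Omega_2\times X_1$; with these, $m(X_2)\,m(X_1)<\infty$ and both factors positive give finiteness directly, with no ergodicity or monotone convergence argument needed.
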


Given $G$ and $H$ two countable groups and $G\curvearrowright X$ and $H\curvearrowright X$ two probability measure-preserving actions on a standard probability space $X$, if there is an inclusion between $H$-orbits and $G$-orbits, then one can construct a measurable embedding between the groups (see e.g.\ \cite[Remark~2.36]{delabie_quantitative_2022}). This construction is based on the same ideas as the dictionary between measure equivalence and orbit equivalence (see e.g.\ \cite{furman_orbit_1999} and \cite[$\mrm{P_{ME}5}$]{gaboriau_examples_2005}). We however do not know if from a measurable embedding it is possible to get two p.m.p.\ actions with inclusion of orbits, so the setting of embeddings provides only an implication but not a dictionary. Using this implication and taking $G$ and $H$ to be the same mapping class group, we deduce that an inclusion of orbits forces an equality of orbits. Then, \cite{kida_orbit_2008} implies \textit{virtual conjugacy} (see \cite[Definition~1.3]{kida_orbit_2008}) of said actions when they are ergodic.

\begin{Corollary}\label{cr OE rigidity}Let $\S$ be a connected orientable surface of finite type without boundary and with genus $g\ge 0$ and $p\ge 0$ marked points such that $3g-3+p\geq 2$. Let $G$ and $H$ be two copies of $\ModS$. Assume that there exist free probability measure-preserving actions of $G$ and $H$ on a standard probability space $X$ such that for almost every $x\in X$, $H\cdot x\subseteq G\cdot x$. 
 
Then, for almost every $x\in X$, $H\cdot x=G\cdot x$. If additionally the actions are ergodic, then they are virtually conjugate.
\end{Corollary}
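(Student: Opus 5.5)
We prove Corollary~\ref{cr OE rigidity} by turning the orbit inclusion into a measurable embedding coupling, applying Theorem~\ref{main thm}, and reading the conclusion back on orbits.

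\textbf{Step 1: a coupling from the orbit inclusion.} Following \cite[Remark~2.36]{delabie_quantitative_2022}, set $\Omega=\{(x,g)\in X\times G\}$ with the counting measure along $G$. Let $G$ act by $g'\cdot(x,g)=(x,gg'^{-1})$, twisted appropriately. More canonically, write $\mcl R_G$ for the orbit equivalence relation of $G$, and let $\Omega=\mcl R_G=\{(x,y): y\in G\cdot x\}$ with the left counting measure. Then $G$ acts on the first coordinate and $H$ on the second; the $H$-action is well defined because $H\cdot y\subseteq G\cdot y=G\cdot x$. Both actions preserve the measure, and they commute.

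\textbf{Step 2: the coupling properties.} Freeness of each action on $\Omega$ follows from freeness on $X$. The diagonal $\{(x,x)\}$ is a fundamental domain for the $G$-action, and it has measure $1$. For the $H$-action, we use the Lusin--Novikov theorem to pick a Borel transversal: in each $\mcl R_G$-class, the set $G\cdot x$ splits into countably many $H$-orbits. Since the $H$-orbit equivalence relation on $X$ is countable Borel and its restriction to each fibre $\{x\}\times G\cdot x$ is Borel, a Borel choice of one point per $H$-orbit in the fibre exists. Enumerating $G$ and taking, in each $H$-orbit, the element $g\cdot x$ of minimal index gives such a choice. So $\Omega$ is a measurable embedding coupling from $H$ to $G$.

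\textbf{Step 3: apply Theorem~\ref{main thm} and return to orbits.} Theorem~\ref{main thm} shows that the $H$-fundamental domain $D$ has finite measure. Its measure equals $\int_X N(x)\,d\mu(x)$, where $N(x)$ is the number of $H$-orbits contained in $G\cdot x$.

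The main obstacle is to conclude $N=1$ almost everywhere, and not merely $N<\infty$; mere integrability is not enough. Here we use the index cocycle. The function $N$ is $G$-invariant. On the $G$-invariant set $\{N=k\}$, restricting the coupling gives a measure equivalence coupling in which $\mrm{covol}(H)/\mrm{covol}(G)=k$.

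Iterating the embedding then forces $k=1$. Composing the coupling with itself $m$ times, via Proposition~\ref{transitivité du plongement ME}, corresponds to orbit inclusions of index $k^m$. Each of these is still a measure equivalence, by Theorem~\ref{main thm}.

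To get a contradiction we need an extra rigidity input. Kida's result \cite{kida_measure_2010} describes every self-ME coupling of $\ModS$ as coming from a commensurator, namely the extended mapping class group, via a measurable map $\Omega\to\Modext$. Such a map forces the coupling index to be $1$, since self-couplings of $\ModS$ are then essentially $\Modext$ with coupling index one. Hence $k=1$, so $N=1$ almost everywhere, which gives $H\cdot x=G\cdot x$.

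\textbf{Step 4: the ergodic case.} Once the orbits are equal, the two actions are orbit equivalent through the identity map. If the actions are ergodic, Kida's orbit equivalence rigidity \cite{kida_orbit_2008} then gives virtual conjugacy directly.
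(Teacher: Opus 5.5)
Your argument is correct in outline, but the key ``index one'' step uses a different and heavier input than the paper's proof.

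\textbf{How the paper does it.} The paper applies its own Theorem~\ref{thm existence of tautening map} directly to the measurable embedding coupling $\Omega=\mcl R_G$, after passing to the torsion-free finite-index copies $G^0,H^0$ of $\Modz$. It picks a finite $K\subseteq\operatorname{Aut}\,\cxcourbes{\S}$ that is a fundamental domain both for left $G^0$-translation and for right $H^0$-translation. Then $\Phi^{-1}(K)$ is a common fundamental domain for $G^0$ and $H^0$ on $\Omega$. Comparing its measure with that of $R_H\Delta$, where $\Delta$ is the diagonal, shows that $H\Delta$ is conull. This is exactly your statement that $N=1$ almost everywhere.

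\textbf{How your route differs.} You first invoke Theorem~\ref{main thm} to get $\int_X N\,d\mu<\infty$, and then Kida's rigidity of self-ME couplings to get a tautening map on each restricted coupling over $\{N=k\}$. This works, but it is redundant. Theorem~\ref{main thm} is itself deduced from Theorem~\ref{thm existence of tautening map}, so a tautening map already exists on $\Omega$ itself. The paper needs Kida only for the ergodic virtual-conjugacy statement. Your bookkeeping $m(D)=\int_X N\,d\mu$ is a clean alternative to the paper's two claims about $H\Delta$.

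\textbf{What to fix.} First, the iteration paragraph proves nothing: self-couplings of index $k^m$ are not contradictory by themselves, as you concede. Delete it.

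Second, and more importantly, the sentence ``such a map forces the coupling index to be $1$'' is the entire content of the step, so you must argue it. The argument runs as follows.
\begin{enumerate}
\item Pass to $\Modz$ so that both translation actions on the target group are free. This matters because for $\Sigma_{1,2}$ and $\Sigma_2$ the group $\ModS$ contains a central involution acting trivially on $\cxcourbes{\S}$. In those cases the natural target is $\operatorname{Aut}\,\cxcourbes{\S}$, not $\Modext$, so your phrase ``namely the extended mapping class group'' is not accurate in general.
\item Choose a common transversal $K$ of the left and right cosets of this finite-index subgroup; such a transversal exists.
\item Check that $\Phi^{-1}(K)$ is a fundamental domain for both actions, so the two covolumes agree.
\end{enumerate}
If you do this once on all of $\Omega$, you get $\int_X N\,d\mu=\mu(X)$ with $N\ge 1$. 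Hence $N=1$ almost everywhere directly, with no need to decompose along $\{N=k\}$.
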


It is unknown in general whether measurable embeddings between two different mapping class groups exist, although we do know the following monotone invariance.
\begin{Corollary}\label{cr monotonicity of complexity}
    Let $\S$ and $\S'$ be connected orientable surfaces without boundary and of finite type. If $\ModS$ measurably embeds into $\Mod {\S'}$, then ${\kappa(\S)\leq \kappa(\S')}$. 
\end{Corollary}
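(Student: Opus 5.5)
We need to prove Corollary~\ref{cr monotonicity of complexity}: if $\ModS$ measurably embeds into $\Mod{\S'}$, then $\kappa(\S)\le\kappa(\S')$. The plan is to use a measure-theoretic invariant that is monotone under measurable embeddings and equals the complexity: the maximal rank of a free abelian subgroup, or more robustly the maximal length of a chain of commuting infinite amenable normal subgroups. A cleaner route is to transfer $\Z^{\kappa(\S)}$ (generated by Dehn twists about a pants decomposition of $\S$) through the embedding.

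\textbf{Step 1: transport the abelian subgroup.} Since $\Z^{\kappa(\S)}\le\ModS$, it measurably embeds into $\ModS$. By transitivity (Proposition~\ref{transitivité du plongement ME}) it then measurably embeds into $\Mod{\S'}$. So it suffices to show that $\Z^n$ measurably embeds into $\Mod{\S'}$ only if $n\le\kappa(\S')$. More generally, I would prove that if $A_1\times\dots\times A_n$, with each $A_i$ infinite amenable, measurably embeds into $\Mod{\S'}$, then $n\le\kappa(\S')$.

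\textbf{Step 2: pass to a measured groupoid.} Restrict the coupling $\Omega$ to a finite-measure fundamental domain $X$ of $\Mod{\S'}$. This yields a measured subgroupoid $\mclG$ of $\Mod{\S'}\ltimes X$ generated by commuting amenable normal subgroupoids $\mclA_1,\dots,\mclA_n$, each of infinite type. Freeness of the $\Z^n$-action and the existence of a fundamental domain guarantee that each $\mclA_i$ and each partial product is nowhere trivial.

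\textbf{Step 3: apply Kida's classification.} Use Kida's classification of amenable subgroupoids of $\Mod{\S'}\ltimes X$ acting on the boundary $\mathbb{P}\mathcal{MF}$ of Thurston's compactification. An amenable subgroupoid of infinite type is either irreducible, in which case it fixes a unique measurable pseudo-Anosov-type pair of boundary points, or reducible, in which case it has a canonical reduction system, a measurable map to curve systems. I would argue by induction along the chain $\mclA_1$, $\mclA_1\times\mclA_2$, and so on. The normalizer of an irreducible amenable subgroupoid is amenable, so after at most one step everything is reducible. Each new commuting factor must then act with infinite type on a component of the complement of the current reduction system, or on twists about its curves. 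This forces the canonical reduction systems to grow, in the sense that the number of disjoint curves plus the "irreducible pieces" strictly increases. Since a curve system has at most $\kappa(\S')$ curves, we get $n\le\kappa(\S')$. This is the argument in Kida's work showing that the rank is an ME invariant (\cite{kida_introduction_2009}); it only uses the groupoid inside $\Mod{\S'}\ltimes X$ and a finite-measure unit space, so it applies verbatim to measurable embeddings.

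\textbf{Main obstacle.} The delicate point is Step 3: making the reduction-system bookkeeping work measurably. Each stage requires a countable partition of $X$ on which the reduction system is constant, and we must check that infinite type for the new factor genuinely contributes a new curve or component rather than being absorbed. I would handle this by citing Kida's result that a reducible amenable subgroupoid of infinite type, together with commuting normalizing pieces, gives a strictly larger invariant collection. The obvious check is to verify that $\kappa(\S')$ bounds the length of such chains. The remaining steps are routine once the groupoid restriction is set up.
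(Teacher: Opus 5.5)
\textbf{The approach fails at Step~1: the statement you reduce to is false.} Measurable embeddings cannot detect the rank of free abelian subgroups, or any amenable structure. By Ornstein--Weiss, all countably infinite amenable groups are measure equivalent, as the paper recalls in the introduction. Hence $\Z^n$ measurably embeds into $\Z$ for every $n$. If $\kappa(\S')\geq 1$, then $\Z\cong\gen{T_c}\leq\Mod{\S'}$ for any curve $c$. By transitivity, $\Z^n$ measurably embeds into $\Mod{\S'}$ for \emph{every} $n$. The same holds for any product $A_1\times\cdots\times A_n$ of infinite amenable groups, since such a product is itself amenable.

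The groupoid picture shows exactly where your Step~3 breaks. The $\w$-preimages $\mclA_i$ of the coordinate factors are of infinite type because $\w$ is of action type. But the cocycle $\rho$ towards $\Mod{\S'}$ only has trivial kernel. Compose an Ornstein--Weiss coupling of $\Z^n$ with $\Z$ and the inclusion $\gen{T_c}\leq\Mod{\S'}$, and take the fundamental domain $X_1\times\{1\}$ for $\Mod{\S'}$. Then $\rho$ takes all its values in $\gen{T_c}$. Every $(\mclA_i,\rho)$ and every partial product then has canonical multicurve $\{c\}$, so nothing grows. The lemma you want to cite, that a new commuting infinite-type factor yields a strictly larger invariant collection, cannot exist. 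What survives a measurable embedding is the number of \emph{nonamenable} direct factors. This is Kida's key idea as quoted in the paper: if $F_2\times\cdots\times F_2$ with $n$ factors measurably embeds into $\ModS$, then $n\leq\lfloor(\kappa(\S)+1)/2\rfloor$.

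\textbf{What the paper does instead.} The case $\kappa(\S')=0$ follows from preservation of amenability (Remark~\ref{rem amen is preserved by meas emb}). Otherwise, the paper passes to $\Modz$ and runs the first half of the proof of Lemma~\ref{lemma max decompo are sent to max decompo}:
\begin{itemize}
\item Take a $0$-maximal decomposition $S_1,\dots,S_{\nsig}$ of $\S$.
\item Build everywhere nonamenable subgroupoids $\mclR_i$ from $\w$-preimages of twists about intersecting curves in $S_i$.
\item Use Lemma~\ref{lemma multicurve is canonical} and Proposition~\ref{if H normalises A then its reducible or IA} to show that the $\mclR_i$ are irreducible nonamenable on pairwise distinct components of $\S'\setminus\s$.
\end{itemize}
Even this only gives $\nsig\leq\lfloor(\kappa(\S')+1)/2\rfloor$, which loses the parity of the complexity. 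To reach $\kappa(\S)\leq\kappa(\S')$, the paper adds the five-curve argument. It shows that $\mclR_i$ coming from a complexity-$2$ piece is irreducible nonamenable either on a component of complexity at least $2$ or on two distinct components. The count via equation ($\bigstar$) then gives $\kappa(\S')+1\geq 2\nsig+\dsig=\kappa(\S)+1$.
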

Note that for two different surfaces $\S$ and $\S'$, if $\ModS$ embeds either algebraically or quasi-isometrically into $\Mod {\S'}$, then ${\kappa(\S)\leq \kappa(\S')}$ because $\kappa(\S)$ is the maximal rank of a free abelian subgroup of $\Mod{\S}$ (see \cite{behrstock_dimension_2008} for quasi-isometries).
Corollary~\ref{cr monotonicity of complexity} is a slight generalisation of {\cite[Corollary~5.29]{kida_introduction_2009}}.  Kida's key idea to prove it is the following: if a direct product of $n$ nonabelian free groups $F_2\times\cdots\times F_2$ measurably embeds into $\ModS$, then $n\leq\lfloor( \kappa(\S)+1)/2\rfloor$.
Additionally, the product of exactly $\lfloor( \kappa(\S)+1)/2\rfloor$ factors $F_2\times\cdots\times F_2$ embeds as a subgroup into $\ModS$ so this bound is optimal. This embedding can be seen combinatorially on the surface $\S$. Indeed, this bound corresponds to the maximal amount of components when cutting $\S$ into disjoint subsurfaces of complexity at least $1$, i.e.\ the smallest subsurfaces with a nonamenable mapping class group (which then contains $F_2$ as a subgroup). Cutting up $\S$ in this way is what we call a \textit{maximal decomposition} of $\S$, and the collection of pairwise disjoint curves used to cut up the surface is called a \textit{maximal decomposition multicurve}. Maximal decompositions of $\S$ will be the central tool in our study of self measurable embeddings of $\ModS$.
\medbreak
One important way to exhibit rigidity when proving Theorem~\ref{main thm} is to apply the theorem proven by Irmak \cite{irmak_superinjective_2006} and generalised by Shackleton \cite{shackleton_combinatorial_2007} that states that any induced subgraph embedding of the curve graph into itself is an automorphism of the curve graph. The \textit{curve graph} $\cxcourbes{\S}$ is an infinite graph whose vertices are essential simple closed curves on $\S$ up to isotopy and whose edges correspond to disjoint curves. The action of the mapping class group on the curve graph contains enough information to understand the group, as shown by Ivanov in the crucial theorem \cite[Theorem~1]{ivanov_automorphism_1997}: the group of automorphisms of the curve graph is virtually isomorphic to $\ModS$. These two theorems combined give a strong form of combinatorial rigidity for self-embeddings of curve graphs. To prove Theorem~\ref{main thm}, the goal becomes to build a self-embedding of the curve graph from the data of a self measurable embedding of the mapping class group, to which we then apply the latter combinatorial rigidity.

A crucial step in Ivanov and McCarthy's proof that $\ModS$ is co-Hopfian \cite{ivanov_injective_1999} is to show that any injective homomorphism from $\ModS$ into itself sends Dehn twists to powers of Dehn twists. For an essential simple closed curve $c$ (up to isotopy), the \textit{Dehn twist} associated to $c$ is a mapping class that acts as identity outside of an annulus around $c$ and twists the annulus around the curve. Similarly to Ivanov-McCarthy's proof, the crucial step of our proof is to show that a measurable embedding sends Dehn twists to powers of Dehn twists, in a measure-theoretic sense that we will define below. After this step, we are able to construct a measurable and equivariant map from the self measurable embedding coupling to the set of self-embeddings of the curve graph. This map takes values in the finite-index overgroup $\operatorname{Aut}\,\cxcourbes{\S}$ of $\ModS$, by the combinatorial rigidity of self-embeddings of the curve graph \cite{ivanov_automorphism_1997,irmak_superinjective_2006,shackleton_combinatorial_2007} explained above. Thus, we obtain a map that sends the abstract self measurable embedding coupling $\Omega$ to the \textit{tautological} coupling of $\ModS$ acting by right and left multiplication on $\operatorname{Aut}\,\cxcourbes{\S}$. Such a map is often called a \textit{tautening map} and provides enough information on $\Omega$, namely that it is a measure equivalence coupling. This part of the proof is inspired by the fact that the existence of a tautening map for a self measure equivalence coupling has been a crucial step to show measure equivalence rigidity, an idea originally developed in \cite{furman_gromovs_1999} and then appearing for instance in \cite{monod_orbit_2006,kida_measure_2010,bader_integrable_2013,guirardel_measure_2025,escalier_graph_2026}.

We now define what it means for a subgroup to be sent to another subgroup by the measurable embedding, as there is only a coupling of the two groups, and no direct map from the source group $H$ to the target group $G$. For this, we work with the groupoid-theoretic point of view on measure equivalence developed by Kida in \cite{kida_introduction_2009} and that has become classical when working on measure equivalence rigidity. Given a measurable embedding coupling from a group $H$ to a group $G$ with respective fundamental domains $D_H$ and $D_G$, there are actions $G\curvearrowright D_H$ and $H\curvearrowright D_G$. Then, on the finite measure space $X=D_H\cap D_G$, the orbits of these two actions coincide (see the proof of Lemma \ref{lemma measurable embedding provides groupoid and cocycles} for details). The common collection of arrows on $X$ forms a \textit{measured groupoid} $\mclG$, with two cocycles $\w : \mclG\longrightarrow H$ and $\rho : \mclG\longrightarrow G$ that label the arrows by the respective actions of $H$ and $G$. The cocycle $\w$ comes from the action of $H$ on the space $D_G$ of finite measure, so it as well-behaved as in a measure equivalence coupling. The asymmetry of measurable embeddings as opposed to measure equivalence comes in the fact that $\rho$ loses any form of surjectivity onto $G$.
\medbreak
From now on, we study self measurable embeddings of mapping class groups, so let us fix ${H\simeq G\simeq \ModS}$.
The first copy $H$ of $\ModS$ at the target of $\w$ is the \textit{source} of the measurable embedding and the second copy $G$ at the target of $\rho$ is the \textit{target} of the measurable embedding. In our proof, a general idea is to fix some suitable information on the source copy of $\S$ and prove that this information is transported through the measurable embedding onto the target copy of $\S$. In practice, a subgroup $H'$ of $H$ is \textit{transported into} a subgroup $G'$ of $G$ by the measurable embedding on some positive measure Borel subset $U$ of $X$ when 
$\rho\left(\w^{-1}(H')|_U\right)\subseteq G'$.

The key step of the proof (Proposition~\ref{prop recap courbes dans courbes}) is to show that a cyclic subgroup generated by a Dehn twist $T_c$ is transported into another Dehn twist cyclic subgroup by the measurable embedding, up to restricting to a Borel subset $U$ of $X$. However, the subgroupoid $\w^{-1}\left(\gen{T_c}\right)|_U$ alone does not provide enough rigidity to get information through $\rho$. For instance, it could a priori be sent by $\rho$ into a cyclic subgroup generated by a pseudo-Anosov mapping class of $\S$ that fixes no curve on the surface. The image of $\w^{-1}\left(\gen{T_c}\right)|_U$ by $\rho$ could also be very far from being a subgroup of $\ModS$. 

From this point on, the proof becomes different for surfaces of complexity $2$ as opposed to surfaces of complexity at least $3$. When $\kappa(\S)=2$, there is a proof derived directly from Kida's work (see the proof of Proposition~\ref{nonsep curves prop}). When $\kappa(\S)\geq 3$, new combinatorial arguments are needed to force rigidity. The right objects to apply $\w^{-1}$ to are maximal decompositions of $\S$. The first step of our proof is to reinforce Kida's measure-theoretic work on maximal decompositions in {\cite[Section~5]{kida_mapping_2008}}. We show that given a maximal decomposition multicurve $a_1,\dots,a_{n-1}$, there exists another maximal decomposition multicurve $\a_1,\dots,\a_{n-1}$ such that 
$$\rho\left(\w^{-1}(\gen{T_{a_i}})|_U\right)\subseteq \gen{T_{\a_1},\dots,T_{\a_{n-1}}}$$
for every $i$ and for some positive measure Borel subset $U$ of $X$ (see Point~\ref{ii max decompos} of Lemma~\ref{lemma max decompo are sent to max decompo}).

Then, the goal becomes to eliminate all but one twist from this multitwist in the image. For this, we first work on a specific type of separating curves of $\S$ which we call \textit{leaf curves}. A leaf curve separates a subsurface of complexity $1$ (or sometimes $2$) from the rest of $\S$. It is called this way because the maximal decompositions we study have \textit{dual graphs} that are finite trees (see Definition~\ref{def dual graph}), and leaf curves correspond to leaves of such a tree. Showing that any Dehn twist along a leaf curve is sent to exactly one Dehn twist via the measurable embedding constitutes the most technical part of the proof (Proposition~\ref{prop leaf type curves}). For this, we study all the possible images of maximal decompositions through the measurable embedding. Then, we study interactions between different intersecting maximal decomposition multicurves. After proving that a leaf curve is sent to a leaf curve, dealing with other curves is based on the following idea: a curve of $\S$ is uniquely determined by the leaf curves that are disjoint from it. Thus, once we know the image of leaf curves by the measurable embedding, we can deduce the image of other types of curves. The idea of obtaining some rigidity for leaf curves and then passing to a broader category of curves appears in work on the strongly separating curve graph by Bowditch \cite{bowditch_rigidity_2016} and on quasi-isometric rigidity of quotients of mapping class groups by Mangioni and Sisto \cite[Sections~6,~7]{mangioni_rigidity_2025}.

\paragraph{Structure of the paper.} In Section~\ref{prelims mapping class groups}, we first give preliminaries on mapping class groups and surfaces and then we study the combinatorics of maximal decompositions of a given surface. Sections~\ref{section groupoids} and \ref{subsection meas embeddings} consist of preliminaries on measured groupoids and measurable embeddings. In Section~\ref{section adams and free into direct products}, we adapt the classical Adams argument \cite{adams_indecomposability_1994} to measurable embeddings and we prove a new Lemma~\ref{lemma free product into direct product} on measurable embeddings that may be of interest to the reader independently from mapping class groups. Section~\ref{section groupoids and mapping class groups} gives an overview of Kida's classification of groupoids with cocycles to mapping class groups. Section~\ref{section proof theorem} contains the proof of Theorem~\ref{thm existence of tautening map}, which is a stronger and more technical version of our main theorem. Finally, Section~\ref{conclusion subsection} gives the proof of our main Theorem~\ref{main thm} as well as Corollaries~\ref{cr biembedability}, \ref{cr OE rigidity} and \ref{cr monotonicity of complexity}.

\paragraph{Acknowledgements.} The author warmly thanks her PhD adviser Camille Horbez for their many mathematical discussions, as well as for his constant support and enthusiasm. She thanks Adrien Abgrall for showing her how to make illustrations, Sasha Bontemps for a helpful discussion about maximal decompositions of surfaces and Giorgio Mangioni for pointing her to \cite{bowditch_rigidity_2016}. She is also grateful to her colleagues who came to her first talk about this project.

\paragraph{Statement on large language models.}
Every proof of this project was found without large language models (LLM). No sentence or word of this article was written by an LLM, everything was written and drawn by the author.
While working on this project, out of curiosity, she did prompt an LLM once to prove Corollary~\ref{prop in modz all twists are free groups}. and thrice to find references and bibliography. Each time, it was inconclusive and none of the answers were used in this text.

\section{Mapping class groups and combinatorics of surfaces}\label{prelims mapping class groups}

\subsection{Surfaces and their mapping classes}

\paragraph{Surfaces.} A \textit{surface} $\S$ is an orientable $2$-manifold of finite type, where \textit{finite type} means that the fundamental group of $\S$ is finitely generated. Up to homeomorphism, there is exactly one connected surface \textit{of type} $\S_{g,p,b}$ with genus $g$, $p$ marked points and $b$ boundary components. Here, we work with $p$ non necessarily zero, so any surface of type $\S_{0,p,b}$ with $p+b=3$ will be called a \textit{pair of pants}. More generally, we call \textit{$n$-punctured sphere} any surface of type $\S_{0,p,b}$ with $p+b=n$. Similarly, an \textit{$n$-punctured torus} is a surface of type $\S_{1,p,b}$ with $p+b=n$.

For a surface $\S$, a \textit{curve} of $\S $ is the isotopy class of an injective map $S ^1\longrightarrow \S$ that does not bound a disk in $\S$ with $0$ or $1$ marked points and that is not isotopic to a boundary component. We denote by $\cxcourbes{\S}$ the set of curves on $\S$. To be clear, when we say curve, we will always mean isotopy class of essential simple closed curves. When we say that two curves are distinct, we always mean that they are distinct isotopy classes.

There is a well-defined minimal geometric intersection map
$$i : \begin{array}{rcl}
     \cxcourbes{\S}^2 &\longrightarrow &\N  \\
     (a,b)&\longmapsto & \ds \min_{\substack {\alpha\in a \\ \beta\in b }}~\#\alpha\cap\beta~.
\end{array}$$
We then say that two curves $a,b\in\cxcourbes{\S}$ are \textit{disjoint} when $i(a,b)=0$, i.e.\ there exist isotopy representatives that are actually disjoint on $\S$. Note that a curve is thus disjoint from itself. In the contrary, two curves $a,b\in\cxcourbes{\S}$ \textit{intersect} if and only if $i(a,b)\geq 1$. A \textit{multicurve} of $\S$ is a collection of pairwise disjoint curves of $\S$. We say that two multicurves are \textit{disjoint} if their union is a multicurve and \textit{intersect} if their union is not a multicurve.

The \textit{topological complexity} of a connected surface $\S$ of type $\S_{g,p,b}$ is $\kappa(\S)=3g+p+b-3$. It is the number of curves in a pants decomposition of the surface, i.e,\ the biggest cardinality of a multicurve of $\S$. It is also the maximal rank of a free abelian subgroup of $\ModS$. We choose to define $\kappa$ as in \cite{shackleton_combinatorial_2007} and \cite{bowditch_large-scale_2018} but others (for instance \cite{kida_measure_2010}) choose to define the topological complexity of a surface as the simplicial dimension of $\cxcourbes{\S}$. In our case, the simplicial dimension of $\cxcourbes{\S}$ is $\kappa(\S)-1$ because a maximal simplex of $\cxcourbes{\S}$ has $\kappa(\S)$ vertices, so it is of dimension $\kappa(\S)-1$.
Moreover, we will use the notation $\dsig$ to be $0$ when $\kappa(\S)$ is odd and $1$ when $\kappa(\S)$ is even.

The set $\cxcourbes{\S}$ has a graph structure and is called the \textit{curve graph}. For simplicity of notation, we give the same notation $\cxcourbes{\S}$ to the set of vertices of this graph as to the graph itself. Two curves $a,b\in\cxcourbes{\S}$ are adjacent in the graph $\cxcourbes{\S}$ if and only if they are distinct and $i(a,b)=0$. For instance, a multicurve of $\S$ is a collection of vertices of $\cxcourbes{\S}$ that spans a complete subgraph. Crucially, the curve graph of a connected surface $\S$ of positive complexity is hyperbolic, as proven by Masur and Minsky \cite{masur_geometry_1999}. Thus, there is a well-defined boundary $\partial\cxcourbes{\S}$, which is not compact because the graph $\cxcourbes{\S}$ is locally infinite.

A \textit{subsurface} $Q$ of $\S$ is the isotopy class of an embedded surface in $\S$. The \textit{boundary} $\partial Q$ of $Q$ is the multicurve of $\S$ composed of the isotopy classes of $\bar Q\setminus \mathring Q$. Note that $Q$ can be open, closed or neither, it depends on which curves of $\partial Q$ are in $Q$. For instance, if $Q$ is open then it has $\# \partial Q$ marked points and when $Q$ is closed, it has $\#\partial Q$ boundary components. 
We say that a curve $c\in\cxcourbes{\S}$ \textit{intersects} the subsurface $Q$ \textit{nontrivially} when $c$ and $Q$ do not have disjoint isotopy representatives on $\S$ and we say that $c$ and $Q$ are \textit{disjoint} if they do have disjoint isotopy representatives. For instance, the multicurve $\partial Q$ is disjoint from $Q$.

When $Q$ is a subsurface of $\S$, $\cxcourbes{Q}$ is the set of curves of $Q$ as a surface, so in particular $\cxcourbes{Q}$ is a subset of $\cxcourbes{\S}$ that does not contain $\partial Q$. The graph $\cxcourbes{Q}$ is a subgraph of $\cxcourbes{\S}$, so two curves are adjacent in $\cxcourbes{Q}$ when they are disjoint. This means that in our setting when $\kappa(Q)=1$, there are no edges in $\cxcourbes{Q}$ and when $\kappa(Q)=0$, $\cxcourbes{Q}$ is empty. 

Given a subset $\mclC\subseteq\cxcourbes{\S}$, the subsurface $\S_{\mclC}$ \textit{filled} by $\mclC$ is the closed subsurface that contains $\mclC$ and such that for any connected component $S$ of $\S_{\mcl C}$, 
\begin{itemize}[noitemsep, nolistsep, label=-]
    \item $S$ is not a pair of pants,
    \item if $S$ is an annulus, $\partial Q\in\mcl C$,
    \item if $S$ is not an annulus, $\forall c\in\cxcourbes{Q}, \exists c'\in\mcl C,~ i(c,c')\geq 1$.
\end{itemize}

For $Q$ a subsurface of $\S$, denote by $Q^*$ the subsurface of $\S$ that is filled by all the curves that are disjoint from $Q$.

\paragraph{Mapping class groups.} 

\begin{Def}\label{def mapping class group, subgroups of subsurfaces, modz def}
    Let $\S$ be a connected surface.
    \begin{itemize}
        \item The \textit{mapping class group} of $\S$ consists of isotopy classes of orientation-preserving diffeomorphisms of $\S$: $$\ModS = \mrm{Diffeo^+}(\S)/\,\mrm{isotopy}~.$$
        An element $f\in\ModS$ is called a \textit{mapping class}.
        \item The \textit{extended mapping class group} of $\S$ is$$\Modext = \mrm{Diffeo}(\S)/\,\mrm{isotopy}$$
        where $\mrm{Diffeo}(\S)$ consists of diffeomorphisms of $\S$ that preserve or invert the orientation. The group $\ModS$ is an index $2$ subgroup of $\Modext$.
        \item Define $\Modz$ as the subgroup of $\ModS$ acting trivially on the first homology group of $\S$ modulo 3, i.e.\ 
        $$\Modz=\ker\big[\ModS\longrightarrow\mrm{Aut}(\mrm{H}_1(\S,\Z/3\Z))\big]~.$$
        \item Let $Q$ be a subsurface of $\S$. Then, $\Modact{Q}$ is the subgroup of $\Modz$ consisting of the mapping classes of $\Modz$ that are identity outside of the closure $\bar Q\subseteq \S$ of the subsurface $Q$ of $\S$.
    \end{itemize}
\end{Def}

A mapping class $f$ of $\ModS$ is said to be \textit{pure} if there exists a (possibly empty) multicurve $\s$ on $\S$ such that every connected component $Q$ of $\S\setminus \s$ is $f$-invariant and either $f$ fixes every curve of $Q$ or $f$ fixes no curve of $Q$. In practice, we will work with $\Modz$ instead of $\ModS$ because it has only pure mapping classes, as shown by Ivanov.

\begin{Th}[Ivanov {\cite[Theorem~3]{ivanov_subgroups_1992}}, Handel--Mosher {\cite{handel_subgroup_2020}}]\label{modz th}
    Let $\S$ be a connected surface with $\kappa(\S)\geq 1$.
    
    Then, $\Modz$ is a finite-index torsion-free subgroup of $\ModS$ consisting of only pure mapping classes.
\end{Th}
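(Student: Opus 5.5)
The statement has three parts: $\Modz$ has finite index in $\ModS$, it is torsion-free, and every element is pure. I would prove them in that order. Finite index is immediate: $\Modz$ is the kernel of the action homomorphism $\ModS\to\mrm{Aut}(\mrm{H}_1(\S,\Z/3\Z))$. Since $\mrm{H}_1(\S,\Z/3\Z)$ is a finite group (as $\S$ has finite type), its automorphism group is finite, so the kernel has finite index.

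For torsion-freeness I would use Nielsen realisation together with Serre's lemma. Let $f\in\Modz$ have finite order. By Nielsen realisation (Kerckhoff), $f$ is represented by a finite-order isometry $\phi$ of some complete hyperbolic metric on $\S$, with marked points treated as cusps. For closed $\S$, Serre's lemma states that a periodic automorphism of a finite-rank lattice that acts trivially modulo $m\ge 3$ is trivial. The action of $\phi$ on $\mrm{H}_1(\S,\Z)$ preserves a lattice and is trivial mod $3$, so $\phi_*$ is the identity on integral homology. A nontrivial finite-order isometry acting trivially on $\mrm{H}_1(\S,\Z)$ is impossible; one sees this via the Lefschetz fixed point formula or the Riemann--Hurwitz count of fixed points, applied to the quotient orbifold. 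For punctured surfaces, the same argument would be run on $\mrm{H}_1$ of the punctured surface. The boundary classes make this somewhat more delicate, and I would instead defer to Ivanov's proof for the low-genus cases. Here the hypothesis $\kappa(\S)\ge1$ is what excludes the sporadic cases, such as the sphere with $3$ points or the torus, where torsion survives.

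Purity is the main content and the main obstacle. I would use the Nielsen--Thurston classification. Given $f\in\Modz$, take its canonical reduction multicurve $\s$ (Birman--Lubotzky--McCarthy), so that on each component of $\S\setminus\s$ some power of $f$ is either periodic or pseudo-Anosov. The key step is to show that $f$ itself, not merely a power, fixes each curve of $\s$, preserves each component, and preserves orientations. Suppose $f$ permutes a finite set of curves, or components, nontrivially, or reverses the orientation of one of them. I would then produce a homology class mod $3$ that $f$ moves. The natural candidates are the classes of the permuted curves, or the classes of arcs and curves crossing them, exploiting the fact that a permutation of order $k$ acts nontrivially on the associated $\Z/3$-span.

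This is where Ivanov's argument works with covers. One passes to the finite characteristic cover associated to the mod-$3$ homology, or applies Serre's lemma to the restriction of $f^k$ to subsurfaces. Separating curves have trivial homology class, so they require care: components must then be distinguished via the homology of the subsurfaces themselves. Once each component is $f$-invariant, the restriction of $f$ to each component is either pseudo-Anosov or has finite order on that component. Applying the torsion-free argument to the component, after capping off or using its relative homology, shows that a finite-order restriction is trivial. This gives exactly the required dichotomy: on each component, $f$ either fixes every curve or fixes none.

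Given the length of these arguments, in the paper I would cite Ivanov \cite[Theorem~3]{ivanov_subgroups_1992}, and Handel--Mosher \cite{handel_subgroup_2020} for the general finite-type version, rather than reprove the purity step.
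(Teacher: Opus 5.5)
Your proposal ends up where the paper does: the paper gives no independent argument and simply cites Ivanov for closed surfaces and Handel--Mosher (through the homomorphism $\ModS\to\mathrm{Out}(F_n)$ available for surfaces with marked points or boundary). Those are exactly the citations you rely on for purity, and your direct arguments for finite index and for torsion-freeness in the closed case are fine. One aside is wrong, though: $\kappa(\S)\geq 1$ is not excluding cases where torsion survives, since none does there --- the level-$3$ congruence subgroup of $\mathrm{SL}(2,\Z)\cong\Mod{T^2}$ is torsion-free by the very Serre lemma you invoke, and $\Mod{\S_{0,3}}\cong S_3$ acts faithfully on $\mrm{H}_1(\S_{0,3},\Z/3\Z)$, so $\mathrm{Mod}^0(\S_{0,3})$ is trivial.
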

\begin{Rem}
    Ivanov works on compact surfaces. When the surface has marked points or boundary components, the latter theorem can be deduced from the Handel--Mosher theorem \cite{handel_subgroup_2020}, by noticing that in this case $\ModS$ has a well-behaved map to the outer automorphism group of some free group.
\end{Rem}

The extended mapping class group acts on the curve graph $\cxcourbes{\S}$ by graph automorphisms because it sends disjoint curves to disjoint curves. The group of graph automorphisms of $\cxcourbes{\S}$ is denoted $\mrm{Aut}\,\cxcourbes{\S}$. By the following theorem of Ivanov, generalised by Luo, it is enough to study automorphisms of the curve graph to get information on the mapping class group.

\begin{Th}[Ivanov {\cite[Theorem~1]{ivanov_automorphism_1997}}, Luo \cite{luo_automorphisms_2000}]\label{th ivanov}
Let $\S$ be a connected surface with $\kappa(\S)\geq 2$.
\begin{itemize}[noitemsep, nolistsep]
    \item If $\kappa(\S)=2$, then $\Modext \longrightarrow \mrm{Aut}\,\mcl{C}(\S)$ has finite index image and finite kernel.
    \item If $\kappa(\S)\geq 3$, then $\Modext \simeq \mrm{Aut}\,\mcl{C}(\S)$ is an isomorphism.
\end{itemize}
\end{Th}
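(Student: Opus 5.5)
The plan is to follow Ivanov's strategy, with Luo's refinements for surfaces with marked points. The idea is to recover enough topology from the combinatorics of $\cxcourbes{\S}$ that every automorphism $\varphi$ is induced by a diffeomorphism of $\S$. That gives surjectivity, or finite-index image when $\kappa(\S)=2$. A separate computation then controls the kernel. The steps, in order, are as follows.

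First, characterise combinatorially the topological type of a curve $c$. Maximal simplices of $\cxcourbes{\S}$ are exactly the pants decompositions, with $\kappa(\S)$ vertices, so $\varphi$ sends pants decompositions to pants decompositions. The link of $c$ is $\cxcourbes{\S\setminus c}$. Its join decomposition into the curve graphs of the components, together with the simplicial dimensions of those pieces, distinguishes:
\begin{itemize}[noitemsep, nolistsep]
    \item nonseparating curves from separating ones;
    \item among separating curves, the genus and number of marked points on each side, up to the usual ambiguities such as a one-holed torus versus a four-holed sphere piece.
\end{itemize}
Luo's contribution is to resolve these ambiguities when $p>0$, for instance by counting how the link meets links of other curves. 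Hence $\varphi$ preserves the $\Modext$-orbit of each curve.

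Second, recover the relation ``$a$ and $b$ fill a complexity-one subsurface and meet minimally'', meaning $i(a,b)=1$ in a torus piece or $i(a,b)=2$ in a sphere piece. Take a pants decomposition $P$, a curve $a\in P$, and another curve $b$ with $P\setminus\{a\}\cup\{b\}$ also a pants decomposition. Then $a$ and $b$ lie in the same complexity-one component $Q$ of $\S\setminus(P\setminus\{a\})$. I would detect minimal intersection inside $\cxcourbes{Q}$, which is a Farey graph once we add the relation ``$i$ minimal''. One way is to use configurations of four or five curves in the link, such as chains, pentagons and ``lantern/hexagon'' configurations, that exist only when the intersection is minimal. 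This is the step I expect to be the main obstacle: it is where the case analysis is heaviest, and where low-complexity and marked-point cases need special care.

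Third, with pants decompositions and elementary moves preserved, fix a pants decomposition $P$ together with a dual collection of curves meeting $P$ minimally. The image under $\varphi$ is a configuration of the same combinatorial type, hence of the same topological type by the first two steps, so there is $f\in\Modext$ agreeing with $\varphi$ on it. Then $f^{-1}\varphi$ fixes this finite ``rigid'' set. Connectedness of the pants graph, via elementary moves, together with the Farey-graph rigidity in each complexity-one piece, propagates this to all curves, so $f^{-1}\varphi=\mrm{id}$.

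Finally, identify the kernel. It consists of mapping classes fixing every curve. By Alexander-method arguments, for $\kappa(\S)\geq 3$ it is trivial (the statement excludes e.g.\ $\S_{2,0}$, whose hyperelliptic involution acts trivially, since $\kappa(\S_{2,0})=3$ must then be treated as an exception in the genuine statement). For $\kappa(\S)=2$ it is finite: for $\S_{2,0}$ and $\S_{1,2}$ it is generated by the hyperelliptic involution. When $\kappa(\S)=2$, the image is only of finite index: $\cxcourbes{\S_{1,2}}\simeq\cxcourbes{\S_{0,5}}$, and the automorphism group is $\Mod{\S_{0,5}}^{\pm}$, which contains the image of $\Mod{\S_{1,2}}^{\pm}$ with finite index. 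I would handle these finitely many surfaces by direct identification.
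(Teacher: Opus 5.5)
The paper gives no proof of this statement. It quotes Ivanov and Luo, and afterwards uses only that the torsion-free group $\mathrm{Mod}^0(\Sigma)$ injects into $\mathrm{Aut}\,\mathcal{C}(\Sigma)$ with finite-index image. You are right that the second bullet is false as written. Indeed $\kappa(\Sigma_{2,0})=3$, and the hyperelliptic involution of the closed genus-$2$ surface fixes every curve, so there the map is onto with kernel of order $2$. The displayed statement does not exclude this surface, so it cannot be proved verbatim. The correct version moves $\Sigma_{2,0}$ into the ``finite kernel, finite-index image'' case, which is all the paper needs. Your last paragraph then contradicts your own computation by listing $\Sigma_{2,0}$ among the complexity-$2$ surfaces. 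Those are $\Sigma_{0,5}$, where the map is an isomorphism, and $\Sigma_{1,2}$, where the kernel is generated by the hyperelliptic involution and the image is a proper finite-index subgroup of $\mathrm{Aut}\,\mathcal{C}(\Sigma_{1,2})\cong\mathrm{Mod}^{\pm}(\Sigma_{0,5})$.

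Beyond that, Step~1 fails as stated. Every vertex link has dimension $\kappa(\Sigma)-2$. A curve cutting off a pair of pants containing two marked points has link $\mathcal{C}(\Sigma_{g,p-1})$, a single curve graph rather than a join. The same holds for a nonseparating curve, whose link is $\mathcal{C}(\Sigma_{g-1,p+2})$. On $\Sigma_{1,3}$ these two links are even isomorphic, because $\mathcal{C}(\Sigma_{1,2})\cong\mathcal{C}(\Sigma_{0,5})$. So the link does not separate nonseparating from separating curves.

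On $\Sigma_{1,2}$, which the statement covers, automorphisms genuinely exchange these two types. There your conclusion that $\varphi$ preserves the $\mathrm{Mod}^{\pm}(\Sigma)$-orbit of each curve is false, and this is exactly why the image is not everything. Steps~1--3 must therefore exclude $\Sigma_{1,2}$ explicitly, and elsewhere recognise curve types from global rather than link-local data.

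Several steps are then asserted rather than argued:
\begin{itemize}
\item \emph{Type recognition} in Step~1, which you attribute to Luo without a mechanism; ``counting how the link meets links of other curves'' is not an argument.
\item \emph{Step~2}: minimal intersection inside a complexity-one piece cannot be read off the link of $P\setminus\{a\}$, since that link has no edges.
\item \emph{Step~3}: one dual curve per pants curve fixes only an edge of each Farey graph. The pointwise stabiliser of an edge still contains a reflection, so more curves (e.g.\ a triangle) are needed before you can conclude $f^{-1}\varphi=\mathrm{id}$.
\end{itemize}
These points are precisely the substance of the papers of Ivanov, Korkmaz and Luo. As it stands, your proposal is an outline of the cited proofs rather than a proof.
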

Note that by Ivanov's Theorem~\ref{modz th}, the subgroup $\Modz$ has no torsion so it can be seen directly as a finite-index subgroup of $\mrm{Aut}\,\cxcourbes{\S}$ via the injective homomorphism from $\Modz$ into $\mrm{Aut}\,\mcl{C}(\S)$.

\paragraph{Dehn twists.} The twist on the annulus $S^1\times[1,2]$ is defined by ${(\theta,t)\longmapsto(\theta+2\pi(t-1),t)}$. For an annulus subsurface $A$ of a surface $\S$, take a diffeomorphism $\varphi_A$ of $\S$ that is isotopic to the twist on $A$ and to the identity on $\S\setminus A$. The \textit{Dehn twist} around a curve $c\in\cxcourbes{\S}$ is the isotopy class of $\varphi_A$ when $A$ is an annulus neighborhood of an essential simple closed curve in the isotopy class $c$. 

The Dehn twist around a curve $c$ is denoted $T_c$ and is an element of $\ModS$. A \textit{multitwist} around a multicurve $\s$ of $\S$ is the free abelian subgroup $\gen{T_c}_{c\in\s}$ of $\ModS$ generated by the Dehn twists around the curves in $\s$.
For example, when $Q$ is a subsurface of $\S$, then we get the following multitwist
    $$\Modact{Q}\cap\Modfix{Q}=\gen {T_c}_{c\in\partial Q}~.$$

In the following proposition, we list useful properties of Dehn twists. For more information on Dehn twists, see e.g.\ \cite[Chapter~Three]{farb_primer_2012}.

\begin{Prop}\label{denh twist prop} Let $\S$ be a connected surface.
    \begin{enumerate}[noitemsep, nolistsep]
        \item \label{intersection formula dehn twists}\cite[Proposition~3.2]{farb_primer_2012} For all $a,b\in\cxcourbes{\S}$ and $k\in\Z$,
        $$i(T_a^k(b),b)=|k|\,i(a,b)^2~.$$
        \item \label{dehn twists injectivity}\cite[Fact~3.9]{farb_primer_2012} In particular, Dehn twists have infinite order and for every $a,b\in\cxcourbes{\S}$ and $m,n\in\Z\setminus\{0\}$,
        $$T_a^m=T_{b}^n\iff a=b\text{ and }m=n~.$$
        \item \label{dehn twists squared give free groups}\cite[Theorem~3.14]{farb_primer_2012} If $i(a,b)\geq 2$, then $T_a$ and $T_b$ generate $F_2$. If $i(a,b)=1$ and $j,k$ are such that $|j|,|k|\geq 2$, then $T_a^j$ and $T_b^k$ generate $F_2$.
    \end{enumerate}
\end{Prop}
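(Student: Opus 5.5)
The three items of the statement are standard and cited from \cite{farb_primer_2012}; I would prove them in the order stated, each building on the previous one.

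For Point~\ref{intersection formula dehn twists}, I would use the classical explicit model. Fix representatives $\alpha\in a$ and $\beta\in b$ in minimal position, so that $\#\alpha\cap\beta=i(a,b)$, and take the twisting annulus to be a thin neighbourhood of $\alpha$. The representative $T_a^k(\beta)$ is obtained by surgery: at each of the $i(a,b)$ intersection points, $\beta$ is replaced by an arc that winds $|k|$ times around $\alpha$. Counting intersections of this curve with a parallel copy of $\beta$ gives exactly $|k|\,i(a,b)^2$ points, since each of the $i(a,b)$ crossings contributes $|k|$ windings, each of which meets $\beta$ in $i(a,b)$ points. The main obstacle is minimality: one has to show that this representative realises $i(T_a^k(b),b)$. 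I would do this with the bigon criterion, checking that no bigon can form. Any innermost bigon would have to lie in the annulus or would produce a bigon between $\alpha$ and $\beta$, which is excluded by minimal position. A cleaner alternative is to verify the more general inequality
$$\bigl|\,i(T_a^k(b),c)-|k|\,i(a,b)\,i(a,c)\bigr|\le i(b,c),$$
and then specialise to $c=b$, using $i(b,b)=0$.

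For Point~\ref{dehn twists injectivity}, I would derive infinite order from Point~\ref{intersection formula dehn twists}. Since $a$ is essential, there exists a curve $b$ with $i(a,b)\ge1$. Then $i(T_a^k(b),b)>0$ for every $k\neq0$, so $T_a^k\neq\mathrm{id}$. For the equivalence, the implication from right to left is trivial. Conversely, suppose $T_a^m=T_b^n$ with $a\neq b$.
\begin{itemize}[noitemsep, nolistsep]
\item If $i(a,b)\ge1$, then $T_a^m(b)$ differs from $b$ by Point~\ref{intersection formula dehn twists}, while $T_b^n(b)=b$. This is a contradiction.
\item If $i(a,b)=0$, pick a curve $c$ with $i(a,c)\neq0$ and $i(b,c)=0$; such a curve exists because $a$ and $b$ are distinct disjoint curves. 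Then $T_b^n(c)=c$, whereas $T_a^m(c)\neq c$, again a contradiction.
\end{itemize}
Hence $a=b$. Then $T_a^{m-n}=\mathrm{id}$, and infinite order forces $m=n$.

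For Point~\ref{dehn twists squared give free groups}, I would apply the ping-pong lemma on the set $\cxcourbes{\S}$. Set
$$X=\{c:\ i(c,a)<i(c,b)\},\qquad Y=\{c:\ i(c,b)<i(c,a)\}.$$
Using the inequality from the first step with $c$ replaced by $a$ or $b$, one shows that $T_a^{j}(Y)\subseteq X$ for every $j\neq0$ when $i(a,b)\ge2$, and symmetrically that $T_b^{k}(X)\subseteq Y$. The hypothesis $i(a,b)\ge2$ is exactly what makes the dominant term $|j|\,i(a,b)\,i(a,c)$ beat the error term. When $i(a,b)=1$, the same estimate only works once $|j|,|k|\ge2$. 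Since $X$ and $Y$ are nonempty and disjoint (for instance $b\in X$ and $a\in Y$), the ping-pong lemma gives that the two elements generate $F_2$. This case reduces to the same intersection inequality, so the real work is concentrated in the minimal-position argument of the first step.
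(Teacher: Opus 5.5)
Your proposal is correct and is essentially the standard argument of the cited source \cite{farb_primer_2012}: the surgery description of $T_a^k(b)$ with the bigon criterion, the auxiliary curve $c$ with $i(a,c)\neq 0=i(b,c)$, and ping-pong on the sets comparing $i(\cdot,a)$ with $i(\cdot,b)$; the paper itself gives no proof beyond this citation. Two small corrections: with your definitions one has $a\in X$ and $b\in Y$ (not the reverse), and the ping-pong step genuinely needs the lower bound $i(T_a^j(c),b)\geq |j|\,i(a,b)\,i(a,c)-i(b,c)$ for arbitrary $c$, so the general inequality is not an optional alternative and does not follow from the minimal-position count for $c=b$ alone.
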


\begin{Prop}\label{nonsep dehn twists are not in modz}
    Let $\S$ be a connected surface and let $T_a$ be a Dehn twist around a nonseparating curve $a$. If $T_a^k\in \Modz$, then $|k|\geq 2$.
\end{Prop}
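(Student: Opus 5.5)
We argue directly from the definition of $\Modz$ as the kernel of the action on $\mrm{H}_1(\S,\Z/3\Z)$: we show that $T_a$ and $T_a^{-1}$ both act nontrivially on this homology group. Since $T_a^0$ is the identity, which lies in $\Modz$, the statement is to be read for $k\neq 0$, and this is the case we treat. The tool is the action of a Dehn twist on homology, namely the transvection formula $(T_a^k)_*(x)=x+k\,\hat\imath(a,x)\,[a]$, where $\hat\imath$ denotes the algebraic intersection number (see e.g.\ \cite[Proposition~6.3]{farb_primer_2012}). This formula holds for $x\in \mrm{H}_1(\S,\Z)$ and descends to $\mrm{H}_1(\S,\Z/3\Z)$ after reducing coefficients modulo $3$.

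First, since $a$ is nonseparating, there is a simple closed curve $b$ on $\S$ meeting $a$ transversally in exactly one point. To find it, cut $\S$ along $a$ to get a connected surface with two boundary components, join them by an arc, and close it up. Then $\hat\imath(a,b)=\pm1$. Second, we check that $[a]$ is nonzero in $\mrm{H}_1(\S,\Z/3\Z)$. The intersection pairing $\hat\imath(\cdot,b)$ is well defined modulo $3$ on $\mrm{H}_1(\S,\Z/3\Z)$, because $b$ is a closed curve in the interior and marked points do not affect the count. Since it sends $[a]$ to $\pm1\neq 0$ in $\Z/3\Z$, the class $[a]$ is nonzero. The marked points cause no trouble here, because the pairing with the closed class $[b]$ is defined on the homology of the punctured surface.

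Finally, apply the formula to $x=[b]$. This gives $(T_a^{k})_*[b]=[b]\pm k[a]$, so if $T_a^k\in\Modz$ then $k[a]=0$ in $\mrm{H}_1(\S,\Z/3\Z)$. Because $[a]$ is a nonzero element of the $\Z/3\Z$-vector space $\mrm{H}_1(\S,\Z/3\Z)$, this forces $3\mid k$. In particular $k\neq\pm1$, and since $k\neq 0$ we get $|k|\geq 3\geq 2$. This is in fact stronger than the claim.

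The only delicate step is justifying the transvection formula and the nondegeneracy argument for surfaces with marked points, where the homology intersection pairing is degenerate. We avoid the degeneracy by working with the specific closed curve $b$ dual to $a$ rather than with the full pairing, so the argument reduces to the local computation of $T_a$ along $b$.
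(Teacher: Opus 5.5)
Your proof is correct and follows the same route as the paper: take $b$ with $i(a,b)=1$, apply the transvection formula of \cite[Proposition~6.3]{farb_primer_2012} to $[b]$, and observe that the resulting action on $\mrm{H}_1(\S,\Z/3\Z)$ is nontrivial. The differences are small: you justify $[a]\neq 0$ through the pairing with $[b]$, where the paper says $[a],[b]$ extend to a basis, and you treat every $k\neq 0$ at once, which gives the sharper conclusion $3\mid k$ rather than only excluding $k=\pm1$.
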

\begin{proof}
    The curve $a$ being nonseparating, there exists a curve $b$ on $\S$ such that $i(a,b)=1$. Then, the homology classes $[a]$ and $[b]$ can be completed into a basis of $\mrm{H}_1(\S,\Z/3\Z)$ and by \cite[Proposition~6.3]{farb_primer_2012}, 
    $$T_a\cdot[b]=[a]+[b]\text{ and }T_a^{-1}\cdot[b]=[a]-[b]~.$$
    Thus, both $T_a$ and $T_a^{-1}$ act nontrivially on $\mrm{H}_1(\S,\Z/3\Z)$ so they are not in $\Modz$.
\end{proof}

\begin{Cr}\label{prop in modz all twists are free groups}
    Let $\S$ be a connected surface. For every $a,b$ curves such that $i(a,b)\geq 1$, ${\gen{T_a}\cap \Modz}$ and $\gen{T_b}\cap \Modz$ generate a nonabelian free group.
\end{Cr}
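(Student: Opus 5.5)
Since $\gen{T_a}$ is infinite cyclic and $\Modz$ has finite index in $\ModS$ (Theorem~\ref{modz th}), the intersection $\gen{T_a}\cap\Modz$ is a nontrivial subgroup of $\gen{T_a}$. It is therefore equal to $\gen{T_a^{m}}$ for a unique integer $m\geq 1$; for instance $m$ divides the index $[\ModS:\Modz]$. In the same way $\gen{T_b}\cap\Modz=\gen{T_b^{n}}$ with $n\geq 1$. The claim thus reduces to showing that $T_a^m$ and $T_b^n$ generate a nonabelian free group. We split into two cases according to $i(a,b)$.

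\textbf{Case $i(a,b)\geq 2$.} By Point~\ref{dehn twists squared give free groups} of Proposition~\ref{denh twist prop}, $T_a$ and $T_b$ generate a free group $F_2$ with basis $\{T_a,T_b\}$. In a free group on a basis $\{x,y\}$, the elements $x^m,y^n$ with $m,n\geq 1$ generate a free group of rank $2$. One way to see this is that $\{x^m,y^n\}$ freely generate, since a reduced word in $x^m,y^n$ stays reduced in $x,y$. Hence $\gen{T_a^m,T_b^n}\simeq F_2$.

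\textbf{Case $i(a,b)=1$.} Both curves are then nonseparating. Indeed, a separating curve has even algebraic intersection, hence even geometric intersection parity, with every curve, which contradicts $i(a,b)=1$. Since $T_a^m\in\Modz$, Proposition~\ref{nonsep dehn twists are not in modz} gives $m\geq 2$, and likewise $n\geq 2$. The second statement of Point~\ref{dehn twists squared give free groups} of Proposition~\ref{denh twist prop} then applies with $j=m$ and $k=n$, so $T_a^m$ and $T_b^n$ generate $F_2$.

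\textbf{Main point.} The only subtle step is the case $i(a,b)=1$, where plain twists $T_a,T_b$ satisfy the braid relation and do not generate a free group. This is exactly why Proposition~\ref{nonsep dehn twists are not in modz} is needed: it rules out the exponents $m=1$ and $n=1$. Within that case, the only thing to check is that $i(a,b)=1$ forces both $a$ and $b$ to be nonseparating, which follows from the parity argument given above.
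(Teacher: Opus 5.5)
Your proof is correct and follows the same two-case argument as the paper. For $i(a,b)\geq 2$ you use that $T_a,T_b$ generate $F_2$; for $i(a,b)=1$ you use Proposition~\ref{nonsep dehn twists are not in modz} to force exponents of absolute value at least $2$ before applying the second part of Point~\ref{dehn twists squared give free groups}. You also make explicit two details the paper leaves implicit: that the powers $x^m,y^n$ of a free basis still generate a free group of rank $2$, and that $i(a,b)=1$ forces both curves to be nonseparating, which is needed for Proposition~\ref{nonsep dehn twists are not in modz} to apply.
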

\begin{proof}
    When $i(a,b)\geq 2$, $T_a$ and $T_b$ generate a nonabelian free group by Point~\ref{dehn twists squared give free groups} of Proposition~\ref{denh twist prop}. When $i(a,b)=1$, by Proposition~\ref{nonsep dehn twists are not in modz}, $\gen{T_a}\cap \Modz=\gen{T_a^j}$ and $\gen{T_b}\cap \Modz=\gen{T_b^k}$ for some $|j|,|k|\geq 2$. By Point~\ref{dehn twists squared give free groups} of Proposition~\ref{denh twist prop}, $\gen{T_a^j,T_b^k}\simeq F_2$ a nonabelian free group of two generators.
\end{proof}

\subsection{Maximal decompositions of a surface}\label{section combinatoire max decompos}

A useful tool, generalising Kida's work in \cite[Section~5.4]{kida_introduction_2009}, is maximal decompositions of a given surface $\S$ into subsurfaces with nonamenable mapping class groups. The smallest complexity surfaces that have nonamenable mapping class groups are the surfaces of complexity $1$, which are exactly $\S_{0,p,b}$ with $p+b=4$ or $\S_{1,p,b}$ with $p+b=1$. In other words, these are $4$-punctured spheres or once-punctured tori.

It will not always be possible to cut up a surface into only pieces of complexity $1$. We will show that this is possible when the complexity is odd. When the complexity is even, one piece of the decomposition will be of complexity $2$, which are exactly $\S_{0,p,b}$ with $p+b=5$ ($5$-punctured spheres) or $\S_{1,p,b}$ with $p+b=2$ (twice-punctured tori).

\begin{Def}\label{def dual graph}
    Let $\s$ be a multicurve of a connected surface $\S$. The graph dual to $\s$, denoted $\G_{\s}=(E,V)$, is defined as follows.
    \begin{itemize}[noitemsep, nolistsep, label=-]
        \item The vertices $V$ correspond to closures of connected components of $\S\setminus \s$.
        \item The edges $E$ correspond to curves of $\s$.
        \item A curve $c\in \s$ connects as an edge the two vertices of $V$ that correspond to the closures of components of $\S\setminus \s$ that share $c$ in their boundary.
    \end{itemize}
\end{Def}

\begin{Rem}\label{rem equation star}
    The graph $\G_{\s}$ is finite, non oriented and can have loops. Each vertex $v$ is a surface with some topological complexity $\kappa(v)$. Recall that the topological complexity is the number of curves needed for a pants decomposition. Therefore, 
    $$\kappa(\S)=\#E+\sum_{v\in V} \kappa(v)~.$$
    In other words, if we label each vertex $v$ by $\kappa(v)$ and each edge by $1$, then the complexity of the whole surface is simply the sum of all labels.
    Now, denote by $b_1$ the number of edges outside of a covering tree of $\G_{\s}$. Then,
    $$\#V-\#E=1-b_1~.$$
    This gives us the following useful equation on the dual graph $\G_{\s}$:
    \begin{equation*}\label{equation star}
        \kappa(\S)+1=2\#V+\sum_{v\in V}\left(\kappa(v)-1\right) + b_1~.\tag{$\bigstar$}
    \end{equation*}
\end{Rem}

First, let us describe the shape of decompositions of a surface into the maximal amount of components with nonamenable mapping class groups.

\begin{Lemma}\label{lemma n(g,p)}

Fix a connected surface $\S$ with complexity $\kappa(\S)\ge 1$. Let 
$$\nsig=\left\lfloor\frac{\kappa(\S)+1}{2}\right\rfloor~.$$
Let $\s$ be a multicurve of $\S$. Let $\G_{\s}=(V,E)$ be the graph dual to $\s$. Assume that for every component $v\in V$, $\kappa(v)\ge 1$. 

Then, $\#V\leq \nsig$. Moreover, if $\#V=\nsig$ then the decomposition has the following form.
\begin{itemize}
    \item When $\kappa(\S)$ is odd, $\G_{\s}$ is a tree and each component of $\S\setminus \s$ has complexity $1$.
    \item When $\kappa(\S)$ is even, the decomposition has exactly one of the two following forms.
    \begin{itemize}[noitemsep, nolistsep]
        \item The graph $\G_{\s}$ is a tree with exactly one component of complexity $2$ and all the other components have complexity~$1$.
        \item The graph $\G_{\s}$ has exactly one edge outside a covering tree, and all components have complexity~$1$.
    \end{itemize}
\end{itemize}
\end{Lemma}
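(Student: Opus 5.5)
The whole argument rests on equation~\eqref{equation star} from Remark~\ref{rem equation star}:
$$\kappa(\S)+1=2\#V+\sum_{v\in V}\left(\kappa(v)-1\right)+b_1.$$
By hypothesis $\kappa(v)\ge1$ for every $v\in V$, so each term $\kappa(v)-1$ is nonnegative. Also $b_1\ge 0$, since it counts edges outside a covering tree. Hence $\kappa(\S)+1\ge 2\#V$, so $\#V\le(\kappa(\S)+1)/2$. Since $\#V$ is an integer, $\#V\le\nsig$, which is the first claim.

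Now assume $\#V=\nsig$, and set
$$R\defeq\sum_{v\in V}(\kappa(v)-1)+b_1=\kappa(\S)+1-2\nsig.$$
Here $R$ is a sum of nonnegative integers. Its value is $0$ when $\kappa(\S)$ is odd and $1$ when $\kappa(\S)$ is even, so $R=\dsig$.

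\emph{Odd case.} Every term of $R$ vanishes. Thus $b_1=0$, which means $\G_{\s}$ is a tree, and $\kappa(v)=1$ for every component.

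\emph{Even case.} Exactly one term of $R$ equals $1$ and all the others vanish. There are two possibilities.
\begin{itemize}[noitemsep, nolistsep]
    \item If the nonzero term is $\kappa(v_0)-1$ for some $v_0$, then $b_1=0$, so $\G_{\s}$ is a tree. The component $v_0$ has complexity $2$ and every other component has complexity $1$.
    \item If the nonzero term is $b_1=1$, then exactly one edge lies outside a covering tree and every component has complexity $1$.
\end{itemize}
These two forms are mutually exclusive, because the first has $b_1=0$ and the second has $b_1=1$.

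The only point needing care is that \eqref{equation star} applies to $\G_{\s}$ as defined. This holds because $\S$ is connected, so $\G_{\s}$ is connected and $\#V-\#E=1-b_1$. The complexity count $\kappa(\S)=\#E+\sum_v\kappa(v)$ holds because completing $\s$ by pants decompositions of each component gives a pants decomposition of $\S$. Both facts are already recorded in Remark~\ref{rem equation star}, so no real obstacle remains; the proof is a direct counting argument.
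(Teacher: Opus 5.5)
Your proof is correct and follows essentially the same route as the paper: both rest on equation~\eqref{equation star}, the nonnegativity of the terms $\kappa(v)-1$ and $b_1$, and a case analysis of the leftover quantity $\kappa(\S)+1-2\nsig$. Your version is a bit more explicit than the paper's, since you identify that leftover as $\dsig$ and justify the exclusivity of the two even-case forms via $b_1=0$ versus $b_1=1$, but the argument is the same.
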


\begin{proof}
    Recall (\ref{equation star}): 
        $$\kappa(\S)+1=2\#V+\sum_{v\in V}\left(\kappa(v)-1\right) + b_1~.$$
    On one hand, all the $(\kappa(v)-1)$ and $b_1$ are positive or null so 
    $$\#V\leq \left\lfloor\frac{\kappa(\S)+1}{2}\right\rfloor=\nsig.$$

    On the other hand, to maximise $\#V$ when $\kappa(\S)$ is odd, the only solution is to take $\kappa(v)=1$ for all $v$ and $b_1=0$, so we indeed get a tree of complexity $1$ components. When $\kappa(\S)$ is even, either there is a cycle ($b_1=1$) and all components are complexity $1$ or it is a tree ($b_1=0$) and there is exactly one complexity $2$ component.
\end{proof}

\begin{Rem}\label{rem si decompo est un arbre à petites composantes alors cest maximal}
    Recall that $\dsig$ is defined so that $\nsig=\left(\kappa(\S)+1-\dsig\right)/{2}$.
    If a decomposition multicurve $\s$ of $\S$ verifies
    \begin{itemize}
        \item $\G_{\s}$ is a tree,
        \item $\sum_{v\in V(\G_{\s})}\left(\kappa(v)-1\right)=\dsig$,
    \end{itemize}
    then (\ref{equation star}) gives
        $\kappa(\S)+1=2\#V+\dsig$
    so that $\#V=\nsig$. Thus $\G_{\s}$ necessarily has $\nsig$ vertices.
\end{Rem}

\begin{Ex}\label{exemple de decompo pour sphere n trous}
Let $\S$ be an $n$-punctured sphere, i.e.\ a surface of type $\S_{0,p,b}$ with ${b+p=n}$ and assume that $n\geq 4$. Let us build a decomposition $\s$ of $\S$ by induction on $n$ such that $\G_{\s}$ is a tree with $$\sum_{v\in V(\G_{\s})}\left(\kappa(v)-1\right)=\dsig~.$$ 
\begin{itemize}
    \item If $n\in\{4,5\}$, the decomposition multicurve is taken empty.
    \item If $n\geq 6$, take a curve $c$ of $\S$ such that the connected components of $\S\setminus c$ are a $4$-punctured sphere $S$ and an $(n-2)$-punctured sphere $\S'$. Then, apply the induction hypothesis to $\S'$ and get a decomposition multicurve $\s'$, as illustrated in Figure~\ref{fig decompo sphere n trous}. Notice that $\dsig=\delta_{\S'}$ so we have $$\sum_{v\in V(\G_{\s'})}\left(\kappa(v)-1\right)=\dsig~.$$  Define $\s=\s'\sqcup\{c\}$ and notice that $\G_{\s}$ is the tree $\G_{\s'}$ with an additional leaf corresponding to the $4$-punctured sphere $S$. The complexity of $S$ is $1$, so we still have $$\sum_{v\in V(\G_{\s})}\left(\kappa(v)-1\right)=0+\sum_{v\in V(\G_{\s'})}\left(\kappa(v)-1\right)=\dsig~.$$
    \begin{figure}[ht]
	\centering
	\includesvg[width=0.6\textwidth]{313final}
    \caption{Induction step while constructing a $0$-maximal decomposition in Example~\ref{exemple de decompo pour sphere n trous}.}
	\label{fig decompo sphere n trous}
\end{figure}
\end{itemize}
Note that $\G_{\s}$ has $\nsig$ vertices by Remark~\ref{rem si decompo est un arbre à petites composantes alors cest maximal}. Thus, this construction illustrates Lemma~\ref{lemma n(g,p)} in the case of a tree dual graph for an $n$-punctured sphere.
\end{Ex}

It will not be enough to study decompositions of a surface with a maximal number of components, so let us define $k$-maximal decompositions. Recall that $\dsig=1$ when $\kappa(\S)$ is even and $\dsig=0$ when $\kappa(\S)$ is odd. See Figure~\ref{fig 1max et 0max pour 4 trous} for an example of both a $0$-maximal and a $1$-maximal decompositions for a surface of genus $4$.

\begin{Def}\label{def kmax decompo}
    A $k$-maximal decomposition of a connected surface $\S$ is a multicurve $\s$ such that $\S\setminus \s$ has $\nsig-k$ connected components, of which exactly $2k+\dsig$ components have complexity~$2$.
\end{Def}

\begin{figure}[ht]
	\centering
	\includesvg[width=0.6\textwidth]{1maxdecompo.svg}
    \caption{A $1$-maximal decomposition on the left and a $0$-maximal decomposition on the right for the surface of genus $4$.}
	\label{fig 1max et 0max pour 4 trous}
\end{figure}

It follows from Lemma~\ref{lemma n(g,p)} that $0$-maximal decompositions of the surface $\S$ are exactly the decompositions that have a tree as dual graph and have a maximal amount $\nsig$ of components.

\begin{Lemma}\label{lemma structure of k max decopos} Let $\s$ be a $k$-maximal decomposition of a connected surface $\S$.
    \begin{enumerate}[noitemsep, nolistsep]
        \item \label{point 1 lemma structure of k max decopos}The dual graph $\G_{\s}$ is a tree. The components that are not of complexity $2$ have complexity~$1$. 
        \item \label{point 2 lemma structure of k max decopos}There is no curve $c$ disjoint from $\s$ such that the multicurve $\s\sqcup\{c\}$ gives a decomposition with more connected components of complexity at least $1$.
    \end{enumerate}
\end{Lemma}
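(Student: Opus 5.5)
Both points follow from the counting identity (\ref{equation star}) of Remark~\ref{rem equation star}; it is the same argument as Lemma~\ref{lemma n(g,p)}, with the counts fixed by Definition~\ref{def kmax decompo}.

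For Point~\ref{point 1 lemma structure of k max decopos}, let $\s$ be $k$-maximal and $\G_\s=(V,E)$ its dual graph. Then $\#V=\nsig-k$, and exactly $2k+\dsig$ vertices have complexity $2$. Substituting $\kappa(\S)+1=2\nsig+\dsig$ into (\ref{equation star}) gives
$$2\nsig+\dsig=2(\nsig-k)+\sum_{v\in V}(\kappa(v)-1)+b_1~,$$
that is,
$$\sum_{v\in V}(\kappa(v)-1)+b_1=2k+\dsig~.$$
The complexity $2$ vertices alone contribute $2k+\dsig$ to the sum. Every other term is nonnegative, provided each remaining vertex has complexity at least $1$. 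Hence $b_1=0$, so $\G_\s$ is a tree, and every other vertex has $\kappa(v)=1$.

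The subtle step, and the main obstacle, is excluding remaining components of complexity $0$ (pairs of pants), which contribute $-1$. The definition does not explicitly forbid them, so I would read Definition~\ref{def kmax decompo} as taking place in the setting of Lemma~\ref{lemma n(g,p)}, where all components have complexity at least $1$ (decompositions into pieces with nonamenable mapping class groups), and state this convention explicitly. If one insists on allowing pants, counterexamples exist: a pants component with large complexity pieces attached can balance the sum. So the hypothesis $\kappa(v)\geq1$ is genuinely needed and should be invoked.

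For Point~\ref{point 2 lemma structure of k max decopos}, suppose $c$ is disjoint from $\s$, with $c\notin\s$, and let $\s'=\s\sqcup\{c\}$. The curve $c$ lies in a single component $Q$ of $\S\setminus\s$. If $\kappa(Q)=1$, then $Q\setminus c$ consists of pieces of complexity $0$, so no new component of complexity at least $1$ appears. If $\kappa(Q)=2$, cutting along $c$ either leaves a single component of complexity $1$ (when $c$ is nonseparating in $Q$), or splits $Q$ into a complexity $1$ piece and a pair of pants. Indeed, two complexity $1$ pieces would require $1+1+1=3>2$ by Remark~\ref{rem equation star} applied to $Q$. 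In every case the number of components of complexity at least $1$ does not increase, since the tree structure and the complexity counts are unchanged elsewhere.
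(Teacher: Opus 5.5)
Your proposal is correct and is essentially the paper's proof: the same substitution into (\ref{equation star}) forces $b_1=0$ and all remaining components to have complexity $1$, and Point~\ref{point 2 lemma structure of k max decopos} follows, as in the paper, because a curve inside a component of complexity $1$ or $2$ leaves at most one piece of positive complexity. Your caveat is also justified: the paper's step $\sum_v(\kappa(v)-1)\geq 2k+\dsig$ tacitly assumes every component has complexity at least $1$, and without that convention the literal definition admits counterexamples (two disjoint curves on $\S_{0,8}$, each cutting off two marked points, give a ``$0$-maximal decomposition'' with pieces of complexity $0,0,3$).
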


\begin{proof}Denote $\G_{\s}=(V,E)$ and recall (\ref{equation star}):
    $$\kappa(\S)+1=2\#V+\sum_{v\in V}\left(\kappa(v)-1\right) + b_1~.$$
    By definition, 
    $$2\nsig+\dsig=\kappa(\S)+1~.$$
    Note that $\#V=\nsig-k$ and $$\sum_v\left(\kappa(v)-1\right)\geq 2k+\dsig$$
    which gives $$2\# V+\sum_v\left(\kappa(v)-1\right)\geq 2\nsig-2k+2k+\dsig =\kappa(\S)+1~.$$
    Thus, the above inequality is an equality by equation (\ref{equation star}), which forces $b_1$ to be zero and $$\sum_v\left(\kappa(v)-1\right)=2k+\dsig$$
    which means that all the other components have complexity~$1$. This proves Point~\ref{point 1 lemma structure of k max decopos}.

    If $c$ is a curve on a subsurface $S$ of complexity $1$ or $2$, then $S\setminus c$ has at most one component of positive complexity, which proves Point~\ref{point 1 lemma structure of k max decopos}.
\end{proof}

We distinguish three different types of separating curves according to the role they play in a maximal decomposition. In a $k$-maximal decomposition, a leaf curve is a curve that separates one component of the decomposition from the rest.

\begin{Def}\label{def types de courbes}
        A separating curve $c$ on a connected surface $\S$ is said to be:
    \begin{itemize}[noitemsep]
        \item a \textit{leaf curve} if one of the connected components of $\S\setminus c$ is of complexity $1$ or $1+\dsig$;
        \item \textit{strongly separating} if none of the two connected components are of complexity $0$;
        \item \textit{weakly separating} if one of the two connected components is of complexity $0$.
    \end{itemize}
    Denote by $\mclC_{leaf}$ the subset of $\cxcourbes{\S}$ composed of leaf curves and $\mclC_{nl}$ the complement, i.e. the \textit{non-leaf curves}. A subsurface of $\S$ of complexity $1$ or $1+\dsig$ having one leaf curve as its boundary is called a \textit{leaf subsurface}.
\end{Def}

Note that in Figure~\ref{fig 1max et 0max pour 4 trous}, the central pink curve of the $1$-maximal decomposition is a strongly separating curve that cannot be completed into a $0$-maximal decomposition. All the green curves of Figure~\ref{fig 1max et 0max pour 4 trous} are leaf curves.

\begin{Lemma}\label{lemma strongly sep curves into decompo}
Any strongly separating curve can be completed into a multicurve which is either a $0$-maximal decomposition or a $1$-maximal decomposition.
\end{Lemma}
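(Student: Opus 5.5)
Let $c$ be a strongly separating curve, and write $\S\setminus c=S_1\sqcup S_2$ with $\kappa(S_1),\kappa(S_2)\geq 1$. Seen as a surface with one extra marked point (or boundary component), each $S_i$ satisfies $\kappa(\S)=\kappa(S_1)+\kappa(S_2)+1$. The plan is to decompose each side separately, then glue along $c$ and use Remark~\ref{rem si decompo est un arbre à petites composantes alors cest maximal} to count components.

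First I would prove an auxiliary claim: every connected surface $S$ with $\kappa(S)\geq 1$ has a multicurve $\s_S$ such that $\G_{\s_S}$ is a tree and
$$\sum_{v}\left(\kappa(v)-1\right)=\delta_S.$$
That is, all components have complexity $1$, except one of complexity $2$ when $\kappa(S)$ is even. This is a $0$-maximal decomposition of $S$. For spheres it is Example~\ref{exemple de decompo pour sphere n trous}. In general I would induct on $\kappa(S)$, mirroring that example.
\begin{itemize}
\item If $\kappa(S)\in\{1,2\}$, take the empty multicurve.
\item If $S$ has genus $g\geq 1$ and $\kappa(S)\geq 3$, cut off a once-holed torus (complexity $1$) with a separating curve. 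The remainder has complexity $\kappa(S)-2\geq 1$, and the parity and $\delta$ are unchanged.
\item If $g=0$, then $S$ has at least $6$ punctures or boundary components. Cut off a $4$-punctured sphere as in the example.
\end{itemize}
In each case the dual tree gains one leaf of complexity $1$, so the sum $\sum(\kappa(v)-1)$ is preserved. The genus case needs a quick check that the remainder is not a pair of pants or an annulus; this follows from $\kappa\geq 1$. This inductive step is the main obstacle, and it is routine.

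Next, apply the claim to $S_1$ and $S_2$ and set $\s=\s_{S_1}\sqcup\s_{S_2}\sqcup\{c\}$. Curves of $\s_{S_i}$ are essential in $S_i$ and not parallel to $c$, so $\s$ is a multicurve of $\S$. Its dual graph $\G_\s$ is obtained from the two trees by joining them with the edge $c$, so it is a tree. Moreover
$$\sum_{v}\left(\kappa(v)-1\right)=\delta_{S_1}+\delta_{S_2}\in\{0,1,2\}.$$
Since $\kappa(S_1)+\kappa(S_2)=\kappa(\S)-1$, this sum has the same parity as $\dsig$.
\begin{itemize}
\item If the sum equals $\dsig$, Remark~\ref{rem si decompo est un arbre à petites composantes alors cest maximal} gives $\#V=\nsig$, with $\dsig$ components of complexity $2$. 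So $\s$ is $0$-maximal.
\item Otherwise $\dsig=0$ and both $\kappa(S_i)$ are even. Then there are exactly two components of complexity $2$, and equation (\ref{equation star}) gives
$$\kappa(\S)+1=2\#V+2,$$
so $\#V=\nsig-1$. This matches Definition~\ref{def kmax decompo} with $k=1$ and $2k+\dsig=2$, so $\s$ is $1$-maximal.
\end{itemize}
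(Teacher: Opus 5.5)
Your proof is correct and follows the same route as the paper. Both arguments take a $0$-maximal decomposition on each side of $c$, glue along $c$, and split according to the parities of $\kappa(S_1)$ and $\kappa(S_2)$; your case split on $\delta_{S_1}+\delta_{S_2}$ is the paper's parity split. The only addition is that you prove by induction that the side decompositions exist and check the component count via equation ($\bigstar$), whereas the paper leaves both steps implicit.
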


\begin{proof}
    Let $c$ be a strongly separating curve of $\S$.
    
Assume that $\kappa(\S)$ is even. Let us show that $c$ can be completed into a multicurve which is a $0$-maximal decomposition. Indeed, one component of $\S\setminus c$ has odd complexity, so there are only complexity $1$ surfaces in a $0$-maximal decomposition and the other component of $\S\setminus c$ has even complexity so exactly one complexity $2$ component in a $0$-maximal decomposition. Gluing these two components together, we get a $0$-maximal decomposition of the whole surface.

Now, assume that $\kappa(\S)$ is odd. Let us show that $c$ can be completed into a multicurve which is either a $0$-maximal decomposition or a $1$-maximal decomposition. Indeed, either both components of $\S\setminus c$ have odd complexities and the $0$-maximal decompositions on both sides have only complexity $1$ components, so gluing them together gives a $0$-maximal decomposition. Or both components of $\S\setminus c$ have even complexities so each side has one complexity $2$ component, which makes the total decomposition $1$-maximal because it contains two complexity $2$ surfaces.
\end{proof}

We record the following proposition that we will use only in the proof of Proposition~\ref{prop leaf type curves}. Note that the proof of this proposition is constructive so in particular it gives examples of $0$-maximal decompositions.

\begin{Prop}\label{prop D les decompos particulieres avec g+p/3}
Let $\S$ be a connected surface without boundary and with $\kappa(\S)\geq 3$. Let $D$ be the collection of $0$-maximal decomposition multicurves $\s$ of $\S$ verifying
$$\# \left(\s\cap \mclC_{leaf}\right) >\# \left(\s\cap \mclC_{nl}\right)~.$$
Then, any leaf curve of $\S$ can be completed into a multicurve in $D$.
\end{Prop}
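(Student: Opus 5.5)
The plan is to reduce to finitely many explicit constructions by means of the $\ModS$-action. The collection $D$ is $\ModS$-invariant, since mapping classes preserve $0$-maximality and preserve the property of being a leaf curve. Two separating curves whose complementary components have the same topological types lie in the same $\ModS$-orbit (change of coordinates). A leaf curve $c$ is separating and cuts off a leaf subsurface of one of the types $\S_{1,0,1}$ or $\S_{0,3,1}$. When $\dsig=1$ it may also cut off $\S_{1,1,1}$ or $\S_{0,4,1}$. The other side is then determined by $g$ and $p$. So it suffices to exhibit, for each leaf type that actually occurs on $\S$ (at most four types), one element $\s\in D$ containing a curve of that type.

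For the construction, count in the dual tree $\G_\s$ of a $0$-maximal decomposition, which has $\nsig$ vertices and $\nsig-1$ edges by Lemma~\ref{lemma structure of k max decopos}. Every edge incident to a leaf vertex of $\G_\s$ is a leaf curve, because such a vertex is a component of complexity $1$, or $1+\dsig$ in the case of the complexity $2$ piece. So it is enough to build $\s$ whose tree has more leaf vertices than internal edges.

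I would use a ``caterpillar'' shape. The spine is a path of spine pieces $v_1,\dots,v_m$, each a $4$-holed sphere of complexity $1$. Each interior spine piece has two boundary curves on the spine and two pendant boundary curves. The pendant pieces are leaves of type $\S_{1,0,1}$ (one genus each) or $\S_{0,3,1}$ (three marked points each). The end spine pieces can carry three pendant curves, or some marked points of $\S$ directly. With $m$ spine vertices one gets roughly $2m$ leaf edges against $m-1$ internal edges, which gives the required strict inequality.

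To realise a given surface $\S_{g,p}$:
\begin{itemize}[noitemsep]
\item first attach $g$ torus leaves;
\item then group the marked points into triples forming $\S_{0,3,1}$ leaves;
\item absorb the remaining $0$, $1$ or $2$ marked points into spine pieces, which turns a spine curve slot into a marked point;
\item when $\dsig=1$, replace one pendant leaf by a complexity $2$ leaf ($\S_{1,1,1}$ or $\S_{0,4,1}$), or turn an end spine piece into a $5$-holed sphere, as appropriate.
\end{itemize}
Remark~\ref{rem si decompo est un arbre à petites composantes alors cest maximal} then certifies $0$-maximality at once: the dual graph is a tree and $\sum(\kappa(v)-1)=\dsig$. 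It remains to check the inequality $\#(\s\cap\mclC_{leaf})>\#(\s\cap\mclC_{nl})$ by counting. Throughout, I would arrange the choices so that the prescribed leaf type appears among the pendant pieces.

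The main obstacle is the bookkeeping. When marked points are absorbed into the spine, or when $g$ or $p$ is small, the spine can become long relative to the number of pendant leaves. One must then check that the leaf count still strictly wins. I would handle this by a short induction on $\kappa(\S)$: attaching two more pendant leaves through one more spine piece raises $\kappa$ by $2$, adds $2$ leaf edges and only $1$ internal edge. This reduces the problem to base cases with $\kappa(\S)\in\{3,4\}$, and possibly $5,6$ when parity or genus forces it, to be treated by hand. For instance, when $\kappa=3$ a $0$-maximal decomposition has two pieces and one curve, and that curve is a leaf curve. When $\kappa=4$ it has two pieces, one of complexity $2$, so again the single curve is a leaf. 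The delicate base cases are genus $0$ with few spare punctures and low genus with $p\le 2$. There I would also check that each leaf type present on $\S$ can be made to appear.
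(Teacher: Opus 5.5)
Your construction is the paper's: cut off the leaf subsurface of $c$, then torus and $\Sigma_{0,3,1}$ leaves, and decompose the leftover genus-zero core into $4$-holed spheres (plus one $5$-holed sphere when $\delta_\Sigma=1$), with $0$-maximality certified by the tree/complexity count. The gap is in the part you call bookkeeping.

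Your induction is miscalibrated. Inserting one spine piece carrying two pendant complexity-$1$ leaves adds three vertices of complexity $1$ and three edges. By $\kappa(\Sigma)=\#E+\sum_v\kappa(v)$ this raises $\kappa$ by $6$, not $2$ (two new torus leaves raise the genus by $2$). So base cases with $\kappa\le 6$ cannot feed an induction in steps of $2$, and you also leave those base cases unchecked. As written, the strict inequality is not established.

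No induction is needed, but one degenerate situation must be handled. Suppose the core is a sphere with $n'\ge 4$ holes cut into $m$ pieces. Then $n'=2m+2$ or $2m+3$, and at most two of these holes are marked points of $\Sigma$ (the leftover points). All other holes are pendant leaf curves, and the only remaining curves are the $m-1$ curves inside the core. So leaf curves ($\ge 2m$) directly outnumber non-leaf curves ($\le m-1$). Your own ``roughly $2m$ versus $m-1$'' remark already suffices; the paper instead bounds $\ell=g+\lfloor p/3\rfloor\ge 2n_\Sigma/3$.

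The real obstruction is $n'\le 3$, where the core is an annulus or a pair of pants and your caterpillar does not exist. This happens for instance for $\Sigma_{3,0}$ or $\Sigma_{0,9}$, both of complexity $6$. It occurs only in low complexity: with $p=3q+r$, $0\le r\le 2$, the core has $g+q+r$ holes, and $g+q+r\le 3$ forces $\kappa(\Sigma)=3(g+q)+r-3\le 6$. The case of a complexity-$2$ prescribed leaf is similar.

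The paper handles $\kappa(\Sigma)\le 6$ by a separate, simpler argument. Then $n_\Sigma\le 3$, so the dual tree of any $0$-maximal decomposition is a path on at most three vertices, and all its curves are leaf curves. It therefore suffices to exhibit one $0$-maximal decomposition through $c$:
\begin{itemize}
\item $\{c\}$ when $\kappa(\Sigma)\in\{3,4\}$;
\item $\{c,c'\}$ when $\kappa(\Sigma)\in\{5,6\}$, for a suitably chosen leaf curve $c'$ disjoint from the leaf subsurface of $c$, so that the middle piece has positive complexity.
\end{itemize}
With these two ingredients your sketch becomes a proof.
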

\begin{proof}
    Fix $g$ to be the genus of $\S$ and $p$ the number of marked points of $\S$. Fix $c$ a leaf curve of $\S$ and fix $S$ the open leaf subsurface with boundary $c$. By definition, the subsurface $S$ has complexity $1$ or $1+\dsig$. Let us prove the proposition by constructing a multicurve that contains $c$ and that is in $D$.
\medbreak
\textbf{Case 1: $\kappa(\S)\leq 6$.} When $\S$ has complexity at most $6$, we show that any $0$-maximal decomposition of $\S$ is in $D$. Indeed, any $0$-maximal decomposition $\s$ of $\S$ (for instance, one that contains $c$) has a dual graph $\G_{\s}$ which is a tree with $\nsig$ vertices by Lemma~\ref{lemma structure of k max decopos}. But when $\kappa(\S)\leq 6$,
\begin{align*}
    \nsig=\frac{\kappa(\S)+1-\dsig}{2}\leq 3
\end{align*}
so $\G_{\s}$ is a line of at most three vertices. Thus, all edges of $\G_{\s}$ are connected to a leaf. This means exactly that $\s$ has only leaf curves.

If $\kappa(\S)\leq 4$, then $\{c\}$ gives a $0$-maximal decomposition of $\S$. When $\kappa(\S)\in\{5,6\}$, then for any leaf curve $c'$ in $\cxcourbes{\S\setminus S}$, the multicurve $\{c,c'\}$ is a $0$-maximal decomposition of $\S$.
\medbreak
\textbf{Case 2: $\kappa(\S)> 6$.}\begin{Claim}\label{claim subcase 2a 2b}
    There exists a $0$-maximal decomposition $\s$ of $\S$ that contains $c$ and verifies $$\# \left(\s\cap \mclC_{leaf}\right) \geq 2\nsig/3~.$$
\end{Claim}

Given $\s$ as in Claim~\ref{claim subcase 2a 2b}, let us conlude the proof of the proposition by showing that $\s\in D$. Note that $\#\s=\nsig -1$ because the dual graph $\G_{\s}$ is a tree with $\nsig$ vertices by Lemma~\ref{lemma structure of k max decopos} and the curves of $\s$ are in one-to-one correspondence with the edges of $\G_{\s}$. The number of non-leaf curves of $\s$ is
\begin{align*}
    \# \left(\s\cap \mclC_{nl}\right)&= \nsig -1-\# \left(\s\cap \mclC_{leaf}\right)\\
                                    &\leq \nsig -1-\frac{2\nsig}{3}=\frac{\nsig}{3}-1\\
                                    &< \frac{2\nsig}{3}\leq \# \left(\s\cap \mclC_{leaf}\right)
\end{align*}
so $\s\in D$. Therefore, we have constructed a $0$-maximal decomposition multicurve in $D$ that completes the leaf curve $c$, which proves the proposition. It is left to prove Claim~\ref{claim subcase 2a 2b}.

\begin{proof}[Proof of Claim~\ref{claim subcase 2a 2b}]
\textbf{Subcase 2.a: $\kappa(\S)> 6$ and $\kappa(S)=1$.} We will construct a $0$-maximal decomposition multicurve that contains $c$ and has $g+\left\lfloor p/3\right\rfloor$ leaf curves in total. First, fix $S$ as the first component of the decomposition and $c$ as the first leaf curve.
\begin{itemize}
    \item If $S$ is a surface of type $\S_{1,0,1}$, then $\S\setminus S$ has genus $g-1$. Separate $g-1$ surfaces of type $\S_{1,0,1}$ and get $g-1$ more leaf curves. Then, separate $\left\lfloor p/3\right\rfloor$ surfaces of type $\S_{0,3,1}$ and get $\left\lfloor p/3\right\rfloor$ additional leaf curves.
    \item If $S$ is a surface of type $\S_{0,3,1}$, then $\S\setminus S$ has $p-3$ marked points. Separate $\partent{p/3}-1$ subsurfaces of type $\S_{0,3,1}$ and get $\partent{p/3}-1$ more leaf curves. Then, separate $g$ surfaces of type $\S_{1,0,1}$ to get $g$ leaf curves. 
\end{itemize}
In both cases, as illustrated in Figure~\ref{fig l leaf curves}, we get $\ell$ leaf curves with $$\ell=g+\partent {p/3}~.$$

\begin{figure}[ht]
    \centering
    \includesvg[width=0.8\textwidth]{claim118}
    \caption{The $g+\partent{p/3}$ leaf curves and the surface $\S'$ that is left.}
    \label{fig l leaf curves}
\end{figure}

Recall that $$\nsig =\frac{3g+p-2-\dsig}{2}$$
so that $$\ell\geq g+\frac{p-2}{3}\geq \frac{3g+p-2-\dsig}{3}=\frac{2\nsig}{3}~.$$
In the middle, there remains a closed subsurface $\S'$ with $p-3\left\lfloor p/3\right\rfloor$ marked points and $g+\left\lfloor p/3\right\rfloor$ boundary components, the $\ell$ leaf curves. The assumption $\kappa(\S)> 6$ ensures us that $\S'$ is neither a pair of pants nor an annulus, so in particular the $\ell$ leaf curves constructed above are pairwise distinct. In summary, $\S'$ has type $\S_{0,p',b'}$ with $$p'+b'=g+p-2\left\lfloor p/3\right\rfloor\geq 4~.$$
Note that $\kappa(\S')$ and $\kappa(\S)$ have the same parity.
Construct a $0$-maximal decomposition $\s'$ of $\S'$ as in Example~\ref{exemple de decompo pour sphere n trous}. The vertices of $\G_{\s'}$ all have complexity~$1$, except for one complexity $2$ vertex when $\kappa(\S')$ is even. Let $\s$ be the multicurve composed of the $\ell$ leaf curves constructed above and of the curves of $\s'$. Then, $\G_{\s}$ is the tree $\G_{\s'}$ to which we have added $\ell$ additional leaves of complexity $1$. By Remark~\ref{rem si decompo est un arbre à petites composantes alors cest maximal}, $\s$ is a $0$-maximal decomposition of $\S$.

\medbreak
\textbf{Subcase 2.b: $\kappa(\S)> 6$ and $\kappa(S)=2$.} Note that in this case, $\dsig=1$ by definition of leaf curves, so $\kappa(\S)$ is necessarily even. We will construct a $0$-maximal decomposition multicurve that contains $c$ and has $g+\left\lfloor (p-1)/3\right\rfloor$ leaf curves in total. First, fix $S$ as the first component of the decomposition and $c$ as the first curve.
\begin{itemize}
    \item If $S$ is a surface of type $\S_{1,1,1}$, then $\S\setminus S$ has genus $g-1$ and $p-1$ marked points. Separate $g-1$ surfaces of type $\S_{1,0,1}$ and get $g-1$ more leaf curves. Then, separate $\left\lfloor (p-1)/3\right\rfloor$ surfaces of type $\S_{0,3,1}$ and get $\left\lfloor (p-1)/3\right\rfloor$ additional leaf curves.
    \item If $S$ is a surface of type $\S_{0,4,1}$, then $\S\setminus S$ has $p-4$ marked points and genus $g$. Separate $\partent{(p-1)/3}-1$ subsurfaces of type $\S_{0,3,1}$ and get $\partent{(p-1)/3}-1$ more leaf curves. Then, separate $g$ surfaces of type $\S_{1,0,1}$ to get $g$ additional leaf curves. 
\end{itemize}
In both cases, we get $\ell=g+\partent {(p-1)/3}$ leaf curves. Again, the inequality $\ell\geq 2\nsig/3$ holds because
\begin{align*}
    \ell\geq g+\frac{p-1-2}{3}= g+\frac{p-2-\dsig}{3}=\frac{2\nsig}{3}~.
\end{align*}
There remains a surface $\S'$ with $$p'=p-1-3\left\lfloor (p-1)/3\right\rfloor$$ marked points and $$b'=g+\left\lfloor (p-1)/3\right\rfloor$$ boundary components (the $\ell$ constructed leaf curves). Here, $\kappa(\S)$ is even and at least $7$ so $\kappa(\S)\geq 8$. This ensures that $\S'$ is neither a pair of pants nor an annulus. In summary, $\S'$ has type $\S_{0,p',b'}$ with $$p'+b'=g+(p-1)-2\left\lfloor (p-1)/3\right\rfloor\geq 4~.$$
Note that $\kappa(\S)$ being even means that $g+p$ is odd, so $\kappa(\S')$ is odd.
Construct a $0$-maximal decomposition $\s'$ of $\S'$ as in Example~\ref{exemple de decompo pour sphere n trous}. The vertices of $\G_{\s'}$ all have complexity~$1$. Let $\s$ be the multicurve composed of the $\ell$ leaf curves constructed above and of the curves of $\s'$. Then, $\G_{\s}$ is the tree $\G_{\s'}$ with $\ell-1$ additional leaves of complexity $1$ and one additional leaf of complexity $2$ (corresponding to $S$). By Remark~\ref{rem si decompo est un arbre à petites composantes alors cest maximal}, $\s$ is a $0$-maximal decomposition of $\S$.\qedhere 
\end{proof}\qedhere\end{proof}

\section{Measurable embeddings and measured groupoids}\label{section meas embeddings and groupoids}
A \textit{Borel isomorphism} is a measurable bijection such that its inverse is also measurable. A \textit{standard Borel space} is a measurable space which is Borel isomorphic to a separable and complete metric space. A \textit{standard measured space} is a standard Borel space equipped with a Borel measure, and it is a \textit{standard probability space} when the space has measure $1$. An action of a countable group $G$ on a standard measured space $(X,\mu)$ by Borel isomorphisms is \textit{measure-preserving} when $\mu(g A)=\mu(A)$ for any Borel subset $A\subseteq X$ and any $g\in G$. 

\subsection{Measured groupoids}\label{section groupoids}
\paragraph{Measured groupoids and cocycles.} Measured groupoids generalise both groups and equivalence relations of measure-preserving group actions. Informally, we think of a groupoid over a measured space $X$ as a measurably varying set of arrows with source and range in $X$. A cocycle is then viewed as a group-valued labelling of the arrows that respects the composition of arrows.
\begin{Def}
    A \textit{Borel groupoid} over a standard Borel space $X$ is a standard Borel space $\mclG$ equipped with the following: 
    \begin{itemize}[[noitemsep, nolistsep, label=-]
        \item two measurable maps $s,r : \mclG \longrightarrow X$ called the \textit{source} and \textit{range} maps;
        \item a measurable map $({\mfk g},{\mfk h})\longmapsto {\mfk g}{\mfk h}$ defined on $\mclG^{(2)}=\{({\mfk g},{\mfk h})\in\mclG ~|~ r({\mfk h})=s({\mfk g})\}$
     and called the \textit{composition} map;
        \item a measurable map 
        $$\begin{array}{rcl}
            X & \longrightarrow & \mclG \\
            x & \longmapsto & {\mfk e}_x
        \end{array}$$
        such that for any $x\in X$ and any ${\mfk g},{\mfk h}\in\mclG$ with $s({\mfk g})=x=r({\mfk h})$,
        \begin{align*}
            s({\mfk e}_x)=r({\mfk e}_x)=x,~{\mfk g}{\mfk e}_x={\mfk g}\text{ and }{\mfk e}_x{\mfk h}={\mfk h}
        \end{align*}
       called the \textit{unit} map;
        \item a measurable map 
        $$\begin{array}{rcl}
            \mclG & \longrightarrow & \mclG \\
            {\mfk g} & \longmapsto & {\mfk g}^{-1}
        \end{array}$$
        such that for any ${\mfk g}\in\mclG$, 
        $$s({\mfk g}^{-1})=r({\mfk g}),~r({\mfk g}^{-1})=s({\mfk g}),~{\mfk g}{\mfk g}^{-1}={\mfk e}_{r({\mfk g})}\text{ and } {\mfk g}^{-1}{\mfk g}={\mfk e}_{s({\mfk g})}$$
        called the \textit{inverse} map.
    \end{itemize}
    The space $X$ is called the \textit{base space} of the groupoid $\mclG$. The groupoid $\mclG$ is called \textit{discrete} when $s^{-1}(\{x\})$ is countable for every $x\in X$.

    A \textit{subgroupoid} of $\mclG$ is a Borel subset $\mclH\subseteq \mclG$ that is stable by composition and inverse, and that contains $\mfk e_x$ for all $x\in X$. The subgroupoid \textit{generated} by two subgroupoids $\mclH$ and $\mclH'$ of $\mclG$ is the smallest subgroupoid of $\mclG$ that contains $\mclH$ and $\mclH'$. It is denoted $\gen{\mclH,\mclH'}$.
\end{Def}

A discrete groupoid is a countable group exactly when the base space $X$ is reduced to a singleton. An important example of discrete Borel groupoid comes from an action of a countable group $G$ on a standard Borel space $X$ by Borel isomorphisms. This groupoid is denoted $G \ltimes X$, the elements of $G \ltimes X$ are couples $(g,x)\in G\times X$ with $s(g,x)=x$ and $r(g,x)=g x$. The composition law is $(g, hx)(h,x)=(gh,x)$ and the units are ${\mfk e}_x=(1_G,x)$ for $g,h\in G$ and $x\in X$.

\begin{Def}
    The \textit{restriction} of a Borel groupoid $\mclG$ to a Borel subset $A\subseteq X$ is a Borel groupoid over the space $A$ defined by $\mclG|_A=\{g\in\mclG~|~s(g),r(g)\in A\}~.$
\end{Def}

Note that the restricted space $(G\ltimes X)|_A$ is a groupoid, but does not necessarily come from a group action. The fact that groupoids are preserved by restriction motivates their definition as a generalisation of group actions on measured spaces. 

\begin{Def}
    A \textit{bisection} of a Borel groupoid $\mclG$ is a measurable subset of $\mclG$ such that $s|_B$ and $r|_B$ are Borel isomorphisms onto their images $s(B)$ and $r(B)$. 
\end{Def}

For instance, any groupoid of the form $(G\ltimes X)|_A$ can be written as the countable union of the bisections $B_{g}=\{g\}\times (g^{-1}A\cap A)$ for $g\in G$. In general, by the Lusin-Novikov theorem (see e.g.\ {\cite[Theorem~18.10]{kechris_classical_1995}}), any discrete Borel groupoid $\mclG$ can be written as a countable union of bisections of $\mclG$.

\begin{Def}
    A \textit{measured groupoid} is a discrete Borel groupoid $\mclG$ over a standard measured space $(X,\mu)$ such that for any bisection $B\subseteq \mclG$, $\mu(s(B))=\mu(r(B))$.
\end{Def}

Note that if a probability measure-preserving action $G \curvearrowright X$ is free, the associated groupoid $G\ltimes X$ corresponds to the usual countable Borel equivalence relation ${\mcl{R}=\{(x,g x)\}_{x\in X, g\in G}}$.

There is a natural notion of groupoid homomorphism and isomorphism, defined for instance in \cite[Definition~3.11]{guirardel_measure_2025}. In particular, a cocycle is a groupoid homomorphism from a measured groupoid to a group. Informally, one can understand a cocycle as labels on the arrows of a groupoid that respect the composition laws of the groupoid and the group.

\begin{Def}
    Let $\mclG$ be a measured groupoid over a standard measured space $X$. Let $G$ be a countable group. A \textit{cocycle} is a measurable map $\rho : \mclG \longrightarrow G$ such that 
    $$\forall ({\mfk g},{\mfk h})\in\mclG^{(2)},~\rho({\mfk g}{\mfk h})=\rho({\mfk g})\rho({\mfk h})~.$$
    The \textit{kernel} of $\rho$ is $\ker \rho =\{{\mfk g}\in\mclG~|~\rho({\mfk g})=1_{G}\}$ and we say that $\rho$ has \textit{trivial kernel} if $\ker\rho=\{{\mfk e}_x\}_{x\in X}$.
\end{Def}

When considering groupoids of the form $(G\ltimes X)|_A$, we naturally define $$\begin{array}{ccc}
    (G\ltimes X)|_A & \longrightarrow & G \\
     (g,x) & \longmapsto &g 
\end{array}$$
which is in fact a cocycle with trivial kernel.

\begin{Def}
    A measured groupoid $\mclG$ over $(X,\mu)$ is of \textit{infinite type} if for any Borel subset $A\subseteq X$ of positive measure and for almost every $x\in A$, the set $\mclG|_A\cap s^{-1}(\{x\})$ is infinite.
\end{Def}

\begin{Rem}\label{rem trivial kernel infinite-type groupoid to infinite group}
    Let $\mclG$ be a measured groupoid and let $G$ be a group. Let $\rho : \mclG\longrightarrow G$ be a cocycle with trivial kernel. If $\mclG$ is of infinite type, then the group $G$ is infinite.
\end{Rem}

\begin{Def}
    A cocycle $\rho :\mclG\longrightarrow G$ is of \textit{action type} when it has trivial kernel and when for any infinite subgroup $H$ of $G$, the groupoid $\rho^{-1}(H)$ is of infinite type.
\end{Def}

Note that the notions of infinite type and action type are naturally preserved by restrictions of the groupoid to positive measure Borel subsets.

An example of action-type cocycles is given by probability measure-preserving actions by Poincaré recurrence, so the assumption of finiteness of the measure is necessary in the following lemma.

\begin{Lemma}[{\cite[Proposition~2.26]{kida_introduction_2009}}, {\cite[Lemma~3.22]{guirardel_measure_2025}}]\label{cocycles from pmp actions are action-type}
    Let $G$ be a countable group admitting a measure-preserving action on a standard measured space $(X,\mu)$ with $\mu(X)$ finite. Then, the groupoid $(G\ltimes X)|_A$ is of infinite type and the cocycle $(G\ltimes X)|_A \longrightarrow G$ is of action type for every Borel subset $A\subseteq X$ of positive measure.
\end{Lemma}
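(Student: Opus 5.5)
The plan is to reduce both assertions to Poincaré recurrence for the finite measure-preserving action of $G$ on $X$, applied separately to each subgroup. First, infinite type of $(G\ltimes X)|_A$ follows from the action-type property with $H=G$: if $G$ is infinite, then $\rho^{-1}(G)=(G\ltimes X)|_A$. If $G$ is finite, infinite type may fail; I would either assume $G$ infinite, as in the cited references, or note that the statement is only used for infinite groups. The cocycle $(g,x)\mapsto g$ has trivial kernel by construction, since $(1_G,x)=\mfk e_x$. So the real content is: for every infinite subgroup $H\le G$, the groupoid $\rho^{-1}(H)=(H\ltimes X)|_A$ is of infinite type. This is the first assertion applied to the restricted action $H\curvearrowright X$, which is still measure-preserving on a finite measure space. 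It therefore suffices to prove: for an infinite countable group $H$ acting measure-preservingly on $(X,\mu)$ with $\mu(X)<\infty$, every positive measure Borel $B\subseteq A$ and almost every $x\in B$, the set $\{h\in H : hx\in B\}$ is infinite.

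For this I would argue by contradiction. Suppose there is a finite set $F\subseteq H$ and a positive measure Borel set $C\subseteq B$ such that $\{h: hx\in B\}\subseteq F$ for all $x\in C$. The sets $C_F$ indexed by finite subsets $F\subseteq H$ are countably many and cover the bad set, so if the bad set has positive measure, some $C_F$ has positive measure. Since $H$ is infinite, choose an infinite sequence $h_1,h_2,\dots$ in $H$ such that $h_j^{-1}h_i\notin F$ for all $i\neq j$. This is possible by induction, because each new element must avoid only finitely many cosets-translates $h_iF\cup h_iF^{-1}$.

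Then the translates $h_iC$ are pairwise disjoint. Indeed, if $y\in h_iC\cap h_jC$, write $y=h_ix=h_jx'$ with $x,x'\in C$. Then $h_j^{-1}h_i x=x'\in C\subseteq B$, so $h_j^{-1}h_i\in F$, a contradiction. Each translate has measure $\mu(C)>0$ by measure preservation, so infinitely many disjoint translates contradict $\mu(X)<\infty$. Hence almost every $x\in B$ returns to $B$ infinitely often. Measurability of the sets $C_F$ is routine, since $\{x: hx\in B\}$ is Borel for each of the countably many $h$.

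I expect the only delicate point to be the choice of the sequence $(h_i)$ and the bookkeeping that the bad set is a countable union of the $C_F$. There is no real obstacle.
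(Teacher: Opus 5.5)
Your argument is correct and is essentially the paper's approach: the paper gives no proof of its own, only the citations to Kida and Guirardel--Horbez and the remark that the lemma follows from Poincaré recurrence, and your pigeonhole argument on the pairwise disjoint translates $h_iC$ is the standard proof of that recurrence (counting group elements $h$ rather than orbit points, so no freeness is needed). You are also right that the infinite-type claim for $(G\ltimes X)|_A$ itself needs $G$ to be infinite, which the statement leaves implicit.
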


\begin{Def}
    Let $\mclG$ be a measured groupoid over a standard measured space $X$ and let $\rho : \mclG\longrightarrow G$ be a cocycle into a countable group $G$. Assume that $G$ has a measure-preserving action on a standard measured space $Y$. A measurable map $\varphi :X\longrightarrow Y$ is $(\mclG,\rho)$\textit{-equivariant} if there exists a conull Borel subset $X^*$ of $X$ such that $\forall g\in\mclG|_{X^*}$, 
    $$\varphi(r(g))=\rho(g)\varphi(s(g))~.$$
\end{Def}

\paragraph{Normalisation and amenability.} 

\begin{Def}
    Let $\mclG$ be a measured groupoid over a standard measured space $X$, $\mclH$ a subgroupoid of $\mclG$ and $B$ a bisection of $\mclG$. Then, $\mclH$ is $B$\textit{-invariant} if there exists a conull Borel subset $X^*\subseteq X$ such that for any ${\mfk b},{\mfk b}'\in B\cap \mclG|_{X^*}$ and ${\mfk h}\in\mclG|_{X^*}$ such that ${\mfk b}'{\mfk h}{\mfk b}^{-1}$ is well defined, it holds that 
    $${\mfk h}\in\mclH\iff {\mfk b}'{\mfk h}{\mfk b}^{-1}\in\mclH~.$$
\end{Def}

\begin{Def}
    Let $\mclH,\mclA$ be subgroupoids of a measured groupoid $\mclG$ over a space $X$.
    Then, $\mclH$ \textit{normalises} $\mclA$ if there exist bisections $(B_n)_{n\in\N}$ such that $\mclH=\bigcup_n B_n$ and $\mclA$ is $B_n$-invariant for every $n$.
\end{Def}
Note that if $\mclH$ normalises $\mclA$, then $\mclH|_A$ normalises $\mclA|_A$ for any Borel subset $A\subseteq X$.

\begin{Lemma}\label{lemma if groups normalise then groupoids normalise}
    Let $\rho : \mclG\longrightarrow G$ be a cocycle from a measured groupoid $\mclG$ to a countable group $G$. If a subgroup $H$ of $G$ normalises another subgroup $A$ then $\rho^{-1}(H)$ normalises $\rho^{-1}(A)$.\qed
\end{Lemma}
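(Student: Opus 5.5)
The plan is to take, as the bisections decomposing $\rho^{-1}(H)$, the natural level sets of $\rho$. Since $H$ is countable, I will write
$$\rho^{-1}(H)=\bigcup_{h\in H}\rho^{-1}(\{h\})~.$$
Each level set $\rho^{-1}(\{h\})$ is a Borel subset of $\mclG$, but it need not be a bisection. So I will first refine it. By the Lusin--Novikov theorem, as recalled after the definition of bisections, $\mclG$ is a countable union of bisections $(C_m)_{m\in\N}$. Then
$$\rho^{-1}(H)=\bigcup_{h\in H}\bigcup_{m\in\N}\big(C_m\cap\rho^{-1}(\{h\})\big)~.$$
Each piece is a Borel subset of a bisection, hence itself a bisection, and there are countably many pieces. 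This gives the required family $(B_n)_{n\in\N}$, and on each $B_n$ the cocycle $\rho$ takes a single constant value $h_n\in H$.

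Next I will check that $\rho^{-1}(A)$ is $B_n$-invariant for each $n$. I will take $X^*=X$, since the cocycle identity holds everywhere. Fix ${\mfk b},{\mfk b}'\in B_n$ and ${\mfk h}\in\mclG$ such that ${\mfk b}'{\mfk h}{\mfk b}^{-1}$ is defined. The cocycle relation, together with $\rho({\mfk b}^{-1})=\rho({\mfk b})^{-1}$ (which follows from $\rho({\mfk e}_x)=1$), gives
$$\rho({\mfk b}'{\mfk h}{\mfk b}^{-1})=h_n\,\rho({\mfk h})\,h_n^{-1}~.$$
Since $H$ normalises $A$, we have $h_nAh_n^{-1}=A$. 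Hence $\rho({\mfk h})\in A$ if and only if $h_n\rho({\mfk h})h_n^{-1}\in A$. This is exactly the statement ${\mfk h}\in\rho^{-1}(A)\iff{\mfk b}'{\mfk h}{\mfk b}^{-1}\in\rho^{-1}(A)$.

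Two routine preliminaries remain. First, $\rho^{-1}(H)$ and $\rho^{-1}(A)$ are indeed subgroupoids: they are Borel, stable under composition and inverse, and contain all units. Second, the fact that $\rho({\mfk e}_x)=1_G$ follows from $\rho({\mfk e}_x)=\rho({\mfk e}_x)^2$.

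The only real point to be careful about is that the level sets are not bisections in general, which is why the Lusin--Novikov refinement is needed. Apart from that, the argument is a direct computation.
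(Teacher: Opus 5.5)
Your proposal is correct. The paper gives no proof of this lemma and marks it as immediate; your argument is the standard verification it leaves implicit. You refine $\rho^{-1}(H)$ into countably many bisections on which $\rho$ is constant, using Lusin--Novikov, and then use the cocycle identity $\rho({\mfk b}'{\mfk h}{\mfk b}^{-1})=h_n\rho({\mfk h})h_n^{-1}$ together with $h_nAh_n^{-1}=A$.
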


The notion of \textit{amenable} measured groupoid was coined by Kida and is a generalisation of Zimmer amenable measure-preserving actions defined in \cite{zimmer_amenable_2020}. We refer the reader to \cite[Section~4]{kida_introduction_2009} and \cite[Definition~3.36]{guirardel_measure_2025} for the formal definition. Here, we choose to give only the results we will need about how amenability of groupoids interacts with amenability of groups via cocycles. Note that we will also use specific results proven by Kida about amenable groupoids with cocycles towards mapping class groups, which we give in Section~\ref{section classification subgroupoids}.

\begin{Lemma}[{\cite[Proposition~4.33]{kida_introduction_2009}}, {\cite[Corollary~3.42]{guirardel_measure_2025}}]\label{lemma triv kernel cocycle amenable group amenable groupoid}
    Let $\mclG$ be a measured groupoid and let $G$ a countable group. Let $\rho : \mclG\longrightarrow G$ be a cocycle with trivial kernel. 
    
    If $G$ is amenable, then $\mclG$ is amenable.
\end{Lemma}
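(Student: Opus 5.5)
The plan is to reduce to the case of group actions, where amenability of a groupoid $G\ltimes Y$ is Zimmer amenability of the action $G\curvearrowright Y$. Every action of an amenable group is Zimmer amenable. Given $\rho:\mclG\to G$ with trivial kernel, I will realise $\mclG$ (up to isomorphism on a conull set) as a restriction of an action groupoid of the form $(H\ltimes \Omega)$ paired with $G$. Alternatively, I will verify the definition of amenability directly via invariant means, which is the route I will commit to.

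First I will work with the fixed-point characterisation of amenability in Kida's framework. $\mclG$ is amenable if for every measurable field of compact convex sets with an affine (measurable) $\mclG$-action, there is a $\mclG$-invariant measurable section. Equivalently, there is a sequence of measurable assignments $x\mapsto \lambda_n^x\in \mathrm{Prob}(s^{-1}(x))$ that is asymptotically invariant under the arrows of $\mclG$. Since $G$ is amenable, I will pick a Reiter sequence $(\phi_n)$ of finitely supported probability measures on $G$ with $\|g\phi_n-\phi_n\|_1\to 0$ for all $g\in G$.

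The key construction transports $\phi_n$ to the fibres of $\mclG$ through $\rho$. Triviality of the kernel says exactly that, for each $x$, the restriction $\rho: r^{-1}(x)\to G$ is injective. Indeed, if $\rho(\mfk g)=\rho(\mfk h)$ with $r(\mfk g)=r(\mfk h)$, then $\rho(\mfk h^{-1}\mfk g)=1$, so $\mfk h^{-1}\mfk g$ is a unit and $\mfk g=\mfk h$. I will then define $\lambda_n^x$ on $r^{-1}(x)$ by pulling back $\phi_n$ along this injection, restricted to its image $\rho(r^{-1}(x))$. Because the image may be a proper subset of $G$, the pulled-back masses do not sum to one, so this cannot be a probability measure directly.

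This is the main obstacle. To get around it, I will instead use the amenability criterion for \emph{actions on fibred spaces}. I will form the measured groupoid extension $\mclG\ltimes_\rho (X\times G)$, i.e.\ $\mclG$ acting on $X\times G$ by $\mfk g\cdot(s(\mfk g),h)=(r(\mfk g),\rho(\mfk g)h)$. Trivial kernel makes this action free, and this extended groupoid is then an equivalence relation with trivial stabilisers, hence amenable in the sense needed; this is the standard step in Kida's Proposition~4.33. Amenability of $G$ lets me average over the $G$-coordinate with the Reiter means $\phi_n$. Combining the two gives an asymptotically invariant mean on $\mclG$, using the general fact that an extension is amenable when the base action is amenable and the twisting group is amenable.

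Concretely, I will check invariance by computing, for each arrow $\mfk g$, the error $\|\rho(\mfk g)\phi_n-\phi_n\|_1$. This tends to $0$ pointwise, so by dominated convergence it tends to $0$ in the weak sense Kida uses. Measurability of all assignments follows from the Lusin--Novikov decomposition of $\mclG$ into countably many bisections, on each of which $\rho$ is constant after a countable partition, since $G$ is countable.
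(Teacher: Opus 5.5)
\textbf{The gap: amenability of the skew product.} Your argument breaks where you declare the skew-product groupoid $\mclG\ltimes_\rho(X\times G)$ amenable because it is an equivalence relation with trivial stabilisers. That implication is false in general. The orbit relation of a free probability measure-preserving action of $F_2$ is principal, yet it is everywhere nonamenable.

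Worse, when $G$ is amenable, amenability of this skew product is essentially equivalent to amenability of $\mclG$. Amenable groupoids have amenable action groupoids, and the extension theorem you invoke at the end gives the other direction. So, as written, step (a) assumes what you are trying to prove. Your final paragraph does not repair this. The estimate $\|\rho(\mfk g)\phi_n-\phi_n\|_1\to 0$ only gives approximately invariant means on the fibres $G$ of the $\mclG$-space $X\times G$, not on the fibres $r^{-1}(x)$ of $\mclG$. Transferring them back is exactly the obstacle you identified, namely that $\rho(r^{-1}(x))$ is a proper, possibly very sparse, subset of $G$.

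\textbf{How to repair it.} What makes the skew product amenable here is smoothness, not principality, and this is where triviality of the kernel has to be used a second time. Your own injectivity computation shows that each slice $X\times\{h\}$ meets every class of the skew-product relation $\mclR$ in at most one point. Indeed, if $(x,h)$ and $(y,h)$ are related via $\mfk g$, then $\rho(\mfk g)=1$, so $\mfk g$ is a unit and $x=y$. Hence $X\times G$ is covered by countably many partial Borel transversals, so $\mclR$ is smooth, and smooth countable measured equivalence relations are amenable.

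With this in place you may legitimately cite Zimmer's extension theorem: $G$ amenable and the skew product amenable imply $\mclG$ amenable. More cleanly, you can follow the route you abandoned at the start.
\begin{enumerate}
\item Smoothness makes $\Omega'=(X\times G)/\mclR$ a standard Borel space. It carries a $G$-invariant $\sigma$-finite measure induced from $\mu\otimes(\text{counting})$ and a measure-preserving $G$-action through the second coordinate.
\item The map $x\mapsto[(x,1)]$ is a Borel injection $X\hookrightarrow\Omega'$, again by trivial kernel.
\item One checks that $\mclG\simeq(G\ltimes\Omega')|_X$, with $\rho$ corresponding to the action cocycle.
\item Every action of an amenable group is Zimmer-amenable, so $G\ltimes\Omega'$ is amenable, and amenability passes to restrictions, as the paper notes for Kida's definition.
\end{enumerate}
The paper itself gives no proof of this lemma and only cites Kida and Guirardel--Horbez.
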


Kida's definition of amenability of a groupoid is stable by restrictions to positive measure Borel subsets of the base space. It is necessary to reinforce the notion of nonamenability for it to also be stable by restrictions in the following way.

\begin{Def}
    A groupoid $\mclG$ over a space $X$ is \textit{everywhere nonamenable} if for every Borel subset $A\subseteq X$ of positive measure, $\mclG|_A$ is not amenable.
\end{Def}

\begin{Lemma}[{\cite[Lemma~3.20]{kida_measure_2010}, \cite[Lemma~3.43]{guirardel_measure_2025}}]\label{lemme action-type groupe non moyennable donne groupoide non moyennable}
    Let $\mclG$ be a measured groupoid over a standard probability space $X$ and let $\rho : \mclG\longrightarrow G$ be an action-type cocycle into a countable group $ G$. 
    
    If $ G$ contains a nonabelian free group, then $\mclG$ is everywhere nonamenable.
\end{Lemma}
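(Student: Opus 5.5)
The strategy is to argue by contradiction, combining a restriction argument with the fact that amenability passes to subgroupoids. Suppose $G$ contains a nonabelian free group $F=\gen{a,b}\simeq F_2$, and that there is a Borel subset $A\subseteq X$ of positive measure such that $\mclG|_A$ is amenable. Set $\mclH=\rho^{-1}(F)|_A$. This is a subgroupoid of $\mclG|_A$. Subgroupoids of amenable groupoids are amenable (see \cite[Section~4]{kida_introduction_2009}), so $\mclH$ is amenable. The restriction of $\rho$ to $\mclH$ is a cocycle into $F$. It keeps trivial kernel and action type, because these notions are preserved under restriction and under taking preimages of subgroups. So it suffices to show that a measured groupoid over a finite measure space cannot be amenable if it admits an action-type cocycle into $F_2$.

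For this reduced statement, use the action of $F_2$ on its Gromov boundary $\partial F_2$, which is amenable in Zimmer's sense. I would also use the standard characterisation of amenability of groupoids via fixed points. If $\mclH$ is amenable and $\rho:\mclH\to F_2$ is a cocycle, then for the compact metrisable $F_2$-space $\partial F_2$ there is a $(\mclH,\rho)$-equivariant measurable map $\varphi: A\to\Prob(\partial F_2)$; this is the groupoid version of Zimmer's fixed point property, \cite[Section~4]{kida_introduction_2009}.

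Next, analyse the support of $\varphi(x)$ using the dynamics of $F_2$ on $\partial F_2$. Let $\Prob_{\le 2}$ be the set of probability measures supported on at most two points, and let $\Prob_{\ge 3}$ be its complement.

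On $\Prob_{\ge 3}$ the $F_2$-action is proper, by the usual barycenter argument for convergence groups. Concretely, there is an equivariant Borel map from $\Prob_{\ge 3}$ to finite subsets of $F_2$. Composing with $\varphi$ then yields an equivariant map $\psi:A'\to\mathcal F(F_2)$, where $\mathcal F(F_2)$ denotes the finite subsets of $F_2$, defined on the positive-measure set $A'$ where $\varphi$ lands in $\Prob_{\ge3}$. Restrict further to a positive-measure set on which $\psi$ is constant, equal to some finite set $S$. Then $\rho$ maps the restricted groupoid into the finite stabiliser of $S$. Trivial kernel now forces the groupoid to have finite orbits, which contradicts infinite type: action type applied to the infinite subgroup $F_2$ itself says $\mclH$ is of infinite type.

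On the part where $\varphi$ lands in $\Prob_{\le 2}$, take supports. This gives an equivariant map to the countable-orbit space of unordered pairs or singletons of $\partial F_2$. Restrict to a positive-measure set $A''$ and partition by the $\rho$-image: the arrows of $\mclH|_{A''}$ are sent into the stabiliser of the corresponding point or pair along the equivariant map. Since that map is not constant, one argues through the standard Adams-type trick. Stabilisers of points or pairs in $\partial F_2$ are virtually cyclic. Using equivariance and a countability argument, one finds a positive-measure subset on which $\rho$ takes values in a single virtually cyclic subgroup $C$ of $F_2$. Then pick an infinite subgroup of $F_2$ meeting every conjugate of $C$ finitely, for instance $\gen{a^{N}, b^{N}}$ chosen to avoid $C$ up to finite index. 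Its preimage must be of infinite type by action type, but it is forced to be trivial on that set, giving a contradiction.

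The main obstacle is this last step: passing from "$\varphi(x)$ is supported on at most two boundary points" to a contradiction. Here equivariance only controls $\rho$ up to conjugation by varying elements, so the countability and partition argument is the delicate part. The cleaner alternative I would try first is to quote directly Kida's result that the preimage of $F_2$ under an action-type cocycle is nowhere amenable \cite[Lemma~3.20]{kida_measure_2010}, together with the observation that restriction to $A$ preserves all hypotheses.
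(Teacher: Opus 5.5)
Your reduction to $\mclH=\rho^{-1}(F)|_A$, the equivariant map $\varphi\colon A\to\Prob(\partial F_2)$, and your treatment of the part where $\varphi$ lands in $\Prob_{\ge 3}$ are fine. The $\Prob_{\le 2}$ case, however, has a genuine gap.

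The map $x\mapsto\mathrm{Supp}\,\varphi(x)$ takes values in the set of singletons and unordered pairs of points of $\partial F_2$. This set has uncountably many $F_2$-orbits, so no countability argument makes the map constant on a set of positive measure. Without constancy, equivariance only says that $\rho(\mathfrak g)$ carries the support at $s(\mathfrak g)$ to the support at $r(\mathfrak g)$. It does not put $\rho(\mathfrak g)$ in one fixed stabiliser. So the step ``one finds a positive-measure subset on which $\rho$ takes values in a single virtually cyclic subgroup $C$'' is unjustified.

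Nor can that step follow from what you have used up to that point, namely amenability, trivial kernel, and infinite type of $\mclH$. All of these hold, for instance, for the restriction to a finite-measure subset of the measure-preserving, conservative, amenable action of $F_2$ on pairs of distinct points of $\partial F_2$ with its Bowen--Margulis measure. There, $(\xi,\eta)\mapsto\frac12(\delta_\xi+\delta_\eta)$ is equivariant with null fibres, and no positive-measure piece has $\rho$-image in a cyclic subgroup. What fails in that example is exactly that preimages of cyclic subgroups are of infinite type. So action type for cyclic subgroups is the hypothesis that must control $\varphi$ directly; it cannot wait until after a localisation that is not available.

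The repair is to run your own finite-set argument on cyclic subgroupoids instead of on $\mclH$. This is the mechanism of Lemma~\ref{lemma A,B in F2 nonamenable}. Combined with action type and the fact that subgroupoids of amenable groupoids are amenable, it gives the statement; the paper itself only cites Kida and Guirardel--Horbez.

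By action type, $\mclA=\rho^{-1}(\gen a)|_A$ and $\mclB=\rho^{-1}(\gen b)|_A$ are of infinite type, and $\varphi$ is equivariant for both. The group $\gen a$ acts freely and properly discontinuously on $\partial F_2\setminus\{\zeta_a,\eta_a\}$, with a Borel fundamental domain $D$. Suppose the $\mclA$-invariant set where $\varphi(x)$ charges this complement had positive measure. Then $x\mapsto\{k\in\Z\mid \varphi(x)(a^kD)\text{ is maximal}\}$ would be an equivariant map to finite subsets of $\Z$, hence constant on some positive-measure set. There $\rho$ would be trivial on $\mclA$, and trivial kernel would contradict infinite type.

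Hence $\mathrm{Supp}\,\varphi(x)\subseteq\{\zeta_a,\eta_a\}$ for almost every $x$, and likewise $\mathrm{Supp}\,\varphi(x)\subseteq\{\zeta_b,\eta_b\}$. These two sets are disjoint, which is the desired contradiction. This disposes of both of your cases at once, so the $\Prob_{\le 2}/\Prob_{\ge 3}$ split is unnecessary.

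Your closing fallback of quoting \cite[Lemma~3.20]{kida_measure_2010} is what the paper does. But it cites the statement itself and does not complete your argument.
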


\subsection{Measurable embeddings}\label{subsection meas embeddings}

Recall that an action of a countable group $G$ on a standard measured space $(\Omega,m)$ is measure-preserving when each $ g\in G$ acts by a Borel isomorphism on $\Omega$ and for every Borel subset $A$, $m( g A)=m(A)$. For such an action, a \textit{Borel fundamental domain} is a Borel subset $D$ such that $\Omega = \ds\bigsqcup_{ g\in G} g D$, or equivalently the map $(g,d)\in G\times D\longmapsto gd\in\Omega$ is a Borel isomorphism.

\begin{Def}
    Let $G,H$ be two countable groups. We say that $H$ \textit{measurably embeds} into $G$ if there exists a standard measured space $(\Omega,m)$ with a measure-preserving action of $G\times H$ such that
    \begin{itemize}[noitemsep, nolistsep]
        \item the action $G\curvearrowright \Omega$ is free and admits a Borel fundamental domain of finite positive measure,
        \item the action $H\curvearrowright \Omega$ is free and admits a Borel fundamental domain.
    \end{itemize}
    Such a space $\Omega$ is called a \textit{measurable embedding coupling} of $H$ into $G$. 
    
    If additionally the action $H\curvearrowright \Omega$ admits a Borel fundamental domain of finite measure, then we say that $H$ and $G$ are \textit{measure equivalent} and $\Omega$ is a \textit{measure equivalence coupling} of $H$ and~$G$.
\end{Def}

\begin{Lemma}\label{lemma measurable embedding provides groupoid and cocycles}
    Let $G,H$ be two countable groups. Let $(\Omega,m)$ be a measurable embedding coupling of $H$ into $G$.
    
    Then, there exist
    \begin{itemize}[noitemsep, nolistsep]
        \item a Borel subset $X$ of $\Omega$ of finite positive measure such that $\left(G\times H\right) X$ is conull in $\Omega$,
        \item a measured groupoid $\mclG$ with base space $X$, an action-type cocycle $\w : \mclG\longrightarrow H$ and a trivial-kernel cocycle $\rho : \mclG\longrightarrow G$.
    \end{itemize}
    Up to replacing $m$ by $m/m(X)$, one can consider $X$ to be a standard probability space.
\end{Lemma}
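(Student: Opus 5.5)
We take fundamental domains $D_G$ for the $G$-action, of finite positive measure, and $D_H$ for the $H$-action, and set $X=D_G\cap D_H$. The plan is to show that $X$ has finite positive measure, that $(G\times H)X$ is conull, and that the two induced actions $G\curvearrowright D_H$ and $H\curvearrowright D_G$ have the same orbits on $X$; the groupoid $\mclG$ is then this common orbit relation.

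\textbf{Choice of $X$.} Since $\Omega=\bigsqcup_{h\in H}hD_H$, we have $D_G=\bigsqcup_h (D_G\cap hD_H)$. Some piece $D_G\cap hD_H$ has positive measure. Replacing $D_H$ by $hD_H$, which is still a fundamental domain because the actions commute, we may assume $X=D_G\cap D_H$ has positive measure, and it is finite since $X\subseteq D_G$.

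To get $(G\times H)X$ conull, we replace $D_H$ by a better fundamental domain. Fix enumerations of $H$ and $G$. For $x\in D_G$, the pieces $D_G\cap hD_H$ cover $D_G$, and since $D_G\simeq \Omega/G$ one can modify $D_H$ inside each $G$-orbit class. Concretely, define a new $H$-fundamental domain $D_H'$: for each $H$-orbit, choose a point of it lying in $GD_G\cap\ldots$. More simply, note that $(G\times H)X$ is a $G\times H$-invariant set. On its complement $\Omega'$, choose $D_H'\cap\Omega'$ as any $H$-fundamental domain meeting $D_G$ in positive measure, if $\Omega'$ is non-null, and iterate. A countable exhaustion, in which we pick disjoint $G\times H$-invariant pieces and union the chosen domains, yields $D_H$ with $(G\times H)(D_G\cap D_H)$ conull. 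Each step adds positive measure inside $D_G$, which has finite measure, so countably many steps suffice.

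\textbf{Groupoid and cocycles.} $H$ acts on $D_G\simeq G\backslash\Omega$ by $h\cdot x=$ the unique point of $G hx\cap D_G$. This action is measure-preserving on a finite measure space, and it is free on orbit classes because $G\times H$ acts freely on $G$-orbits, by freeness of $H$ and commuting. Similarly $G$ acts on $D_H$. For $x,y\in X$, the following are equivalent: $y=h\cdot x$ for some $h$ in the $D_G$-action; $y\in Ghx$; $y\in(G\times H)x$; $y=g\cdot x$ in the $D_H$-action. So the orbit relations restricted to $X$ coincide. We define $\mclG=(H\ltimes D_G)|_X$ with $\w(h,x)=h$.

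By Lemma~\ref{cocycles from pmp actions are action-type}, since $\mu(D_G)<\infty$, the cocycle $\w$ is of action type. For an arrow from $x$ to $y$, write $y=g hx$ with $(g,h)$ unique by freeness of the product action on $X$; the uniqueness needs a check that $ghx=x$ forces $g=h=1$, which holds generically after discarding a null set by the fundamental-domain property. Then set $\rho=g$. The map $\rho$ is a cocycle identified with that of $(G\ltimes D_H)|_X$, hence it has trivial kernel.

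\textbf{Expected obstacle.} The main obstacle is the careful choice of $D_H$ ensuring conullity of $(G\times H)X$, together with verifying freeness of the combined action at the level of arrows. Finally, normalizing by $m(X)$ makes $X$ a probability space.
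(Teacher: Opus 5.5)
Your outline is the paper's: choose $D_G,D_H$ with $(G\times H)(D_G\cap D_H)$ conull (the paper simply cites Kida's Lemma~2.27 for this), take $\mclG=(H\ltimes D_G)|_X$, get action type for $\w$ from $m(D_G)<\infty$, and get trivial kernel for $\rho$ from the identification with $(G\ltimes D_H)|_X$. The problem is your justification for $\rho$. You claim that $H\curvearrowright D_G$ is free, and that the product action of $G\times H$ is free ``generically after discarding a null set''. Both claims are false in general, and discarding null sets does not help.

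Take the coupling of Remark~\ref{alg embedding is meas embedding}: $\Omega=G$ with counting measure, $G$ acting by left multiplication and $H\le G$ by right multiplication. Then $D_G=\{1_G\}$, so $H$ acts trivially on $D_G$. Moreover $(g,h)x=gxh^{-1}=x$ whenever $g=xhx^{-1}$, so every point has stabiliser isomorphic to $H$. For the same reason $\mclG$ cannot be ``the common orbit relation''. In this example the orbit relation on the one-point set $X$ is trivial and carries no action-type cocycle to the infinite group $H$, whereas the action groupoid is $H$ itself. Your chain of equivalences only shows that the orbit relations agree, which is not what is needed.

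The repair is to work with arrows $(h,x)$ of the action groupoid, as in your final definition and in the paper.
\begin{itemize}
\item $\rho(h,x)$ is the unique $g$ with $ghx\in D_G$; uniqueness uses only freeness of $G$ and the fundamental-domain property.
\item Symmetrically, an arrow $(g,x)$ of $(G\ltimes D_H)|_X$ determines a unique $h$ with $(g,h)x\in X$, by freeness of $H$. This gives the groupoid isomorphism, which is the paper's identification through $\big((G\times H)\ltimes\Omega\big)|_X$.
\item Trivial kernel: if $\rho(h,x)=1$, then $hx\in X\subseteq D_H$ and $x\in D_H$ lie in one $H$-orbit, so $h=1$.
\end{itemize}

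On the choice of domains, your exhaustion needs a genuine maximality argument. You could pick at each stage a piece carrying at least half the available measure in $D_G$, or use transfinite recursion. The observation that ``each step adds positive measure'' does not by itself guarantee exhaustion after $\omega$ steps. It is simpler to choose the domain directly. Enumerate $H=\{h_1,h_2,\dots\}$ and let $D_H'$ consist of $D_H\setminus HD_G$ together with the points $h_kd$ ($d\in D_H$) such that $h_kd\in D_G$ and $h_jd\notin D_G$ for all $j<k$. This is a Borel $H$-fundamental domain with $H(D_G\cap D_H')=HD_G$. Hence $(G\times H)(D_G\cap D_H')\supseteq GD_G=\Omega$ everywhere, not just almost everywhere. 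Positivity of $m(X)$ then follows because $\Omega$ is a countable union of translates of $X$ and $m(D_G)>0$, so your preliminary translation of $D_H$ becomes unnecessary.
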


\begin{proof}
By \cite[Lemma~2.27]{kida_introduction_2009}, it is possible to choose Borel fundamental domains $D_G$ for $G$ and $D_H$ for $H$ such that $$m\left(\Omega\setminus\left(G\times H\right) X\right)=0~, $$where $X=D_G\cap D_H$. In particular, this implies that $X$ has positive measure. As $D_G$ has finite measure, $X$ also has finite measure. Note that \cite[Lemma~2.27]{kida_introduction_2009} is stated in the case of a measure equivalence coupling, but the proof is exactly the same for a measurable embedding coupling.

The actions of $G$ and of $H$ on $\Omega$ commute, so $G$ acts on $D_H$ and $H$ acts on $D_G$ via measure-preserving actions we denote by $\cdot$ to differentiate them from the actions on $\Omega$. Expliciting those actions provides us with two cocycles. For $d\in D_{ H}$ and $ g\in G$, define $ g\cdot d$ as being the unique representative in $D_{ H}$ of the $ H$-orbit of $ g d$, i.e.\
$$\{ g\cdot d\}= D_{ H}\cap  H( g d)~.$$
Similarly, for $h\in H$ and $d\in D_G$, $$\{ h\cdot d\}= D_{ G}\cap  G( h d)~.$$
Taking the exact element of $ H$ that brings $ g d$ to $D_{ H}$ defines the following cocycle:
$$\w : \begin{array}{ccl}
     G\ltimes D_{ H} & \longrightarrow &  H \\
     ( g, d)&\longmapsto & \text{the unique } h\in H \text{ such that }  h g d= g\cdot d~.
\end{array}$$
Symmetrically, the action $ H\stackrel{\textstyle \cdot}{\curvearrowright}D_{ G}$ gives the cocycle 
$$\rho : \begin{array}{ccl}
     H\ltimes D_{ G} & \longrightarrow &  G \\
     ( h, d)&\longmapsto & \text{the unique } g\in G \text{ such that }g h d =h\cdot d~.
\end{array}$$
Now, restricting to $X=D_G\cap D_H$, there is a groupoid isomorphism
$$\begin{array}{ccc}
     \big(( G\times H)\ltimes \Omega\big)|_X &\simeq &( G\ltimes D_{ H})|_X\\
     \left(( g, h),x\right)& \longmapsto & ( g,x)
\end{array}$$
because for fixed $ g\in G$ and $x\in X$, the only possible $ h\in H$ such that $( g, h)x\in X$ is $\w( g,x)$. The cocycle $\w$ becomes 
$$\w : \begin{array}{ccl}
     \big(( G\times H)\ltimes \Omega\big)|_X &\longrightarrow & H\\
     \left(( g, h),x\right)& \longmapsto &  h~.
\end{array}$$
But we also have the groupoid isomorphism
$$\begin{array}{ccc}
     \big(( G\times H)\ltimes \Omega\big)|_X &\simeq &( H\ltimes D_{ G})|_X\\
     \left(( g, h),x\right)& \longmapsto & ( h,x)
\end{array}$$
which now gives the cocycle $\w$ as the natural cocycle from the action of $H$ on $D_G$
$$\w : \begin{array}{ccl}
     ( H\ltimes D_{ G})|_X &\longrightarrow & H\\
     ( h,x)& \longmapsto &  h~.
\end{array}$$
Symmetrically, $\rho : ( G\ltimes D_{ H})|_X\longrightarrow G$.
To sum up, define the common groupoid
$$\mclG = (H\ltimes D_{G})|_X \simeq (G\ltimes D_{H})|_X~ .$$
Via this identification, the cocycles defined above arise from group actions
$$\w : (H\ltimes D_{G})|_X \longrightarrow H~ ,$$
$$\rho : (G\ltimes D_{H})|_X \longrightarrow G~.$$
Thus, since $D_G$ has finite measure, the cocycle $\w$ is of action type by Lemma~\ref{cocycles from pmp actions are action-type}.
The difference with measure equivalence lies in the fact that there is no reason for $\rho $ to be of action type because it comes from an action on a space of infinite measure. The cocycle $\rho $ still has trivial kernel because it comes from a group action.
\end{proof}

\begin{Rem}\label{alg embedding is meas embedding}
    For countable groups $G$ and $H$, an injective homomorphism $f : H\longrightarrow G$ is a special case of measurable embedding. Indeed, $G=\Omega$ is a measurable embedding coupling of $H$ into $G$, where $G$ acts by left translations and $f(H)$ by right translations. This coupling corresponds to $X$ being one single point of $G$. The groupoid $\mclG$ given by Lemma~\ref{lemma measurable embedding provides groupoid and cocycles} is then isomorphic to the group $H$, the cocycle $\w$ is the identity on $H$ and the cocycle $\rho $ is the embedding~$f$.
\end{Rem}

\begin{Rem}\label{rem amen is preserved by meas emb} Take a measurable embedding from a countable group $H$ to a countable group $G$, apply Lemma~\ref{lemma measurable embedding provides groupoid and cocycles} and adopt the same notations. If $G$ is amenable, the trivial-kernel cocycle $\rho : \mclG\longrightarrow G$ forces the groupoid $\mclG$ to be amenable, by Lemma~\ref{lemma triv kernel cocycle amenable group amenable groupoid}. The action-type cocycle $\w:\mclG \longrightarrow H$ has the following additional property: if $H$ is nonamenable, then $\mclG$ is everywhere nonamenable, by \cite[Lemma~4.12]{escalier_graph_2026}. In other words, $\w$ is \textit{action-like} in the sense of \cite[Definition~4.9]{escalier_graph_2026}. In conclusion, if $H$ measurably embeds into $G$ and $G$ is amenable, then $H$ is too.
\end{Rem}

\begin{Rem}\label{remark finite indices in me couplings}
Let $\Omega$ be a measurable embedding coupling of $H$ into $G$ with Borel fundamental domains $D_H$ and $D_G$. Let $H^0$ (resp. $G^0$) be a finite-index subgroup of $H$ (resp. $G$) with respective finite systems of representatives $R_H$ and $R_G$. Define the Borel subsets 
$$D_G^0=R_GD_G\text{ and }D_H^0=R_HD_H~.$$
Then, $D_G^0$ (resp. $D_H^0$) is a fundamental domain for the induced action of $G^0$ (resp. $H^0$) on $\Omega$. Moreover, $D_G^0$ has finite positive measure because it is the union of a finite number of translates of $D_G$. Thus, $\Omega$ is also a measurable embedding coupling of $H^0$ into $G^0$. 

Moreover, $D_H^0 = \bigsqcup_{h\in R_H} h D_H$ so $D_H^0$ has finite measure if and only if $D_H$ has finite measure. Therefore, $\Omega$ is a measure equivalence coupling of $H$ and $G$ if and only if it is a measure equivalence coupling of $H^0$ and $G^0$.
\end{Rem}

\begin{Prop}[{\cite[Proposition~2.9]{delabie_quantitative_2022}}]\label{transitivité du plongement ME} Let $G_1,G_2$ and $H$ be countable groups. 

If $G_1$ measurably embeds into $H$ and $H$ into $G_2$, then $G_1$ measurably embeds into $G_2$. If additionally every measurable embedding from $G_1$ to $G_2$ is a measure equivalence, then $H$ is measure equivalent to $G_1$ and $G_2$.
\end{Prop}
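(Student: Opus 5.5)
The plan is to compose couplings explicitly. Let $\Omega_1$ be a measurable embedding coupling of $G_1$ into $H$ and $\Omega_2$ one of $H$ into $G_2$. I would form the fibered product $\Omega=\Omega_1\times_H\Omega_2$, namely the quotient of $\Omega_1\times\Omega_2$ by the diagonal action $h\cdot(x,y)=(hx,hy)$. Since $H$ acts freely on $\Omega_1$ with a fundamental domain $D^1_H$ of finite measure, every diagonal $H$-orbit meets $D^1_H\times\Omega_2$ exactly once. So $\Omega$ is identified with $D^1_H\times\Omega_2$, carrying the product measure, and this gives a standard measured space.

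The commuting actions of $G_1$ (on $\Omega_1$) and of $G_2$ (on $\Omega_2$) descend to measure-preserving actions on $\Omega$. For freeness and fundamental domains:
\begin{itemize}[noitemsep, nolistsep]
\item For $G_2$, take $D^1_H\times D^2_{G_2}$. Its measure is $m_1(D^1_H)\,m_2(D^2_{G_2})$, which is finite and positive.
\item For $G_1$, take $D^1_{G_1}\times D^2_H$. Use the model $\Omega\simeq\Omega_1\times D^2_H$, which is valid because $H$ acts freely on $\Omega_2$ with fundamental domain $D^2_H$.
\end{itemize}
Freeness is inherited from freeness on the factors, together with the fact that the quotient identifications are Borel isomorphisms. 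The only points to check are measurability of the identifications and that the two models of $\Omega$ carry the same measure up to these isomorphisms. Both follow by decomposing $D^1_H=\bigsqcup_{h}\big(D^1_H\cap hD^2$-type pieces$\big)$ over $h\in H$, which is routine. This proves the first assertion.

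For the second assertion, assume every measurable embedding of $G_1$ into $G_2$ is a measure equivalence. Then the composed $\Omega$ above has $m(D^1_{G_1}\times D^2_H)<\infty$. Compute this measure in the model $\Omega\simeq\Omega_1\times D^2_H$; up to the scaling constant of the identification, it equals $m_1(D^1_{G_1})\cdot m_2(D^2_H)$. Both factors are positive, because fundamental domains of nonempty $\Omega_i$ have positive measure. Hence both factors are finite. Finiteness of $m_1(D^1_{G_1})$ says $\Omega_1$ is a measure equivalence coupling of $G_1$ and $H$, and finiteness of $m_2(D^2_H)$ says $\Omega_2$ is a measure equivalence coupling of $H$ and $G_2$.

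The main obstacle is this measure bookkeeping: showing rigorously that the measure of the $G_1$-fundamental domain in the composed coupling factors as a product of the two separate fundamental-domain measures, with compatible normalisations in the two models of $\Omega$. I would handle it by writing both models as subsets of $\Omega_1\times\Omega_2$ that are fundamental domains for the diagonal $H$-action. Any two Borel fundamental domains of a free measure-preserving action are related by countably many pieces translated by group elements, so they induce the same quotient measure, and Fubini then gives the product formula.
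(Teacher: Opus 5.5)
Your proposal is correct and follows the paper's proof essentially step for step. Both compose the couplings as $(\Omega_1\times\Omega_2)/H$, identified with $D^1_H\times\Omega_2$ and with $\Omega_1\times D^2_H$, take the fundamental domains $D^1_H\times D^2_{G_2}$ and $D^1_{G_1}\times D^2_H$, and obtain the second assertion from the product-measure argument. Your observation that both factors have positive measure is a small precision the paper leaves implicit when it says that an infinite factor forces an infinite product.
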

\begin{proof} For $i\in\{1,2\}$, fix $G_i\times H\curvearrowright\Omega_i$ a measurable embedding coupling. Denote the corresponding fundamental domains $X_i$ for $H$ and $Y_i$ for $G_i$, i.e.\
\begin{align*}
    \Omega_i&\simeq  H\times X_i\\
          &\simeq G_i\times Y_i~.
\end{align*} 
The Borel subsets $Y_2$ and $X_1$ have finite measure.

The composed coupling is defined by 
$$\Omega =~^{\textstyle\Omega_1\times\Omega_2}\Big/ _{\textstyle  H}$$
where $H$ acts diagonally on $\Omega_1\times\Omega_2$. Using the fact that $\Omega_1/H\simeq X_1$, we identify
$$\Omega\simeq X_1\times\Omega_2~.$$
This exhibits an action of $G_2$ on $\Omega$ by acting on the second coordinate. A Borel fundamental domain for this action is therefore $X_1\times Y_2$, which has finite measure. Symmetrically, the indentification 
$$\Omega\simeq \Omega_1\times X_2 $$
gives an action of $G_1$ on $\Omega$ with Borel fundamental domain $Y_1\times X_2$. The actions of $G_1$ and $G_2$ on $\Omega$ commute and are free, see e.g.\ \cite[Appendix~A.1.3]{bader_integrable_2013} for the explicit formulas. Hence $\Omega$ is indeed a measurable embedding coupling from $G_1$ to $G_2$.

Now, assume in addition that every measurable embedding coupling from $G_1$ into $G_2$ is a measure equivalence. This forces the Borel subset $Y_1\times X_2$ to also have finite measure in $\Omega_1\times X_2$. If either $Y_1$ or $X_2$ had infinite measure, then $Y_1\times X_2$ also would have infinite measure. Thus, $Y_1$ and $X_2$ both have finite measure. Therefore, $H$ is measure equivalent to $G_1$ via the coupling $\Omega_1$ and to $G_2$ via $\Omega_2$.
\end{proof}

\subsection{An obstruction to measurable embeddings of free products}\label{section adams and free into direct products}

Proposition~\ref{prop adams maximal map} recalls an important construction by Adams in \cite{adams_indecomposability_1994}. We will use it to prove the following Lemma~\ref{lemma A,B in F2 nonamenable}, which is a specific version of Lemma~\ref{lemme action-type groupe non moyennable donne groupoide non moyennable} that holds for any cocycle with trivial kernel. The latter is then used to prove Lemma~\ref{lemma free product into direct product}, which provides us with an obstruction to the image of a nonabelian free group via a measurable embedding.

Let $F_2$ be a free group over two generators and denote by $\partial F_2$ its boundary. Denote by $\Prob(\partial F_2)$ the probability measures on the compact metrisable space $\partial F_2$. The structure of measurable space of $\Prob(\partial F_2)$ is the coarsest one that makes the maps $\mu\mapsto\mu(A)$ measurable for all Borel subsets $A$ of $\partial F_2$, see e.g.\ \cite[Theorem~17.24]{kechris_classical_1995}. Denote by $\Prob_{\leq 2}(\partial F_2)$ the probability measures on $\partial F_2$ that have a support of cardinality $1$ or $2$. Note that $\Prob_{\leq 2}(\partial F_2)$ is a Borel subset of $\Prob(\partial F_2)$.

\begin{Prop}[Adams {\cite{adams_indecomposability_1994}}]\label{prop adams maximal map}
    Let $\mclG$ be an infinite-type amenable measured groupoid on a standard probability space $X$. Let $\rho : \mclG\longrightarrow F_2$ be a cocycle with trivial kernel. 
    
    Then, there exists a measurable and $(\mclG,\rho)$-equivariant map $\varphimax : X\longrightarrow\Prob(\partial F_2)$ that verifies the following.
    \begin{enumerate}[$(1)$]
        \item \label{1support}\textbf{Support.} There exists a conull Borel subset $X^*$ of $X$ such that $\varphimax|_{X^*}$ takes values in $\Prob_{\leq 2}(\partial F_2)$.
        \item \label{2maximality}\textbf{Maximality.} If $\varphi: X \longrightarrow \Prob( \partial F_2)$ is a measurable and $(\mclG,\rho)$-equivariant map, then for almost every $x\in X$, $\Supp(\varphi(x))\subseteq \Supp (\varphimax(x))$.
        \item \label{3uniqueness}\textbf{Uniqueness.} If there exists another map $\varphimax'$ that verifies Points~\ref{1support} and \ref{2maximality}, then for almost every $x\in X$, $\varphimax(x)=\varphimax'(x)$.
    \end{enumerate}
\end{Prop}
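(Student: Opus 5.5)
The plan is to follow Adams' original argument, in the form given by Kida \cite[Section~4]{kida_introduction_2009}. We replace the amenable action of $F_2$ on its boundary by the groupoid $\mclG$ acting through $\rho$. Since $\mclG$ is amenable and $F_2$ acts continuously on the compact metrisable space $\partial F_2$, amenability gives at least one measurable $(\mclG,\rho)$-equivariant map $\varphi_0 : X\longrightarrow \Prob(\partial F_2)$. This is the fixed point property in the space of measurable sections of $\Prob(\partial F_2)$; see e.g.\ \cite[Proposition~4.14]{kida_introduction_2009}. The main tool afterwards is a dichotomy for equivariant measures, applied pointwise to $\varphi_0(x)$ and to its finitely supported pieces.

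\textbf{Step 1: support.} Let $\varphi$ be any equivariant map. For a probability measure $\mu$ on $\partial F_2$, split $\mu=\mu_{\le 2}+\mu'$ according to whether the atoms of largest mass number at most two. More robustly, consider the Borel $F_2$-equivariant map $\mu\mapsto\mu\otimes\mu\otimes\mu$ restricted to triples of pairwise distinct points, and compose it with the barycentre map $\partial F_2^{(3)}\to F_2$ (finite-to-one, $F_2$-equivariant, to a finite subset of $F_2$). Suppose that on a positive measure set $A$ the measure $\varphi(x)$ is not supported on at most two points. Then we get an equivariant map from $A$ to the finite nonempty subsets of $F_2$, weighted by the normalised mass of the triples. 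Picking the subset of maximal weight gives an equivariant map $\psi:A\to\mathcal P_{<\infty}(F_2)$ to finite sets. This forces, for almost every $x$, the stabilising arrows $\mclG|_A\cap s^{-1}(x)$ to have $\rho$-images lying in finitely many translates. Since $\rho$ has trivial kernel and $\mclG|_A$ is of infinite type, there are infinitely many arrows from $x$. So we refine $A$ to a positive measure subset on which $\psi$ is constant, equal to a finite set $F$. Then $\rho(\mclG|_{A'})$ lies in the finite stabiliser of $F$, which is trivial, contradicting infinite type. Hence every equivariant map is supported on at most two points almost everywhere. Applied to $\varphi_0$, this handles Point~\ref{1support}. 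The step I expect to be most delicate is the measurable bookkeeping here: restricting to level sets of $\psi$ and using the trivial kernel to get a contradiction with infinite type.

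\textbf{Step 2: maximality and uniqueness.} Next, consider the set $\mathcal S$ of all measurable equivariant maps, and the assignment $x\mapsto\Supp\varphi(x)$, which is a subset of cardinality at most $2$. Two equivariant maps $\varphi,\varphi'$ give an equivariant map $(\varphi+\varphi')/2$, whose support is the union of the supports. By Step 1 this union also has cardinality at most $2$. Now take a countable family $(\varphi_n)$ in $\mathcal S$ such that the measure of $\{x: \#\Supp\varphi(x)=2\}$ is maximised, via the usual exhaustion argument with $\sup$ over measures. The convex combination $\sum 2^{-n}\varphi_n$ is again equivariant. Normalise it into $\varphimax$, the uniform measure on its support, which stays Borel and equivariant. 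Given any $\varphi$, the support of $(\varphi+\varphimax)/2$ contains $\Supp\varphimax(x)$, has cardinality at most $2$, and by maximality cannot be strictly larger on positive measure. This gives Point~\ref{2maximality}. For Point~\ref{3uniqueness}, two maps satisfying both points have mutually contained supports. Taking $\varphimax$ to be uniform on its support then makes it unique, which is how the normalisation should be fixed in the statement.
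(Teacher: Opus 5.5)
Your proposal is correct and takes the same route as the paper's sketch, but you reconstruct the cited ingredients instead of quoting them: existence of an equivariant map from amenability, then Adams' barycentre-of-triples argument (which uses only trivial kernel and infinite type) to force supports of cardinality at most $2$, then an exhaustion argument producing the maximal map. Your caveat on Point~(3) is also right: uniqueness holds only after a normalisation such as taking $\varphimax(x)$ uniform on its support, since if $\rho(\mclG)\subseteq\gen{a}$ both constant maps $\frac12\delta_{\zeta_a}+\frac12\delta_{\eta_a}$ and $\frac13\delta_{\zeta_a}+\frac23\delta_{\eta_a}$ satisfy Points~(1) and~(2); your only slip is calling the barycentre map finite-to-one, which it is not, though you never use that property.
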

\begin{proof}[Sketch of proof]
   The groupoid $\mclG$ being amenable, by \cite[Proposition~4.14]{kida_introduction_2009}, there exists a measurable and $(\mclG,\rho)$-equivariant map ${X\longrightarrow \Prob(\partial F_2)}$. In \cite[Proposition~5.1]{adams_indecomposability_1994}, Adams states that any measurable $(\mclG,\rho)$-equivariant map ${X\longrightarrow \Prob(\partial F_2)}$ takes values in $\Prob_{\leq 2}(\partial F_2)$ almost everywhere. This map is therefore almost everywhere valued in $\Prob_{\leq 2}(\partial F_2)$.
    Then, this existence allows us to apply \cite[Lemma~3.2]{adams_indecomposability_1994} which gives the existence of a map ${\varphimax : X\longrightarrow \Prob_{\leq 2}(\partial F_2)}$ that verifies Points~\ref{1support} and \ref{2maximality}. By \cite[Lemma~3.3]{adams_indecomposability_1994}, Point~\ref{3uniqueness} holds as well.
\end{proof}

\begin{Rem}
    Strictly speaking, Adams states the previous results only for action-type cocycles because he works with probability measure-preserving actions. Fortunately, the proof is exactly the same for cocycles with trivial kernel. For instance, we refer to the proof of \cite[Lemma~12.2]{guirardel_measure_2025} where Adams arguments are rewritten for trivial-kernel cocycles.
\end{Rem}

\begin{Lemma}[Kida {\cite[Lemma~3.20]{kida_measure_2010}}]\label{lemma A,B in F2 nonamenable}
    Let $\mclG$ be a measured groupoid on a standard probability space $X$. Let ${F_2=\gen{a,b}}$ be a nonabelian free group and let $\rho : \mclG\longrightarrow F_2$ be a cocycle with trivial kernel. Let $\mclA,\mclB$ be infinite-type subgroupoids of $\mclG$ such that $\rho(\mclA)\subseteq \gen{a}$ and $\rho(\mclB)\subseteq \gen{b}$.
    
    Then, for every Borel subset $U$ of $X$ of positive measure, the subgroupoid $\gen{\mclA|_U,\mclB|_U}$ is everywhere nonamenable.
\end{Lemma}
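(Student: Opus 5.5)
We argue by contradiction using Adams' maximal map from Proposition~\ref{prop adams maximal map}. Since restricting to $U$ preserves every hypothesis (infinite type, trivial kernel, the containments $\rho(\mclA|_U)\subseteq\gen{a}$ and $\rho(\mclB|_U)\subseteq\gen{b}$), we may replace $X$ by $U$. We then suppose there is a positive measure Borel subset $V\subseteq U$ such that $\mclH\defeq\gen{\mclA|_V,\mclB|_V}$ is amenable, and derive a contradiction. After renormalising the measure on $V$, the groupoid $\mclH$ is an infinite-type amenable measured groupoid: it contains $\mclA|_V$, which is of infinite type. The restriction of $\rho$ to $\mclH$ is a cocycle with trivial kernel into $F_2$. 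So Proposition~\ref{prop adams maximal map} provides a measurable $(\mclH,\rho)$-equivariant map $\varphimax : V\longrightarrow \Prob_{\leq 2}(\partial F_2)$.

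The next step is to use the subgroupoids separately. The groupoid $\mclA|_V$ is amenable, by Lemma~\ref{lemma triv kernel cocycle amenable group amenable groupoid} since $\gen{a}\simeq\Z$, and it is of infinite type. The map $\varphimax$ is in particular $(\mclA|_V,\rho)$-equivariant. I would show that for almost every $x\in V$, the support of $\varphimax(x)$ is contained in the fixed-point set $\{a^{+\infty},a^{-\infty}\}\subseteq\partial F_2$ of $\gen{a}$. The argument is an Adams-type recurrence argument. Because $\mclA|_V$ is of infinite type and $\rho$ has trivial kernel, for almost every $x$ there are infinitely many arrows $\mfk g$ from $x$ returning into a set where $\varphimax$ is nearly constant, with labels $\rho(\mfk g)=a^{n}$ and $|n|\to\infty$. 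Concretely, partition $\Prob_{\leq 2}(\partial F_2)$ into small Borel pieces and apply infinite type to the preimage of each piece. Equivariance then gives $a^{n}\varphimax(s(\mfk g))=\varphimax(r(\mfk g))$ with both sides close to the same measure. Since $a^n$ contracts everything except the repelling point toward $a^{\pm\infty}$, a measure on at most two points that is almost fixed by $a^n$ with $|n|$ large must be supported on $\{a^{+\infty},a^{-\infty}\}$. The symmetric argument with $\mclB|_V$ shows that the support of $\varphimax(x)$ is contained in $\{b^{+\infty},b^{-\infty}\}$ for almost every $x$.

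These two fixed-point sets are disjoint in $\partial F_2$. Hence $\varphimax(x)$ would have empty support on a conull set, which is impossible for a probability measure, and this is the contradiction. The main obstacle is making the recurrence step rigorous. We only have trivial kernel, not action type, so we cannot pull back the subgroup $\gen{a}$ and invoke Poincaré recurrence directly. We must use the infinite type of $\mclA$ itself, together with the trivial kernel of $\rho$, to obtain arrows with labels $a^n$ and $|n|$ unbounded returning near a given value of $\varphimax$. A cleaner alternative I would try first is to invoke the known statement, used in the proof of Proposition~\ref{prop adams maximal map} and in \cite[Lemma~12.2]{guirardel_measure_2025}, that an infinite-type amenable groupoid with a trivial-kernel cocycle into a cyclic subgroup $\gen{a}$ has equivariant measures supported on $\{a^{\pm\infty}\}$. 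Either way, the supports on the $a$-side and the $b$-side cannot meet, so no positive-measure restriction of $\gen{\mclA|_U,\mclB|_U}$ is amenable.
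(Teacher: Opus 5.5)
Your proof is correct. Its skeleton matches the paper's: contradiction, an equivariant map for the amenable groupoid $\mclH$, support constraints coming from $\mclA$ and $\mclB$, and disjointness of the two pairs of fixed ends. The difference lies in how the support constraint is obtained.

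The paper runs no recurrence argument. It applies Proposition~\ref{prop adams maximal map} to $\mclA|_Y$ itself. The constant map $y\mapsto\frac12\delta_{\zeta_a}+\frac12\delta_{\eta_a}$ is $(\mclA|_Y,\rho)$-equivariant with support of cardinality exactly $2$. The maximal map has support of cardinality at most $2$ containing this one, so its support is exactly $\{\zeta_a,\eta_a\}$. Maximality then forces every $(\mclA|_Y,\rho)$-equivariant map, in particular the one for $\mclH$, to be supported in $\{\zeta_a,\eta_a\}$. So the "cleaner alternative" you wanted to cite comes in two lines from the proposition already stated; all the dynamical input sits inside its cardinality bound.

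Your direct route is also sound. Trivial kernel makes the labels of distinct arrows out of a given point pairwise distinct. Hence infinitely many returns into $\varphimax^{-1}(O)$, for $O$ ranging over a countable base of $\Prob(\partial F_2)$, give $a^{n_j}\varphimax(x)\to\varphimax(x)$ with $|n_j|\to\infty$ for almost every $x$. North--south dynamics of $a$ on $\partial F_2$ then pins the support to $\{a^{\pm\infty}\}$. This is more self-contained: it does not use the maximality or uniqueness parts of Adams' result for $\mclA$. It also works for arbitrary probability measures, so for $\mclH$ you only need the existence of some equivariant map into $\Prob(\partial F_2)$.

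One small point: the statement concerns $\gen{\mclA|_U,\mclB|_U}|_V$, which contains your $\gen{\mclA|_V,\mclB|_V}$. Either run your argument with the former, since it only uses that $\mclH$ is amenable and contains $\mclA|_V$ and $\mclB|_V$, or note that amenability passes to subgroupoids.
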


\begin{proof}
Assume by contradiction that there exists a positive measure Borel subset $Y$ of $U$ such that the subgroupoid $$\mclH=\gen{\mclA|_U,\mclB|_U}|_Y$$ is amenable. Then, by Proposition~\ref{prop adams maximal map}, there exists a measurable and $(\mclH,\rho)$-equivariant map $$\varphi_{\mclH} : Y\longrightarrow \Prob(\partial F_2)~.$$

Denote by $\zeta_a,\eta_a$ (resp. $\zeta_b,\eta_b$) the two fixed ends of $a$ (resp. $b$) in $\partial F_2$. Denote by $\delta_{\zeta}$ the Dirac measure at an end $\zeta\in\partial F_2$.
The constant map $$\varphi_a : Y\longrightarrow \left\{\frac{1}{2}\delta_{\zeta_a}+\frac{1}{2}\delta_{\eta_a}\right\}$$
is measurable and $(\mclA|_Y,\rho)$-equivariant. Similarly, the constant map 
$$\varphi_b : Y\longrightarrow \left\{\frac{1}{2}\delta_{\zeta_b}+\frac{1}{2}\delta_{\eta_b}\right\}$$ is measurable and $(\mclB|_Y,\rho)$-equivariant map.

The groupoid $\mclA|_Y$ is amenable because it is sent into the amenable group $\gen a$ by a cocycle with trivial kernel, by Lemma~\ref{lemma triv kernel cocycle amenable group amenable groupoid}. Proposition~\ref{prop adams maximal map} applied to the groupoid $\mclA|_Y$ yields the existence of an almost everywhere unique measurable and $(\mclA|_Y,\rho)$-equivariant map $Y\longrightarrow \Prob_{\leq 2}(\partial F_2)$ that is maximal amongst all measurable $(\mclA|_Y,\rho)$-equivariant maps $Y\longrightarrow\Prob(\partial F_2)$. Now, notice that the constant map $\varphi_a$ is necessarily equal almost everywhere to this unique maximal map because it has support of cardinality exactly $2$.
Similarly, by Proposition~\ref{prop adams maximal map} applied to $\mclB|_Y$, the constant map 
$$\varphi_b : Y\longrightarrow \left\{\frac{1}{2}\delta_{\zeta_b}+\frac{1}{2}\delta_{\eta_b}\right\}$$ is the (almost everywhere) unique maximal measurable $(\mclB|_Y,\rho)$-equivariant map amongst all measurable $(\mclB|_Y,\rho)$-equivariant maps $Y\longrightarrow \Prob(\partial F_2)$.

Note that $\mclA|_Y,\mclB|_Y\subseteq \mclH$ so the map $\varphi_{\mclH}$ is in particular both $(\mclA|_Y,\rho)$-equivariant and $(\mclB|_Y,\rho)$-equivariant. By maximality of $\varphi_a$ amongst all measurable and $(\mclA|_Y,\rho)$-equivariant maps from $Y$ to $\Prob(\partial F_2)$, for almost every $y\in Y$, $$\Supp(\varphi_{\mclH}(y))\subseteq\Supp (\varphi_a(y))=\{\zeta_a,\eta_a\}~.$$
Similarly, for almost every $y\in Y$, $$\Supp(\varphi_{\mclH}(y))\subseteq \{\zeta_b,\eta_b\}~.$$
But $a$ and $b$ generate a free group so $\{\zeta_a,\eta_a\}\cap \{\zeta_b,\eta_b\}$ is empty. Therefore, $\mclH$ cannot be amenable. In conclusion, $\gen{\mclA|_U,\mclB|_U}$ is everywhere nonamenable.
\end{proof}

\begin{Lemma}\label{lemma free product into direct product}
    Let $\mclG$ be a measured groupoid over a standard probability space $X$. Let $G$ and $K$ be infinite countable groups and let $A$ be an infinite cyclic group. Assume that there are two cocycles $$\w:\mclG \longrightarrow K\ast A$$ and $$\rho:\mclG \longrightarrow G$$ that both have trivial kernel.
    Let $\mclK, \mclA$ be infinite-type subgroupoids of $\mclG$ such that 
    \begin{align*}
        \w(\mclK)\subseteq K~\text{and } \w(\mclA) \subseteq A~.
    \end{align*}
    Then, $\rho(\mclA)$ cannot be contained in the centre of $G$.
\end{Lemma}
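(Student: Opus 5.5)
The plan is to argue by contradiction: assume $\rho(\mclA)\subseteq Z(G)$ and build a subgroupoid that is everywhere nonamenable because of $\w$, yet amenable because of $\rho$. The source of nonamenability is the free product structure. For any nontrivial $k\in K$, the subgroups $A$ and $kAk^{-1}$ of $K\ast A$ generate a nonabelian free group, freely generated by a generator $a$ of $A$ and by $kak^{-1}$. The idea is to realise the conjugate $kAk^{-1}$ inside $\mclG$ by conjugating $\mclA$ with a piece of $\mclK$.

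\textbf{Step 1: a suitable bisection.} Since $\mclK$ has infinite type, for almost every $x$ the set $\mclK\cap s^{-1}(\{x\})$ is infinite. Since $\w$ has trivial kernel, these arrows take infinitely many distinct $\w$-values, so some value is not $1$. Write $\mclK$ as a countable union of bisections using Lusin--Novikov and split further according to the values of the pair $(\w,\rho)$. This gives a bisection $B\subseteq\mclK$ with $s(B)$ of positive measure on which $\w\equiv k\neq 1$ with $k\in K$, and $\rho\equiv g$ for some fixed $g\in G$.

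\textbf{Step 2: the conjugated groupoid.} Set $V=r(B)$. Let $\mclB$ be the subgroupoid over $V$ consisting of the arrows ${\mfk b}\,{\mfk a}\,{\mfk b}'^{-1}$ with ${\mfk b},{\mfk b}'\in B$ and ${\mfk a}\in\mclA|_{s(B)}$. Since $\mclA$ has infinite type and $B$ is a bisection (a measure-preserving partial isomorphism $s(B)\to V$), $\mclB$ has infinite type. Both $\w$-values and $\rho$-values are constant on $B$, so
\[
\w(\mclB)\subseteq kAk^{-1}\quad\text{and}\quad \rho({\mfk b}{\mfk a}{\mfk b}'^{-1})=g\rho({\mfk a})g^{-1}=\rho({\mfk a}),
\]
where the last equality uses the centrality assumption. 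In particular $\rho(\mclB)\subseteq\rho(\mclA)$.

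\textbf{Step 3: the contradiction.} Restrict $\w$ to $\mclH=\gen{\mclA|_V,\mclB}$. It has trivial kernel and takes values in $\gen{A,kAk^{-1}}\simeq F_2=\gen{a,kak^{-1}}$, with $\w(\mclA|_V)\subseteq\gen a$ and $\w(\mclB)\subseteq\gen{kak^{-1}}$. Both $\mclA|_V$ and $\mclB$ have infinite type, so Lemma~\ref{lemma A,B in F2 nonamenable} shows that $\mclH$ is everywhere nonamenable. On the other hand, $\rho(\mclH)$ lies in the subgroup of $G$ generated by $\rho(\mclA)$. This subgroup is contained in $Z(G)$, so it is abelian and hence amenable. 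Since $\rho$ has trivial kernel, Lemma~\ref{lemma triv kernel cocycle amenable group amenable groupoid} makes $\mclH$ amenable, which is a contradiction.

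I expect the main obstacle to be Step 2, namely checking measurably that $\mclB$ is a genuine subgroupoid of infinite type. The constancy of $\rho$ on $B$ is the essential point there, since without it the $\rho$-values of conjugates would pick up varying factors $\rho({\mfk b})\rho({\mfk b}')^{-1}$. The infinite-type property of $\mclA$ itself is used only in Step 2, where it is transported to $\mclB$ through the bisection.
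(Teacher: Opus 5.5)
Your proposal is correct and follows the paper's proof essentially step by step. The paper also picks a positive-measure bisection $B\subseteq\mclK$ on which $\w\equiv k\neq 1$ and $\rho\equiv g$, and forms $\mclH=\gen{\mclA|_V,B\mclA|_UB^{-1}}$. It then gets amenability from $\rho(\mclH)\subseteq Z(G)$ and everywhere nonamenability from Lemma~\ref{lemma A,B in F2 nonamenable} applied to $\gen{a}$ and $\gen{kak^{-1}}$. Your added justifications fill in details the paper leaves implicit: why a nontrivial $k$ exists, why the conjugated groupoid has infinite type, and restricting $\w$ to $\mclH$ so that it lands in $F_2$.
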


\begin{proof}
    Assume by contradiction that $\rho(\mclA)\subseteq Z(G)$, where $Z(G)$ is the centre of $G$. For $k\in K$ and $g\in G$, 
    $$B_{g,k}=\{\mfk k\in\mclK~|~ \rho(\mfk k)=g,~\w(\mfk k)=k\}$$
    is a bisection of $\mclG$ because $\w$ and $\rho$ have trivial kernel. Note that $$\mclK=\bigcup_{\substack{g\in G\\ k\in K}}B_{g,k}~.$$
From now on, fix nontrivial $k\in K$ and $g\in G$ such that the bisection $B_{g,k}$, which we will now simply denote by $B$, has positive measure. Name $U$ the source of $B$ and $V$ its range, so that $U,V$ are Borel subsets of $X$ of positive measure. Now, define the following subgroupoid of $\mclG|_V$
    $$\mclH=\gen{\mclA|_V,B\mclA|_U B^{-1}}~.$$
    We will show that $\mclH$ is both amenable and nonamenable which gives a contradiction. 

    On one hand, we show that $\mclH$ is amenable. Firstly, $\rho(\mclA|_V)\subseteq Z(G)$ by assumption. Secondly, fix $\mfk a\in\mclA|_U$ and $\mfk b,\mfk b'\in B$ such that $\mfk{bab'}^{-1}$ is well defined. Then,
    \begin{align*}
        \rho(\mfk{bab'}^{-1})&=\rho(\mfk b)\rho(\mfk a)\rho(\mfk b')^{-1}\\
                            &=g\rho(\mfk a)g^{-1} = \rho(\mfk a)\in Z(G)
    \end{align*}
    so $\rho(B\mclA|_U B^{-1})\subseteq Z(G)$. Thus $\rho(\mclH)\subseteq Z(G)$. The cocycle $\rho$ having trivial kernel, this forces $\mclH$ to be amenable by Lemma~\ref{lemma triv kernel cocycle amenable group amenable groupoid}.

    On the other hand, let us show that $\mclH$ is not amenable. Let $A=\gen{a}$. Note that $\w(\mclA|_V)\subseteq \gen{a}$ and
    $$\w(B\mclA|_U B^{-1})\subseteq k\w(\mclA|_U)k^{-1} \subseteq \gen{kak^{-1}}~.$$
    Using that $a$ and $kak^{-1}$ generate a free group, apply Lemma~\ref{lemma A,B in F2 nonamenable} to the trivial-kernel cocycle $\w$ to find that $\mclH$ is indeed nonamenable.
\end{proof}

\section{Groupoids with cocycles towards mapping class groups, after Kida}\label{section groupoids and mapping class groups}

\subsection{Canonical reduction sets and multicurves}\label{section canonical multicurves}

Fix a connected surface $\S$ and recall (see Theorem~\ref{modz th}) that Ivanov proved that any $f\in\Modz$ is \textit{pure}, i.e.\ there exists a (possibly empty) multicurve $\s$ on $\S$ such that every connected component $Q$ of $\S\setminus \s$ is $f$-invariant and either $f$ fixes every curve of $Q$ or $f$ fixes no curve of $Q$. Then $\s$ is called a \textit{reduction multicurve} for $f$. More generally, in \cite[Section~7.2]{ivanov_subgroups_1992}, Ivanov constructs a \textit{canonical reduction multicurve} for any subgroup of $\Modz$, which is called canonical because it is the unique reduction multicurve that is invariant by the normaliser of the subgroup. In this section, following Kida \cite[Theorem~3.6]{kida_measure_2010} and Hensel--Horbez \cite[Section~3.2]{hensel_measure_2025}, we explain that every subgroupoid of a groupoid $\mclG$ with a cocycle towards $\Modz$ also has this canonical multicurve, up to a countable partition of the base space $X$.

\begin{Def}
    Let $\mclG$ be a measured groupoid over a standard probability space $X$. Let $\S$ be a connected surface and let $\rho : \mclG\longrightarrow \Modz$ be a cocycle.
    
    We say that $(\mclG,\rho)$ \textit{stabilises} a multicurve $\s$ if ${\rho(\mclG|_{X^*})\subseteq \Stab\s }$ for some conull Borel subset $X^*$ of $X$. 
    We say that $(\mclG,\rho)$ \textit{stabilises} an open subsurface $Q$ if $(\mclG,\rho)$ stabilises the multicurve $\partial Q$. In other words, $Q$ is a component of $\S\setminus \s$ for some multicurve $\s$ that $\mclH$ stabilises.
\end{Def}

\begin{Def}
     Let $\mclG$ be a measured groupoid over a standard probability space $X$. Let $\S$ be a connected surface and let $\rho : \mclG\longrightarrow \Modz$ be a cocycle. A set $\mclC$ of curves on $\S$ is the \textit{canonical reduction set} of $(\mclG,\rho)$ if 
     \begin{itemize}[nolistsep, label=-]
        \item $(\mclG,\rho)$ stabilises all the curves in $\mclC$;
        \item for every Borel subset $U$ of $X$ of positive measure, any curve that is stabilised by $(\mclG|_U,\rho)$ belongs to $\mclC$.
     \end{itemize}
     The \textit{canonical multicurve} of $(\mclG,\rho)$ is the boundary of the subsurface filled by $\mclC$.
\end{Def}

If the canonical reduction set exists, then it is exactly the set of all the curves stabilised by $(\mclG,\rho)$, which makes it unique. The canonical multicurve is then also unique. Note also that if $\s$ is the canonical multicurve of $(\mclG,\rho)$ then it is also the canonical multicurve of $(\mclG|_U,\rho)$ for any Borel subset $U$ of positive measure. The next lemma states the existence of the canonical reduction set and multicurve, up to a countable Borel partition of $X$.

\begin{Lemma}[{\cite[Lemma~3.12]{hensel_measure_2025},~\cite[Theorem~3.6]{kida_measure_2010}}]\label{canonical reduction set exists}
    Let $\mclG$ be a measured groupoid over a standard probability space $X$. Let $\S$ be a connected surface and let $\rho : \mclG\longrightarrow \Modz$ be a cocycle.
    
    Then, there exists a countable conull Borel partition $X^*=\bigsqcup_n X_n$ of $X$ such that for every $n$, the canonical reduction set and multicurve exist for $(\mclG|_{X_n},\rho)$.
\end{Lemma}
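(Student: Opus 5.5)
We build the partition by an exhaustion over the countable set $\cxcourbes{\S}$. For a Borel subset $U\subseteq X$ of positive measure and a curve $c$, say $c$ is \emph{stabilised on $U$} if $\rho(\mclG|_U)\subseteq \Stab{c}$ after discarding a null set. For a finite set $F$ of curves, the stabiliser condition is then a condition on $\rho(\mclG|_U)\subseteq \bigcap_{c\in F}\Stab c$.

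\textbf{Step 1: a good piece inside every positive measure set.} Fix $U$ of positive measure. We look for $Y\subseteq U$ of positive measure such that the set $\mclC(Y)$ of curves stabilised by $(\mclG|_Y,\rho)$ equals the set of curves stabilised on every positive measure subset of $Y$. The key is that $\mclC(Y)$ cannot grow forever. Shrinking $Y$ can only enlarge $\mclC(Y)$. Moreover $\mclC(Y)$ is a $\rho(\mclG|_Y)$-invariant set of curves which all lie in the common stabiliser, so all of them are fixed by the subgroup generated by $\rho(\mclG|_Y)$.

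We then use Ivanov's purity, Theorem~\ref{modz th}, in its subgroup form: for a subgroup $\Gamma\subseteq\Modz$, the set of curves fixed by $\Gamma$ is the set of curves disjoint from a canonical multicurve together with all curves of the components where $\Gamma$ acts trivially. This set is determined by the following finite data: a multicurve $\s$, at most $\kappa(\S)$ curves, and a choice of components of $\S\setminus\s$. Hence the possible sets $\mclC(Y)$ form a family in which any strictly increasing chain has length bounded in terms of $\kappa(\S)$. One way to see this is to consider the subsurface filled by the fixed curves, with boundary multicurve, and note that the complexity strictly increases along the chain.

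Starting from $Y_0=U$, whenever some positive measure $Y'\subseteq Y_i$ stabilises an extra curve, we set $Y_{i+1}=Y'$. This must stop after boundedly many steps, giving the desired $Y$. For this $Y$, $\mclC(Y)$ is the canonical reduction set of $(\mclG|_Y,\rho)$, and the canonical multicurve is then defined as the boundary of its filled subsurface.

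\textbf{Step 2: exhaustion.} Consider the family of Borel sets $Y$ for which the canonical reduction set exists. By Step 1, every positive measure set contains such a $Y$, and the property is inherited by positive measure subsets. A standard maximality argument, taking the supremum of measures of countable disjoint unions of such sets, then yields a countable disjoint family $(X_n)$ whose union is conull. Measurability is not an issue: for each of the countably many finite sets of curves $F$, the set of points where arrows leave $\bigcap_{c\in F}\Stab c$ is Borel, since $\rho$ is a Borel cocycle.

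\textbf{Main obstacle.} The delicate point is the chain-length bound in Step 1. It requires checking that the set of curves fixed by a subgroup of $\Modz$ really is controlled by a bounded-complexity datum. Concretely, we need that if $\Gamma$ fixes curves $c,c'$ with $i(c,c')>0$, then $\Gamma$ acts trivially on curves of the subsurface filled by $c$ and $c'$; this is where purity is used. We would handle it by citing Ivanov's canonical reduction system \cite[Section~7.2]{ivanov_subgroups_1992}, or by following the argument of \cite[Theorem~3.6]{kida_measure_2010}.
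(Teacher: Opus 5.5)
You should know that the paper gives no proof of this lemma; it quotes it from Hensel--Horbez and Kida. Your two-step scheme is the standard argument behind those references: inside every positive-measure set, find a piece on which the set of stabilised curves can no longer grow, using a bounded monotone invariant from Ivanov's canonical reduction systems, then exhaust $X$. Step~2 is fine as written. The step that does not hold up is the chain-length bound, which is the whole content of Step~1.

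First, two smaller points. There is a slip in your description of fixed sets. For $\Gamma\le\Modz$ with canonical reduction multicurve $\s(\Gamma)$, the fixed set is $\mathrm{Fix}(\Gamma)=\s(\Gamma)\cup\bigcup_{Q\in\mathcal T(\Gamma)}\cxcourbes{Q}$, where $\mathcal T(\Gamma)$ is the set of components of $\S\setminus\s(\Gamma)$ on which $\Gamma$ acts trivially. It is not ``all curves disjoint from $\s(\Gamma)$'', since curves in components where $\Gamma$ contains a pseudo-Anosov are moved. You should also state that $\mclC(Y)$ \emph{equals} $\mathrm{Fix}(\gen{\rho(\mclG|_{Y^*})})$, where $Y^*\subseteq Y$ is the conull set obtained by intersecting the countably many conull sets attached to the curves of $\mclC(Y)$. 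You only record an inclusion, and equality is what lets you apply Ivanov's description to $\mclC(Y)$.

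Second, and more seriously, ``the complexity of the filled subsurface strictly increases along the chain'' is false under either natural reading.

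If you count boundary curves, so that the invariant is the maximal size $\mu$ of a multicurve in the fixed set, take $\gen{T_c^3}\supseteq\{1\}$ with $c$ nonseparating. The fixed sets are the curves disjoint from $c$ and all of $\cxcourbes{\S}$, and both have $\mu=\kappa(\S)$.

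If you do not count them, so that the invariant is $\xi(\Gamma)=\sum_{Q\in\mathcal T(\Gamma)}\kappa(Q)$, take the following. Let $P$ be a complexity-one subsurface with $\partial P=b$, let $d\in\cxcourbes{P}$, and let $\phi\in\Modz$ be supported on $P$ and pseudo-Anosov there. Then $\gen{T_b^3,T_d^3}\subseteq\gen{T_b^3,T_d^3,\phi}$ have fixed sets differing only by $d$. This only adds an annulus around $d$, so $\xi$ is unchanged.

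What works is the sum $\mu+\xi$, where $\mu(\Gamma)=\#\s(\Gamma)+\xi(\Gamma)$. Suppose $\mathrm{Fix}(\Gamma_1)\subseteq\mathrm{Fix}(\Gamma_2)$.
\begin{itemize}
\item $\mu(\Gamma_1)\le\mu(\Gamma_2)$ holds trivially.
\item $\xi(\Gamma_1)\le\xi(\Gamma_2)$ holds because fixed curves never cross the canonical multicurve. So each positive-complexity $Q\in\mathcal T(\Gamma_1)$ lies in some $Q'\in\mathcal T(\Gamma_2)$. Pairwise disjoint positive-complexity subsurfaces of $Q'$ have total complexity at most $\kappa(Q')$, with equality only for the family $\{Q'\}$.
\item If both are equalities, the positive-complexity trivial components coincide. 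Hence $\s(\Gamma_1)\subseteq\s(\Gamma_2)$ and $\mathrm{Fix}(\Gamma_2)\setminus\mathrm{Fix}(\Gamma_1)\subseteq\s(\Gamma_2)\setminus\s(\Gamma_1)$. This difference is empty by equality of $\mu$.
\end{itemize}
So $\mu+\xi$, which takes values in $[0,2\kappa(\S)]$, strictly increases along your chain, and the chain has length at most $2\kappa(\S)+1$. With this invariant in place of the filled-subsurface complexity, your proof is complete.
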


\begin{Rem}\label{rem we can always choose U for the canonical multicurve to exist}
In view of the previous lemma, given a finite number of subgroupoids $\mclH_1,\dots,
$ $\mclH_k$ of $\mclG$, there always exists a countable conull Borel partition $X^*=\bigsqcup_n X_n$ such that the canonical multicurves of $(\mclH_i|_ {X_n},\rho)$ exist for all $1\leq i\leq k$ and all $n\in \N$. Indeed, it suffices to refine the $k$ partitions given by Lemma~\ref{canonical reduction set exists} into a common partition. In this case, we will say that such a partition is \textit{adapted} to the subgroupoids $(\mclH_i,\rho)$ for all $i$.
\end{Rem}

The canonical multicurve is called canonical because of the following key result.

\begin{Lemma}[{\cite[Lemma~3.13]{hensel_measure_2025},~\cite[Theorem~3.6]{kida_measure_2010}}]\label{lemma multicurve is canonical}
    Let $\mclG$ be a measured groupoid over a standard probability space $X$. Let $\S$ be a connected surface and let $\rho : \mclG\longrightarrow \Modz$ be a cocycle. Let $\mclA$ and $\mclH$ be subgroupoids such that $\mclH$ normalises $\mclA$. Assume that the canonical multicurve $\s$ of $(\mclA,\rho)$ exists. 
    
    Then, $(\mclH,\rho)$ stabilises $\s$.
If in addition the canonical multicurve $\s'$ of $(\mclH,\rho)$ exists, then $\s$ and $\s'$ are disjoint.
\end{Lemma}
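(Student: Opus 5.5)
The plan is to reduce the groupoid statement to Ivanov's group-theoretic fact that the canonical reduction multicurve of a subgroup of $\Modz$ is invariant under its normaliser. The reduction goes through a countable decomposition of $\mclH$ into bisections, combined with the canonicity of the reduction set. Write $\mclC$ for the canonical reduction set of $(\mclA,\rho)$, so that $\s=\partial\S_{\mclC}$. Since $\mclH$ normalises $\mclA$, choose bisections $(B_n)_n$ covering $\mclH$ such that $\mclA$ is $B_n$-invariant for every $n$. Refining each $B_n$ according to the value of $\rho$, which is countably valued, we may assume that $\rho$ is constant on $B_n$, equal to some $f_n\in\Modz$. Let $U_n=s(B_n)$ and $V_n=r(B_n)$.

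\textbf{Step 1: transport of stabilised curves.} Fix $n$ with $\mu(U_n)>0$ and a curve $c\in\mclC$. Conjugation by $B_n$ identifies $\mclA|_{U_n}$ with $\mclA|_{V_n}$, using $B_n$-invariance. For $\mfk a\in\mclA|_{U_n}$, the arrow $\mfk b'\mfk a\mfk b^{-1}$ lies in $\mclA|_{V_n}$ and has $\rho$-value $f_n\rho(\mfk a)f_n^{-1}$. Since $(\mclA|_{U_n},\rho)$ stabilises $c$, it follows that $(\mclA|_{V_n},\rho)$ stabilises $f_nc$. By canonicity, which applies to any positive measure subset, we get $f_nc\in\mclC$. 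Running the same argument with $B_n^{-1}$ gives $f_n\mclC=\mclC$ (up to null sets). Since $f_n$ is a homeomorphism, it maps the filled subsurface $\S_{\mclC}$ to $\S_{f_n\mclC}=\S_{\mclC}$, and hence $f_n\s=\s$. Therefore $\rho(\mclH)\subseteq\Stab\s$ off a null set, which means $(\mclH,\rho)$ stabilises $\s$.

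\textbf{Step 2: disjointness.} Assume now that the canonical multicurve $\s'$ of $(\mclH,\rho)$ exists, with canonical reduction set $\mclC'$. By Step 1, $(\mclH,\rho)$ stabilises every curve of $\s$. Since $\rho$ lands in $\Modz$ and pure elements stabilising a multicurve stabilise each curve, every curve of $\s$ lies in $\mclC'\subseteq\S_{\mclC'}\cup\s'$. This is where the main subtlety lies: a curve of $\s$ is either a boundary curve of $\S_{\mclC'}$ or lies inside $\S_{\mclC'}$. In both cases it is disjoint from $\partial\S_{\mclC'}=\s'$, since boundary curves of a subsurface are disjoint from every curve contained in its closure. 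Hence $\s$ and $\s'$ are disjoint.

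The main obstacle I anticipate is the measure-theoretic bookkeeping. Invariance holds only off a conull set, so Step 1 should be carried out on $\mclH|_{X^*}$. One also needs canonicity to be applicable on $V_n$, which only requires $\mu(V_n)=\mu(U_n)>0$. I would also check carefully the claim that filling commutes with mapping classes, meaning $\S_{f\mclC}=f\S_{\mclC}$. This follows from the intrinsic definition of the filled subsurface.
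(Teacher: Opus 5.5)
Your argument is correct. The paper gives no proof of its own here and only cites Hensel--Horbez and Kida. Your route is the standard argument behind those references: cover $\mclH$ by bisections on which $\rho$ is constant, transport stabilised curves by conjugation, apply canonicity on the positive-measure set $r(B_n)$ to get $f_n\mclC=\mclC$ and hence $f_n\s=\s$, and obtain disjointness from $\s\subseteq\mclC'$ together with the fact that the curves of $\mclC'$ lie in $\S_{\mclC'}$.

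The one ingredient you should cite, rather than derive from the paper's loose definition of purity, is Ivanov's theorem that an element of $\Modz$ preserving a multicurve fixes each of its curves \cite{ivanov_subgroups_1992}. This is what lets you pass from $\rho(\mclH)\subseteq\Stab\s$ to every curve of $\s$ belonging to $\mclC'$.
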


\subsection{Classification of subgroupoids, following Kida}\label{section classification subgroupoids}
Following Kida \cite[Chapter~5]{kida_introduction_2009}, we classify subgroupoids of a groupoid with a cocycle towards $\Modz$. On a surface of positive complexity, a subgroupoid can be of exactly one of the three following types: reducible, irreducible amenable or irreducible nonamenable. 

In the algebraic setting, reducible subgroups are exactly subgroups of curve stabilisers. An example of an irreducible amenable subgroup of $\Modz$ is a cyclic group generated by one pseudo-Anosov mapping class. The group $\Modz$ itself is irreducible nonamenable. A nonabelian free group generated by two pseudo-Anosov mapping classes is irreducible nonamenable as well.

\begin{Def}
    Let $\mclG$ be a measured groupoid over a standard probability space $X$. Let $\S$ be a connected surface and let $\rho : \mclG\longrightarrow \Modz$ be a cocycle with trivial kernel. Let $\mclH$ be a subgroupoid of $\mclG$ such that $(\mclH,\rho)$ stabilises an open subsurface $Q$ of $\S$. We say that
\begin{itemize}
        \item $(\mclH,\rho)$ is \textit{trivial} on $Q$ when $\rho(\mclH|_{X^*})\subseteq\Modfix{Q}$ for some conull Borel subset $X^*\subseteq X$.
    \item $(\mclH,\rho)$ is \textit{reducible} on $Q$ when $(\mclH,\rho)$ stabilises a curve $c\in\cxcourbes{Q}$ \item $(\mclH,\rho)$ is \textit{irreducible} on $Q$ if it is nontrivial and not reducible on $Q$. 
\end{itemize}
When $(\mclH,\rho)$ is irreducible on $Q$, we say that
\begin{itemize} 
    \item $(\mclH,\rho)$ is \textit{irreducible amenable} on $Q$ when there exists a measurable and $(\mclH,\rho)$-equivariant map $X\longrightarrow \partial_{\leq 2}\cxcourbes Q$,
    \item $(\mclH,\rho)$ is \textit{irreducible nonamenable} on $Q$ when there does not exist such a map.
\end{itemize}
\end{Def}

Note that if $(\mclH,\rho)$ stabilises a subsurface $Q$ with $\kappa(Q)=0$, then $(\mclH,\rho)$ is automatically trivial on $Q$ because $$\Modact{Q}=\gen{T_c}_{c\in\partial Q} \subseteq \Modfix{Q}~.$$
When $\kappa(Q)>0$, trivial on $Q$ implies reducible on $Q$.
\medbreak
First, we state that if a subgroupoid is reducible and nontrivial on a subsurface $Q$ then its canonical multicurve contains a curve of $\cxcourbes{Q}$.

\begin{Prop}\label{reducible nontrivial implies existence of canonical curve}
    Let $\mclG$ be a measured groupoid over a standard probability space $X$. Let $\S$ be a connected surface and let $\rho : \mclG\longrightarrow \Modz$ be a cocycle with trivial kernel. Let $\mclH$ be a subgroupoid of $\mclG$ and assume that the canonical multicurve $\s$ of $(\mclG,\rho)$ exists. 
    
    If $(\mclH,\rho)$ stabilises an open connected subsurface $Q$ and is reducible and nontrivial on $Q$, then $\s$ contains a curve of $\cxcourbes{Q}$.
\end{Prop}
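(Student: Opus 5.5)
Throughout I read the statement with $\s$ the canonical multicurve of $(\mclH,\rho)$ rather than of $(\mclG,\rho)$; that is the version that makes sense, since $\mclG$ plays no role beyond carrying $\rho$. I argue by contradiction, entirely through the canonical reduction set $\mclC$ of $(\mclH,\rho)$, so that $\s=\partial\S_{\mclC}$. The idea is that reducibility puts a curve of $\cxcourbes{Q}$ into $\mclC$, nontriviality keeps some curve of $\cxcourbes{Q}$ out of $\mclC$, and a filling argument shows $\partial\S_{\mclC}$ must separate these two curves inside $Q$.

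First I record what the hypotheses say about $\mclC$.
\begin{enumerate}[(a)]
\item Since $(\mclH,\rho)$ stabilises $Q$, it stabilises every curve of $\partial Q$, so $\partial Q\subseteq\mclC$.
\item Reducibility on $Q$ gives a curve $c\in\cxcourbes{Q}$ stabilised by $(\mclH,\rho)$, hence $c\in\mclC$. In particular $\kappa(Q)\geq1$.
\item I claim some $d\in\cxcourbes{Q}$ is not in $\mclC$. Suppose instead $\cxcourbes{Q}\subseteq\mclC$. As $\cxcourbes{Q}$ is countable, intersecting the corresponding countably many conull sets gives a conull $X^*$ on which every element of $\rho(\mclH|_{X^*})$ fixes every curve of $Q$ and preserves $Q$.
\item Such an element $f\in\Modz$ is pure by Theorem~\ref{modz th}. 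Fixing all curves of $Q$ forces its restriction to $Q$ to be trivial up to twists along $\partial Q$, by the Alexander method combined with torsion-freeness of $\Modz$. Hence $f\in\Modfix{Q}$, so $(\mclH,\rho)$ is trivial on $Q$, contradicting the hypothesis.
\end{enumerate}

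Now assume by contradiction that $\s=\partial\S_{\mclC}$ contains no curve of $\cxcourbes{Q}$. Then every curve of $\partial\S_{\mclC}$ is disjoint from $Q$ or lies in $\partial Q$. Since $Q$ is connected, $Q$ is then either contained in $\S_{\mclC}$ or disjoint from it up to isotopy. Because $c\in\mclC$ is a curve of $Q$, $Q$ meets $\S_{\mclC}$, so $Q\subseteq S$ for some component $S$ of $\S_{\mclC}$. This $S$ contains $Q$, which has $\kappa(Q)\geq1$, so $S$ is neither an annulus nor a pair of pants. By definition of the filled subsurface, every curve of $\cxcourbes{S}$ intersects some curve of $\mclC$, so $\mclC$ fills $S$.

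Again by countability, there is a conull $X^*$ such that every $f\in\rho(\mclH|_{X^*})$ fixes each curve of $\mclC$ and each curve of $\partial S$. A mapping class fixing a filling collection of curves of $S$ restricts to a finite-order mapping class of $S$, by the Alexander method. Since $f$ is pure and $\Modz$ is torsion-free, this restriction is trivial up to boundary twists. Therefore $f$ fixes every curve of $S$, in particular the curve $d\in\cxcourbes{Q}\subseteq\cxcourbes{S}$ from step (c). Then $(\mclH,\rho)$ stabilises $d$, so $d\in\mclC$, a contradiction. This proves that $\s$ contains a curve of $\cxcourbes{Q}$.

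The main obstacle is the topological step used twice: a pure element of $\Modz$ that fixes a filling set of curves of a subsurface, and preserves that subsurface, is the identity there up to boundary twists. I would justify it by the Alexander method, which makes the pointwise stabiliser of a filling set finite modulo boundary twists, together with Ivanov's torsion-freeness from Theorem~\ref{modz th}. A secondary point needing care is the bookkeeping of the definition of $\S_{\mclC}$ when $S$ shares boundary curves with $Q$, and passing between "stabilised on conull sets" for countably many curves and a single conull $X^*$.
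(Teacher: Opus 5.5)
Your proof is correct and follows essentially the same route as the paper's. Assuming $\s=\partial\S_{\mclC}$ has no curve in $\cxcourbes{Q}$, the connected surface $Q$ lies in a single component of $\S\setminus\s$, reducibility places that component in $\S_{\mclC}$, and nontriviality forbids this. Your steps (c)--(d) and your final Alexander-method paragraph make explicit the implication ``$Q\subseteq\S_{\mclC}$ implies $(\mclH,\rho)$ is trivial on $Q$'', which the paper uses without comment; you also correctly read $\s$ as the canonical multicurve of $(\mclH,\rho)$, as the paper's proof does.

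The justification of that topological step needs repair. Torsion-freeness of $\Modz$ together with purity in the paper's sense does not rule out a nontrivial finite-order restriction to $Q$ or $S$. For example, $(T_aT_b)^3$ supported on a one-holed torus fixes every curve there and is pure in that sense, yet it has infinite order, since its square is the boundary twist. You should instead cite Ivanov's finer fact that, for $f\in\Modz$, a finite-order restriction of $f$ to an $f$-invariant subsurface is trivial. Equivalently for your purposes, $f^k(c)=c$ implies $f(c)=c$.
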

\begin{proof}
    Consider the subsurface $\S_{\mclC}$ filled by the canonical reduction set of $(\mclH,\rho)$. We have that $\S_{\mclC}\cap Q\neq \varnothing$ because $(\mclH,\rho)$ fixes some curve of $\cxcourbes{Q}$. We also have that $Q\nsubseteq \S_{\mclC}$ because $(\mclH,\rho)$ is nontrivial on $Q$.

    By contradiction, assume that $\cxcourbes{Q}$ does not contain any curve of $\s$. Then, the fact that $(\mclH,\rho)$ stabilises $Q$ means that $Q$ is contained in a connected component of $\S \setminus\partial \S_{\mclC}$, because $\s=\partial\S_{\mclC}$. This contradicts either $\S_{\mclC}\cap Q\neq \varnothing$ or $Q\nsubseteq \S_{\mclC}$.
    
    Therefore, there exists a curve $c\in\cxcourbes{Q}\cap\partial \S_{\mclC}$.
\end{proof}

\begin{Rem}
    
Note that in \cite[Section~7]{kida_introduction_2009} and \cite[Definition~5.4]{kida_introduction_2009}, Kida defines trivial, irreducible amenable and irreducible nonamenable for a subgroupoid $(\mclH,\rho)$ only on components $Q$ of the surface cut along the canonical multicurve $\s$ of $(\mclH,\rho)$, assuming it exists. Let us explain that by considering the more general case of a subsurface $Q$ stabilised by $(\mclH,\rho)$, $Q$ is either a component of $\S\setminus\s$ like for Kida, or $(\mclH,\rho)$ is reducible or trivial on $Q$. Assume that $(\mclH,\rho)$ is nontrivial on an open subsurface $Q$ that it stabilises. \begin{itemize}[noitemsep, nolistsep]
    \item If $\partial Q\nsubseteq \s$, then $(\mclH,\rho)$ is reducible on $Q$ because the subsurface filled by the canonical reduction set of $(\mclH,\rho)$ has nontrivial intersection with $Q$.
    \item If $\partial Q\subseteq \s$, then $(\mclH,\rho)$ is reducible on $Q$ if and only if there is another curve of $\s$ in $Q$. 
\end{itemize}
So if a subgroupoid $(\mclH,\rho)$ is irreducible on $Q$, then $Q$ is necessarily a component of $\S\setminus \s$. Conversely, if $Q$ is a component of $\S\setminus\s$, then $(\mclH,\rho)$ can only be trivial or irreducible on $Q$. Moreover, our definitions of irreducible amenable and irreducible nonamenable coincide with Kida's definition, as stated in \cite[Corollary~5.14]{kida_introduction_2009}. 
\end{Rem}

The following lemma is Kida's groupoid-theoretic version of the fact that the action of the mapping class group on the boundary of the curve graph is amenable, which is itself a theorem independently proven by Kida \cite[Lemma~4.32]{kida_introduction_2009} and Hamenstädt \cite{hamenstadt_geometry_2009}. 

Recall that a subgroupoid $(\mclH,\rho)$ stabilises a multicurve $\s$ on $\S$ when
${\rho (\mclH)\subseteq \Stab{\s}}$.
Note that the subgroup $\Stab{\s}$ acts on $\cxcourbes{Q}$ for every connected component $Q$ of $\S\setminus\s$.
The equivariance of the map in the next lemma is in the sense of this action of $\Stab \s$.

\begin{Lemma}[Kida {\cite[Proposition~5.18]{kida_introduction_2009}}]\label{map to product of boundaries of curve graphs of components}
    Let $\mclG$ be a measured groupoid over a standard probability space $X$ and let $\rho : \mclG\longrightarrow\Modz$ be a cocycle with trivial kernel. Let $\mclH$ be a subgroupoid of $\mclG$ such that $(\mclH,\rho)$ stabilises a multicurve $\s$ on $\S$. Let $Q_1,\dots,Q_k$ be the connected components of $\S\setminus\s$ on which $(\mclH,\rho)$ is nontrivial. Assume that there exists a measurable and $(\mclH,\rho)$-equivariant map 
    $$X\longrightarrow\prod_{1\leq i\leq k}\partial_{\leq 2}\cxcourbes{Q_i}~.$$
    Then $\mclH$ is amenable.
\end{Lemma}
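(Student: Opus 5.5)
The plan is to reduce to a product of amenable groupoids, one for each component, and combine them. First, since $(\mclH,\rho)$ stabilises $\s$ and $\rho$ takes values in $\Modz$, whose elements are pure, every element of $\rho(\mclH)$ preserves each component of $\S\setminus\s$. This gives a homomorphism
$$\Stab{\s}\cap\Modz\longrightarrow \prod_{Q\text{ component of }\S\setminus\s}\Mod{Q}~,$$
whose kernel is the multitwist $\gen{T_c}_{c\in\s}\cap\Modz$, a free abelian group. On the components where $(\mclH,\rho)$ is trivial, the image of $\rho(\mclH)$ lies in the pointwise stabiliser of $\cxcourbes{Q}$. There it is finite: it is trivial in $\Modz$ when $\kappa(Q)\ge 1$, and it is a finite group when $\kappa(Q)=0$. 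So, up to an amenable (virtually abelian) kernel, only the factors $Q_1,\dots,Q_k$ matter.

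Next I would exploit the equivariant map $\varphi=(\varphi_1,\dots,\varphi_k)$. Each $\varphi_i : X\to\partial_{\leq 2}\cxcourbes{Q_i}$ is $(\mclH,\rho_i)$-equivariant, where $\rho_i$ is the composition of $\rho$ with the projection to $\Mod{Q_i}$. I would use the amenability of the action of $\Mod{Q_i}$ on $\partial\cxcourbes{Q_i}$ (Kida \cite[Lemma~4.32]{kida_introduction_2009}, Hamenstädt). Amenability of the product action of $\prod_i\Mod{Q_i}$ on $\prod_i\partial\cxcourbes{Q_i}$ follows, and passing to $\partial_{\leq 2}$ (unordered pairs) is fine since the symmetric product of an amenable action space stays amenable (finite-to-one map). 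The standard principle is: if a cocycle $\rho':\mclH\to G$ admits an equivariant measurable map to a standard Borel $G$-space $Z$ on which $G$ acts amenably, and the groupoid $\ker\rho'$ is amenable, then $\mclH$ is amenable. This is the groupoid version of Zimmer's result that the pullback of an amenable action is amenable, in \cite[Section~4]{kida_introduction_2009}. I would apply it with $G=\prod_i\Mod{Q_i}$ (times the finite factors) and $\rho'$ the composite cocycle.

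The remaining point, and the main obstacle, is controlling $\ker\rho'$. Since $\rho$ has trivial kernel, $\ker\rho'$ is a subgroupoid sent by $\rho$ with trivial kernel into the kernel of the restriction map. That kernel is virtually the free abelian multitwist group, hence amenable. By Lemma~\ref{lemma triv kernel cocycle amenable group amenable groupoid}, $\ker\rho'$ is amenable.

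Finally I would invoke the extension statement: an extension of amenable groupoids, where $\mclH/\ker\rho'$ acts amenably through the equivariant map, is amenable. I expect the technical care to lie in setting up this extension (or the equivalent skew-product argument) correctly, including null sets, and in checking that the finite-index and finite-kernel issues do not break amenability. Both follow from the stability of amenability under finite extensions.
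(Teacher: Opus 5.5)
The paper does not prove this lemma; it only cites Kida's Proposition~5.18 and remarks that the lemma is the groupoid form of the amenability of $\Mod{Q}\curvearrowright\partial\cxcourbes{Q}$, which is the route you take. Your outline is the standard argument behind Kida's result and is correct: the cutting homomorphism has an amenable multitwist kernel, the components outside $Q_1,\dots,Q_k$ have trivial image, the product of the symmetrised boundary actions is amenable, and a cocycle with amenable kernel admitting an equivariant map to an amenable $G$-space forces amenability. The one point to make explicit is that the amenability you invoke must be the measure-independent (Borel) amenability proved by Kida and Hamenstädt, since that is the version which survives products, symmetrisation and pullback along the equivariant map.
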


The following corollary justifies the terminology. If a subgroupoid is irreducible amenable on every connected component of $\S\setminus \s$, then it is forced to be an amenable subgroupoid. Note that the contraposition of Corollary~\ref{if IA or T everywhere, then amen. If nonamen then IN somewhere} is the following: if $\mclH$ is a nonamenable subgroupoid such that $(\mclH,\rho)$ has a canonical multicurve $\s$, then it has to be irreducible nonamenable on some connected component of $\S\setminus\s$.

\begin{Cr}\label{if IA or T everywhere, then amen. If nonamen then IN somewhere}
    If $(\mclH,\rho)$ stabilises a multicurve $\s$ and is trivial or irreducible amenable on every connected component of $\S\setminus\s$, then $\mclH$ is amenable.
\end{Cr}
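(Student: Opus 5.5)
The plan is to build a measurable, $(\mclH,\rho)$-equivariant map into the product $\prod_{1\leq i\leq k}\partial_{\leq 2}\cxcourbes{Q_i}$ and then apply Lemma~\ref{map to product of boundaries of curve graphs of components}. We keep the notation of that lemma: $\mclH$ stabilises $\s$, and $Q_1,\dots,Q_k$ are the components of $\S\setminus\s$ on which $(\mclH,\rho)$ is nontrivial. On every other component $(\mclH,\rho)$ is trivial, so those factors play no role.

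For each $i$, the hypothesis says that $(\mclH,\rho)$ is irreducible amenable on $Q_i$. By definition, this gives a measurable and $(\mclH,\rho)$-equivariant map
$$\varphi_i : X\longrightarrow \partial_{\leq 2}\cxcourbes{Q_i}~.$$
Here equivariance is with respect to the action of $\Stab{\s}$ on $\cxcourbes{Q_i}$. This action is well defined because $\rho(\mclH)\subseteq\Stab\s$, and every element of $\Modz$ preserves each component of $\S\setminus\s$ by purity (Theorem~\ref{modz th}). Each $\varphi_i$ holds on a conull Borel subset $X_i^*$. We set $X^*=\bigcap_i X_i^*$, which is still conull since there are finitely many indices, and define
$$\varphi=(\varphi_1,\dots,\varphi_k) : X\longrightarrow\prod_{1\leq i\leq k}\partial_{\leq 2}\cxcourbes{Q_i}~.$$
This map is measurable, being a product of measurable maps into standard Borel spaces. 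It is $(\mclH,\rho)$-equivariant on $X^*$ because the action of $\Stab\s$ on the product is coordinatewise: for ${\mfk h}\in\mclH|_{X^*}$, each coordinate satisfies $\varphi_i(r({\mfk h}))=\rho({\mfk h})\varphi_i(s({\mfk h}))$.

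Lemma~\ref{map to product of boundaries of curve graphs of components} then gives that $\mclH$ is amenable. One degenerate case needs a separate check: if $k=0$, so that $(\mclH,\rho)$ is trivial everywhere, the target is a point. The constant map is then trivially equivariant, and the lemma applies directly. Alternatively, in that case $\rho(\mclH)$ lies in the abelian multitwist group $\gen{T_c}_{c\in\s}$, so amenability also follows from Lemma~\ref{lemma triv kernel cocycle amenable group amenable groupoid}.

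The only real obstacle is bookkeeping. We must check that the equivariance in the definition of irreducible amenable uses the same action of $\Stab{\s}$ as in Lemma~\ref{map to product of boundaries of curve graphs of components}, and that each $Q_i$ is indeed a component of $\S\setminus\s$, as noted in the remark comparing these definitions with Kida's. Both follow directly from the definitions.
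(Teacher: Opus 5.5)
Your proof is correct and is exactly the argument the paper intends. The paper gives no separate proof and treats the corollary as immediate from Lemma~\ref{map to product of boundaries of curve graphs of components}: take the product of the maps $X\to\partial_{\leq 2}\cxcourbes{Q_i}$ supplied by irreducible amenability on the nontrivial components. Your treatment of the conull sets and of the degenerate case $k=0$ simply spells out details the paper leaves implicit.
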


The next two propositions are a key result from Kida's work. It is a general groupoid-theoretic version of the fact that if a subgroup $H$ of a mapping class group normalises an infinite amenable subgroup, then $H$ is either itself amenable or it stabilises a curve of the surface.

\begin{Prop}[Kida {\cite[Section~5.3]{kida_introduction_2009}}]\label{amen is red or IA}
    Let $\mclG$ be a measured groupoid over a standard probability space $X$. Let $\S$ be a connected surface and let $\rho : \mclG\longrightarrow \Modz$ be a cocycle with trivial kernel. Let $\mclA$ be an infinite-type amenable subgroupoid of $\mclG$. Assume that the canonical reduction set of $(\mclA,\rho)$ exists. 
    
    Then, $(\mclA,\rho)$ is reducible or irreducible amenable on any subsurface that it stabilises. In other words, $(\mclA,\rho)$ is never irreducible nonamenable.
\end{Prop}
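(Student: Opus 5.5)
We must show that if $\mclA$ is amenable of infinite type and stabilises a subsurface $Q$ on which it is irreducible, then it is irreducible amenable on $Q$; that is, there is a measurable $(\mclA,\rho)$-equivariant map $X\to\partial_{\leq 2}\cxcourbes{Q}$. By the Remark after Proposition~\ref{reducible nontrivial implies existence of canonical curve}, irreducibility forces $Q$ to be a component of $\S\setminus\s$, where $\s$ is the canonical multicurve of $(\mclA,\rho)$. The plan is the groupoid version of the Ivanov/McCarthy--Papadopoulos argument, with $\Prob(\partial F_2)$ in Adams' construction replaced by measures on the Thurston boundary $\mcl{PMF}(Q)$.

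\textbf{Step 1 (invariant measure).} Since $\mclA$ is amenable and $\Stab{\s}$ acts continuously on the compact metrisable space $\mcl{PMF}(Q)$ (through its image in $\Mod{Q}$), Kida's criterion \cite[Proposition~4.14]{kida_introduction_2009} provides a measurable $(\mclA,\rho)$-equivariant map $\mu : X\to\Prob(\mcl{PMF}(Q))$. This is the same existence result that underlies Proposition~\ref{prop adams maximal map}.

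\textbf{Step 2 (measures live on minimal filling laminations).} Decompose $\mu(x)$ according to the partition of $\mcl{PMF}(Q)$ into two parts: the set $\mcl{MIN}(Q)$ of classes whose support is a minimal filling lamination, and its complement. On the complement there is an equivariant Borel map to nonempty finite sets of curves of $Q$, given by the boundary of the subsurface filled by the support, or by the curve components of the support. If the complement had positive mass on a positive-measure subset, we would proceed as follows.
\begin{itemize}
\item Restrict to a subset where the total mass of the complement is constant.
\item Take the finitely supported probability measure on finite sets of curves given by the push-forward.
\item Use a maximal-atom argument to obtain an equivariant map to finite sets of curves of $Q$.
\end{itemize}
Since the cocycle has trivial kernel and $\mclA$ is of infinite type, the finite-index argument of Kida \cite[Section~5.2]{kida_introduction_2009} then produces a positive-measure $U$ on which $(\mclA|_U,\rho)$ stabilises a curve of $\cxcourbes{Q}$. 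This contradicts irreducibility, so $\mu(x)$ is supported on $\mcl{MIN}(Q)$ almost everywhere.

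\textbf{Step 3 (finiteness and passage to $\partial\cxcourbes{Q}$).} By Klarreich's theorem, the forgetful map $\mcl{MIN}(Q)\to\partial\cxcourbes{Q}$ is a Borel equivariant surjection, so pushing forward gives $\nu : X\to\Prob(\partial\cxcourbes{Q})$. The main obstacle is to show that $\nu(x)$ has support of cardinality at most $2$. I would follow Adams' argument \cite[Proposition~5.1]{adams_indecomposability_1994} with the hyperbolic space $\cxcourbes{Q}$ in place of the tree; this is Kida's Proposition~5.18 framework, and Hamenstädt's amenability of the boundary action supports it. The steps are as follows.
\begin{itemize}
\item Suppose the support has at least three points on a positive-measure set.
\item Use the barycentre map from triples of distinct boundary points to finite sets of curves (coarse centres of ideal triangles, Masur--Minsky hyperbolicity). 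This map is equivariant and Borel.
\item From $\nu(x)^{\otimes 3}$ restricted off the diagonals, obtain an equivariant map to finitely supported measures on $\cxcourbes{Q}$. Local infiniteness is handled by taking a maximal-mass finite level set of centres.
\item As in Step 2, this yields an invariant finite set of curves, contradicting irreducibility.
\end{itemize}
Hence $\nu(x)\in\partial_{\leq 2}\cxcourbes{Q}$ almost everywhere, which is the required equivariant map, so $(\mclA,\rho)$ is irreducible amenable on $Q$.

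Finally, if $(\mclA,\rho)$ is trivial or reducible on $Q$, there is nothing to prove. Thus on every stabilised subsurface it is reducible or irreducible amenable, and it is never irreducible nonamenable.
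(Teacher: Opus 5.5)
Your outline follows the same route as the paper's sketch, which is Kida's argument. You take an $(\mclA,\rho)$-equivariant map $X\to\Prob(\mcl{PMF}(Q))$ from amenability, split it into the $\mcl{MIN}(Q)$ part and its complement, get reducibility from the canonical curves of non-minimal foliations, and use Klarreich's map to pass to $\partial\cxcourbes{Q}$. Steps~1 and~2 are fine, up to two small repairs:
\begin{itemize}
\item Normalise the non-$\mcl{MIN}$ part on the invariant set where its mass is positive, instead of restricting to a level set of that mass; all level sets may be null.
\item A curve stabilised by $(\mclA|_U,\rho)$ only on a positive-measure $U$ contradicts irreducibility precisely because the canonical reduction set of $(\mclA,\rho)$ exists. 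You should say so explicitly.
\end{itemize}
The genuine gap is in Step~3, the claim that the support of $\nu(x)$ has at most two points. The paper does not reprove this; it imports it from \cite[Proposition~4.29]{kida_introduction_2009}.

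Your justification for that bound does not work as written.
\begin{itemize}
\item Masur--Minsky hyperbolicity gives, for three distinct points of $\partial\cxcourbes{Q}$, a nonempty set of coarse centres of bounded diameter. Since $\cxcourbes{Q}$ is locally infinite, this set is infinite: all neighbours of a centre are centres for a slightly larger constant. So hyperbolicity alone does not give an equivariant map to \emph{finite} sets of curves.
\item Your remedy of taking a maximal-mass level set fails. If $\nu(x)$ is supported on exactly three points, then $\nu(x)^{\otimes 3}$ off the diagonals is carried by the six orderings of a single triple. Every centre then receives the same mass, and the maximal level set is the whole infinite centre set.
\item With infinite bounded sets you also cannot restrict to a positive-measure subset on which the set is constant, since there are uncountably many such sets. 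So the passage to a stabilised curve, as in Step~2, breaks down.
\end{itemize}
What is missing is a finiteness input beyond hyperbolicity. Kida constructs a Borel equivariant map from triples of distinct boundary points to nonempty finite sets of curves using Bowditch's finiteness results for tight geodesics, and this underlies \cite[Proposition~4.29]{kida_introduction_2009}. Either cite that proposition, as the paper does, or import that construction. Also, Lemma~\ref{map to product of boundaries of curve graphs of components} (Kida's Proposition~5.18) and Hamenst\"adt's boundary amenability go in the opposite direction: they deduce amenability from a boundary map. They say nothing about the size of the support of $\nu(x)$.
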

\begin{proof}[Sketch of proof]
    When $Q$ is stabilised by $(\mclA,\rho)$, the cocycle $\rho$ sends $\mclA$ to $\Stab {\partial Q}$. This subgroup acts on the compact metrisable space $\mcl{PMF}$ of projective measured foliations on the surface $Q$, as explained in \cite[Section~5.2]{kida_introduction_2009}. By \cite[Proposition~4.14]{kida_introduction_2009}, there exists a measurable $(\mclA,\rho)$-equivariant map $$\varphi : X\longrightarrow M(\mcl{PMF})$$ where $M(\mcl{PMF})$ are the probability measures on $\mcl{PMF}$. Then, by \cite[Theorem~5.3]{kida_introduction_2009}, there exists a partition $X= X_1\sqcup X_2$ of $X$ such that $\varphi|_{X_1}$ takes values in $M(\mcl{MIN})$, where $\mcl{MIN}$ are minimal measured foliations, and $\varphi|_{X_2}$ takes values in $M(\mcl{PMF}\setminus \mcl{MIN})$.

    On $X_1$, Kida uses the map $\mcl{MIN}\longrightarrow\partial \cxcourbes{Q}$ constructed by Klarreich in \cite{klarreich_boundary_2022} to get an equivariant map towards $M(\partial \cxcourbes{Q})$. Then, \cite[Proposition~4.29]{kida_introduction_2009} states that this map has support of cardinality at most $2$, which makes $(\mclA|_{X_1},\rho)$ reducible or irreducible amenable on $Q$. On $X_2$, Kida uses the fact that any foliation in $\mcl{PMF}\setminus \mcl{MIN}$ contains a canonical multicurve, which is then stabilised by $(\mclA|_{X_2},\rho)$, so the latter is reducible on $Q$. But the canonical reduction set of $(\mclA,\rho)$ exists. Thus, if $X_2$ has positive measure, then the whole $(\mclA,\rho)$ is reducible on $Q$. If $X_2$ has null measure, then $(\mclA,\rho)$ is reducible or irreducible amenable on $Q$.
\end{proof}

\begin{Prop}[Kida {\cite[Theorem~4.62]{kida_introduction_2009}}]\label{if H normalises A then its reducible or IA}
    Let $\mclG$ be a measured groupoid over a standard probability space $X$. Let ${\rho : \mclG\longrightarrow \Modz}$ be a cocycle with trivial kernel. Let $\mclA$ and $\mclH$ be infinite-type subgroupoids of $\mclG$ such that $\mclH$ normalises $\mclA$. Assume that $(\mclA,\rho)$ and $(\mclH,\rho)$ both stabilise an open subsurface $Q$ and $(\mclA,\rho)$ has a canonical multicurve.
    
    Assume that $(\mclA,\rho)$ is nontrivial and reducible or irreducible amenable on $Q$. Then, $(\mclH,\rho)$ is reducible or irreducible amenable on $Q$.

    In particular, if $\mclA$ is amenable and nontrivial on $Q$, then $(\mclH,\rho)$ is reducible or irreducible amenable on $Q$.
\end{Prop}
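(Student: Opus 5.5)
The plan is to argue by contradiction, reducing to Kida's dichotomy for amenable subgroupoids (Proposition~\ref{amen is red or IA}). We may restrict to a positive measure Borel subset and apply Lemma~\ref{canonical reduction set exists}; then both $(\mclA,\rho)$ and $(\mclH,\rho)$ have canonical reduction sets and multicurves on the pieces of the partition. Since the conclusion is that $(\mclH,\rho)$ is reducible or irreducible amenable on $Q$, suppose instead that $(\mclH,\rho)$ is irreducible nonamenable on $Q$ (trivial on $Q$ would already count as reducible when $\kappa(Q)>0$).

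First, consider the case where $(\mclA,\rho)$ is reducible and nontrivial on $Q$. By Proposition~\ref{reducible nontrivial implies existence of canonical curve}, the canonical multicurve $\s$ of $(\mclA,\rho)$ contains a curve $c\in\cxcourbes{Q}$. Since $\mclH$ normalises $\mclA$, Lemma~\ref{lemma multicurve is canonical} shows that $(\mclH,\rho)$ stabilises $\s$, and in particular $c$. So $(\mclH,\rho)$ is reducible on $Q$, which contradicts irreducibility.

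Second, consider the case where $(\mclA,\rho)$ is irreducible amenable on $Q$. Then there is a measurable $(\mclA,\rho)$-equivariant map $\varphi:X\to\partial_{\leq 2}\cxcourbes{Q}$, and I would take it maximal and unique in the sense of Adams' construction (Proposition~\ref{prop adams maximal map}), transposed to the curve graph boundary as in Kida's work. The key step is to show that $\varphi$ is also $(\mclH,\rho)$-equivariant. Write $\mclH$ as a union of bisections $B_n$ leaving $\mclA$ invariant. For each $n$, the map $x\mapsto\rho(\mfk b)^{-1}\varphi(r(\mfk b))$, with $\mfk b\in B_n$ having source $x$, is again $(\mclA,\rho)$-equivariant on $s(B_n)$, because conjugation by $B_n$ preserves $\mclA$. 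Uniqueness of the maximal map then forces it to coincide with $\varphi$ almost everywhere. This gives an $(\mclH,\rho)$-equivariant map to $\partial_{\leq 2}\cxcourbes{Q}$, so $(\mclH,\rho)$ is irreducible amenable on $Q$, which contradicts the assumption.

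The main obstacle is this second case. Maximality and uniqueness must be established for maps into $\partial\cxcourbes{Q}$, which is not compact, rather than into $\partial F_2$. This requires Kida's description of amenable subgroupoids via measures on $\mcl{PMF}(Q)$ and the Klarreich map, as sketched in Proposition~\ref{amen is red or IA}. One must also ensure that the infinite-type hypothesis on $\mclA$ excludes invariant measures supported on a single point in a way incompatible with irreducibility. The final ``in particular'' statement follows from Proposition~\ref{amen is red or IA}: an amenable $\mclA$ that is nontrivial on $Q$ is reducible or irreducible amenable there, so the main statement applies.
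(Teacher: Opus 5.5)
Your proposal is correct and is essentially the paper's proof: the reducible case goes through Proposition~\ref{reducible nontrivial implies existence of canonical curve} and Lemma~\ref{lemma multicurve is canonical} exactly as you describe, and in the irreducible amenable case the paper cites Kida for two facts, which together are precisely your bisection-conjugation-plus-maximality argument. These facts are the existence of a maximal $(\mclA,\rho)$-equivariant map $X\to\partial_{\leq 2}\cxcourbes{Q}$, and the fact that this map is also equivariant for any subgroupoid normalising $\mclA$. One small point: drop the opening restriction to pieces of a partition, because neither case uses a canonical reduction set for $(\mclH,\rho)$, and a conclusion proved only for the restrictions $\mclH|_{X_n}$ would not give the statement for $(\mclH,\rho)$ on all of $X$.
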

\begin{proof}The subsurface $Q$ has positive complexity because $(\mclA,\rho)$ is nontrivial on $Q$.

If $(\mclA,\rho)$ is irreducible amenable on $Q$, then there exists a measurable and $(\mclA,\rho)$-equivariant map ${X\longrightarrow \partial_{\leq 2}\cxcourbes Q}$ that is maximal for inclusion, as shown in \cite[Lemma~5.8]{kida_introduction_2009}. This maximality is enough for it to also be equivariant by a subgroupoid that normalises $\mclA$, namely $(\mclH,\rho)$, as shown in \cite[Lemma~6.7]{kida_introduction_2009}. This means that $(\mclH,\rho)$ is either reducible or irreducible amenable on $Q$.

If $(\mclA,\rho)$ is reducible, let $\s$ be the canonical multicurve of $(\mclA,\rho)$. By Proposition~\ref{reducible nontrivial implies existence of canonical curve}, $\s$ contains a curve of $\cxcourbes{Q}$, which is stabilised by $(\mclH,\rho)$ by Lemma~\ref{lemma multicurve is canonical}. This makes $(\mclH,\rho)$ reducible.

In particular, if $\mclA$ is amenable then $(\mclA,\rho)$ is reducible or irreducible amenable on $Q$ by Proposition~\ref{amen is red or IA}, so the latter holds.
\end{proof}

\section{Rigidity of self measurable embeddings of mapping class groups}\label{section proof theorem}

\begin{Setting}\label{setting for thm}
We fix two connected orientable finite-type surfaces $\Sigma$ and $\S'$ without boundary and of topological complexity at least $1$. Fix $H=\Modz$ and $G=\Modzprime$ the finite-index subgroups of $\ModS$ and $\ModSprime$ respectively as in Definition~\ref{def mapping class group, subgroups of subsurfaces, modz def}. Fix a measurable embedding coupling $\Omega$ of $H$ into $G$. Apply Lemma~\ref{lemma measurable embedding provides groupoid and cocycles} and rescale the measure on $\Omega$ to get a Borel subset $X$ of $\Omega$ of measure $1$ such that $\left(G\times H\right) X$ is conull in $\Omega$ and there is a measured groupoid $\mclG$ over the standard probability space $X$ with 
\begin{itemize}[noitemsep, nolistsep, label=-]
    \item an action-type cocycle $\w :\mclG\longrightarrow H$, 
    \item a trivial-kernel cocycle $\rho : \mclG\longrightarrow G$.
\end{itemize}
\end{Setting}
The first group $H=\Modz$ is the \textit{source} of the measurable embedding and the second group $G=\Modzprime$ is the \textit{target} of the measurable embedding. 
Recall that a subgroup $H'$ of $H$ is \textit{transported into} a subgroup $G'$ of $G$ by the measurable embedding on some positive measure Borel subset $U$ of $X$ when 
$\rho\left(\w^{-1}(H')|_U\right)\subseteq G'$. 

Note that we prove the main theorem about a self measurable embedding, so $\S=\S'$. As explained in Remark~\ref{rem on equality or not of S and S'}, we do not assume equality of the source and the target from the beggining because it allows us to prove monotonicity of topological complexity (Corollary~\ref{cr monotonicity of complexity}) in Section~\ref{section maximal decompo meas}. Directly after proving this monotonicity, we assume $\S=\S'$ for the rest of the proof.

\paragraph{Structure of the proof.} The goal of Section~\ref{section proof theorem} is to prove Theorem~\ref{thm existence of tautening map}, which is a stronger version of Theorem~\ref{main thm} (see Section~\ref{conclusion subsection}). The key step of the proof is to show that Dehn twist cyclic subgroups are transported into Dehn twist cyclic subgroups by the self measurable embedding up to a partition of $X$: this is the key Proposition~\ref{prop recap courbes dans courbes}. To obtain enough rigidity at the target, we retain more information on the source group $H$ than just Dehn twists: we retain $k$-maximal decompositions (see Definition~\ref{def kmax decompo}). Section~\ref{subsection prelim results for the proof of thm} contains two preliminary lemmas about the behaviour of subgroupoids sent by $\rho$ into Dehn twists or multitwists. Then, in Section~\ref{section maximal decompo meas}, we show that $k$-maximal decompositions are transported into $k$-maximal decompositions by the self measurable embedding. Then, Section~\ref{section curves meas} contains the proof of the key Proposition~\ref{prop recap courbes dans courbes}. Finally, Section~\ref{final subsection proof thm} passes from the groupoid-theoretic Proposition~\ref{prop recap courbes dans courbes} to the coupling-theoretic Theorem~\ref{thm existence of tautening map}.

\subsection{Subgroupoids with cocycles to Dehn twists}\label{subsection prelim results for the proof of thm}

The next lemma is a groupoid-theoretic characterisation of adjacency of two vertices in the curve graph.

\begin{Lemma}\label{adjacency curves in groupoidal terms}
Under Setting~\ref{setting for thm}, let $\mclA,\mclB$ be infinite-type subgroupoids of $\mclG$ and $\a,\b\in\cxcourbes{\S'}$. Assume that $\rho(\mclA)\subseteq \gen{T_{\a}}$ and $\rho(\mclB)\subseteq \gen{T_{\b}}$.
\begin{itemize}
    \item If $\a,\b$ are disjoint, then $\gen{\mclA,\mclB}$ is amenable.
    \item If $\a,\b$ are not disjoint, then $\gen{\mclA|_U,\mclB|_U}$ is everywhere nonamenable, for every Borel subset $U\subseteq X$ of positive measure.
\end{itemize}
\end{Lemma}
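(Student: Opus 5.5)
For the first bullet, suppose $\a$ and $\b$ are disjoint. Then $T_\a$ and $T_\b$ commute, so $\gen{T_\a,T_\b}$ is a free abelian group of rank at most two, in particular amenable. Every arrow of $\gen{\mclA,\mclB}$ is a finite composition of arrows of $\mclA$ and $\mclB$ and their inverses. Since $\rho$ is a cocycle, it therefore maps $\gen{\mclA,\mclB}$ into $\gen{T_\a,T_\b}$. As $\rho$ has trivial kernel by Setting~\ref{setting for thm}, Lemma~\ref{lemma triv kernel cocycle amenable group amenable groupoid} shows that $\gen{\mclA,\mclB}$ is amenable. One small point needs checking: the inclusion $\rho(\gen{\mclA,\mclB})\subseteq\gen{T_\a,T_\b}$ holds on a conull set. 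This is immediate, because the generated subgroupoid consists of products of arrows, and $\rho$ is multiplicative on composable pairs.

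For the second bullet, suppose $i(\a,\b)\geq 1$. The cocycle $\rho$ takes values in $G=\Modzprime$. Hence $\rho(\mclA)\subseteq \gen{T_\a}\cap\Modzprime$ and $\rho(\mclB)\subseteq\gen{T_\b}\cap\Modzprime$. These intersections are infinite cyclic, generated by $T_\a^j$ and $T_\b^k$ respectively. By Corollary~\ref{prop in modz all twists are free groups}, $F=\gen{T_\a^j,T_\b^k}$ is a nonabelian free group with free basis $\{T_\a^j,T_\b^k\}$. I will apply Lemma~\ref{lemma A,B in F2 nonamenable} to the subgroupoid $\mclG'=\gen{\mclA,\mclB}$ together with the cocycle $\rho|_{\mclG'}:\mclG'\to F$. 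This restricted cocycle is well defined because $\mclG'$ is mapped into $F$ by the cocycle property. It has trivial kernel because $\rho$ does. In the notation of that lemma, $a=T_\a^j$ and $b=T_\b^k$, and the subgroupoids $\mclA,\mclB$ are of infinite type with $\rho(\mclA)\subseteq\gen{a}$ and $\rho(\mclB)\subseteq\gen{b}$. The lemma then says that $\gen{\mclA|_U,\mclB|_U}$ is everywhere nonamenable for every Borel subset $U$ of positive measure.

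The main point to get right is that Lemma~\ref{lemma A,B in F2 nonamenable} is stated for a cocycle defined on the whole ambient groupoid with values in $F_2$. Here $\rho$ itself takes values in the larger group $\Modzprime$, so the lemma cannot be applied to it directly. Passing to the subgroupoid $\mclG'$ settles this, since $\rho$ maps $\mclG'$ into $F$. Amenability of $\gen{\mclA|_U,\mclB|_U}$ restricted to subsets is intrinsic to that groupoid, so it does not matter whether we view it inside $\mclG'$ or inside $\mclG$. The second subtlety is that we must use powers of twists lying in $\Modzprime$: when $i(\a,\b)=1$, the twists $T_\a$ and $T_\b$ themselves do not generate a free group. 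Corollary~\ref{prop in modz all twists are free groups} is exactly what handles this, via Proposition~\ref{nonsep dehn twists are not in modz}.
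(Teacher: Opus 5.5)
Your proposal is correct and follows the same route as the paper. For disjoint curves you use amenability of the abelian group $\gen{T_{\a},T_{\b}}$ together with Lemma~\ref{lemma triv kernel cocycle amenable group amenable groupoid}, and for intersecting curves you combine Corollary~\ref{prop in modz all twists are free groups} with Lemma~\ref{lemma A,B in F2 nonamenable}. Restricting $\rho$ to $\gen{\mclA,\mclB}$, so that it takes values in the free group $\gen{T_{\a}^{j},T_{\b}^{k}}$, is a useful clarification of a step the paper leaves implicit.
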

\begin{proof}
    If $\a,\b$ are disjoint curves, then $\gen{\mclA,\mclB}$ is sent by $\rho$ into $\gen{T_{\a},T_{\b}}$ which is amenable. The cocycle having trivial kernel, this forces $\gen{\mclA,\mclB}$ to be amenable by Lemma~\ref{lemma triv kernel cocycle amenable group amenable groupoid}.

    If $\a$ and $\b$ are not disjoint, then $\gen{T_{\a}}\cap \Modzprime$ and $\gen{T_{\b}}\cap \Modzprime$ generate a nonabelian free group by Corollary~\ref{prop in modz all twists are free groups}. Then, apply Proposition~\ref{lemma A,B in F2 nonamenable} to 
    $$\rho(\mclA)\subseteq \gen{T_{\a}}\cap \Modzprime$$
    and 
    $$\rho(\mclB)\subseteq \gen{T_{\b}}\cap \Modzprime$$
    to obtain that for every positive measure subset $U$ of $X$, $\gen{\mclA|_U,\mclB|_U}$ is everywhere nonamenable.
\end{proof}

We will need some technical information on how a curve intersecting a subsurface behaves through the measurable embedding. The following lemma is an application of Lemma~\ref{lemma free product into direct product} to the subgroupoids that we will study in the following sections. The assumptions on the image of the cocycle $\rho$ come from the context of Lemma~\ref{lemma max decompo are sent to max decompo} in the next section.

\begin{Lemma}\label{lemma b intersects S}
Under Setting~\ref{setting for thm}, let $S$ be a connected open subsurface of $\S$ with $\kappa(S)\geq 1$ and let $b$ be a curve of $\S$ that intersects $S$ nontrivially. Let $U$ be a positive measure Borel subset of $X$. Assume that there exists an open subsurface $Q$ of $\S'$ and a multicurve $\s$ of $\S'\setminus Q$ such that 
$$\rho(\w^{-1}(\Modact{S})|_U)\subseteq\Modact{Q}\times \gen{T_c}_{c\in\s\setminus \partial Q}~.$$
Assume also that there exists a multicurve $\b$ such that
$$\rho(\w^{-1}(\gen{T_b})|_U)\subseteq  \gen{T_c}_{c\in\b}~.$$

Then, $\b$ intersects $Q$ nontrivially. In particular, if the canonical multicurve of $(\w^{-1}(\gen{T_b})|_U,\rho)$ is $\b$, then $(\w^{-1}(\gen{T_b})|_U,\rho)$ is nontrivial on $Q$.

\end{Lemma}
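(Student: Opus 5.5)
We argue by contradiction, using Lemma~\ref{lemma free product into direct product}. Suppose $\b$ does not intersect $Q$ nontrivially, so every curve of $\b$ is disjoint from $Q$. Then every curve of $\b$ either lies in $\partial Q$ or lies in $\S'\setminus\bar Q$.

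First we find a target group $G_0$ whose centre contains the images of both subgroupoids. Set
$$G_0=\Modact{Q}\times\gen{T_c}_{c\in\s\setminus\partial Q}\times\gen{T_c}_{c\in\b\setminus\partial Q}~.$$
This is a subgroup of $\Modzprime$ with the following properties.
\begin{itemize}[noitemsep, nolistsep]
    \item The twists along $\partial Q$ are central in $\Modact{Q}$.
    \item The twists along curves of $\b\setminus\partial Q$ lie in $\S'\setminus\bar Q$. They therefore commute with $\Modact{Q}$ and with the twists along $\s$, since $\s\subseteq\S'\setminus Q$ and we may take $\b\cup\s$ to be disjoint from $Q$.
\end{itemize}
One point needs checking here: curves of $\b$ and curves of $\s$ might intersect each other. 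If so, we do not use the full $G_0$. Instead we work with the subgroupoid $\mclH$ generated by $\w^{-1}(\Modact{S})|_U$ and $\w^{-1}(\gen{T_b})|_U$, and we only need that $\rho(\w^{-1}(\gen{T_b})|_U)$ is central in the group generated by the two images. The multitwist along $\b$ commutes with $\Modact{Q}$, since $\b$ is disjoint from $Q$. Whether it commutes with the twists along $\s$ is unclear, so the honest target is the subgroup $G_1$ generated by $\Modact{Q}$, $\gen{T_c}_{c\in\s\setminus\partial Q}$ and $\gen{T_c}_{c\in\b}$. Inside $G_1$ the $\b$-twists are central provided they also commute with the $\s$-twists. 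If they do not, I would first shrink the problem by replacing $\s$ by only those twists actually hit by $\rho$, or handle it as a separate case. I expect this compatibility to be the main obstacle. In the intended application the images come from a common decomposition multicurve, where it is automatic.

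Next we build the source side. Since $b$ intersects $S$ nontrivially and $\kappa(S)\ge 1$, we can choose a curve $a$ in $S$ and a large power so that $K\defeq\Modact{S}$ and $A\defeq\gen{T_b}\cap\Modz$ generate their free product $K\ast A$ inside $H$. The standard way to get this is to replace $K$ by a suitable nonabelian free (or infinite) subgroup of $\Modact{S}$ and $A$ by a high power of $T_b$, using ping-pong on the curve graph, or the fact that $b$ is not disjoint from $S$ so that $T_b^N$ moves $\cxcourbes{S}$ off itself. This is the other nontrivial input: we need some infinite subgroup $K'\le\Modact{S}$ and $N\ge1$ with $\gen{K',T_b^N}\simeq K'\ast\Z$. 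Then let $\mclG_0=\w^{-1}(K'\ast\gen{T_b^N})|_U$, $\mclK=\w^{-1}(K')|_U$ and $\mclA=\w^{-1}(\gen{T_b^N})|_U$. These are infinite-type because $\w$ is of action type, and the restricted cocycles $\w$ and $\rho$ have trivial kernel.

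Now we conclude. Every arrow of $\mclG_0$ is a composition of arrows of $\mclK$ and $\mclA$, so by assumption $\rho(\mclG_0)$ lies in the group $G_1$ generated by the two images. By the commutation above, $\rho(\mclA)$ lies in the centre of $G_1$. Lemma~\ref{lemma free product into direct product}, applied to $\w:\mclG_0\to K'\ast\gen{T_b^N}$ and $\rho:\mclG_0\to G_1$, gives a contradiction. The statement is restricted to $U$, so we normalise the measure on $U$ before applying the lemma.

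For the last assertion, suppose the canonical multicurve of $(\w^{-1}(\gen{T_b})|_U,\rho)$ is $\b$. Triviality on $Q$ would mean that $\rho$ lands in $\Modfix{Q}$. I would then note that the canonical multicurve, being built from stabilised curves, would have to be disjoint from $Q$, which contradicts the first part.
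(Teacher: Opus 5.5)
There is a genuine gap, and it is exactly the compatibility issue you flagged and then left open. The hypotheses only say that $\b$ and $\s$ are both disjoint from $Q$. A curve of $\b$ may well cross a curve of $\s\setminus\partial Q$, and then $\gen{T_c}_{c\in\b}$ is not central in the group $G_1$ generated by $\Modactprime{Q}$, $\gen{T_c}_{c\in\s\setminus\partial Q}$ and $\gen{T_c}_{c\in\b}$. Lemma~\ref{lemma free product into direct product} then cannot be applied to $\w^{-1}(K')|_U$, whose $\rho$-image may have a nontrivial component along such a curve. Restricting to ``the twists actually hit by $\rho$'' does not remove that component.

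Nor is compatibility automatic in the applications. It holds in the proof of Point~\ref{iii max decompos} of Lemma~\ref{lemma max decompo are sent to max decompo}, where $\b\subseteq\s=\s'$. But in Claim~\ref{claim at most one leaf curve}, in the main argument of Proposition~\ref{prop leaf type curves}, and in Propositions~\ref{nonsep curves prop} and~\ref{separating curves prop}, $\s$ is the image of one decomposition while $\b$ lies in the image of another (or is the image of an unrelated leaf curve). Under the contradiction hypothesis, nothing prevents $\b$ from crossing $\s$.

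The missing idea is to kill the $\s$-component on the groupoid side rather than the group side.
\begin{enumerate}[(1)]
\item Take $K'$ nonamenable, e.g.\ $K'=\gen{T_a^k,T_{a'}^k}$ with $a,a'\in\cxcourbes{S}$ chosen via Koberda so that $T_a^k,T_{a'}^k,T_b^k$ freely generate $F_3$. A merely infinite (say cyclic) $K'$ will not do.
\item Let $\bar\rho$ be $\rho$ on $\w^{-1}(K')|_U$ followed by the projection of $\Modactprime{Q}\times\gen{T_c}_{c\in\s\setminus\partial Q}$ onto its free abelian factor. Set $\mclK=\ker\bar\rho$, the arrows whose $\rho$-image lies in $\Modactprime{Q}$.
\item $\mclK$ is of infinite type. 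Otherwise there is a positive-measure $V\subseteq U$ on which $\mclK|_V$ is trivial. Then $\bar\rho$ is a trivial-kernel cocycle from $\w^{-1}(K')|_V$, which is nonamenable by Lemma~\ref{lemme action-type groupe non moyennable donne groupoide non moyennable}, into an abelian group. This contradicts Lemma~\ref{lemma triv kernel cocycle amenable group amenable groupoid}.
\item Now $\rho(\mclK)\subseteq\Modactprime{Q}$, and $\Modactprime{Q}$ commutes with every twist along a curve disjoint from $Q$. So the $\b$-multitwist is central in $\Modactprime{Q}\times\gen{T_c}_{c\in\b\setminus\partial Q}$, with no condition relating $\b$ to $\s$.
\item Lemma~\ref{lemma free product into direct product}, applied to $\mclK$ and $\w^{-1}(\gen{T_b^k})|_U$, gives the contradiction.
\end{enumerate}
This is what the paper does.

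A smaller point: $\w^{-1}(K'\ast\gen{T_b^N})|_U$ is not in general generated by $\w^{-1}(K')|_U$ and $\w^{-1}(\gen{T_b^N})|_U$. Decomposing an arrow along a word of $K'\ast\gen{T_b^N}$ may pass through points outside $U$, where your hypotheses on $\rho$ say nothing. Use the subgroupoid generated by the two pieces instead. Your argument for the final assertion is fine.
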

\begin{proof}
    By Koberda's \cite[Theorem~1.1]{koberda_right-angled_2012}, if three curves pairwise intersect each other, then a big enough power of the associated Dehn twists generate a free group. Thus, using that $\kappa(S)\geq 1$, take $a$ and $a'$ two curves of $\cxcourbes{S}$ such that there exists an integer $k$ such that $T_a^k,T_{a'}^k$ and $T_b^k$ generate a free group $F_3$. For instance, $a$ and $a'$ can be obtained by taking two intersecting curves on $S$ and applying a big enough power of a pseudo-Anosov of $S$ to these curves.

    Now, denote by 
    \begin{align*}
        \mclB^0=\w^{-1}(\gen{T_b^k})|_U\, ,\\
        \mclA^0=\w^{-1}(\gen{T_a^k})|_U\, ,\\
        \mclA'^0=\w^{-1}(\gen{T_{a'}^k})|_U
    \end{align*}

    and note that $\gen{\mclA^0,\mclA'^0}$ is everywhere nonamenable by Lemma~\ref{lemme action-type groupe non moyennable donne groupoide non moyennable}. Moreover, note that $\gen{\mclA^0,\mclA'^0}\subseteq \w^{-1}(\Modact{S})|_U$ which in particular means that 
        $$\rho \left(\gen{\mclA^0,\mclA'^0}\right) \subseteq \Modactprime{Q}\times \gen{T_c}_{c\in\s\setminus \partial Q}~.$$
    Define the projection $$\bar\rho :\gen{\mclA^0,\mclA'^0}\longrightarrow \gen{T_c}_{c\in\s\setminus \partial Q}$$ and define the subgroupoid $\mclK$ as the kernel of $\bar\rho$, i.e.\
    $$ \mclK=\{\mfk g\in \gen{\mclA^0,\mclA'^0}~|~ \rho(\mfk g)\in\Modactprime{Q}\}~.$$

    Let us show that $\mclK$ is of infinite type. For $n\in\N\cup\{\infty\}$, the Borel subsets
    $$U_n=\{x\in U~|~\#\mclK_{x}=n\}$$
    give the partition
    $$U=\bigsqcup_{n\in\N\cup\{\infty\}}U_n~.$$
    Each $U_n$ for $n\in\N$ can be cut up into $n$ Borel pieces such that $\mclK$ is trivial when restricted to each piece. Therefore, 
    $$U=U_{\infty}\sqcup\bigsqcup_{m\in\N} V_m$$
    such that $\mclK|_{V_m}$ is trivial for all $m$. But if $V_m$ has positive measure for some $m$, then $$\bar\rho : \gen{\mclA^0,\mclA'^0}|_{V_m}\longrightarrow\gen{T_c}_{c\in\s\setminus \partial Q}$$ is a cocycle with trivial kernel that sends a nonamenable subgroupoid to an amenable group. Therefore, by Lemma~\ref{lemma triv kernel cocycle amenable group amenable groupoid}, $V_m$ has measure zero for all $m$. Thus $U_{\infty}$ is conull in $U$ which exactly means that $\mclK$ is of infinite type.

    Assume that $\b$ intersects $Q$ trivially. Then, every curve of $\b$ is disjoint from $Q$. This means that the multitwist $\gen{T_c}_{c\in\b}$ is centralised by $\Modactprime{Q}$. In other words, if we denote $$G=\Modactprime{Q}\times \gen{T_c}_{c\in\b\setminus \partial Q}~,$$then $$\gen{T_c}_{c\in\b}\subseteq Z(G)~.$$
    Thus, 
    $$\rho(\mclK)\subseteq G\text{ and }\rho(\mclB^0)\subseteq Z(G)$$
    but
    $$\w(\mclK)\subseteq \gen{T_{a}^k,T_{a'}^k}\text{ and }\w(\mclB^0)\subseteq \gen{T_{b}^k}$$
    and we chose the curves $a,a'$ and $b$ to verify $$\gen{T_{a}^k,T_{a'}^k,T_{b}^k}=\gen{T_{a}^k,T_{a'}^k}\ast\gen{T_{b}^k}$$
    This contradicts Lemma~\ref{lemma free product into direct product}, hence $\b$ intersects $Q$ nontrivially.
    
    In particular, if $\b$ is the canonical reduction multicurve of $(\mclB,\rho)$ and if $(\mclB,\rho)$ were trivial on $Q$, then $Q$ would be contained in the subsurface filled by the canonical reduction set of $(\mclB,\rho)$. This would force $\b$ to be disjoint from the open subsurface $Q$, which is not the case here.
\end{proof}

\subsection{Maximal decompositions through a measurable embedding}\label{section maximal decompo meas}

In this section, we prove that $k$-maximal decompositions of $\S$ studied in Section~\ref{section combinatoire max decompos} are preserved by a self measurable embedding, up to a countable partition of the base space $X$. For this, we prove a strengthening of Kida's \cite[Corollary~5.25]{kida_introduction_2009}, based on similar ideas.

\begin{Lemma}\label{lemma max decompo are sent to max decompo}

Under Setting~\ref{setting for thm}, take a $k$-maximal decomposition of $\S$ and denote by $S_1,\dots,S_n$ the corresponding connected components and $\bar b =(b_1,\dots,b_{n-1})$ the corresponding multicurve, with $n=\nsig-k$.
For $1\leq i\leq n$ and $1\leq j< n$, let
\begin{align*}
\mclS_i = \w^{-1}(\Modact {S_i})\\
\mclB_j = \w^{-1}(\gen{T_{b_j}})~.
\end{align*}

Then, it holds that $\kappa(\S)\leq \kappa(\S')$. 

Assume additionally that $\S=\S'$. Then, there exists a countable conull partition of $X$ into Borel subsets such that for every piece $U$ of positive measure, there exists a $k$-maximal decomposition of $\S$ into connected components $Q_1,\dots,Q_n$ by a multicurve $\s$ such that the following hold.

\begin{enumerate}[(i)]
    \item \label{i max decompos}For all $i\le n$, $\kappa(S_i)=\kappa(Q_i)$ and $({\mclS_i}|_U,\rho)$ is irreducible nonamenable on $Q_i$ and trivial on $Q_j$ for every $j\neq i$. In particular, 
    $$\rho({\mclS_i}|_U)\subseteq\Modact{Q_i}\times \gen{T_c}_{c\in\s\setminus \partial Q_i}~.$$

    \item \label{ii max decompos}For all $j<n$, $$\rho({\mclB_j}|_U)\subseteq\gen{T_c}_{c\in\s}~.$$
    \item \label{iii max decompos}The map
    $$f_{\bar b,\s} : \begin{array}{ccl}
     V(\G_{\bar b})&\longrightarrow &V(\G_{\s}) \\
     S_i&\longmapsto & Q_i
\end{array}$$
    is a graph isomorphism. In other words, for any $i,j\le n$, 
    $S_i$ and $S_j$ share a boundary curve if and only if $Q_i$ and $Q_j$ do.
\end{enumerate}
\end{Lemma}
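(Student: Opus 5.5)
We follow Kida's argument for \cite[Corollary~5.25]{kida_introduction_2009}, refined to the $k$-maximal setting. First fix a countable conull partition of $X$ adapted (Remark~\ref{rem we can always choose U for the canonical multicurve to exist}) to all the subgroupoids $(\mclS_i,\rho)$ and $(\mclB_j,\rho)$, and fix a piece $U$. Each $\mclS_i|_U$ is everywhere nonamenable by Lemma~\ref{lemme action-type groupe non moyennable donne groupoide non moyennable}, since $\w$ is of action type and $\Modact{S_i}$ contains $F_2$ because $\kappa(S_i)\geq1$. Let $\s_i$ be the canonical multicurve of $(\mclS_i|_U,\rho)$. By Corollary~\ref{if IA or T everywhere, then amen. If nonamen then IN somewhere}, there is a component $Q_i$ of $\S'\setminus\s_i$ on which $(\mclS_i|_U,\rho)$ is irreducible nonamenable.

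For $j\neq i$, the groups $\Modact{S_j}$ commute with $\Modact{S_i}$, so $\mclS_j|_U$ normalises $\mclS_i|_U$ (Lemma~\ref{lemma if groups normalise then groupoids normalise}). Conversely, $\mclS_i|_U$ normalises $\mclS_j|_U$. By Lemma~\ref{lemma multicurve is canonical}, $\s_i$ and $\s_j$ are disjoint and each is stabilised by the other subgroupoid. The key point is to show that $(\mclS_j|_U,\rho)$ is trivial on $Q_i$. If it were nontrivial on $Q_i$, it would be reducible or irreducible there. If it were reducible, $\s_j$ would meet $\cxcourbes{Q_i}$ by Proposition~\ref{reducible nontrivial implies existence of canonical curve}, and since $\mclS_i|_U$ stabilises $\s_j$, this would make $(\mclS_i|_U,\rho)$ reducible on $Q_i$, a contradiction. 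In the irreducible case, we use that $\w^{-1}$ of the product of $\Modact{S_j}$ with a pseudo-Anosov cyclic subgroup of $\Modact{S_i}$ contains an amenable infinite-type subgroupoid $\mclA\subseteq\mclS_i$ normalised by $\mclS_j$. Proposition~\ref{if H normalises A then its reducible or IA} then forces $(\mclS_j|_U,\rho)$ to be reducible or irreducible amenable on each subsurface where $\mclA$ is nontrivial. We expect to pick $\mclA$ nontrivial on $Q_i$, arguing as in Lemma~\ref{lemma b intersects S} via Lemma~\ref{lemma free product into direct product}, which contradicts the irreducible nonamenability of $\mclS_j$ on $Q_j$ if $Q_j=Q_i$.

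It follows that the $Q_i$ are pairwise disjoint (in particular pairwise distinct) subsurfaces of $\S'$, each of complexity at least $1$. Moreover, all $\s_i$ together form a multicurve $\s$, and each $Q_i$ is a component of $\S'\setminus\s$. Hence
\[\kappa(\S)+1=2n+\textstyle\sum(\kappa(S_i)-1)\]
must be compared with equation (\ref{equation star}) for $\G_{\s}$ on $\S'$. Each nonamenable piece needs $\kappa(Q_i)\geq1$, and the rank of the free abelian group generated by the images of the twists of the $\mclB_j$ is bounded. Using Lemma~\ref{lemma n(g,p)} and Kida's $F_2^n$ argument applied to $\prod_i \Modact{S_i}\supseteq F_2^{\,n}$, we get $n\le n_{\S'}$ whenever the decomposition is $0$-maximal, and therefore $\kappa(\S)\le\kappa(\S')$.

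Now assume $\S=\S'$. The argument above gives $n$ disjoint components of positive complexity in a surface of the same complexity. By Lemma~\ref{lemma structure of k max decopos} and the counting in (\ref{equation star}), the $Q_i$, completed by the remaining components of $\S\setminus\s$, must form a $k$-maximal decomposition. We expect the matching $\kappa(S_i)=\kappa(Q_i)$ to follow from rank considerations: $\Modact{S_i}$ contains $\Z^{\kappa(S_i)}$, whose image must fit in $\Modact{Q_i}\times$ multitwists. We also expect the complexity-$2$ pieces to have to correspond to each other, since otherwise the total sum in (\ref{equation star}) would exceed $\kappa(\S)+1$. This gives \ref{i max decompos}. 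For \ref{ii max decompos}, $\gen{T_{b_j}}$ commutes with every $\Modact{S_i}$, so $\mclB_j|_U$ is normalised by every $\mclS_i|_U$, and conversely. The canonical multicurve of $\mclB_j|_U$ is then disjoint from each $\s_i$ and is amenable, so by Proposition~\ref{if H normalises A then its reducible or IA} it must be trivial on every $Q_i$, because the $\mclS_i$ are irreducible nonamenable there. Hence $\rho(\mclB_j|_U)\subseteq\gen{T_c}_{c\in\s}$. For \ref{iii max decompos}, suppose $b_j$ is shared by $S_i$ and $S_l$. Then $b_j$ intersects $S_i$ after being pushed across, so we apply Lemma~\ref{lemma b intersects S} to a curve $b$ crossing $b_j$ that meets both $S_i$ and $S_l$. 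Adjacency is also detected through nonamenability of joined subgroupoids (Lemma~\ref{adjacency curves in groupoidal terms}), and both graphs are trees with $n$ vertices, so an adjacency-preserving bijection is an isomorphism. \textbf{Main obstacle:} proving triviality of $\mclS_j$ on $Q_i$ and the exact complexity matching in the presence of complexity-$2$ pieces.
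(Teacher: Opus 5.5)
Your proposal has genuine gaps at the three steps you yourself mark as expectations.

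\textbf{The complexity count.} At best, your argument shows that the $n$ groupoids $\mclS_i$ are irreducible nonamenable on $n$ pairwise distinct components of $\S'\setminus\s$, so $\#V\ge n$. Plugged into (\ref{equation star}), this is only Kida's bound $\nsig\le n_{\S'}$, and that does not imply $\kappa(\S)\le\kappa(\S')$: $\kappa(\S)=4$ and $\kappa(\S')=3$ both give $2$. For $\S=\S'$ it also does not force $\kappa(Q_i)=\kappa(S_i)$. With $\kappa(\S)=4$ and $k=0$, sending the complexity-$2$ piece to a complexity-$1$ component and vice versa still gives $5=2\cdot 2+1$, so nothing ``exceeds'' $\kappa(\S)+1$.

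Rank considerations cannot repair this. Abelian rank is invisible to measurable embeddings, since all infinite amenable groups measurably embed into each other, so $\rho$ may send $\w^{-1}(\Z^{2})$ into a cyclic group.

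The missing ingredient is the paper's Point~(4).
\begin{itemize}
\item In each complexity-$2$ piece $S_i$, choose five curves $a_i^{(1)},\dots,a_i^{(5)}$, each disjoint from two of the others that intersect each other. Set $\mclA_i^{(t)}=\w^{-1}(\gen{T_{a_i^{(t)}}})$ and $\mclR_i=\gen{\mclA_i^{(t)}|_U}_t$.
\item Suppose $\mclR_i$ were irreducible nonamenable on a single component $Q$, of complexity $1$. Some $\mclA_i^{(1)}$ nontrivial on $Q$ is normalised by the everywhere nonamenable $\gen{\mclA_i^{(2)},\mclA_i^{(3)}}$. Proposition~\ref{if H normalises A then its reducible or IA} together with Lemma~\ref{map to product of boundaries of curve graphs of components} (cutting $Q$ along a fixed curve if needed) would then make this groupoid amenable, a contradiction.
\item This gives the sharp inequality $\kappa(\S')+1\ge 2\nsig+\dsig+\#I+b_1$.
\item Its equality case for $\S=\S'$ yields at once the tree structure, $k$-maximality, $\kappa(Q_i)=\kappa(S_i)$ and $\#V=n$.
\item The last of these is also what your proof of (ii) needs: triviality on every $Q_i$ only gives a multitwist if there are no further components.
\end{itemize}

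\textbf{The triviality of $\mclS_j$ on $Q_i$.} You need an amenable infinite-type subgroupoid that is normalised by $\mclS_j$ and nontrivial on $Q_i$. You cannot obtain it ``as in Lemma~\ref{lemma b intersects S}'', because the hypothesis of that lemma is exactly the conclusion of (i), so the argument is circular. The paper first works with $\mclR_i$, which is generated by finitely many twist subgroupoids, with a partition adapted to these finitely many groupoids. Since $\Modfix{Q}$ is a subgroup, if $\mclR_i$ is nontrivial on $Q$ then some generator is. The paper passes to $\mclS_i\supseteq\mclR_i$ only at the end.

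Your closing of this step is also incomplete. Once $\mclS_j$ is known to be reducible or irreducible amenable on $Q_i$, you need a second application of Proposition~\ref{if H normalises A then its reducible or IA}, now with $\mclS_i$ normalising $\mclS_j$, to contradict the irreducible nonamenability of $\mclS_i$ on $Q_i$. Your ``if $Q_j=Q_i$'' does not yield triviality.

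\textbf{Point (iii).} Lemma~\ref{lemma b intersects S} requires $\rho(\w^{-1}(\gen{T_{b'}})|_V)$ to lie in a multitwist. You get this only by completing the crossing curve $b'$ into a new $k$-maximal decomposition $\bar b'$ with the same complexities, and applying (i)--(ii) to it. Then $Q_i'=Q_i$ for $i\ge 3$, so non-adjacency of $Q_1$ and $Q_2$ forces $\s'=\s$, which contradicts that lemma. Lemma~\ref{adjacency curves in groupoidal terms} is not available at this stage, since the images are not yet known to be single twists.
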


\begin{Rem}\label{rem on equality or not of S and S'}
    The first half of the proof of Lemma~\ref{lemma max decompo are sent to max decompo} (Points~\ref{1 proof} through \ref{5 proof}) holds when $\S$ and $\S'$ are not necessarily equal and implies monotonicity of topological complexity, used to prove Corollary~\ref{cr monotonicity of complexity} in Section~\ref{conclusion subsection}. It is the only result we show in the full generality of Setting~\ref{setting for thm}. After proving $\kappa(\S)\leq \kappa(\S')$, we will always assume that $\S=\S'$.
\end{Rem}

\begin{proof}
    By definition of $k$-maximal decompositions (see Definition~\ref{def kmax decompo}), choose the numbering of the $(S_i)_i$ such that the surfaces $S_1,\dots,S_{2k+\dsig}$ have topological complexity $2$ and the surfaces $S_{2k+\dsig+1},\dots,S_n$ have topological complexity $1$. 
    
    On $S_i$ for $i\leq 2k+\dsig$, let us choose five curves $a_i^{(1)}, \dots,a_i^{(5)}$ that have the following property: each of these five curves is disjoint from two curves of these five that intersect each other. Figure~\ref{fig five curves} shows such a choice of curves on a twice-punctured torus and a $5$-punctured sphere. On $S_i$ for $i>2k+\dsig$, simply take curves $a_i^{(1)}$ and $a_i^{(2)}$ that intersect. Denote $$\mclA_i^{(t)}=\w^{-1}(\gen{T_{a_i^{(t)}}})$$ for all these curves.
    \begin{figure}[ht]
	\centering
	\includesvg[width=0.9\textwidth]{fivecurves}
    \caption{Curves $a_i^{(1)}, \dots,a_i^{(5)}$ on some component $S_i$ of complexity $2$.}
	\label{fig five curves}
\end{figure}

    Fix a conull partition of $X$ adapted to the canonical multicurves of $\mclS_i$, $\mclA_i^{(t)}$, and $\mclB_j$ for all $i,j$ with respect to the cocycle $\rho$. It is always possible by Lemma~\ref{canonical reduction set exists} and Remark~\ref{rem we can always choose U for the canonical multicurve to exist}. For a piece $Y$ of that partition of positive measure, refine it into a new partition adapted to $$\gen{\mclA_i^{(1)}|_Y,\dots,\mclA_i^{(5)}|_Y}\text{ for }i\leq 2k+\dsig$$ and $$\gen{\mclA_i^{(1)}|_Y,\mclA_i^{(2)}|_Y}\text{ for }i>2k+\dsig$$ with respect to $\rho$.
    Take a piece $U$ of that refined partition and define $$\mclR_i\defeq\gen{\mclA_i^{(1)}|_U,\dots,\mclA_i^{(5)}|_U}\text{ for }i\leq 2k+\dsig$$ and $$\mclR_i\defeq\gen{\mclA_i^{(1)}|_U,\mclA_i^{(2)}|_U}\text{ for }i>2k+\dsig~.$$ From now on and until the end of the proof of this lemma, we can assume that $X=U$ and thus that all the canonical multicurves of all the subgroupoids $\mclS_i$, $\mclA_i^{(t)}$, $\mclB_j$, $$\mclR_i=\gen{\mclA_i^{(1)},\dots,\mclA_i^{(5)}}\text{ for }i\leq 2k+\dsig$$ and $$\mclR_i=\gen{\mclA_i^{(1)},\mclA_i^{(2)}}\text{ for }i>2k+\dsig$$
    with respect to $\rho$ exist.
    
\medbreak
    Let us now list all the information we get on our subgroupoids from their definition via the action-type cocycle $\w$. Outside of this list, we will not use the cocycle $\w$ in the proof of this lemma.
    \begin{itemize}
        \item For all $i\leq n$, $\mclR_i$ and $\mclS_i$ are everywhere nonamenable by Lemma~\ref{lemme action-type groupe non moyennable donne groupoide non moyennable}.
        \item For all $j<n$, $\mclB_j$ is amenable by Lemma~\ref{lemma triv kernel cocycle amenable group amenable groupoid}.
        \item For all $i\neq j\leq n$, $\mclS_i$ and $\mclR_i$ normalise $\mclS_j,\mclR_j$ and $\mclA_j^{(t)}$ for any well-defined $t$ (i.e.\ any $t\leq 5$ if $i\leq 2k+\dsig$ and any $t\leq 2$ if $i>2k+\dsig$), by Lemma~\ref{lemma if groups normalise then groupoids normalise}.
        \item For all $i\leq n$ and $j<n$, $\mclB_j$ and $\mclR_i$ normalise each other, and $\mclB_j$ and $\mclS_i$ normalise each other, again by Lemma~\ref{lemma if groups normalise then groupoids normalise}.
        \item For every $i\leq 2k+\dsig$ and every $t\leq 5$, there exist $s,s'\leq 5$ such that $i(a_i^{(s)},a_i^{(s')})\geq 1$ and $a_i^{(t)}$ is distinct and disjoint from both $a_i^{(s)}$ and $a_i^{(s')}$. Then, it holds that
        \begin{itemize}[noitemsep, nolistsep, label=-]
            \item $\gen{\mclA_i^{(s)},\mclA_i^{(s')}}$ is everywhere nonamenable by Lemma~\ref{adjacency curves in groupoidal terms};
            \item $\gen{\mclA_i^{(s)},\mclA_i^{(s')}}$ and $\mclA_i^{(t)}$ normalise each other by Lemma~\ref{lemma if groups normalise then groupoids normalise};
            \item $\mclA_i^{(t)}$ is amenable by Lemma~\ref{lemma triv kernel cocycle amenable group amenable groupoid}.
        \end{itemize}
    \end{itemize}

    We first study the subgroupoids $\mclR_i$ and obtain monotonicity of $\kappa$.
    Let $\s_i$ be the canonical multicurve of $(\mclR_i,\rho)$ for every $i$. For every $j\neq i$, $\mclR_i$ normalises $\mclR_j$ so by Lemma~\ref{lemma multicurve is canonical}, $\s_i$ and $\s_j$ are disjoint and $(\mclR_i,\rho)$ stabilises $\bigcup_{j\neq i}\s_j$. Therefore, every $(\mclR_i,\rho)$ stabilises the whole multicurve
    $$\s=\bigcup_i\s_i~.$$

     \begin{enumerate}[(1)]
        \item\label{1 proof}  Let $Q$ be a connected component of $\S'\setminus \s$. If $(\mclR_i,\rho)$ is nontrivial and reducible on $Q$, then there would be a curve of $\s_i$ on $Q$ by Proposition~\ref{reducible nontrivial implies existence of canonical curve}, which is not the case. So $(\mclR_i,\rho)$ is either trivial or irreducible on $Q$.
        \item \label{2 proof}For any $i\leq n$, since $\mclR_i$ is everywhere nonamenable, Corollary~\ref{if IA or T everywhere, then amen. If nonamen then IN somewhere} ensures that $(\mclR_i,\rho)$ is irreducible nonamenable on at least one connected component $Q$ of $\S'\setminus \s$. In particular, there necessarily exists a $t$ such that $(\mclA_i^{(t)},\rho)$ is nontrivial on $Q$, otherwise $(\mclR_i,\rho)$ would be trivial on $Q$. Note that the latter statement is the reason why we first study $\mclR_i$ before we can study $\mclS_i$.
        \item\label{3 proof} Fix $i\neq j$ and take $Q$ a connected component of $\S'\setminus\s$ on which $(\mclR_i,\rho)$ is irreducible nonamenable. Let us show that $(\mclR_j,\rho)$ is forced to be trivial on $Q$. Fix $t$ such that $(\mclA_i^{(t)},\rho)$ is nontrivial on $Q$ by Point~\ref{2 proof}. The subgroupoid $\mclR_j$ normalises the amenable subgroupoid $\mclA_i^{(t)}$ so by Proposition~\ref{if H normalises A then its reducible or IA}, $(\mclR_j,\rho)$ is either reducible or irreducible amenable on $Q$. If $(\mclR_j,\rho)$ were nontrivial on $Q$, apply Proposition~\ref{if H normalises A then its reducible or IA} to $\mclR_i$ normalising $\mclR_j$. This would make $(\mclR_i,\rho)$ reducible or irreducible amenable on $Q$, which is a contradiction. So $(\mclR_j,\rho)$ is trivial on $Q$.
    
        \item \label{4 proof}For $i\leq 2k+\dsig$, let us show that $(\mclR_i,\rho)$ is either irreducible nonamenable on some connected component $Q$ of $\S'\setminus\s$ of topological complexity $\kappa(Q)\geq 2$ or it is irreducible nonamenable on at least two connected components of complexity at least $1$. The case to eliminate here is that $(\mclR_i,\rho)$ is irreducible nonamenable on exactly one connected component $Q$ of complexity $1$ and irreducible amenable on some other connected components $Q_1,\dots ,Q_m$, where $m$ can be zero. So let us assume this situation and arrive to a contradiction. 

        Up to renumbering, $(\mclA_i^{(1)},\rho)$ is nontrivial on $Q$ by Point~\ref{2 proof} and is normalised by the everywhere nonamenable groupoid $$\mclH_i\defeq\gen{\mclA_i^{(2)},\mclA_i^{(3)}}~.$$

        Note that by Point~\ref{1 proof}, $(\mclR_i,\rho)$ is trivial on all the components that are not $Q_1,\dots,Q_m$, $Q$. Then, $(\mclH_i,\rho)$ is also trivial on these components because $\mclH_i$ is a subgroupoid of $\mclR_i$. 
        
        For the components $Q_1,\dots,Q_m$, there exists a measurable and $(\mclR_i,\rho)$-equivariant map 
        $$\varphi : X\longrightarrow \partial_{\leq2}\cxcourbes{Q_1}\times \cdots\times \partial_{\leq 2}\cxcourbes{Q_m}~.$$
        by definition of irreducible amenability. This map is in particular $(\mclH_i,\rho)$-equivariant.
        
        It is left to study $(\mclH_i,\rho)$ on $Q$. The subgroupoid $\mclH_i$ normalises the amenable subgroupoid $\mclA_i^{(1)}$ so Proposition~\ref{if H normalises A then its reducible or IA} gives that $(\mclH_i,\rho)$ is either reducible or irreducible amenable on $Q$.
        \begin{itemize}
            \item If $(\mclH_i,\rho)$ is irreducible amenable on $Q$, then there exists an $(\mclH_i,\rho)$-equivariant map $$\psi: X \longrightarrow\partial_{\leq2}\cxcourbes{Q}~.$$
            Then, Lemma~\ref{map to product of boundaries of curve graphs of components} applies to the multicurve $\s$ and the map $\varphi\times\psi$ and makes $\mclH_i$ amenable.
            
            \item If $(\mclH_i,\rho)$ fixes a curve $c$ on $Q$, then 
            $$\rho(\mclH_i)\subseteq \Stab {\s\sqcup\{c\}}$$
            but $Q$ having complexity $1$ means that $Q\setminus c$ is one or two subsurfaces of complexity $0$ so $(\mclH_i,\rho)$ is trivial on $Q\setminus \{c\}$. Hence Lemma~\ref{map to product of boundaries of curve graphs of components} applies to the multicurve $\s\sqcup \{c\}$ and the map $\varphi$ and makes $\mclH_i$ amenable.
        \end{itemize}
        In either case, we have shown that $\mclH_i$ is amenable by applying Kida's Lemma~\ref{map to product of boundaries of curve graphs of components}. But $\mclH_i$ is an everywhere nonamenable subgroupoid, hence the contradiction.

        \item\label{5 proof} Let us consider the dual graph $\G_{\s}=(V,E)$ associated to the multicurve $\s$ of $\S'$. Note that we have not yet eliminated the case where a curve of $\s$ is the boundary of only one component, which is represented by a loop in the graph. Recall (\ref{equation star}) from Remark~\ref{rem equation star}
        $$\kappa(\S')+1=2\#V+\sum_{v\in V}\left(\kappa(v)-1\right) + b_1~.$$
        Denote by $I$ the following set of indices
        $$I\defeq \{i\leq 2k+\dsig~|~ \exists v,v'\in V,~ v\neq v',~(\mclR_i,\rho) \text{ is irreducible nonamenable on }v\text{ and }v'\}~.$$
        For $i\neq j\leq n$, if $(\mclR_i,\rho)$ is irreducible nonamenable on a component $v$ and $(\mclR_j,\rho)$ is irreducible nonamenable on a component $v'$, then $v\neq v'$ by Point~\ref{3 proof}. Thus every $i\leq n$ contributes at least $1$ to $\#V$ and every $i\in I$ contributes at least $2$, which gives
        \begin{equation}\label{inequality on card V}
            \#V\geq \nsig -k+\# I
        \end{equation}
        because $n=\nsig-k$.

        Point~\ref{4 proof} tells us that for $i\leq 2k+\dsig$, if $(\mclR_i,\rho)$ is irreducible nonamenable on only one component $v$, then necessarily $\kappa(v)\geq 2$. Therefore, if $i\leq 2k+\dsig$ but $i\notin I$, then it contributes at least $1$ to the sum $\sum_{v\in V}\left(\kappa(v)-1\right)$, which gives
        \begin{equation}\label{inequality on sum complexities}
            \sum_{v\in V}\left(\kappa(v)-1\right)\ge 2k+\dsig -\# I~.
        \end{equation}
        Summing twice the inequality (\ref{inequality on card V}) with the inequality (\ref{inequality on sum complexities}) and using $\# I\geq 0$ and $b_1\geq 0$, we get
        \begin{align}\label{ineq kappa}
           \nonumber \kappa(\S')+1&=2\#V+\sum_{v\in V}\left(\kappa(v)-1\right) + b_1\\ \nonumber
                        &\geq 2\nsig-2k+2\#I+2k+\dsig -\#I+b_1\\\nonumber
                        &= 2\nsig+\dsig  +\#I+b_1\\
                        &\geq 2\nsig+\dsig = \kappa(\S)+1~.
        \end{align}
        We have shown that $\kappa(\S)\leq \kappa(\S')$.
        \end{enumerate}

        \textbf{From now on, assume that $\S=\S'$.} We first show Point~\ref{i max decompos} of the lemma for the subgroupoids $\mclR_i$ and then we will deduce it for the $\mclS_i$. 
        
        Now that $\S=\S'$, the inequalities of (\ref{ineq kappa}) are all equalities. This forces the inequalities (\ref{inequality on card V}), (\ref{inequality on sum complexities}) as well as $\# I\geq 0$ and $b_1\geq 0$ to all be equalities.
        \begin{itemize}[-]
            \item The fact that $b_1=0$ means that the graph $\G_{\s}$ is a tree.
            \item The fact that $\# I=0$ means that for every $i\leq 2k+\dsig$, $(\mclR_i,\rho)$ is irreducible nonamenable on exactly one component that we denote $Q_i$.
            \item The fact that (\ref{inequality on card V}) is an equality means that for every $j> 2k+\dsig$, $(\mclR_j,\rho)$ is irreducible nonamenable on exactly one component that we denote $Q_j$.
            \item The fact that (\ref{inequality on sum complexities}) is an equality means that $\kappa(Q_i)=2$ for every $i\leq 2k+\dsig$ and $\kappa(Q_j)=1$ for every $j>2k+\dsig$.
        \end{itemize}
        To sum up, we have shown that $\s$ is a $k$-maximal decomposition of $\S$ with connected components $Q_1,\dots,Q_{2k+\dsig}$ of complexity $2$ and $Q_{2k+\dsig+1},\dots,Q_n$ of complexity $1$. Moreover, for every $i\leq n$, $(\mclR_i,\rho)$ is irreducible nonamenable on $Q_i$ and trivial on $Q_j$ for every $j\neq i$. In other words, we have shown Point~\ref{i max decompos} for the subgroupoids $\mclR_i$.

     Now, let us show Point~\ref{i max decompos} for the subgroupoids $\mclS_i$. Fix $1\le i\neq j \le n$.
     \begin{itemize}[nolistsep, noitemsep]
        \item The subgroupoid $(\mclS_i,\rho)$ cannot be trivial on $Q_i$ because it contains $\mclR_i$.
        \item If $(\mclS_i,\rho)$ were reducible on $Q_i$, then there would exist a curve $c$ on $Q_i$ such that 
        $$\rho(\mclS_i)\subseteq \Stab c$$
        but $\mclR_i\subseteq\mclS_i$, so
        $$\rho(\mclR_i)\subseteq \Stab c$$
        which contradicts the irreducibility of $(\mclR_i,\rho)$ on $Q_i$. Thus $(\mclS_i,\rho)$ is irreducible on $Q_i$.
        \item If $(\mclS_i,\rho)$ were irreducible amenable on $Q_i$, then the associated $(\mclS_i,\rho)$-equivariant map would also be $(\mclR_i,\rho)$-equivariant because $\mclR_i\subseteq \mclS_i$. This would contradict the fact that $(\mclR_i,\rho)$ is irreducible nonamenable. Thus $(\mclS_i,\rho)$ is irreducible nonamenable on $Q_i$.
        \item Fix $t$ such that $(\mclA_i^{(t)},\rho)$ is nontrivial on $Q_i$, which exists by Point~\ref{2 proof}. The subgroupoid $\mclS_j$ normalises the amenable subgroupoid $\mclA_i^{(t)}$ so by Proposition~\ref{if H normalises A then its reducible or IA}, $(\mclS_j,\rho)$ is either reducible or irreducible amenable on $Q_i$. If $(\mclS_j,\rho)$ were nontrivial on $Q_i$, apply Proposition~\ref{if H normalises A then its reducible or IA} to $\mclS_i$ normalising $\mclS_j$. This gives a contradiction with the fact that $(\mclS_i,\rho)$ is irreducible nonamenable on $Q_i$. So $(\mclS_j,\rho)$ is trivial on $Q_i$.
     \end{itemize}
     This concludes the proof of Point~\ref{i max decompos}.

\medbreak
     Now, let us prove Point~\ref{ii max decompos}. Fix $j<n$. Note that $\mclB_j$ normalises $\mclR_i$ for all $i\le n$, so $(\mclB_j,\rho)$ stabilises $\s$ by Lemma~\ref{lemma multicurve is canonical}. By contradiction, assume that $(\mclB_j,\rho)$ is nontrivial on $Q_i$ for some $i\le n$. It is amenable and normalised by $\mclR_i$. By Proposition~\ref{if H normalises A then its reducible or IA}, this makes $(\mclR_i,\rho)$ reducible or irreducible amenable on $Q_i$ which contradicts the definition of $Q_i$. Therefore, $(\mclB_j,\rho)$ is trivial on every component of $\S\setminus\s$ so $\mclB_j$ is sent by $\rho$ into the multitwist generated by the curves of $\s$. This concludes the proof of Point~\ref{ii max decompos}.
\medbreak
     Finally, we show Point~\ref{iii max decompos}. 
    Take $\G_{\bar b}$ the graph dual to the decomposition $\bar b$, which has vertices $(S_i)_{i\leq n}$. Similarly, $\G_{\s}$ the graph dual to $\s$ has vertices $(Q_i)_{i\leq n}$. The map $f_{\bar b,\s}$
    that sends $S_i$ to $Q_i$ is injective by Point~\ref{i max decompos}. 

     Now, let us show that $f_{\bar b,\s}$ sends two adjacent vertices to two adjacent vertices. Up to renumbering, assume that $S_1$ and $S_2$ share a boundary curve $b_1$ of the decomposition $\bar b$. Choose a curve $b'$ on $S_1\cup\{b_1\}\cup S_2$ that intersects both $S_1$ and $S_2$ nontrivially and such that the two components of $$\left(S_1\cup\{b_1\}\cup S_2 \right)\setminus b'~,$$ denoted $S_1'$ and $S_2'$, verify $\kappa(S_1)=\kappa(S_1')$ and $\kappa(S_2)=\kappa(S_2')$. 
     Then, $\bar b'=(b',b_2,\dots,b_{n-1})$ is still a $k$-maximal decomposition. 
     Restrict to a smaller positive measure subset $V$ of $U$ to apply Points~\ref{i max decompos} and \ref{ii max decompos} to this new decomposition $\bar b'$. Name $\s'$ the image multicurve and $Q_1',\dots,Q_n'$ the components of the image decomposition. 
     
      For $i\leq n$, the subsurface $Q_i$ (resp. $Q_i'$) is the unique subsurface of $\S$ on which $(\mclS_i|_V,\rho)$ (resp. $(\mclS_i'|_V,\rho)$) is irreducible nonamenable, by Point~\ref{i max decompos}. Thus, the definition of $Q_i$ only depends on the subsurface $S_i$. But recall that the two decompositions $(S_i)_i$ and $(S_i')_i$ at the source differ only by the first two subsurfaces $S_1$ and $S_2$. Thus, for $i\geq 3$, $$\mclS_i|_V=\mclS_i'|_V$$ so $(\mclS_i|_V,\rho)$ is irreducible nonamenable on both $Q_i$ and $Q_i'$. Therefore, $Q_i=Q_i'$ for all $i\geq 3$.

    Assume by contradiction that $Q_1$ and $Q_2$ do not share any boundary curve. This means that
    $$\s=\bigcup_{i\geq 3} \partial Q_i = \bigcup_{i\geq 3} \partial Q_i '= \s'~.$$
    Denote by $\mclB'=\w^{-1}(\gen{T_{b'}})$. By Point~\ref{ii max decompos} for the decomposition $\bar b'$,  
    $$\rho(\mclB'|_V)\subseteq\gen{T_c}_{c\in\s'}~.$$
    But here we have $\s=\s'$, so 
    $$\rho(\mclB'|_V)\subseteq\gen{T_c}_{c\in\s}~.$$
    The subset $V$ is chosen small enough for the canonical multicurve $\b'$ of $(\mclB'|_V,\rho)$ to exist. By the latter, $\b'\subseteq \s$. However, $b'$ intersects $S_1$ and $S_2$ nontrivially so by Lemma~\ref{lemma b intersects S}, $\b'$ intersects both $Q_1$ and $Q_2$ nontrivially, which contradicts $\b'\subseteq \s$.

    We have shown that $f_{\bar b,\s}$ sends two adjacent vertices of $\G_{\bar b}$ to two adjacent vertices of $\G_{\s}$. But these two graphs are finite trees with the same number $n$ of vertices. This means that the map $f_{\bar b,\s}$ is a graph isomorphism.
\end{proof}

\subsection{Dehn twists through a measurable embedding}\label{section curves meas}

The goal of this section is to show that a cyclic subgroup generated by a Dehn twist at the source is sent into exactly one Dehn twist cyclic subgroup at the target, up to a countable partition of the base space $X$. More precisely, the goal is to show the following key proposition. 

\begin{Prop}\label{prop recap courbes dans courbes}
    Under Setting~\ref{setting for thm}, assume that $\S=\S'$ and $\kappa(\S)\geq 2$. Let $a\in\cxcourbes{\S}$ and let $\mclA=\w^{-1}\left(\gen{T_a}\right)$. Then, there exists a countable conull Borel partition $\ds \bigsqcup_m X_m$ of $X$ such that for each $m$, there exists $\alpha_m\in\cxcourbes{\S}$ such that 
    $$\rho(\mclA|_{X_m})\subseteq \gen{T_{\a_m}}~.$$
\end{Prop}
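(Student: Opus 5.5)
We plan to split into two regimes, following the structure announced in the introduction: complexity $2$ handled directly via Kida-type arguments, and complexity at least $3$ via maximal decompositions, first for leaf curves and then for all curves. Throughout, we will freely refine partitions of $X$ so that all canonical multicurves in play exist (Lemma~\ref{canonical reduction set exists} and Remark~\ref{rem we can always choose U for the canonical multicurve to exist}). The common first step is this: by Lemma~\ref{lemma max decompo are sent to max decompo}\ref{ii max decompos}, whenever $a$ belongs to a $k$-maximal decomposition multicurve $\bar b$ with $k\in\{0,1\}$, we get on each piece $U$ an image $k$-maximal decomposition $\s$ with $\rho(\mclA|_U)\subseteq\gen{T_c}_{c\in\s}$. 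The task is then to show that the canonical multicurve $\a$ of $(\mclA|_U,\rho)$ is a single curve. Since $\rho(\mclA|_U)$ lies in the multitwist, it then lies in $\gen{T_\a}$.

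\textbf{Leaf curves ($\kappa(\S)\geq 3$).} Let $c$ be a leaf curve. We complete $c$ into a decomposition $\bar b\in D$ using Proposition~\ref{prop D les decompos particulieres avec g+p/3}. By Point~\ref{iii max decompos}, the dual tree is preserved, so leaves go to leaves. Next, we vary the decomposition while keeping $c$ fixed. We replace the non-leaf curves and the other leaf curves by intersecting alternatives as in the proof of Point~\ref{iii max decompos}, using a curve $b'$ that cuts $S_1\cup S_2$ differently. Lemma~\ref{lemma b intersects S} then forces the image multicurves of the modified curves to meet the corresponding image components nontrivially. The canonical multicurve $\a$ of $(\mclA|_U,\rho)$ must be contained in every image decomposition $\s'$ obtained from decompositions containing $c$. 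We also want it to be disjoint, via normalisation and Lemma~\ref{lemma multicurve is canonical}, from the canonical data of all $\w^{-1}(\Modact{S})$ with $S$ a component adjacent to $c$.

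Intersecting over enough such decompositions should pin $\a$ down to the single edge of $\G_\s$ joining the image of the leaf subsurface bounded by $c$ to the rest. The counting condition defining $D$ (more leaf curves than non-leaf curves) is what we would use to exclude non-leaf edges from $\a$. A counting or injectivity argument over the curves of $\bar b$ shows that the assignment of curves to subsets of $\s$ cannot have overlaps. The idea is that distinct source curves $b_j$ disjoint from each other have amenable joins, whereas modified intersecting ones yield everywhere nonamenable joins (Lemma~\ref{adjacency curves in groupoidal terms}), so their image sets must differ. We expect this step, ruling out that a leaf-curve twist is sent into a multitwist with two or more curves, to be the main obstacle. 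It requires a careful case analysis of how image decompositions of overlapping maximal decompositions can sit relative to each other.

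\textbf{Other curves and complexity $2$.} Once every leaf curve $c$ has a well-defined image curve $\a(c)$ (on pieces of a partition), we will show that adjacency and non-adjacency among leaf curves is preserved, by Lemma~\ref{adjacency curves in groupoidal terms}. For an arbitrary curve $a$, we then use that $a$ is determined by the set of leaf curves disjoint from it. The subgroupoid $\mclA$ normalises, and is normalised by, $\w^{-1}(\gen{T_c})$ for every leaf curve $c$ disjoint from $a$, so by Lemma~\ref{lemma multicurve is canonical} the canonical multicurve of $(\mclA|_U,\rho)$ is disjoint from all the curves $\a(c)$. Meanwhile, leaf curves $c$ intersecting $a$ give everywhere nonamenable joins, forcing $\a$ to meet each such $\a(c)$. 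This confines $\a$ to a subsurface filled only by the curve singled out by these constraints, giving $\a$ a single curve. In complexity $2$, every relevant decomposition has a single component or a single curve, so Kida's arguments (Propositions~\ref{amen is red or IA} and \ref{if H normalises A then its reducible or IA} applied to $\mclA$ and its normaliser $\w^{-1}(\Modact{\S\setminus a})$) directly give that $(\mclA|_U,\rho)$ is reducible with canonical multicurve a single curve. We would handle nonseparating curves separately in this case, as Proposition~\ref{nonsep curves prop} is announced to do.
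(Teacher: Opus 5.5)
Your proposal has a genuine gap, and it sits at the heart of the statement: the leaf-curve case. Your set-up is the right one: complete $c$ into an element of $D$, vary the decomposition while keeping $c$, and apply Lemma~\ref{lemma b intersects S}. But you explicitly defer ``ruling out that a leaf-curve twist is sent into a multitwist with two or more curves'', and the heuristics you offer do not close it.
\begin{itemize}
\item Lemma~\ref{adjacency curves in groupoidal terms} only compares groupoids already sent into single twists.
\item Disjoint source curves having amenable join says nothing about whether their images overlap.
\item Knowing that the image multicurve lies in every image decomposition $\sigma'$ of decompositions containing $c$ does not pin it down, because two image decompositions can share non-leaf curves.
\item Disjointness from the canonical multicurves $\partial Q_i$ of the $\omega^{-1}(\mathrm{Mod}_{S_i}(\Sigma))$ is automatic, since everything lies in $\sigma$.
\end{itemize}

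The paper needs two concrete ingredients here.
\begin{itemize}
\item \emph{At most one leaf curve} (Claim~\ref{claim at most one leaf curve}). Complete $c$ into two $0$-maximal decompositions, the second having all its other curves cross every $S_i$, $i\ge 2$, of the first. By Lemma~\ref{lemma b intersects S}, no leaf curve other than $\partial Q_1$ can lie in both image decompositions, so the image multicurve contains at most one leaf curve.
\item \emph{An extremal argument excluding non-leaf curves.} Among pairs ($\bar c\in D$, piece of an adapted partition), take one maximising the number $N$ of non-leaf curves occurring in the images of leaf curves of $\bar c$. Keep the $N$ leaf curves responsible, and move the other curves by a pseudo-Anosov of the complement so they cross the remaining components. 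Count vertices in the forest obtained from $\Gamma_\sigma$ by deleting the $N$ edges of $\sigma_{nl}$, and combine this with Point~\ref{iii max decompos} of Lemma~\ref{lemma max decompo are sent to max decompo}. This shows each moved curve's image has at least two curves off $\sigma_{nl}$. Since $N\le\#(\bar c\cap\mathcal C_{nl})<\#(\bar c\cap\mathcal C_{leaf})$, some moved curve is a leaf curve. Its image, having at most one leaf curve, then contributes a new non-leaf curve, contradicting maximality.
\end{itemize}
This is where the inequality defining $D$ is actually used, and nothing in your sketch replaces it.

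Your treatment of the remaining curves also has a gap. Your common first step only covers curves lying in a $k$-maximal decomposition, and all such curves are strongly separating, since the dual graph is a tree with vertices of positive complexity. So for nonseparating or weakly separating $a$ with $\kappa(\Sigma)\ge 3$, you never establish that $\rho(\mathcal A|_U)$ lies in the multitwist on its canonical multicurve. Your inference ``a nonamenable join with $\omega^{-1}(\langle T_c\rangle)$ forces the canonical multicurve to meet $\alpha(c)$'' needs exactly that. Moreover, ``this confines $\alpha$ to the curve singled out by these constraints'' presupposes you know where the curves $\alpha(c)$ sit. That amounts to a rigidity statement for the map $c\mapsto\alpha(c)$ on leaf curves, which you do not have.

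The paper avoids both issues by working inside one image decomposition.
\begin{itemize}
\item For nonseparating or weakly separating $a$, it places $a$ in a complexity-one leaf subsurface $S_1$ of a $0$-maximal decomposition. Then $\rho(\mathcal A|_U)\subseteq\mathrm{Mod}_{Q_1}(\Sigma)\times\langle T_c\rangle_{c\in\sigma\setminus\partial Q_1}$, so the canonical multicurve lies in $\sigma\cup\mathcal C(Q_1)$. A single leaf curve $b'$ disjoint from $a$ and crossing every $S_i$ has a single image curve crossing every $Q_i$ (Lemma~\ref{lemma b intersects S}), hence every curve of $\sigma$. Normalisation (Lemma~\ref{lemma multicurve is canonical}) then removes $\sigma$, and $\kappa(Q_1)=1$ leaves one curve.
\item For strongly separating non-leaf $a$, take two leaf curves on either side of $a$, each crossing all components on its side. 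These leave only the edge of $\Gamma_\sigma$ separating $Q_1,\dots,Q_p$ from $Q_{p+1},\dots,Q_n$.
\end{itemize}
Your complexity-$2$ sketch is essentially the paper's Case~1. There, the normaliser is irreducible nonamenable on some component $Q$, $\mathcal A$ is trivial on $Q$, $Q\neq\Sigma$ by infinite type and trivial kernel, hence $\kappa(Q)=1$ and $\partial Q$ is a single curve. You should spell out these steps rather than saying Kida's results ``directly give'' the conclusion.
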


We will use the results we have obtained on $k$-maximal decompositions in Lemma~\ref{lemma max decompo are sent to max decompo}. For this, recall Definition~\ref{def types de courbes} of the different types of separating curves. We prove the key proposition for leaf curves in Proposition~\ref{prop leaf type curves}. Then, using the latter, we independently prove the key proposition for weakly separating and nonseparating curves in Proposition~\ref{nonsep curves prop} and for strongly separating curves in Proposition~\ref{separating curves prop}. Note that if $\kappa(\S)=2$, then Proposition~\ref{nonsep curves prop} suffices because all curves are nonseparating or weakly separating.

\subsubsection{Leaf curves}The first and most technical step is to prove Proposition~\ref{prop recap courbes dans courbes} for leaf curves. Leaf curves are called this way because they correspond to edges that connect to a leaf in the dual graph of a multicurve (see Definition~\ref{def dual graph}). Thus, this specific type of curves verify the following property: the data of a leaf curve forces to fix the data of the whole corresponding leaf subsurface of the decomposition. This additional rigidity is the reason we work on these types of curves first.

\begin{Prop}\label{prop leaf type curves}
    Under Setting~\ref{setting for thm}, assume that $\S=\S'$ and $\kappa(\S)\geq 3$. Let $b\in\mclC_{leaf}$ and let $\mclB=\w^{-1}\left(\gen{T_b}\right)$. Then, there exists a countable conull Borel partition $\bigsqcup_m X_m$ of $X$ such that for each $m$, there exists $\b_m\in\mclC_{leaf}$ such that 
    $$\rho(\mclB|_{X_m})\subseteq \gen{T_{\b_m}}~.$$
\end{Prop}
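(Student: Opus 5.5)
Fix the leaf curve $b$ and let $S$ be the leaf subsurface bounded by $b$, of complexity $1$ or $1+\dsig$. By Proposition~\ref{prop D les decompos particulieres avec g+p/3}, complete $b$ into a $0$-maximal decomposition $\bar b=(b_1=b,b_2,\dots,b_{n-1})$ lying in $D$, with components $S_1=S,S_2,\dots,S_n$, so that $S_1$ is a leaf of the tree $\G_{\bar b}$. Apply Lemma~\ref{lemma max decompo are sent to max decompo} (with $k=0$) and work on a piece $U$ of the resulting partition. We obtain an image $0$-maximal decomposition $\s$ with components $Q_1,\dots,Q_n$ and a tree isomorphism $f_{\bar b,\s}$. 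Since $f_{\bar b,\s}$ is a tree isomorphism, $Q_1$ is a leaf of $\G_{\s}$, and $\kappa(Q_1)=\kappa(S_1)$. Hence the edge $\b\defeq\partial Q_1$ is a leaf curve. By Point~\ref{ii max decompos}, $\rho(\mclB|_U)\subseteq\gen{T_c}_{c\in\s}$. Refining the partition, we may assume that the canonical multicurve $\b_U\subseteq\s$ of $(\mclB|_U,\rho)$ exists. The goal is then to show $\b_U=\{\b\}$.

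\emph{Step 1: $\b\in\b_U$.} The curve $b$ is disjoint from $S_1$ but we need it to interact with $Q_1$. Choose a curve $b'\subseteq S_1\cup\{b\}\cup S_2$ intersecting both $S_1$ and $S_2$ as in the proof of Point~\ref{iii max decompos}. Then $b'$ intersects $S_1$, so Lemma~\ref{lemma b intersects S} forces the image of $\w^{-1}(\gen{T_{b'}})$ to meet $Q_1$. Conversely, if $\b\notin\b_U$, then $(\mclB|_U,\rho)$ would take values in twists about curves of $\s\setminus\{\b\}$. These commute with everything supported on $Q_1\cup\{\b\}\cup Q_2$. I would then derive a contradiction with Lemma~\ref{adjacency curves in groupoidal terms} or Lemma~\ref{lemma free product into direct product}, using that $T_b$ and $T_{b'}$ generate a free group at the source while the images would generate an amenable group.

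\emph{Step 2: eliminating the other curves.} This is the main obstacle. Suppose some $c\in\b_U$ with $c\neq\b$, say $c=\partial Q_i\cap\partial Q_j$. The idea is to vary the decomposition away from $b$. For any edge $b_\ell\neq b$ of $\bar b$, modify $b_\ell$ into a curve $b_\ell'$ crossing it, as in Point~\ref{iii max decompos}, to obtain $\bar b'$ still containing $b$. The same $\mclB$ must then land in both multitwists $\gen{T_c}_{c\in\s}$ and $\gen{T_c}_{c\in\s'}$. Since the canonical multicurve is intrinsic, $\b_U\subseteq\s\cap\s'$. Moreover, $b_\ell'$ intersects the surfaces adjacent to $b_\ell$, so by Lemma~\ref{lemma b intersects S} the image curve of $b_\ell$ is genuinely moved.

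The difficulty is that non-leaf edges of $\s$ could a priori persist across all such modifications. This is exactly where membership in $D$ should be used. Because leaf curves outnumber non-leaf curves, the curves $b_\ell$ can be exchanged among leaf edges, each of which can be moved while keeping $b$. By counting, this should rule out every curve of $\s$ other than $\b$ from lying in every image multicurve. The argument would compare the sets $\s\cap\s'$ over these moves, using the injectivity of the edge correspondence induced by $f_{\bar b,\s}$.

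Once $\b_U=\{\b\}$ is established, we get $\rho(\mclB|_U)\subseteq\gen{T_\b}$. Collecting over the countable partition gives the proposition.
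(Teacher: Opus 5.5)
Your Step~2 is where all the content of the proposition lies, and you do not carry it out. After setting up the right move, you say that non-leaf edges of $\s$ ``could a priori persist across all such modifications'' and defer to a counting argument via $D$ that is never given (``this should rule out''). The phrase ``the image curve of $b_\ell$ is genuinely moved'' does not help on its own. Lemma~\ref{lemma b intersects S} only gives you some curve of $\s'$ meeting the components adjacent to $e_\ell$, the edge of $\s$ corresponding to $b_\ell$ under $f_{\bar b,\s}$. That does not exclude $e_\ell\in\s'$ unless you control the rest of $\s'$.

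Step~1 has the same defect. You never locate $\rho(\w^{-1}(\gen{T_{b'}}))$, so the claim that the two images generate an amenable group is unsupported as written. It would need Point~\ref{ii max decompos} for $(b',b_2,\dots,b_{n-1})$ and the identification of $\s'$ with $\s$ away from $\b$. Step~1 is in any case superfluous: once $\b_U\subseteq\{\b\}$, the canonical multicurve is nonempty, because $\rho$ has trivial kernel and $\mclB|_U$ is of infinite type.

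The missing observation is the one inside the proof of Point~\ref{iii max decompos}. Suppose $b_\ell$ separates $S_p$ from $S_q$; then $p,q\neq 1$, since $S_1$ is a leaf.
\begin{itemize}
\item Replacing only $b_\ell$ by $b'_\ell$ leaves $\w^{-1}(\Modact{S_i})$ unchanged for $i\notin\{p,q\}$. Hence $Q'_i=Q_i$ for these $i$, since $Q_i$ is the unique component on which that subgroupoid is irreducible nonamenable.
\item In a tree, every edge other than the one joining $Q_p$ and $Q_q$ bounds some $Q_i$ with $i\notin\{p,q\}$. So $\s\setminus\{e_\ell\}=\s'\setminus\{e'_\ell\}$ for a single new edge $e'_\ell$.
\item Moreover $\s'\neq\s$. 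By Lemma~\ref{lemma b intersects S} (with $S=S_p$, $Q=Q_p$, and Point~\ref{ii max decompos} for the new decomposition), $\s'$ contains a curve meeting $Q_p$, whereas every curve of $\s$ is disjoint from $Q_p$.
\end{itemize}
Thus $\s\cap\s'=\s\setminus\{e_\ell\}$, and so $e_\ell\notin\b_U$.

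Do this for each $\ell$ with $b_\ell\neq b$, each time restricting to a positive-measure subset (which does not change $\b_U$). This gives $\b_U\subseteq\{\partial Q_1\}$, with no need for $D$ or Proposition~\ref{prop D les decompos particulieres avec g+p/3}. Completed this way, your route is shorter than the paper's. The paper first shows that $\b_U$ contains at most one leaf curve, by comparing two decompositions through $b$ that differ in all other edges. It then excludes non-leaf curves by maximising $\#\s_{nl}$ over $D\times\N$, using a forest-counting claim. As written, however, your proposal does not contain the step that makes your approach work.
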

\begin{proof} When $\kappa(\S)\in\{3,4\}$, $\nsig=2$ so any single leaf curve is a $0$-maximal decomposition multicurve of $\S$. Thus, apply Lemma~\ref{lemma max decompo are sent to max decompo} directly to the $0$-maximal decomposition multicurve $\{b\}$, fix a piece $Y\subseteq X$ of the partition given by Lemma~\ref{lemma max decompo are sent to max decompo} and get a $0$-maximal decomposition multicurve $\s=\{\b\}$ composed of one unique leaf curve $\b$ and verifying $\rho(\mclB|_{Y})\subseteq \gen{T_{\b}}$. This concludes the proof in the case $\kappa(\S)\leq 4$. 

    From now on, assume that $\kappa(\S)\geq 5$.
    \begin{Claim}\label{claim at most one leaf curve}
   Assume that $Y$ is a Borel subset of positive measure such that the canonical multicurve $\b$ of $(\mclB|_Y,\rho)$ exists. Then, $\b$ contains at most one leaf curve.
    \end{Claim}
    \begin{proof}[Proof of Claim~\ref{claim at most one leaf curve}]
    Any leaf curve can be completed into a $0$-maximal decomposition, see for instance the proof of Proposition~\ref{prop D les decompos particulieres avec g+p/3} for a concrete construction. Apply Point~\ref{ii max decompos} of Lemma~\ref{lemma max decompo are sent to max decompo} to get that $\mclB|_{Z}$ is sent by $\rho$ into a multitwist for a small enough positive measure subset $Z\subseteq Y$. In other words, $(\mclB|_Z,\rho)$ is trivial on every connected component of $\S\setminus \b$, so $$\rho\left(\mclB|_Z\right)\subseteq \gen{T_c}_{c\in\b}~.$$
          
    First, let us build everything we need on the surface at the source of the measurable embedding. Complete $b$ into a $0$-maximal decomposition $(b_1,\dots,b_{n-1})$ where $b_1=b$ and denote $S_1,\dots,S_n$ the components of this decomposition, where $S_1$ is an open leaf subsurface separated from $\S$ by $b$. Recall that this means that $n=\nsig$ and all the curves $b_j$ are separating. Now, complete the leaf curve $b$ into another $0$-maximal decomposition $(b_1',b_2',\dots,b_{n-1}')$ where $b_1'=b$ and such that for all $i,j\ge 2$, $b_j'$ intersects the open subsurface $S_i$ nontrivially. Denote $\mclB_j=\w^{-1}(\gen{T_{b_j}})$ and $\mclB_j'=\w^{-1}(\gen{T_{b'_j}})$.

    Now, let us give notations to what happens on the target copy of $\S$. Apply Lemma~\ref{lemma max decompo are sent to max decompo}, restrict to a small enough Borel subset $U$ of positive measure, and denote by $\s$ (resp. $\s'$) the $0$-maximal decomposition multicurve at the target that we get from the decomposition $(b_j)_j$ (resp. $(b_j')_j$). For every $i\leq n$, denote by $Q_i$ the component of $\S\setminus \s$ on which $\w^{-1}(\Modact {S_i})|_U$ is irreducible nonamenable. Similarly, $Q_i'$ is the component of $\S\setminus \s'$ on which $\left(\w^{-1}(\Modact {S_i'})|_U,\rho\right)$ is irreducible nonamenable.
    Apply Lemma~\ref{lemma b intersects S} to $b=b_j'$, $S=S_i$ and $Q=Q_i$ to obtain that $(\mclB_j'|_U,\rho)$ is nontrivial on $Q_i$ for every $2\leq i\leq n$ and $2\leq j \leq n-1$.
    
    Assume by contradiction that $\b$ contains two distinct leaf curves $a$ and $a'$. The multicurve $\b$ is contained in both $\s$ and $\s'$, so $\{a,a'\}\subseteq \s\cap\s'$. This means that there are two leaf subsurfaces of the target $\S$ that are a component in both decompositions. One of them is $Q_1=Q_1'$ coming from $S_1=S_1'$. But there is a second one, so there exist $i,i'\geq 2$ such that $\partial Q_i=\partial Q_{i'}'$ is a leaf curve of $\b$, which forces the leaf subsurfaces $Q_i$ and $Q_{i'}'$ to be equal. For $j\geq 2$, by Point~\ref{ii max decompos} of Lemma~\ref{lemma max decompo are sent to max decompo},
    $$\rho\left(\mclB_j'|_U\right)\subseteq \gen{T_{c}}_{c\in\s'}\subseteq \Modfix{Q_{i'}'}$$
    and $Q_i=Q_{i'}'$ so $$\Modfix{Q_{i}}=\Modfix{Q_{i'}'}~.$$
    Therefore, 
    $$\rho\left(\mclB_j'|_U\right)\subseteq \Modfix{Q_{i}}$$
    which contradicts the fact that $(\mclB_j'|_U,\rho)$ is nontrivial on $Q_i$.
    \end{proof}

    Let us now prove that preimages by $\w$ of leaf curve twists cannot have any non-leaf curves in their canonical multicurve for $\rho$.
    For this, we take a step back and consider the collection $D$ of all the $0$-maximal decomposition multicurves $\s$ of $\S$ that verify 
    $$\# \left(\s\cap \mclC_{leaf}\right) >\# \left(\s\cap \mclC_{nl}\right)~.$$
    To be clear, every element of $D$ is a multicurve of $n-1$ separating curves of $\S$, where $n=\nsig$. By Proposition~\ref{prop D les decompos particulieres avec g+p/3}, any leaf curve of $\S$ can be completed into a multicurve in $D$. 
    We will consider the preimages by $\w$ of all the multicurves in $D$ and all their possible canonical multicurves for $\rho$.

    For every multicurve $\bar c\in D$, there exists a countable conull Borel partition $$X^*=\bigsqcup_{m\in\N} X_m^{\bar c}$$ such that the canonical multicurves of all the $(\w^{-1}(\gen{T_{c}})|_{X_m^{\bar c}},\rho)$, denoted $\g_m(c)$, exist for all $c\in\bar c$ and $m$, as explained in Remark~\ref{rem we can always choose U for the canonical multicurve to exist}. Given a multicurve $\bar c$ in $D$ and an integer $m$, define the following multicurve
    \begin{equation*}\label{def de sigma nl}
        \s_{nl}(\bar c ,X_m^{\bar c})=\mclC_{nl}\cap \bigcup_{c\in \bar c \,\cap\,\mclC_{leaf}}\g_m(c)~.\tag{Def$_{\s_{nl}}$}
    \end{equation*}
    It is the set of non-leaf curves that are in the image of some leaf curve of $\bar c$, on some positive measure subset of $X$.
\medbreak 
    The cardinality of a multicurve of $\S$ is bounded, so we fix a $0$-maximal decomposition $\bar c$ and an $m\in\N$ such that $$\#\s_{nl}(\bar c,X_m^{\bar c})\eqdef N$$ is maximal amongst all possible values of $\s_{nl}$ on $D\times\N$. To prove the proposition, it suffices to show that $N=0$ because of Claim~\ref{claim at most one leaf curve}. So let us assume that $N\geq 1$ and aim towards a contradiction.
    
    To fix more notations, $\bar c = (c_1,\dots,c_{n-1})$ are separating curves that decompose the source $\S$ into components $S_1,\dots,S_n$. Apply Lemma~\ref{lemma max decompo are sent to max decompo} to the latter, restrict to a positive measure Borel subset $U$ of $X_m^{\bar c}$ and get the corresponding $0$-maximal decomposition $Q_1,\dots,Q_n$ by a multicurve $\s$ on the target surface $\S$. Point~\ref{ii max decompos} of Lemma~\ref{lemma max decompo are sent to max decompo} ensures that for every $c\in\bar c$, 
    $$\rho(\w^{-1}(\gen{T_{c}})|_{U})\subseteq \gen{T_a}_{a\in\s}$$
    so in particular it holds that $$\g_m(c)\subseteq \s~.$$Hence
    $$\s_{nl}(\bar c,X_m^{\bar c})\subseteq \s~.$$
    
    For simplicity, denote $\gamma_m(c_i)$ by $\g_i$ and remember that $\g_i\subseteq \s$.
    Renumber $\bar c$ so that $c_1,\dots,c_N$ are leaf curves that verify
    $$\s_{nl}(\bar c,X_m^{\bar c})=\mclC_{nl}\cap\bigcup_{i=1}^N \g_i~.$$
    In other words, we choose exactly $N$ leaf curves from $\bar c$ that give the whole $\s_{nl}(\bar c,X_m^{\bar c})$. Renumber the components $(S_i)_{i\leq n}$ of the decomposition $\bar c$ so that for $i\leq N$, $S_i$ is the component that $c_i$ separates from the rest. Also, note that $N<n$ because $\kappa(\S)\geq 5$ so there exist non-leaf components in the decomposition $(S_i)_{i\leq n}$.

    Now, define a new $0$-maximal decomposition $\bar c '$ by keeping the $c_1,\dots,c_N$ the same and choosing $c_{N+1}',\dots,c_{n-1}'$ that verify the following: for each $N<i\leq n$ and $N<j<n$, $c_j'$ intersects $S_i$ nontrivially. This is possible for instance by applying a pseudo-Anosov mapping class of the subsurface $\S\setminus\bigcup_{1\leq i\leq N}S_i$, which has positive complexity because $N<n$ and is connected because all the components $S_i$ for $i\leq N$ correspond to leaves in the dual graph. Apply Lemma~\ref{lemma max decompo are sent to max decompo} to this new decomposition $\bar c'$. Restrict  to a small enough Borel subset $V$ of $U$ of positive measure on which Lemma~\ref{lemma max decompo are sent to max decompo} holds, so that in particular the canonical multicurve $\g_j'$ of $(\w^{-1}(\gen{T_{c_j'}})|_V,\rho)$ exists. By Point~\ref{ii max decompos} of Lemma~\ref{lemma max decompo are sent to max decompo}, $\w^{-1}(\gen{T_{c_j'}})|_V$ is sent to a multitwist by $\rho$. Thus, by Lemma~\ref{lemma b intersects S} applied to $b=c_j'$, $S=S_i$ and $Q=Q_i$ for all $N<i\leq n$ and $N<j<n$, the multicurve $\g_j'$ intersects $Q_i$ nontrivially.

    Denote by $\s'$ the $0$-maximal decomposition we get at the target from $\bar c'$ on the Borel subset $V$, by Point~\ref{i max decompos} of Lemma~\ref{lemma max decompo are sent to max decompo}. The fact that $\bar c'$ contains $c_1,\dots,c_N$ guarantees that 
    $$\s'\supseteq \bigcup_{i=1}^N \g_i\supseteq \s_{nl}(\bar c,X_m^{\bar c})~.$$
    But recall that for every $j<n$, $\g_j'\subseteq\s' $. In particular, this means that all the $\g_j'$ are disjoint from $\s_{nl}(\bar c,X_m^{\bar c})$. Thus, for every $j<n$, each curve of $\g_j'$ is either in $\s_{nl}(\bar c,X_m^{\bar c})$ or in $\cxcourbes{\S\setminus \s_{nl}(\bar c,X_m^{\bar c})}$. 

    \begin{Claim}\label{claim g_j' at least two curves}
        For $j>N$, $\g_j'$ contains at least two distinct curves in $\cxcourbes{\S\setminus \s_{nl}(\bar c,X_m^{\bar c})}$.
    \end{Claim}
    \begin{proof}[Proof of Claim~\ref{claim g_j' at least two curves}]
        
    Fix $j>N$. To prove this, we show that $\g_j'$ has curves on at least two distinct connected components of $\S\setminus \s_{nl}(\bar c,X_m^{\bar c})$. Consider the dual tree $\G_{\s}$ of the decomposition $\s$ as defined in Definition~\ref{def dual graph}. The edges of $\G_{\s}$ correspond to curves of $\s$. Let $E_{nl}$ be the edges of $\G_{\s}$ corresponding to $\s_{nl}(\bar c,X_m^{\bar c})$ and note that $\#E_{nl}=N$. Let $\G_1,\dots,\G_{N+1}$ be the connected components of the subforest of $\G_{\s}$ with vertices $V(\G_{\s})$ and edges $E(\G_{\s})\setminus E_{nl}$. Let us bound the size of these connected components. The tree $\G_{\s}$ has at least two leaves, so up to renumbering, $\G_1$ and $\G_2$ contain a leaf. But $E_{nl}$ contains no edges that connect to a leaf, so $\#V(\G_1),V(\G_2)\geq 2$. The rest of the connected components verify $\#V(\G_i)\geq 1$, which gives
    \begin{align*}
        \#V(\G_1)&\leq \#V(\G) - \#V(\G_2)-\sum_{j=3}^{N+1} \#V(\G_j)\\
                &\leq n -2 -\sum_{j=3}^{N+1} 1\\
                &\leq n-2-(N-1) = n-(N+1)
    \end{align*}
    and similarly $ \#V(\G_2)\leq n-(N+1)$.
    For $i\leq 3$, it holds that
    \begin{align*}
        \#V(\G_i)&\leq \#V(\G) -\#V(\G_1)- \#V(\G_2)-\sum_{\substack{3\le j\le N+1\\ j\ne i}} \#V(\G_j)\\
                &\leq n -2-2 -(N-2)\\
                &\leq n-(N+1)
    \end{align*}
    In conlusion, every connected component of $\G_{\s}$ with erased $E_{nl}$ edges has at most $n-(N+1)$ vertices.

    By Point~\ref{iii max decompos} of Lemma~\ref{lemma max decompo are sent to max decompo}, the graph isomorphism $f_{\bar c,\s}$ sends the vertices $S_{N+1},\dots,S_n$ to vertices $Q_{N+1},\dots,Q_n$ inside of ${\S \setminus\s_{nl}(\bar c,X_m^{\bar c})}$. In the point of view of dual graphs, this means exactly that $f_{\bar c,\s}$ is sending $n-(N+1)+1$ vertices of $\G_{\bar c}$ into the subforest of $\G_{\s}$ with erased $E_{nl}$ edges. We just showed that each connected component of that subforest has at most $n-(N+1)$ vertices. Thus, there are two vertices sent by $f_{\bar c,\s}$ to two different connected components of the forest. This means that, up to renumbering, $Q_{N+1}$ and $Q_{N+2}$ are in different connected components of $\Sigma\setminus\s_{nl}(\bar c,X_m^{\bar c})$. But $\g_j'$ is disjoint from $\s_{nl}(\bar c,X_m^{\bar c})$ and nontrivial on both $Q_{N+1}$ and $Q_{N+2}$. Therefore, $\g_j'$ contains at least two distinct curves that are in $\cxcourbes{\S\setminus \s_{nl}(\bar c,X_m^{\bar c})}$.
    \end{proof}

    Recall that the trees $\G_{\bar c}$ and $\G_{\s}$ are isomorphic by Point~\ref{iii max decompos} of Lemma~\ref{lemma max decompo are sent to max decompo}. In particular, $\s$ and $\bar c$ have the same number of leaf and non-leaf curves, so 
    $$\#\left(\s\cap \mclC_{nl}\right)=\#\left(\bar c\cap \mclC_{nl}\right)~.$$
    Now, note that $$N=\#\s_{nl}(\bar c,X_m^{\bar c})\leq\# \left(\s\cap \mclC_{nl}\right)$$
    because $\s_{nl}(\bar c,X_m^{\bar c})\subseteq \s\cap \mclC_{nl}$. Moreover, $$\# \left(\bar c\cap \mclC_{nl}\right)<\# \left(\bar c\cap \mclC_{leaf}\right)$$ because $\bar c\in D$.
    Thus it holds that $$N<\# \left(\bar c\cap \mclC_{leaf}\right)~.$$
    This means that we can choose $j>N$ such that $c_j'$ is a leaf curve. By Claim~\ref{claim g_j' at least two curves}, $\g_j'$ contains at least two curves in $\cxcourbes{\S\setminus \s_{nl}(\bar c,X_m^{\bar c})}$. By definition, $\g_j'$ is the canonical multicurve of a leaf curve so by Claim~\ref{claim at most one leaf curve}, only one of these two can be a leaf curve. Therefore, the other one is a non-leaf curve that we denote $\a$. 
    Take the countable conull Borel partition $\sqcup_m X_m^{\bar c'}$ of $X$ used to define the map $\s_{nl}$ in (\ref{def de sigma nl}). Take $m'$ such that $V\cap X_{m'}^{\bar c'}$ has positive measure and note that $\alpha\in \s_{nl}(\bar c',X_{m'}^{\bar c'})$. Moreover, recall that $\s_{nl}(\bar c,X_m^{\bar c})\subseteq \s'$. This gives 
    $$\s_{nl}(\bar c,X_m^{\bar c})\sqcup \{\a\}\subseteq \s_{nl}(\bar c',X_{m'}^{\bar c'})$$
    which contradicts the maximality of $\s_{nl}(\bar c,X_m^{\bar c})$. Therefore, $N=0$.
    
    To sum up, take $b\in\mclC_{leaf}$, $\mclB=\w^{-1}\left(\gen{T_b}\right)$ and $W$ a Borel subset on which the canonical multicurve $\b$ of $(\mclB|_W,\rho)$ exists. The curve $b$ can be completed into a multicurve of $D$ by Proposition~\ref{prop D les decompos particulieres avec g+p/3}. The fact that $N=0$ ensures that $\b$ contains $0$ non-leaf curves. Claim~\ref{claim at most one leaf curve} ensures that $\b$ is therefore reduced to exactly one leaf curve. Point~\ref{ii max decompos} of Lemma~\ref{lemma max decompo are sent to max decompo} ensures that $(\mclB,\rho)$ is trivial on $\S\setminus \b$, which concludes the proof.
\end{proof}

\subsubsection{Nonseparating and weakly separating curves}

When $\kappa(\S)\geq 3$, a weakly separating or nonseparating curve can be drawn on a leaf subsurface. Such a curve is characterised by the leaf curves that are disjoint from it.
When $\kappa(\S)=2$, the proof of the key Proposition~\ref{prop recap courbes dans courbes} is different because maximal decompositions of these are trivial in the way that the surface itself is the one piece of the maximal decomposition, so the decomposition multicurve is empty. 

Note that every curve of a complexity $2$ surface is weakly separating or nonseparating, so the following proposition actually implies the key Proposition~\ref{prop recap courbes dans courbes} when $\kappa(\S)=2$.

\begin{Prop}\label{nonsep curves prop}
Under Setting~\ref{setting for thm}, assume that $\S=\S'$ and $\kappa(\S)\geq 2$. Let $a\in\cxcourbes{\S}$ be a weakly separating or nonseparating curve. Let $\mclA=\w^{-1}\left(\gen{T_a}\right)$. 

Then, there exists a countable conull Borel partition $\bigsqcup_m X_m$ of $X$ such that for each $m$, there exists a curve $\a_m\in\cxcourbes{\S}$ such that 
    $$\rho(\mclA|_{X_m})\subseteq \gen{T_{\a_m}}~.$$
\end{Prop}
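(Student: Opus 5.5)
We split according to $\kappa(\S)$. When $\kappa(\S)=2$ we follow Kida's argument directly, with the everywhere nonamenable groupoid $\w^{-1}(\Modz)$ playing the role of $\mclS_1$. When $\kappa(\S)\ge 3$ we deduce the statement from Proposition~\ref{prop leaf type curves}, using the fact that $a$ is determined by the leaf curves disjoint from it.

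\textbf{Case $\kappa(\S)=2$.} Since $\mclA$ is amenable and of infinite type, we may assume, up to a countable partition (Lemma~\ref{canonical reduction set exists}), that the canonical multicurve $\a$ of $(\mclA,\rho)$ exists. We first show that $\a$ is nonempty. Choose a curve $a'$ intersecting $a$, and choose two intersecting curves $d,d'$ disjoint from $a$. The latter exist because $\S\setminus a$ has a component of complexity $1$ when $a$ is nonseparating or weakly separating. The groupoid $\mclH=\gen{\w^{-1}(\gen{T_d}),\w^{-1}(\gen{T_{d'}})}$ normalises $\mclA$ and is everywhere nonamenable by Lemma~\ref{lemme action-type groupe non moyennable donne groupoide non moyennable}. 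If $(\mclA,\rho)$ were irreducible amenable on $\S$, Proposition~\ref{if H normalises A then its reducible or IA} would make $(\mclH,\rho)$ irreducible amenable or reducible on $\S$. Irreducible amenable is excluded because $\mclH$ is nonamenable. So $(\mclH,\rho)$ stabilises a curve $c$, and then on $\S\setminus c$ it must be irreducible nonamenable on a component of complexity $1$, by Corollary~\ref{if IA or T everywhere, then amen. If nonamen then IN somewhere}. We would derive a contradiction by exchanging roles and using another such pair $(d,d')$ in a different position; I expect this to be fiddly, and the cleanest route is to mimic Kida's proof of \cite[Corollary~5.25]{kida_introduction_2009}.

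Once $\a\neq\varnothing$, the key step is that $(\mclA,\rho)$ is trivial off $\a$, i.e.\ $\rho(\mclA)\subseteq\gen{T_c}_{c\in\a}$. The idea is that $(\mclH,\rho)$ normalises $\mclA$ and stabilises $\a$ (Lemma~\ref{lemma multicurve is canonical}). If $(\mclA,\rho)$ were nontrivial on some component $Q$ of $\S\setminus\a$, then $(\mclH,\rho)$ would be reducible or irreducible amenable on $Q$ by Proposition~\ref{if H normalises A then its reducible or IA}. Counting complexities, this would force $\mclH$ to be amenable, a contradiction. Finally, $\a$ must be a single curve. If $\a=\{\a_1,\a_2\}$, we pick $a''$ disjoint from $a$ with $T_a,T_{a''}$ generating $\Z^2$, and use Lemma~\ref{adjacency curves in groupoidal terms} together with the free-product obstruction Lemma~\ref{lemma free product into direct product}, applied to $a'$ versus the pair $(a,a'')$, to exclude the extra twist.

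\textbf{Case $\kappa(\S)\ge 3$.} Let $L(a)$ be the set of leaf curves disjoint from $a$. For each $\ell\in L(a)$, Proposition~\ref{prop leaf type curves} gives, on pieces of a partition, a leaf curve $\lambda(\ell)$ with $\rho(\w^{-1}(\gen{T_\ell}))\subseteq\gen{T_{\lambda(\ell)}}$. Since $\mclA$ and $\w^{-1}(\gen{T_\ell})$ normalise each other, Lemma~\ref{lemma multicurve is canonical} shows that $(\mclA,\rho)$ stabilises every $\lambda(\ell)$. Conversely, for any leaf curve $\ell'$ intersecting $a$, Lemma~\ref{adjacency curves in groupoidal terms} shows that the image $\lambda(\ell')$ intersects the canonical multicurve of $\mclA$.

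The choice of finitely many leaf curves is where the combinatorics sit. We choose finitely many leaf curves $\ell_1,\dots,\ell_r$ disjoint from $a$ whose common complement is a single complexity-$0$ or complexity-$1$ region containing $a$ in an essential way. For instance, take two leaf subsurfaces, one containing $a$ and an adjacent one. We then argue that their images cut out a region of the same shape at the target. This uses that adjacency of leaf curves is preserved, which follows from Lemma~\ref{adjacency curves in groupoidal terms} applied in both directions, since disjointness corresponds to amenability.

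It follows that $(\mclA,\rho)$ stabilises $\a_m$, the unique curve determined by these leaf curves. The argument of the previous case (normalisation by an everywhere nonamenable groupoid supported on the complementary leaf subsurface) then shows that $(\mclA,\rho)$ is trivial elsewhere, so $\rho(\mclA)\subseteq\gen{T_{\a_m}}$.

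The main obstacle is this last uniqueness step: ensuring that the target configuration of leaf curves determines a single curve, rather than a region in which $\rho(\mclA)$ could act by a partial pseudo-Anosov.
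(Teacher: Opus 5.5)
Both cases have gaps. The one for $\kappa(\S)=2$ is easy to close; the one for $\kappa(\S)\ge 3$ is the actual content of the proposition, and your outline does not supply it.

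\textbf{Case $\kappa(\S)=2$.} The nonemptiness step you leave open closes once you use that normalisation goes both ways.

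Take $\mclH=\w^{-1}(\Stab{a})$, as the paper does, so that Lemma~\ref{lemma if groups normalise then groupoids normalise} gives mutual normalisation of $\mclA$ and $\mclH$ directly. Suppose $(\mclA,\rho)$ were irreducible on $\S$. Your argument then makes $(\mclH,\rho)$ reducible and nontrivial on $\S$. By Proposition~\ref{reducible nontrivial implies existence of canonical curve}, its canonical multicurve is nonempty. Since $\mclA$ normalises $\mclH$, Lemma~\ref{lemma multicurve is canonical} makes $(\mclA,\rho)$ stabilise that multicurve, a contradiction.

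The paper avoids the canonical multicurve of $\mclA$ altogether. It takes the canonical multicurve $\s$ of $(\mclH,\rho)$ and a component $Q$ of $\S\setminus\s$ on which $\mclH$ is irreducible nonamenable. Proposition~\ref{if H normalises A then its reducible or IA} then makes $(\mclA,\rho)$ trivial on $Q$. Hence $Q\neq\S$, so $\s$ is a single curve $\a$ and $\rho(\mclA)\subseteq\gen{T_\a}$.

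Your final ``single curve'' step via Lemma~\ref{lemma free product into direct product} is not justified: that lemma needs a subgroupoid whose $\rho$-image centralises $\rho(\mclA)$, and you have none. It is also unnecessary. If $\a$ had two curves it would be a pants decomposition stabilised by $\mclH$, which forces $\mclH$ to be amenable.

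\textbf{Case $\kappa(\S)\ge 3$.} The obstacle you name at the end is not a technicality, and nothing in your outline overcomes it. Knowing that $(\mclA,\rho)$ stabilises the images of leaf curves disjoint from $a$ confines $\rho(\mclA)$, at best, to the stabiliser of the boundary of a complexity-$1$ region. On that region $(\mclA,\rho)$ could still be irreducible amenable. Two further steps fail. First, preservation of pairwise disjointness among leaf curves does not determine the topological type of the region their images cut out. Second, Lemma~\ref{adjacency curves in groupoidal terms} cannot be applied to $\mclA$, because you do not yet know that $\rho(\mclA)$ lies in a twist group, or even an abelian group.

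The missing idea is to use \emph{one} leaf curve that crosses the whole target decomposition.
\begin{enumerate}
\item The paper puts $a$ in a complexity-$1$ leaf subsurface $S_1$ and completes $\partial S_1$ to a $0$-maximal decomposition $S_1,\dots,S_n$.
\item Lemma~\ref{lemma max decompo are sent to max decompo} gives $\rho(\mclA|_U)\subseteq\Modact{Q_1}\times\gen{T_c}_{c\in\s\setminus\partial Q_1}$. So the canonical multicurve $\a$ of $(\mclA|_U,\rho)$ is nonempty, because $\mclA$ is trivial on $Q_2$, and $\a$ is contained in $\s\cup\cxcourbes{Q_1}$.
\item Take a leaf curve $b'$ disjoint from $a$ but meeting every $S_i$. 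By Proposition~\ref{prop leaf type curves}, its image is a single curve $\b'$. By Lemma~\ref{lemma b intersects S}, $\b'$ meets every $Q_i$, so it crosses every curve of the tree decomposition $\s$.
\item Since $\w^{-1}(\gen{T_{b'}})$ normalises $\mclA$, it stabilises $\a$. Hence a nontrivial power of $T_{\b'}$ fixes $\a$, and $\b'$ is disjoint from $\a$.
\item Therefore $\a\cap\s=\varnothing$, and $\a$ is a single curve of $\cxcourbes{Q_1}$. This is exactly what rules out the partial pseudo-Anosov.
\item It gives $\rho(\mclA)\subseteq\gen{T_{\a},T_{\partial Q_1}}$, and then $\rho(\mclA)\subseteq\gen{T_\a}$ because $\partial Q_1$ is not in the canonical multicurve.
\end{enumerate}
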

\begin{proof}
\textbf{Case 1: $\kappa(\S)=2$.}
Let $\mclH=\w^{-1}\left(\Stab a\right)$. The following is the only information we will use on the action-type cocycle $\w$:
\begin{itemize}[nolistsep, noitemsep]
    \item $\mclH$ and $\mclA$ are of infinite type by Lemma~\ref{cocycles from pmp actions are action-type},
    \item $\mclA$ is amenable by Lemma~\ref{lemma triv kernel cocycle amenable group amenable groupoid},
    \item $\mclH$ is everywhere nonamenable by Lemma~\ref{lemme action-type groupe non moyennable donne groupoide non moyennable},
    \item $\mclH$ and $\mclA$ normalise each other by Lemma~\ref{lemma if groups normalise then groupoids normalise}.
\end{itemize}
 Now, focus on the cocycle $\rho$. Choose a countable conull Borel partition $\bigsqcup_m X_m$ adapted to the canonical reduction sets of both $(\mclA,\rho)$ and $(\mclH,\rho)$, by Lemma~\ref{canonical reduction set exists} and Remark~\ref{rem we can always choose U for the canonical multicurve to exist}. Take $U$ a piece of that partition of positive measure and let $\mclH'=\mclH|_U$ and $\mclA'=\mclA|_U$.

By Corollary~\ref{if IA or T everywhere, then amen. If nonamen then IN somewhere}, $\mclH'$ must be irreducible nonamenable on at least some component $Q$ of $\S\setminus\s$, where $\s$ is the canonical reduction multicurve of $(\mclH',\rho)$. In particular, $\kappa(Q)\geq 1$ because $\mclH'$ is nontrivial on $Q$. 
The subgroupoid $\mclA'$ normalises $\mclH'$ so $(\mclA',\rho)$ stabilises $Q$ by Lemma~\ref{lemma multicurve is canonical}. Assume by contradiction that $(\mclA',\rho)$ is nontrivial on $Q$. Then, Proposition~\ref{if H normalises A then its reducible or IA} applied to $\mclH'$ normalising the amenable $\mclA'$ contradicts the fact that $(\mclH',\rho)$ is irreducible nonamenable on $Q$. Thus, $(\mclA',\rho)$ is trivial on $Q$, i.e.\ $$\rho(\mclA')\subseteq \Modfix{Q}~.$$

The case where $Q=\S$ is impossible because the trivial-kernel cocycle $\rho$ cannot send the infinite-type groupoid $\mclA'$ into the trivial group, by Remark~\ref{rem trivial kernel infinite-type groupoid to infinite group}. Hence $\kappa(Q)=1$ and $\partial Q$ is composed of exactly one curve $\a$. In conclusion, 
$$\rho(\mclA')\subseteq \Modfix{Q}\subseteq \gen {T_{\a}}$$
which proves the proposition in the case $\kappa(\S)=2$.

\medbreak
\textbf{Case 2: $\kappa(\S)\geq 3$.} In this case, $\nsig\geq 2$, i.e.\ the $0$-maximal decompositions of $\S$ have at least $2$ components. In particular, leaf curves are strongly separating.

If $a$ is weakly separating, let $S$ be the interior of a subsurface of $\S$ of type $\S_{0,3,1}$ which contains $a$. If $a$ is nonseparating, let $S$ be the interior of a subsurface of type $\S_{1,0,1}$ containing $a$. 
    In either case, $S$ is a leaf subsurface of complexity $1$, the boundary of $S$ is exactly one leaf curve $b=\partial S$, and $a\in\cxcourbes{S}$. Complete $b$ into a $0$-maximal decomposition $S_1, S_2,\dots,S_n$ of $\S$, with $S=S_1$ and $n=\nsig$. Apply Lemma~\ref{lemma max decompo are sent to max decompo} and restrict to a small enough Borel subset $U$ of positive measure accordingly. Name $Q_1,\dots,Q_n$ the image decomposition components and $\s$ the decomposition multicurve on $U$. Let $\mclA=\w^{-1}(\gen{T_{a}})$ and note that 
    $$\rho(\mclA|_U)\subseteq \rho(\w^{-1}(\Modact{S_1})|_U)\subseteq \Modact{Q_1}\times \gen{T_c}_{c\in\s\setminus \partial Q_1}~. $$
    Let $V$ be a positive measure Borel subset of $U$ on which the canonical multicurve $\a$ of $(\mclA|_V,\rho)$ exists. It is nonempty because $(\mclA|_V,\rho)$ is trivial for instance on $Q_2$. Note that $Q_2$ is well-defined because $\nsig\geq 2$. By the latter, $\a$ contains only curves that are either in $\s$ or in $\cxcourbes{Q_1}$.

    Take $b'$ a leaf curve that is disjoint from $a$ and intersects every $S_i$ nontrivially for $1\le i\le n$. 
    For instance, $b'$ can be taken as an image of $b$ by a pseudo-Anosov on $\S\setminus a$. 
    Then, by Proposition~\ref{separating curves prop}, up to restricting to a smaller positive measure Borel subset $W\subseteq V$, the canonical multicurve of $\mclB'=\w^{-1}(\gen{T_{b'}})|_W$ exists and is exactly one curve $\b'$. By Lemma~\ref{lemma b intersects S}, this curve $\b'$ intersects every $Q_i$ nontrivially for $1\le i\le n$. In particular, $\b'$ being one unique curve, it has to intersect every curve of $\s$.

    The subgroupoid $\mclB'$ normalises $\mclA|_W$ so $(\mclB',\rho)$ stabilises $\a$ by Lemma~\ref{lemma multicurve is canonical}. Thus,
    $$\rho(\mclB')\subseteq \Stab {\a}$$
    but also by Proposition~\ref{separating curves prop},
    $$\rho(\mclB')\subseteq \gen {T_{\b'}}$$
    which gives $$\rho(\mclB')\subseteq \Stab {\a}\cap\gen {T_{\b'}}~.$$
    The groupoid $\mclB'$ being of infinite type and the cocycle $\rho$ having trivial kernel, Remark~\ref{rem trivial kernel infinite-type groupoid to infinite group} forces $\Stab {\a}\cap\gen {T_{\b'}}$ to be an infinite group. Thus, there is a nontrivial power $k$ of $T_{\b'}$ that stabilises the multicurve $\a$. As we are in the torsion-free $\Modz$, $T_{\b'}^k$ actually stabilises each curve of $\a$ which means that $\b'$ and $\a$ are disjoint by Point~\ref{intersection formula dehn twists} of Proposition~\ref{denh twist prop}. But $\b'$ intersects every curve of $\s$, so $\a\cap\s=\varnothing$. Thus, $\a\subseteq \cxcourbes{Q_1}$.

    Remember that $\kappa(Q_1)=1$ by Point~\ref{i max decompos} of Lemma~\ref{lemma max decompo are sent to max decompo}, so any multicurve of $\cxcourbes{Q_1}$ can only be one single curve, so $\a$ is one single curve of $\cxcourbes{Q_1}$. So the groupoid $(\mclA|_W,\rho)$ is trivial on $\S\setminus Q_1$ and stabilises a curve $\a\in\cxcourbes{Q_1}$, i.e.\
    $$\rho(\mclA|_W)\subseteq \Modact{Q_1}\cap \Stab{\a}~.$$
    Note that $\kappa(Q_1)=1$, so $\kappa(Q_1\setminus \a)=0$, i.e.\ $Q_1\setminus\a$ is a pair of pants. Thus,
    $$\Modact{Q_1}\cap \Stab{\a}\subseteq \gen{T_{\a},T_{\partial Q_1}}$$
    which gives 
    $$\rho(\mclA|_W)\subseteq \gen{T_{\a},T_{\partial Q_1}}~.$$
    If $\rho(\mclA|_W)$ had a nontrivial intersection with $\gen{T_{\partial Q_1}}$, then the curve $\partial Q_1$ would be in the canonical multicurve of $(\mclA|_W,\rho)$. But the canonical multicurve of $(\mclA|_W,\rho)$ is the single curve $\a\in\cxcourbes{Q_1}$. Thus, $$\rho(\mclA|_W)\subseteq \gen{T_{\a}}$$
    which concludes the proof.
\end{proof}

\subsubsection{Strongly separating curves} The next proposition concludes the proof of the key Proposition~\ref{prop recap courbes dans courbes}. Any strongly separating curve can be completed into a $0$ or $1$-maximal decomposition by Lemma~\ref{lemma strongly sep curves into decompo}, so Point~\ref{ii max decompos} of Lemma~\ref{lemma max decompo are sent to max decompo} tells us that it is sent into a multitwist. Let us show that it is sent to one single twist, using the result we have for leaf curves (Proposition~\ref{prop leaf type curves}).

\begin{Prop}\label{separating curves prop}
Under Setting~\ref{setting for thm}, assume that $\S=\S'$. Let $a\in\cxcourbes{\S}$ be a strongly separating curve. Let $\mclA=\w^{-1}\left(\gen{T_a}\right)$. Then, there exists a countable conull Borel partition $\bigsqcup_m X_m$ of $X$ such that for each $m$, there exists a curve $\a_m\in\cxcourbes{\S}$ such that 
    $$\rho(\mclA|_{X_m})\subseteq \gen{T_{\a_m}}~.$$
\end{Prop}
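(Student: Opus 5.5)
Note that a strongly separating curve only exists when $\kappa(\S)\geq 3$, so we may use Proposition~\ref{prop leaf type curves}. The plan is to combine the multitwist information from Lemma~\ref{lemma max decompo are sent to max decompo} with rigidity of leaf curves, in the spirit of the Case~2 argument of Proposition~\ref{nonsep curves prop}.

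\textbf{Step 1: reduce to a multitwist.} By Lemma~\ref{lemma strongly sep curves into decompo}, complete $a$ into a $0$- or $1$-maximal decomposition $\bar b$. Apply Lemma~\ref{lemma max decompo are sent to max decompo} and restrict to a positive measure piece $U$. Let $\s$ be the image decomposition multicurve. Point~\ref{ii max decompos} gives $\rho(\mclA|_U)\subseteq\gen{T_c}_{c\in\s}$. Shrinking $U$, let $\a$ be the canonical multicurve of $(\mclA|_U,\rho)$. Then $\a\subseteq\s$, $\a$ is nonempty by Remark~\ref{rem trivial kernel infinite-type groupoid to infinite group}, and $\rho(\mclA|_U)\subseteq\gen{T_c}_{c\in\a}$. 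It remains to show that $\a$ is a single curve.

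\textbf{Step 2: test with leaf curves disjoint from $a$.} Let $S_+$ and $S_-$ be the two components of $\S\setminus a$, both of positive complexity. For every leaf curve $b'$ disjoint from $a$, $\w^{-1}(\gen{T_{b'}})$ and $\mclA$ normalise each other. By Proposition~\ref{prop leaf type curves}, after further restriction, $\rho(\w^{-1}(\gen{T_{b'}}))\subseteq\gen{T_{\b'}}$ for a single leaf curve $\b'$. Lemma~\ref{lemma multicurve is canonical} then shows that $(\w^{-1}(\gen{T_{b'}}),\rho)$ stabilises $\a$. As in Proposition~\ref{nonsep curves prop}, infinite type together with torsion-freeness of $\Modz$ and Point~\ref{intersection formula dehn twists} of Proposition~\ref{denh twist prop} force $\b'$ to be disjoint from every curve of $\a$.

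Next, choose leaf curves $b'$ in one side $S_+$ (or in $S_-$) that intersect chosen pieces of the decomposition $\bar b$. The aim is to use Lemma~\ref{lemma b intersects S} to make $\b'$ intersect every component $Q_i$ corresponding to an $S_i$ met by $b'$. Taking a leaf curve $b'$ in the side of $a$ that crosses all the pieces on that side, $\b'$ must cross all $Q_i$ on one side of some curve of $\a$ while staying disjoint from $\a$. Doing this on both sides, I aim to show that $\a$ cannot contain two curves. Two curves $\a_1,\a_2\in\a\subseteq\s$ would cut off, between them, a region containing some $Q_i$ coming from a piece adjacent to $a$. A suitable leaf curve $b'$ disjoint from $a$ then forces $\b'$ to cross $Q_i$ and also a component beyond $\a_1$ or $\a_2$, contradicting $\b'\cap\a=\varnothing$. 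This uses the tree isomorphism of Point~\ref{iii max decompos} to locate where the $Q_i$ sit.

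\textbf{Main obstacle.} The combinatorics of Step 2 is the delicate part. It requires that leaf curves on one side of $a$ can be chosen to meet all the pieces on that side; this fails if one side is itself a leaf subsurface, in which case $a$ is a leaf curve and Proposition~\ref{prop leaf type curves} already applies. It also requires tracking, via $f_{\bar b,\s}$, on which side of each curve of $\a$ the images lie. If that bookkeeping gets messy, my fallback is an argument analogous to the maximality argument on $\s_{nl}$ in Proposition~\ref{prop leaf type curves}. In that fallback, $a$ would be characterised as the unique curve disjoint from the leaf curves $\b'$ that are images of leaf curves disjoint from $a$. One would then note that these images fill $\S\setminus\a'$ for a single curve $\a'$, which forces $\a=\{\a'\}$.
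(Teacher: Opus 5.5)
Your proposal is correct and follows essentially the paper's own proof. You complete $a$ into a $0$- or $1$-maximal decomposition and take one leaf curve on each side of $a$ crossing every piece on that side. Proposition~\ref{prop leaf type curves} and Lemma~\ref{lemma b intersects S} then show that their single image curves $\beta,\beta'$ cross every $Q_i$ on the corresponding side. Normalisation makes the canonical multicurve $\alpha\subseteq\sigma$ disjoint from $\beta$ and $\beta'$; the paper reads this off from the second half of Lemma~\ref{lemma multicurve is canonical}, and your stabiliser argument also works.

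The final bookkeeping is easier than you fear. It needs neither the fallback nor your ``region between $\alpha_1,\alpha_2$ contains a $Q_i$ adjacent to $a$'' claim, which is not true in general. By Point~\ref{iii max decompos} of Lemma~\ref{lemma max decompo are sent to max decompo}, the edge $a$ goes to an edge $c$ of the tree $\Gamma_{\sigma}$ separating the images of the two sides. Every other curve of $\sigma$ separates two $Q_i$ lying on the same side, so the single curve $\beta$ or $\beta'$ must cross it. Hence $\alpha\subseteq\{c\}$.
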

\begin{proof}
    If $a$ is a leaf curve, the previous Proposition~\ref{prop leaf type curves} applies. Now assume $a$ is not a leaf curve, and complete $a$ into a $k$-maximal decomposition $\bar a$ for $k$ either $0$ or $1$, by Lemma~\ref{lemma strongly sep curves into decompo}. Denote by $S_1,\dots,S_n$ the components of the decomposition $\bar a$, with $n=\nsig -k$. Number the $(S_i)_i$ so that $S_1,\dots,S_p$ are on one side of $a$ and $S_{p+1},\dots,S_n$ are on the other side of $a$, for some $p<n$.

    Now, let $b$ be a leaf curve that is disjoint from $a$ and that intersects every $S_i$ nontrivially for $1\leq i\leq p$. The curve $b$ exists because $a$ is not a leaf curve. Similarly, let $b'$ be a leaf curve disjoint from $a$ and that intersects $S_i$ nontrivially for $p+1\leq i\leq n$. Denote by $\mclB=\w^{-1}\left(\gen{T_b}\right)$ and $\mclB'=\w^{-1}\left(\gen{T_{b'}}\right)$. Note that $\mclA$ normalises $\mclB$ and $\mclB'$ by Lemma~\ref{lemma if groups normalise then groupoids normalise}, because the curves $a,b$ and $b'$ are disjoint.

    Apply Proposition~\ref{prop leaf type curves} to both $b$ and $b'$ and apply Lemma~\ref{lemma max decompo are sent to max decompo} to the decomposition $\bar a$. Take a conull Borel partition of $X$ that refines the three partitions given by the latter. Thus, for any piece $U$ of the partition,
    \begin{itemize}
    \item by Proposition~\ref{prop leaf type curves}, there exist two leaf curves $\b$ and $\b'$ such that $$\rho(\w^{-1}(\gen {T_b})|_U)\subseteq \gen {T_{\b}}~,$$
    $$\rho(\w^{-1}(\gen {T_{b'}})|_U)\subseteq \gen {T_{\b'}}~;$$
    \item by Point~\ref{ii max decompos} of Lemma~\ref{lemma max decompo are sent to max decompo}, there exists a $k$-maximal decomposition $\s$ of $\S$ into components $Q_1,\dots,Q_n$ such that the canonical multicurve $\a$ of $(\mclA|_U,\rho)$ is constant, $\a\subseteq \s$ and $$\rho(\mclA|_U)\subseteq \gen {T_c}_{c\in\a}~;$$
    \item by Points~\ref{i max decompos} and \ref{iii max decompos} of Lemma~\ref{lemma max decompo are sent to max decompo}, the map $S_i\longmapsto Q_i$ is a graph isomorphism from $\G_{\bar a}$ to $\G_{\s}$ and for every $i\leq n$, $$\rho(\w^{-1}(\Modact{S_i})|_U)\subseteq\Modact{Q_i}\times \gen{T_c}_{c\in\s\setminus \partial Q_i}~.$$
    \end{itemize}

    Let $c$ be the curve of $\s$ that is in the boundary of both $Q_p$ and $Q_{p+1}$. The graph $\G_{\s}$ being a tree, this means that $c$ separates the components $(Q_i)_{i\leq p}$ from the components $(Q_i)_{i>p}$. 
    
    For $1\leq i\leq p$, apply Lemma~\ref{lemma b intersects S} to $b$ intersecting $S_i$ and obtain that $\b$ intersects $Q_i$ nontrivially. Thus, the single curve $\b$ intersects every $Q_i$ for $1\leq i\leq p$. Similarly, the single curve $\b'$ intersects every $Q_i$ nontrivially for $p+1\leq i\leq n$. Therefore, each curve of $\s\setminus \{c\}$ intersects either $\b$ or $\b'$. Thus, the only curve of $\s$ that can be disjoint from both $\b$ and $\b'$ is $c$.

    The groupoid $\mclA$ normalises $\mclB$ (resp.\ $\mclB'$) so by Lemma~\ref{lemma multicurve is canonical}, $\a$ and $\b$ (resp.\ $\b'$) are disjoint. But remember that $\a\subseteq \s$. Therefore, $\a\subseteq \{c\}$, which concludes the proof.
    \end{proof}

\subsection{From groupoid-theoretic rigidity to coupling-theoretic rigidity}\label{final subsection proof thm}

As explained in Setting~\ref{setting for thm}, given a measurable embedding coupling $\Omega$ of $\Modz$ into itself, Lemma~\ref{lemma measurable embedding provides groupoid and cocycles} provides us with a measured groupoid $\mclG$ over a finite measure Borel subset $X$ of $\Omega$. In Sections~\ref{subsection prelim results for the proof of thm}, \ref{section maximal decompo meas} and \ref{section curves meas}, we worked only on this space $X$, and got no particular information on the whole coupling $\Omega$.

To pass from the finite-measure subset $X$ to the infinite-measure space $\Omega$, we adapt an argument that appears in work on measure equivalence rigidity (e.g.\ in \cite{furman_gromovs_1999,monod_orbit_2006,kida_measure_2010,bader_integrable_2013,guirardel_measure_2025,escalier_graph_2026}) to the setting of measurable embeddings. It is the construction of a measurable and $(\Modz\times \Modz)$-equivariant map $\Omega \longrightarrow \operatorname{Aut}\,\cxcourbes{\S}$, called a \textit{tautening} map (see e.g.\ \cite[Appendix~A.3]{bader_integrable_2013} for more on tautening maps). This map sends the abstract coupling $\Omega$ to the discrete \textit{tautological} coupling of $G$ and $H$ acting by left-right translation on their finite-index overgroup $\operatorname{Aut}\,\cxcourbes{\S}$ (see Theorem~\ref{th ivanov}).

As we work with embeddings, we crucially apply the following theorem proven by Irmak \cite{irmak_superinjective_2006} and generalised by Shackleton \cite{shackleton_combinatorial_2007} to obtain the group $\operatorname{Aut}\,\cxcourbes{\S}$ at the target of the map $\Phi$.

\begin{Th}[Irmak {\cite{irmak_superinjective_2006}, Shackleton \cite{shackleton_combinatorial_2007}}]\label{irmak th}
    Let $\S$ be a connected surface of positive complexity. An injective map $\cxcourbes{\S}\longrightarrow\cxcourbes{\S}$ that sends intersecting curves to intersecting curves and disjoint curves to disjoint curves is necessarily an automorphism of the graph $\cxcourbes{\S}$.
\end{Th}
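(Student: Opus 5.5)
This is the Irmak--Shackleton theorem, cited from the literature. If I had to prove it, I would reduce to Ivanov's Theorem~\ref{th ivanov} by showing that such a map $\phi$ is \emph{superinjective}, i.e.\ it preserves both disjointness and intersection, and then showing it is surjective. Injectivity together with preservation of disjointness gives a simplicial embedding of the full flag complex into itself. Because intersection is preserved as well, the image is an \emph{induced} subgraph.

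\textbf{Step 1: pants decompositions.} A maximal multicurve is a complete subgraph on $\kappa(\S)$ vertices. Since $\phi$ is injective and preserves disjointness, it sends pants decompositions to pants decompositions. Next I would characterise adjacency in the pants graph purely through disjointness and intersection. Two curves $a,a'$ differ by an elementary move inside a pants decomposition $P$ exactly when $P\setminus\{a\}\cup\{a'\}$ is again a pants decomposition and $i(a,a')\geq 1$. Moreover, whether the move takes place in a one-holed torus or a four-holed sphere, and whether it is minimal ($i=1$ or $2$), can be detected combinatorially: one uses the existence of curves disjoint from all of $P\setminus\{a\}$ that intersect $a$ and $a'$ in a prescribed pattern, following Ivanov's and Irmak's characterisations. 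Since $\phi$ preserves both relations, it preserves this structure. The same reasoning shows that $\phi$ preserves the topological type of the complementary components of $P\setminus\{a\}$, and hence whether a curve is separating or nonseparating, because that type is encoded by the link of $P\setminus\{a\}$.

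\textbf{Step 2: local surjectivity.} Fix a pants decomposition $P$. I would show that $\phi$ maps the link of $P\setminus\{a\}$, which is the curve graph of a complexity~$1$ surface (the Farey graph), onto the link of $\phi(P\setminus\{a\})$. The map restricted to this link is an injective map between Farey graphs. It preserves Farey adjacency, since that is minimal intersection, and it preserves non-adjacency. An injective map of the Farey graph that preserves triangles and non-adjacency is surjective, because every edge lies in exactly two triangles and the graph is connected through triangles.

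\textbf{Step 3: global surjectivity.} The pants graph is connected. Step~2 shows that $\phi(\cxcourbes{\S})$ is closed under elementary moves starting from the image pants decomposition $\phi(P)$. Hence every pants decomposition of the target is obtained from $\phi(P)$ by a sequence of such moves, and every curve lies in the image. Then $\phi$ is a bijection preserving adjacency in both directions, so it is an automorphism.

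The main obstacle is Step~1 in low complexity, especially $\kappa(\S)=2$ and the genus-$2$ and $\S_{1,2}$ exceptions. There the combinatorial characterisation of elementary moves and of curve types is delicate, and I would expect case-by-case arguments as in Shackleton. For those cases I would first try Luo's characterisation of curve types by links.
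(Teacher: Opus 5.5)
The paper gives no proof of this statement; it is quoted from Irmak and Shackleton, so your outline has to stand on its own. Its skeleton is sound. For a pants decomposition $P$ and $a\in P$, put $Q=P\setminus\{a\}$ and let $R$ be the complexity-$1$ component of $\S\setminus Q$. Injectivity and preservation of disjointness send $\cxcourbes{R}$ injectively into $\cxcourbes{R'}$, where $R'$ is the complexity-$1$ component of $\S\setminus\phi(Q)$. If this restriction preserved Farey adjacency, your triangle argument would make it onto; preservation of non-adjacency is not even needed for that. Since $\phi$ also \emph{reflects} disjointness, the preimage of any pants decomposition lying in the image is again a pants decomposition. Connectedness of the pants graph then propagates surjectivity to all of $\cxcourbes{\S}$. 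Finally, a bijection preserving both relations is an automorphism by definition, so Ivanov's theorem plays no role.

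The gap is the one claim everything rests on: that $\phi$ sends Farey neighbours of $\cxcourbes{R}$ (curves with $i=1$ or $2$) to Farey neighbours of $\cxcourbes{R'}$. You assert this, and the mechanism you indicate cannot give it. A curve disjoint from every curve of $P\setminus\{a\}$ either belongs to $P\setminus\{a\}$, and is then disjoint from $a$ and $a'$, or lies in $\cxcourbes{R}$, where any two distinct curves intersect. So such witnesses cannot distinguish minimal $i(a,a')$ from large $i(a,a')$, and ``since $\phi$ preserves both relations, it preserves this structure'' has no content. Witnesses must cross curves of $Q$. For example, remove a curve $e\in Q$ on the boundary of $R$ and let $W\supset R$ be the complexity-$2$ component of $\S\setminus(Q\setminus\{e\})$. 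One can then try to encode Farey adjacency of $a,a'$ by a configuration in $\cxcourbes{W}$, such as an induced $5$-cycle $e,a,x,y,a'$.

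You would then need both halves of such a characterisation: that the configuration exists for Farey neighbours at the source, and, crucially, that at the target any such configuration forces minimal intersection, in each possible topological type of $R\subset W$. That rigidity lemma is the real content of the Irmak--Shackleton theorem. Because it lives in complexity-$2$ subsurfaces, it is the core of the proof in every complexity, not a low-complexity exception.

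Two smaller remarks. First, your claim that the type of $R$ (one-holed torus versus four-holed sphere) is encoded by the link of $P\setminus\{a\}$ is false as stated: with the paper's convention that link is an edgeless countable graph in both cases. Your Steps~2 and~3 never use it, though. Second, the statement needs $\kappa(\S)\geq 2$. For $\kappa(\S)=1$ the paper's $\cxcourbes{\S}$ has no edges, so every injection satisfies the hypotheses and non-surjective ones exist. Correspondingly, your argument has no room for witnesses there. Only $\kappa(\S)\geq 2$ is used in the paper.
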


More precisely, the goal of this section is to prove the following theorem, which implies Theorem~\ref{main thm} as shown in Section~\ref{conclusion subsection}.

\begin{Th}\label{thm existence of tautening map}
    Let $\S$ be a connected surface without boundary and with $\kappa(\S)\geq 2$. Let $H$ and $G$ be two copies of the group $\Modz$. Let $\Omega$ be a measurable embedding coupling of $H$ into $G$. Then, there exists a measurable and $(G\times H)$-equivariant map
    $\Phi : \Omega \longrightarrow \operatorname{Aut}\,\cxcourbes{\S}$.
\end{Th}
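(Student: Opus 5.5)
We follow the standard tautening strategy, in the form used by Kida. We first build a measurable map $\theta: X\longrightarrow \operatorname{Aut}\,\cxcourbes{\S}$ on the finite-measure base space, and then extend it equivariantly to $\Omega$.

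\textbf{Step 1: a curve map on each piece.} For each curve $a\in\cxcourbes{\S}$ (countably many), Proposition~\ref{prop recap courbes dans courbes} gives a countable conull partition $X=\bigsqcup_m X_m^a$ and curves $\a_m$ with $\rho(\w^{-1}(\gen{T_a})|_{X_m^a})\subseteq\gen{T_{\a_m}}$. Set $\theta_a(x)=\a_m$ for $x\in X_m^a$. This is well defined up to null sets. Indeed, if two pieces overlap on positive measure, the subgroupoid is of infinite type and $\rho$ has trivial kernel, so by Remark~\ref{rem trivial kernel infinite-type groupoid to infinite group} the image lies in an infinite subgroup of $\gen{T_\a}\cap\gen{T_{\a'}}$. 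Point~\ref{dehn twists injectivity} of Proposition~\ref{denh twist prop} then forces $\a=\a'$. Intersecting countably many conull sets, we get for almost every $x$ a map $\theta(x):\cxcourbes{\S}\to\cxcourbes{\S}$, $a\mapsto\theta_a(x)$.

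\textbf{Step 2: $\theta(x)$ is an automorphism.} On a common refinement of the partitions for $a$ and $b$, Lemma~\ref{adjacency curves in groupoidal terms} applies twice.
\begin{itemize}
\item If $a,b$ are disjoint, then $\gen{\w^{-1}(\gen{T_a}),\w^{-1}(\gen{T_b})}$ is amenable, since it is sent by the trivial-kernel cocycle $\w$ into an abelian group. So $\theta_a(x),\theta_b(x)$ cannot intersect: otherwise the groupoid would be everywhere nonamenable.
\item If $a,b$ intersect, the same groupoid is everywhere nonamenable by Lemma~\ref{lemme action-type groupe non moyennable donne groupoide non moyennable}, since $\w$ is of action type and $\gen{T_a}\cap\Modz$, $\gen{T_b}\cap\Modz$ generate $F_2$ (Corollary~\ref{prop in modz all twists are free groups}). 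So the images must intersect.
\end{itemize}
Injectivity follows as well. If $a\neq b$ and $\theta_a=\theta_b=\a$ on a positive-measure set, pick $c$ disjoint from $a$ but intersecting $b$, which is possible since $\kappa(\S)\ge2$. Then $\theta_c$ would be both disjoint from and intersecting $\a$, a contradiction. Theorem~\ref{irmak th} now gives $\theta(x)\in\operatorname{Aut}\,\cxcourbes{\S}$ almost surely. Measurability holds because each $\theta_a$ is countably valued on Borel pieces.

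\textbf{Step 3: equivariance and extension.} For $\mfk g\in\mclG$, the subgroupoid $\w^{-1}(\gen{T_a})$ is conjugated by $\mfk g$ into $\w^{-1}(\gen{T_{\w(\mfk g)a}})$, and $\rho$ conjugates the images accordingly. Using uniqueness from Step 1 on bisections (restricting to pieces where $\rho,\w$ are constant), we get
$$\theta(r(\mfk g))=\rho(\mfk g)\,\theta(s(\mfk g))\,\w(\mfk g)^{-1}.$$
Because $(G\times H)X$ is conull and $\w,\rho$ record exactly which pair $(g,h)$ moves points of $X$ into $X$, we define
$$\Phi((g,h)x)=g\,\theta(x)\,h^{-1}.$$
The cocycle identity shows this is well defined almost everywhere, measurable, and $(G\times H)$-equivariant.

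\textbf{Main obstacle.} The main difficulty is the bookkeeping in Step 3: checking the cocycle identity with the correct left/right conventions, and verifying that the extension is independent of the chosen representative. This requires $X=D_G\cap D_H$ with the cocycles defined as in Lemma~\ref{lemma measurable embedding provides groupoid and cocycles}.
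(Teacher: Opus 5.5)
Your proposal is correct and follows the paper's proof essentially step by step: you build $\theta$ from Proposition~\ref{prop recap courbes dans courbes}, get preservation of disjointness and intersection from Lemma~\ref{adjacency curves in groupoidal terms} plus injectivity so that Theorem~\ref{irmak th} applies, check the equivariance $\rho(\mfk g)\theta(s(\mfk g))=\theta(r(\mfk g))\w(\mfk g)$ on bisections where $\rho$ and $\w$ are constant, and extend by $\Phi((g,h)x)=g\theta(x)h^{-1}$. One small citation fix: when $a,b$ intersect, the everywhere nonamenability of $\gen{\w^{-1}(\gen{T_a})|_U,\w^{-1}(\gen{T_b})|_U}$ should come from Lemma~\ref{lemma A,B in F2 nonamenable} applied to the trivial-kernel cocycle $\w$, exactly as in the proof of Lemma~\ref{adjacency curves in groupoidal terms}, because Lemma~\ref{lemme action-type groupe non moyennable donne groupoide non moyennable} is stated for a whole groupoid with an action-type cocycle, and the restriction of $\w$ to a generated subgroupoid need not be of action type.
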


\begin{proof}[Proof of Theorem~\ref{thm existence of tautening map}]
Fix a measurable embedding coupling $\Omega$ of $H$ into $G$. Apply Lemma~\ref{lemma measurable embedding provides groupoid and cocycles} to get $X=D_G\cap D_H$ where $D_G$ is a Borel fundamental domain for $G$ of finite measure and $D_H$ a Borel fundamental domain for $H$. These fundamental domains are chosen such that $(G\times H)X$ is conull in $\Omega$. The measured groupoid $\mclG=(H\ltimes D_G)|_X$ is equipped with two cocycles: $\w : \mclG \longrightarrow H$ and $\rho : \mclG \longrightarrow G$. In other words, we are in Setting~\ref{setting for thm} with $\S=\S'$.
\medbreak
\textbf{Step 1.} Let us construct a measurable map $\theta : X ^*\longrightarrow\operatorname{Aut}{\cxcourbes{\S}}$ on some conull Borel subset $X^*$ of~$X$.

        \textit{$\drsh$ Definition of $\theta$.} 
    For $a\in\cxcourbes{\S}$, by Proposition~\ref{prop recap courbes dans courbes}, there exists a countable conull Borel partition $X^a_*=\bigsqcup_{n}X_n^a$ of $X$ and $\a_n\in\cxcourbes{\S}$ such that for all $n$,
    $$\rho\left(\w^{-1}(\gen{T_a})\right)|_{X_n^a}\subseteq \gen{T_{\a_n}}~.$$
    Then, define the conull Borel subset $$X^*=\bigcap_{a\in\cxcourbes{\S}} X^a_*$$
and define on $X^*$ the following map 
$$\theta : x\longmapsto \left( a\mapsto \a_n \text{ when }x\in X_n^a\right)~.$$

    Let us show that $\theta$ takes values in $\operatorname{Aut}{\cxcourbes{\S}}$. Fix $a,b,c\in\cxcourbes{\S}$ such that $a\neq b$ and $c$ intersects $a$ but is disjoint from $b$. Take a countable partition of $X ^*$ that refines the ones for $a,b$ and $c$. Take $x\in X^*$ and $U$ the piece of the partition that contains $x$. 
    
     \textit{$\drsh$ Adjacency of $\theta(x)$.}
   Apply Lemma~\ref{adjacency curves in groupoidal terms} to 
    $$\rho \left( \w^{-1}(\gen{T_a})|_U\right)\subseteq \gen{T_{\theta(x)(a)}}$$
    and 
    $$\rho \left(\w^{-1}(\gen{T_b})|_U\right)\subseteq  \gen{T_{\theta(x)(b)}}~.$$
    This gives us that $\theta(x)(a)$ and $\theta(x)(b)$ are adjacent in $\cxcourbes{\S}$ if and only if $a$ and $b$ are.

    \textit{$\drsh$ Injectivity of $\theta(x)$.}
    The adjacency property of $\theta$ guarantees that $\theta(x)(c)$ intersects $\theta(x)(a)$ but is disjoint from $\theta(x)(b)$. In particular, $\theta(x)(a)$ and $\theta(x)(b)$ are distinct vertices of the curve graph.

    We have shown that for every $x$ in $X^*$, $\theta(x)$ verifies the assumptions of Theorem~\ref{irmak th}, therefore it is an element of $\operatorname{Aut}{\cxcourbes{\S}}$.

\medbreak
\textbf{Step 2.} Now we show a form of $(\w,\rho)$-equivariance of $\theta$. More precisely, let us show that for almost every $\mfk g\in\mclG$,
    \begin{align}\label{equivariance of theta}
        \rho(\mfk g)\theta(s(\mfk g)) =\theta(r(\mfk g))\w(\mfk g)~.
    \end{align}
    Fix $g,h\in\Modz$ and a bisection of positive measure $B$ on which $\rho$ is constant equal to $g$ and $\w$ is constant equal to $h$.
We will show (\ref{equivariance of theta}) for every $\mfk g\in B$, which suffices because
$$\mclG=\bigsqcup_{g,h\in\Modz} \w^{-1}(\{h\})\cap \rho^{-1}(\{g\})~.$$

Fix $a\in\cxcourbes{\S}$. Let $U$ a Borel subset of positive measure of the domain of $B$ small enough for $\alpha = \theta(x)(a)$ and $\b=\theta(Bx)(ha)$ to be well defined and constant when $x$ varies in $U$. For $x\in U$, there is a unique $\mfk g\in B$ with source $x$ because $B$ is a bisection. Then, the range of $\mfk g$ is $Bx$, $\rho(\mfk g)=g$ and $\w(\mfk g)=h$ so that
$$g\alpha =\rho(\mfk g)\theta(s(\mfk g))(a)$$
and
$$\b=\theta(r(\mfk g))(\w(\mfk g)a)~.$$Thus, to show (\ref{equivariance of theta}) it is now enough to show that $\b=g\a$.

By definition of $\theta$, 
\begin{equation}\label{U}
    \w^{-1}(\gen{T_a})|_U\subseteq \rho^{-1}(\gen{T_{\a}})|_U~,
\end{equation}
\begin{equation}\label{BU}
    \w^{-1}(\gen{T_{ha}})|_{BU}\subseteq \rho^{-1}(\gen{T_{\b}})|_{BU}~.
\end{equation}
Then, it holds that 
\begin{align*}
\w^{-1}(\gen{T_{ha}})|_{BU}&=B\w^{-1}(\gen{T_a})|_U B^{-1}\\
                        &\subseteq B\rho^{-1}(\gen{T_{\a}})|_U B^{-1}\text{  by }(\ref{U})\\
                        &=\rho^{-1}(\gen{T_{g\a}})|_{BU}
\end{align*}
which then gives by $(\ref{BU})$
$$\w^{-1}(\gen{T_{ha}})|_{BU}\subseteq \rho^{-1}(\gen{T_{\b}})|_{BU}\cap \rho^{-1}(\gen{T_{g\a}})|_{BU}~.$$
The subgroupoid $\w^{-1}(\gen{T_{ha}})|_{BU}$ is of infinite type and $\rho$ has trivial kernel. By Remark~\ref{rem trivial kernel infinite-type groupoid to infinite group}, this forces the subgroup $\gen{T_{\b}}\cap \gen{T_{g\a}}$ to be nontrivial, which means that $\b=g\a$ by Point~\ref{dehn twists injectivity} of Proposition~\ref{denh twist prop}. This concludes the proof of $(\ref{equivariance of theta})$.
\medbreak
\textbf{Step 3.} Finally, we construct a tautening map on the whole space $\Omega$, i.e.\ a measurable map $$\Phi : \Omega \longrightarrow \operatorname{Aut}\,\cxcourbes{\S}$$
which is equivariant for the $G\times H$ action. More precisely, for every $g,h\in\Modz$ and almost every $y\in\Omega$, 
$$\Phi((g,h)y)=g\Phi(y)h^{-1}~.$$

To construct $\Phi$, we extend $\theta$ by equivariance to the conull Borel subset $(G\times H)X^*$. For a fixed $y\in\Omega$, there exist $g,h\in G$ and $x\in X^*$ such that $y=(g,h)x$. Define
    $$\Phi((g,h)x)=g \theta(x)h^{-1}$$
    and note that it is automatically $(G\times H)$-equivariant. 
    It is left to show that $\Phi$ is well defined almost everywhere. Take $g_1,g_2\in G$, $h_1,h_2\in H$ and $x_1,x_2\in X^*$ such that 
    $$y=(g_1,h_1)x_1=(g_2,h_2)x_2~.$$
    Thus, for $g=g_1 ^{-1}g_2$ and $h=h_1^{-1}h_2$ it holds that $x_1=(g,h)x_2$. In particular, the element $$\mfk g\defeq (g,x_2)=(h,x_2)$$ in the groupoid $\mclG$ has source $x_2$ and range $x_1$. Applying (\ref{equivariance of theta}) to $\mfk g$ and using that $\rho(\mfk g)=g$ and $\w(\mfk g)=h$, we get 
    \begin{equation}\label{application of equivariance}
      g\theta(x_2)=\theta(x_1)h
    \end{equation}
    for almost every $x_1,x_2\in X^*$. Therefore, 
    \begin{align*}
        \Phi((g_1,h_1)x_1)&=g_1\theta(x_1)h_1^{-1}\\
                        &=g_1\left(g_1 ^{-1}g_2\right)\theta(x_2)\left(h_1 ^{-1}h_2\right)^{-1}h_1^{-1}\text{ by~(\ref{application of equivariance})}\\
                        &=g_2\theta(x_2)h_2^{-1}\\
                        &=\Phi((g_2,h_2)x_2)
    \end{align*}
    for almost every $x_1,x_2\in X$ as wanted.
\end{proof}

\section{Conclusion}\label{conclusion subsection}

In this section, we restate our main results, we explain how our main theorem follows from Theorem~\ref{thm existence of tautening map} and we prove the corollaries. Recall that our definition of a \textit{surface} is an orientable $2$-manifold of finite type.

\begin{manualtheorem}{\ref{main thm}}
Let $\S$ be a connected surface without boundary and with $\kappa(\S)\geq 2$. Then, any measurable embedding coupling from $\ModS$ to $\ModS$ is necessarily a measure equivalence coupling.
\end{manualtheorem}
\begin{proof}
    As explained in Remark~\ref{remark finite indices in me couplings}, it suffices to study a measurable embedding of the finite-index subgroup $\Modz$ of $\ModS$ into itself. Hence, fix two copies $h$ and $G$ of $\Modz$ and fix $\Omega$ a measurable embedding coupling of $H$ into $G$. By Theorem~\ref{thm existence of tautening map}, there exists a measurable and $(G\times H)$-equivariant map ${\Phi : \Omega \longrightarrow \operatorname{Aut}\,\cxcourbes{\S}}$.
    
    By Ivanov's Theorem~\ref{th ivanov}, $\Modz$ is a finite-index subgroup of $\operatorname{Aut}\,\cxcourbes{\S}$. This means that $H$ acts on $\operatorname{Aut}\,\cxcourbes{\S}$ with a finite fundamental domain $K$. The tautening map provides us with the Borel subset $\Phi^{-1}(K)$. It is a fundamental domain for $H$ acting on $\Omega$, and it has finite measure because $K$ is finite. This proves that $\Omega$ is a measure equivalence coupling.
\end{proof}

A direct consequence of this theorem is the description of countable groups that are measurably biembeddable with a mapping class group, using Kida's measure equivalence rigidity theorem \cite{kida_measure_2010}.

\begin{manualcor}{\ref{cr biembedability}}
Let $\S$ be a connected surface without boundary and with $\kappa(\S)\geq 2$. If a countable group $H$ measurably embeds into $\ModS$ and $\ModS$ measurably embeds into $H$, then $H$ is virtually isomorphic to $\ModS$ up to finite kernels.
\end{manualcor}

\begin{proof}
    By Theorem~\ref{main thm} and Proposition~\ref{transitivité du plongement ME}, $H$ is measure equivalent to $\ModS$. By Kida's \cite[Theorem~1.1]{kida_measure_2010}, $H$ is virtually isomorphic to $\ModS$ up to finite kernels.
\end{proof}

As explained in Remark~\ref{rem on equality or not of S and S'}, Lemma~\ref{lemma max decompo are sent to max decompo} gives monotonicity of topological complexity for measurable embeddings.

\begin{manualcor}{\ref{cr monotonicity of complexity}}
    Let $\S$ and $\S'$ be connected surfaces without boundaries and with $\kappa(\S),\kappa(\S')\geq 0$. If $\ModS$ measurably embeds into $\Mod {\S'}$, then ${\kappa(\S)\leq \kappa(\S')}$. 
\end{manualcor}
\begin{proof}
    Assume that there is a measurable embedding from $\ModS$ to $\ModSprime$.
    If $\kappa(\S')=0$, then $\ModSprime$ is amenable so $\ModS$ is also amenable by Remark~\ref{rem amen is preserved by meas emb}, which means that $\kappa(\S)=0$.
    Now, assume that $\kappa(\S')\geq 1$. By Remark~\ref{remark finite indices in me couplings}, there is a measurable embedding from $\Modz$ to $\Modzprime$ so we find ourselves in Setting~\ref{setting for thm}. Thus, Lemma~\ref{lemma max decompo are sent to max decompo} applies and gives ${\kappa(\S)\leq \kappa(\S')}$.
\end{proof}

A consequence of Theorem~\ref{thm existence of tautening map} is the following result about free probability-measure preserving actions of $\ModS$. See for instance \cite[Definition~1.3]{kida_orbit_2008} or \cite[Definition~1.1]{furman_orbit_1999} for the definition of virtual conjugacy of probability measure-preserving actions.

\begin{manualcor}{\ref{cr OE rigidity}}Let $\S$ be a connected surface without boundary and with $\kappa(\S)\geq 2$ and let $G$ and $H$ be two copies of $\ModS$. Assume that there exist free probability measure-preserving actions of $G$ and $H$ on a standard probability space $(X,\mu)$ such that for almost every $x\in X$, $H\cdot x\subseteq G\cdot x$. 
 
Then, for almost every $x\in X$, $H\cdot x=G\cdot x$. If additionally the actions are ergodic, then they are virtually conjugate.
\end{manualcor}
\begin{proof}

 First, let us construct a measurable embedding coupling of $H$ into $G$ from this context. This argument can be found in \cite[Remark~2.36]{delabie_quantitative_2022} with $X=Y$ and in \cite[Proposition~2.5]{demir_measurable_2022}. 
 Let
 $$\Omega =\{ (x,y)\in X\times X\mid y\in G\cdot x\}$$
 the countable Borel equivalence relation associated to the action of $G$ on $X$. It is a standard measured space equipped with the usual measure denoted $m$ and defined in \cite{feldman_ergodic_1977}. 
 Now, for $(x,y)\in\Omega$, $g\in G$ and $h\in H$, let
 $$g(x,y)=(x,g\cdot y)\text{ and }h(x,y)=(h\cdot x,y)~.$$
 This defines two free commuting measure-preserving actions of $G$ and $H$ on $(\Omega,m)$. To be clear, the p.m.p.\ actions on $X$ are denoted by $\cdot$ and the commuting actions on the space $\Omega$ of infinite measure have no notation. 

 Now, the Borel subset
 $$\Delta =\{(x,x)\mid x\in X\}$$
of $\Omega$ has finite measure $m(\Delta)=\mu(X)$. On one hand, $\Delta$ is a fundamental domain for the action of $G$ on $\Omega$. On the other hand, the action of $H$ on $X$ is free, so for every $h,h'\in H$, $h\Delta$ and $h'\Delta$ are disjoint. This provides a way to construct a Borel fundamental domain for the action of $H$ on $\Omega$ that contains $\Delta$. For the detailed construction, see \cite[Proposition~2.5]{demir_measurable_2022}. In particular, $\Omega$ is a measurable embedding coupling of $H$ into $G$. 

\begin{Claim}\label{claim HDelta is conull in Omega}
    The Borel subset $H\Delta$ is conull in $\Omega$.
\end{Claim}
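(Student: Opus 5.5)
The plan is to show that the $H$-invariant set $\Omega\setminus H\Delta$ has an $H$-fundamental domain of measure zero. To get there, I will compare the measures of an $H$-fundamental domain and a $G$-fundamental domain using the tautening map of Theorem~\ref{thm existence of tautening map}.

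\textbf{Step 1: reduction to an index function.} Choose, as in \cite[Proposition~2.5]{demir_measurable_2022}, a Borel fundamental domain $D_H$ for $H\curvearrowright\Omega$ with $\Delta\subseteq D_H$. Since $\Omega\setminus H\Delta$ is $H$-invariant, the set $D_H\setminus\Delta$ is a fundamental domain for $H$ acting on $\Omega\setminus H\Delta$. It therefore suffices to show that
$$m(D_H)=\mu(X)=m(\Delta),$$
the last equality holding because $\Delta$ is a $G$-fundamental domain.

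This has a concrete meaning via the projection $(x,y)\mapsto y$. For each $y$, the fibre is the orbit $G\cdot y$, on which $H$ acts through the first coordinate. Let $c(y)\geq 1$ be the number of $H$-orbits contained in $G\cdot y$. Then, by the definition of the measure $m$ \cite{feldman_ergodic_1977},
$$m(D_H)=\int_X c(y)\,d\mu(y).$$
So the claim is equivalent to $c=1$ almost everywhere, which in turn is equivalent to $H\cdot x=G\cdot x$ almost everywhere. Since $c\geq 1$, showing $m(D_H)=m(\Delta)$ is enough.

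\textbf{Step 2: pass to $\Modz$ and use the tautening map.} By Remark~\ref{remark finite indices in me couplings}, $\Omega$ is also a measurable embedding coupling of $H^0=\Modz$ into $G^0=\Modz$. This changes both fundamental domain measures by the same factor $[\ModS:\Modz]$, so it is enough to prove that the $G^0$- and $H^0$-fundamental domains have equal measure. Theorem~\ref{thm existence of tautening map} gives a measurable map $\Phi:\Omega\to A\defeq\operatorname{Aut}\,\cxcourbes{\S}$ with $\Phi((g,h)y)=g\Phi(y)h^{-1}$. Write $\Gamma=\Modz\le A$, which has finite index by Theorem~\ref{th ivanov}. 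Set $\nu(a)\defeq m(\Phi^{-1}(a))$. Equivariance and invariance of $m$ give $\nu(gah^{-1})=\nu(a)$, so $\nu$ is constant on double cosets $\Gamma a\Gamma$.

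Let $K_G$ be a set of representatives of $\Gamma\backslash A$ and $K_H$ one of $A/\Gamma$. Then $\Phi^{-1}(K_G)$ and $\Phi^{-1}(K_H)$ are fundamental domains for $G^0$ and $H^0$ respectively, so
$$m(\Phi^{-1}(K_G))=\sum_{a\in K_G}\nu(a),\qquad m(\Phi^{-1}(K_H))=\sum_{a\in K_H}\nu(a).$$
Both sums are finite: the first because $G^0$ has a finite-measure fundamental domain, the second by Theorem~\ref{main thm}.

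\textbf{Step 3: double coset counting (main step).} Grouping each sum by double cosets, it remains to check that each double coset $\Gamma a\Gamma$ contains as many left $\Gamma$-cosets as right $\Gamma$-cosets. These numbers are $[\Gamma:\Gamma\cap a\Gamma a^{-1}]$ and $[\Gamma:\Gamma\cap a^{-1}\Gamma a]$. Conjugation by $a$ maps $\Gamma\cap a^{-1}\Gamma a$ onto $a\Gamma a^{-1}\cap\Gamma$. Since $\Gamma$ and $a\Gamma a^{-1}$ have the same finite index in $A$, the multiplicativity of indices inside $A$ shows that $\Gamma\cap a\Gamma a^{-1}$ has the same index in both $\Gamma$ and $a\Gamma a^{-1}$. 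Hence the two counts agree, and the two fundamental domains have equal measure.

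Unwinding through Remark~\ref{remark finite indices in me couplings} gives $m(D_H)=m(\Delta)$. By Step 1 this forces $c=1$ almost everywhere, so $m(D_H\setminus\Delta)=0$ and $H\Delta$ is conull.

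I expect the main obstacle to be Step 3 together with its bookkeeping. One must verify that $\nu$ is genuinely bi-invariant under the finite-index copies, and that the measure factors introduced when passing from $\ModS$ to $\Modz$ match on the $G$ side and the $H$ side.
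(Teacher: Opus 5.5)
Your argument is correct, and its skeleton matches the paper's. You pass to $\Modz$, use the tautening map $\Phi$ of Theorem~\ref{thm existence of tautening map} to show that $G^0$- and $H^0$-fundamental domains have the same measure, and then compare with fundamental domains built from $\Delta$ to force $m(D_H\setminus\Delta)=0$.

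The difference is in how you get the equality of measures. The paper takes a common finite fundamental domain $K$ for the left and right translation actions of $\Modz$ on $\operatorname{Aut}\,\cxcourbes{\S}$, i.e.\ a common left/right transversal. Then $D=\Phi^{-1}(K)$ is simultaneously a $G^0$- and an $H^0$-fundamental domain, and the paper compares $D$ with $R_G\Delta$ and with $R_H\Delta\sqcup(D\setminus H\Delta)$. You instead use separate transversals $K_G,K_H$ and prove $\sum_{K_G}\nu=\sum_{K_H}\nu$ by showing that each double coset $\Gamma a\Gamma$ contains equally many left and right cosets. That count is exactly what guarantees a common transversal exists: every left coset in $\Gamma a\Gamma$ meets every right coset in it, so equal counts give a matching. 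Your route therefore proves explicitly a step the paper asserts without justification.

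Two minor remarks. First, finiteness of $\sum_{K_H}\nu$ does not need Theorem~\ref{main thm}. Each $\nu(a)$ equals $\nu(a')$ for some $a'\in K_G$, so it is at most $\sum_{K_G}\nu<\infty$, and $K_H$ is finite; in any case the double-coset comparison gives equality in $[0,\infty]$. Second, the index function $c(y)$ in Step~1 is correct but unnecessary. Since $D_H\cap H\Delta=\Delta$, the claim already reduces to $m(D_H)=m(\Delta)$.
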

\begin{proof}[Proof of Claim~\ref{claim HDelta is conull in Omega}]
Let $G^0\leq G$ and $H^0\leq H$ be two copies of $\Modz$ and denote $n$ the finite index of these subgroups. By Remark~\ref{remark finite indices in me couplings}, $\Omega$ is a measurable embedding coupling of $H^0$ into $G^0$. As in that remark, fix finite systems of representatives $R_H$ and $R_G$ for $H^0\leq H$ and $G^0\leq G$. On one hand, $R_G\Delta$ is a fundamental domain for $G^0$ acting on $\Omega$ and $m(R_G\Delta)=nm(\Delta)$. On the other hand, for every $h,h'\in H^0$, $h R_H\Delta$ and $h' R_H\Delta$ are disjoint.

Apply Theorem~\ref{thm existence of tautening map} to the measurable embedding coupling $\Omega$ of $H^0$ into $G^0$. There exists a measurable and $(G^0\times H^0)$-equivariant map $\Phi : \Omega\longrightarrow \operatorname{Aut}\,\cxcourbes{\S}$. Recall that $G^0\times H^0$ acts by left and right translations on $\operatorname{Aut}\,\cxcourbes{\S}$, so there exists a common finite fundamental domain $K$ for these two actions. Then, the Borel subset $$D=\Phi^{-1}(K)$$
of $\Omega$ is a common fundamental domain for the actions of $G^0$ and $H^0$ on $\Omega$.

Note that any two Borel fundamental domains for the action of $G^0$ (resp. $H^0$) on $\Omega$ have the same measure. 
Both $D$ and $R_G\Delta$ are Borel fundamental domains for $G^0$ acting on $\Omega$, so $$m(D)=m(R_G\Delta)=nm(\Delta)~.$$
The $H^0$-translates of $R_H\Delta$ are all pairwise disjoint, so $$R_H\Delta \sqcup \left(D\setminus H^0R_H\Delta\right)$$ is a Borel fundamental domain for $H^0$ acting on $\Omega$. Hence, this subset has the same measure as $D$, so
\begin{align*}
    m(D)&=m\left(R_H\Delta \sqcup \left(D\setminus H^0R_H\Delta\right)\right)\\
        &=m(R_H\Delta)+m\left(D\setminus H^0R_H\Delta\right)\\
        &=nm(\Delta)+m\left(D\setminus H^0R_H\Delta\right)\\
        &=m(D)+m\left(D\setminus H^0R_H\Delta\right)~.
\end{align*}
This gives $m\left(D\setminus H^0R_H\Delta\right)= 0$.

Recall that $H^0R_H=H$ so that $D\setminus H\Delta$ has null measure. Thus, the Borel subset
$$H\left(D\setminus H\Delta\right) = \Omega\setminus H\Delta$$
of $\Omega$ also has null measure. In conclusion, $H\Delta$ is conull in $\Omega$. 
\end{proof}

\begin{Claim}\label{claim almost every x}
For almost every $x\in X$ and every $g\in G$, $g(x,x)\in H\Delta$.
\end{Claim}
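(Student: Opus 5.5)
The plan is to transfer the measure-theoretic information from Claim~\ref{claim HDelta is conull in Omega} back to the base space $X$, using that the $G$-translates of $\Delta$ tile $\Omega$ and that the actions preserve the measure $m$. Let $N=\Omega\setminus H\Delta$, which is $m$-null by Claim~\ref{claim HDelta is conull in Omega}. Since $H$ and $G$ commute, $H\Delta$ is $H$-invariant, and so is $N$. For each $g\in G$, consider the Borel set
$$X_g=\{x\in X\mid g(x,x)\in N\}~.$$
The goal is to show $\mu(X_g)=0$ for every $g$. Then $\bigcup_{g\in G}X_g$ is a countable union of null sets, and its complement is the conull set of $x$ required by the claim.

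To show $\mu(X_g)=0$, I will use the description of $m$ from \cite{feldman_ergodic_1977}: a Borel set $E\subseteq\Omega$ is $m$-null if and only if $\mu(\{x\mid (x,y)\in E \text{ for some } y\})=0$, equivalently its projection to either coordinate is $\mu$-null. Now $g(x,x)=(x,g\cdot x)$. The map $x\mapsto (x,g\cdot x)$ sends $X$ Borel-isomorphically onto $g\Delta$, and it pushes $\mu$ to $m|_{g\Delta}$, since $m(g\Delta)=m(\Delta)=\mu(X)$ and the first-coordinate projection identifies $g\Delta$ with $X$. Hence
$$\mu(X_g)=m\left(g\Delta\cap N\right)\le m(N)=0~.$$
An alternative route is to note that $g^{-1}N$ is null because $G$ preserves $m$, so $\{x\mid (x,x)\in g^{-1}N\}$ is null by the same identification of $\Delta$ with $X$. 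Either version works.

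The only step needing care is the identification of $m|_{g\Delta}$ with $\mu$ via the first projection, that is, checking that the Feldman--Moore measure restricted to the graph of the Borel bijection $x\mapsto g\cdot x$ equals the pushforward of $\mu$. This is the standard fact that $m(E)=\int_X \#\{y\mid (x,y)\in E\}\,d\mu(x)$, which for a graph equals $\mu$ of its domain. I expect no real obstacle beyond stating this correctly; the countability of $G$ is what lets the almost-everywhere statement hold for every $g$ simultaneously.
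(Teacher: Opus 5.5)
Your proof is correct and is essentially the paper's argument: the paper fixes $g$, takes the bad set $U$, notes $m(\Delta_U)=\mu(U)$ and $g\Delta_U\subseteq\Omega\setminus H\Delta$, and uses $G$-invariance of $m$ to get $\mu(U)=0$, which is exactly your ``alternative route''. Your main route, computing $m(g\Delta\cap N)=\mu(X_g)$ directly from the counting formula for $m$, is an equally valid variant, and you make explicit the countable union over $G$ that the paper leaves implicit.
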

\begin{proof}[Proof of Claim~\ref{claim almost every x}]
    Take $g\in G$ and a Borel subset $U$ of $X$ such that for every $x\in U$, $g(x,x)\notin H\Delta$. Denote $\Delta_U=\{(x,x)\}_{x\in U}$ and note that $m(\Delta_U)=\mu(U)$. The latter gives $g\Delta_U\subseteq \Omega\setminus H\Delta$. By Claim~\ref{claim HDelta is conull in Omega}, $m(\Delta_U)= 0$ so $\mu(U)=0$.
\end{proof}

Let us now show that for almost every $x\in X$, $G\cdot x\subseteq H\cdot x$. 
Take $g\in G$ and $x\in X$ such that $g(x,x)\in H\Delta$. Then, there exists $h\in H$ and $y\in X$ such that $g(x,x)=h(y,y)$. By definition of the actions on $\Omega$, we get $g\cdot x=y$ and $h\cdot y=x$. Thus, $g\cdot x=h^{-1}\cdot x\in H\cdot x$ as wanted. The latter holds for almost every $x$ by Claim~\ref{claim almost every x}.

Assume additionnally that the actions of $G$ and $H$ on $X$ are ergodic. By \cite[Theorem~1.1]{kida_orbit_2008}, these actions are virtually conjugate.
\end{proof}

\bibliographystyle{alpha}
\bibliography{mcgbibfinal}

\end{document}